\newtheorem{theorem}{Theorem}[subsection]
\newtheorem{definition}[theorem]{Definition}
\newtheorem{proposition}[theorem]{Proposition}
\newtheorem{lemma}[theorem]{Lemma}
\newtheorem{corollary}[theorem]{Corollary}
\newtheorem*{LMMC}{Local Motivic Monodromy Conjecture}
\newtheorem*{LPMC}{Local $\p$-adic Monodromy Conjecture}
\newtheorem*{LTMC}{Local Topological Monodromy Conjecture}
\def\A{\mathbb{A}}
\def\C{\mathbb{C}}
\def\F{\mathbb{F}}
\def\N{\mathbb{N}}
\def\P{\mathbb{P}}
\def\Q{\mathbb{Q}} 
\def\R{\mathbb{R}}
\def\T{\mathcal{T}}
\def\Z{\mathbb{Z}}
\def\L{\mathbb{L}}
\def\p{\mathfrak{p}}
\def\t{\mathrm{top}}
\def\mot{\mathrm{mot}}
\def\for{\mathrm{for}}
\def\mh{{\hat{\mu}}}
\def\M{\mathcal{M}^\mh}
\def\tE{\widetilde{E}}
\def\1{\mathbf{1}}
\def\cF{\mathcal{F}}
\def\mot{\mathrm{mot}}
\def\UB{U\hspace{-1 pt}B_1}
\def\k{\Bbbk}
\def\cP{\mathcal P}
\def\<{\langle}
\def\>{\rangle}
\DeclareMathOperator{\bbox}{box}
\DeclareMathOperator{\BBox}{Box}
\DeclareMathOperator{\codim}{codim}
\DeclareMathOperator{\Span}{span}
\DeclareMathOperator{\spn}{span}
\DeclareMathOperator{\Hom}{Hom}
\DeclareMathOperator{\lk}{lk}
\DeclareMathOperator{\Star}{Star}
\DeclareMathOperator{\supp}{supp}
\DeclareMathOperator{\Sym}{Sym}
\DeclareMathOperator{\Vol}{Vol}
\DeclareMathOperator{\One}{\mathbf{1}}
\DeclareMathOperator{\pr}{pr}
\DeclareMathOperator{\Contrib}{Contrib}
\DeclareMathOperator{\rec}{rec}
\DeclareMathOperator{\Newt}{Newt}
\DeclareMathOperator{\ver}{Vert}
\DeclareMathOperator{\Spec}{Spec}
\DeclareMathOperator{\Ima}{im}
\DeclareMathOperator{\csp}{sp}
\DeclareMathOperator{\gens}{Gen}
\DeclareMathOperator{\Unb}{Unb}
\DeclareMathOperator{\aff}{aff}
\newcommand{\Conv}[1]{\operatorname{Conv}\left\{{#1}\right\}}
\theoremstyle{definition}
\newtheorem{remark}[theorem]{Remark}
\newtheorem{example}[theorem]{Example}
\begin{document}

\title[The local motivic monodromy conjecture for simplicial nondegenerate singularities]{The local motivic monodromy conjecture for simplicial nondegenerate singularities}          
\date{\today}
\author{Matt Larson, Sam Payne, and Alan Stapledon}

\address{Princeton University Department of Mathematics, Fine Hall, Washington Road, Princeton, NJ 08540}
\email{mattlarson@princeton.edu}

\address{Department of Mathematics, University of Michigan, 530 Church St., Ann Arbor, MI 48109}
\email{sdpayne@umich.edu}

\address{Sydney Mathematics Research Institute, L4.42, Quadrangle A14, University of Sydney, NSW 2006, Australia}
\email{astapldn@gmail.com}

\begin{abstract}
We prove the local motivic monodromy conjecture for singularities that are nondegenerate with respect to a simplicial Newton polyhedron. It follows that all poles of the local topological zeta functions of such singularities correspond to eigenvalues of monodromy acting on the cohomology of the Milnor fiber of some nearby point, as do the poles of Igusa's local $p$-adic zeta functions for large primes $p$.  
\end{abstract}

\maketitle

\vspace{-20 pt}

\setcounter{tocdepth}{1}
\tableofcontents

\section{Introduction}

Singularities are a central and pervasive phenomenon in geometry, and one prototypical setting in which they arise is the fiber of a smooth function over an isolated critical value. Monodromy studies how the nearby smooth fibers vary as one moves around such a critical value. Since Milnor’s foundational work on complex hypersurface singularities \cite{Milnor68}, monodromy has served as a bridge between topological, geometric, and analytic perspectives. It reveals subtle invariants of singularities, particularly through the linear action on the cohomology of the Milnor fiber, a topological invariant introduced by Milnor. The monodromy conjectures predict that certain analytic invariants of singularities, encoded in local zeta functions, detect eigenvalues of this monodromy action. These conjectures have guided extensive work over the past four decades on the interaction between resolution data, Newton polyhedra, and monodromy.

The conjectures appear in several closely related forms. In the archimedean setting, a theorem of Malgrange shows that poles of certain oscillatory integrals give rise to monodromy eigenvalues \cite{Malgrange74}. In the nonarchimedean setting, Igusa introduced $p$-adic local zeta functions to study congruence counting problems \cite{Igusa75}, and it was observed by Igusa and Denef that the real parts of their poles appear to be governed by monodromy eigenvalues \cite{Denef85, Igusa88}. This observation led to the local $p$-adic monodromy conjecture \cite{Denef91b}. A topological variant, formulated by Denef and Loeser using Euler characteristics of resolution strata, is known as the local topological monodromy conjecture \cite{DenefLoeser92}. All of these conjectures remain wide open in general.

Motivic integration provides a unifying framework for these observed and predicted phenomena. By replacing numerical invariants such as volumes or Euler characteristics with classes in the Grothendieck ring of varieties, Denef and Loeser introduced the local motivic zeta function, a refinement that simultaneously encodes arithmetic, topological, and geometric information \cite{DenefLoeser98}. The corresponding local motivic monodromy conjecture predicts that poles of this motivic zeta function give rise to eigenvalues of monodromy. By specialization from motivic zeta functions to $p$-adic and topological zeta functions, the motivic conjecture implies both the topological conjecture and the $p$-adic conjecture in cases of good reduction.

A useful way to view the monodromy conjectures—especially the $p$-adic monodromy conjecture—is as a local analogue of the Riemann hypothesis for varieties over finite fields (i.e., the last of the Weil conjectures), famously proved by Deligne \cite{Deligne74}. In both settings, one begins with a generating function defined by counting solutions to polynomial equations. In the Weil conjectures, the Hasse-Weil zeta function records the number of points of a variety over algebraic extensions of a finite field, while in the $p$-adic setting, Igusa’s local zeta function records the number of solutions to a polynomial congruence modulo $p^n$, for all $n$. The expectation in each case is that the analytic behavior of this generating function—specifically, the location of its poles—is governed by underlying cohomological data. For smooth projective varieties, the Riemann hypothesis proved by Deligne says that the zeta function is controlled by the action of Frobenius on \'etale cohomology. Analogously, the $p$-adic monodromy conjecture predicts that the poles of Igusa’s local zeta function reflect the action of geometric monodromy on the cohomology of Milnor fibers. 

The motivic zeta function takes this generating function perspective one step further, and may be viewed as a universal cohomological zeta function: it interpolates between arithmetic and topological realizations and expresses the principle that local counting problems, even at the level of congruences modulo powers of a prime, are governed by the geometry and topology of singularities.

Despite the conceptual elegance of the motivic monodromy conjecture and the close analogy between the $p$-adic monodromy conjecture and the Weil conjectures, which were proved more than fifty years ago, both of these conjectures remain wide open; very few cases are known. One basic obstacle on the motivic side is that motivic zeta functions take values in Grothendieck rings, which have zero divisors, making even the notion of a pole subtle. Progress on these conjectures has relied on identifying classes of singularities for which the zeta functions and monodromy can be analyzed explicitly.

One such class is the Newton nondegenerate singularities. The condition of nondegeneracy is generic, i.e., an open and dense subset of the polynomials with fixed Newton polytope are Newton nondegenerate.  Following the work of Kouchnirenko and Varchenko \cite{Kouchnirenko76, Varchenko76}, nondegeneracy ensures that many invariants of a singularity are governed by the combinatorial geometry of its Newton polyhedron. In this setting, local zeta functions admit explicit descriptions in terms of faces of the polyhedron, and monodromy eigenvalues can often be studied using toric geometry and combinatorial topology.

In this paper, we prove the local motivic monodromy conjecture for nondegenerate singularities whose Newton polyhedra are simplicial. This additional assumption on the Newton polyhedron enables us to incorporate techniques from Ehrhart theory and the theory of local $h$-polynomials. Our main result establishes that one can choose a set of candidate poles for the local motivic zeta function such that each candidate pole corresponds to a nearby eigenvalue of monodromy (Theorem~\ref{thm:lmmsimplicial}).

More precisely, if $f$ is a nondegenerate polynomial whose Newton polyhedron is simplicial, then every candidate pole of its local motivic zeta function gives rise to an eigenvalue of monodromy at the origin or at a nearby point on the hypersurface. As immediate consequences, we recover the local topological monodromy conjecture for this class of singularities (Theorem~\ref{thm:ltmsimplicial}), and the local $p$-adic monodromy conjecture for almost all primes (Theorem~\ref{thm:lpmsimplicial}).

The proof has two complementary components. First, we establish existence results for monodromy eigenvalues using a nonnegative, combinatorial formula for alternating sums of reduced cohomology along coordinate strata (Sections~\ref{sec:nearby}-\ref{sec:vanishing}). This formula is expressed in terms of Ehrhart data and local $h$-polynomials, and avoids the cancellation phenomena that complicate earlier approaches based on monodromy zeta functions. Second, we identify and eliminate fake poles of the motivic zeta function by introducing a local formal zeta function, whose algebraic properties make it possible to intersect sets of candidate poles and systematically remove those not forced by monodromy (Sections~\ref{sec:poles}-\ref{sec:fakepoles}).

Although our main theorems are stated for simplicial Newton polyhedra, many of the arguments apply more broadly. In particular, we obtain additional cases of the motivic monodromy conjecture in dimension three, and we isolate precise combinatorial obstructions that arise in higher dimensions (Section~\ref{sec:beyondsimplicial}). These results clarify the role of simpliciality and highlight the remaining challenges in extending the conjecture to arbitrary nondegenerate singularities.

\subsection{Main results}

Throughout, let $\k$ be a field of characteristic 0, and let $f \in \k[x_1, \ldots, x_n]$ be a regular function whose vanishing locus $X_f$ contains $0 \in \A^n$.  The coefficients of $f$ are contained in a finitely generated subfield $\k' \subset \k$, so we may choose an embedding $\k' \subset \C$, view $f$ as a holomorphic function on $\C^n$, and consider the Milnor fiber $\cF_{x}$, with its monodromy action, for any geometric point $x \in X_f$.  The characteristic polynomial of the induced action on $H^*(\cF_{x}, \C)$ is independent of all choices and its zeros are the \emph{eigenvalues of monodromy} of $f$ at $x$.  The monodromy is quasi-unipotent, so all such eigenvalues of monodromy are roots of unity. 
We say that $\exp(2 \pi i \alpha)$ is a \emph{nearby eigenvalue of monodromy} of $f$ if  $0$ lies in the Zariski closure of the locus of points $x \in X_f$ such that $\exp (2 \pi i \alpha)$ is an eigenvalue of monodromy of $f$ at $x$.

The local motivic zeta function is a subtle invariant of the singularity of $f$ at $0$, introduced by Denef and Loeser \cite{DenefLoeser98}. Let $K^\mh$ be the Grothendieck ring of $\k$-varieties with good $\mh$-action, where $\mh = \varprojlim \mu_m$ is the inverse limit of the groups of $m$th roots of unity, and let $\M := K^\mh[ \L^{-1}]$ be the associated motivic ring obtained by inverting $\L := [\A^1]$. Then the local motivic zeta function $Z_\mot(T) \in \M \llbracket T \rrbracket$ is expressible non-uniquely as the formal power series expansion of a rational function in $\M\big[T, \frac{1}{1-\L^a T^b}\big]_{(a,b) \in \Z \times \Z_{>0}, a/b \in \cP}$, for some finite $\cP \subset \Q$.  Any such $\cP$ is a \emph{set of candidate poles} for $Z_\mot(T)$, as defined in \cite{BoriesVeys16,BultotNicaise20}.  

\begin{LMMC}
There is a set of candidate poles $\cP \subset \Q$ for $Z_\mot(T)$ such that, for every $\alpha \in \cP$, $\exp(2\pi i \alpha)$ is a 
nearby eigenvalue of monodromy. 
\end{LMMC}

\noindent Note that the notion of poles is subtle in this context because $K^\mh$ is not known to be 
an integral domain; in particular, it is unclear whether the intersection of two sets of candidate poles for $Z_\mot(T)$ is necessarily a set of candidate poles.  Our first main result (Theorem~\ref{thm:lmmsimplicial}) confirms the local motivic monodromy conjecture for singularities that are nondegenerate with respect to a simplicial Newton polyhedron.  

For $u = (u_1, \ldots, u_n)$ in $\Z_{\geq 0}^n$, let $x^u := x_1^{u_1} \cdots x_n^{u_n}$, and write  $f = \sum_u a_u x^u$. 
 The Newton polyhedron of $f$, denoted $\Newt(f)$, is the Minkowski sum
$\mathrm{conv} \{ u : a_u \neq 0 \} + \R_{\geq 0}^n.$ 
For each face $F$ of $\Newt(f)$, we consider $f|_F := \sum_{u \in F} a_u x^u.$ Then $f$ is \emph{nondegenerate} if, for all compact faces $F$, the vanishing locus of $f|_F$ has no singularities in the complement of the coordinate hyperplanes in $\A^n$. 

For any face $F$ of $\partial \Newt(f)$ that meets the interior of the orthant $\R_{>0}^n$, let $C_F := \overline {\R_{\geq 0} F}$ be the closure of the cone spanned by $F$. The set of all faces of such cones forms a fan $\Delta$
 whose support is the positive orthant $\R_{\geq 0}^n$.  We say that $\Newt(f)$ is \emph{simplicial} if $\Delta$ is a simplicial fan.

\begin{theorem} \label{thm:lmmsimplicial}
Suppose that $\Newt(f)$ is simplicial and $f$ is nondegenerate.  Then there is a set of candidate poles $\cP \subset \Q$ for $Z_\mot(T)$ such that, for every $\alpha \in \cP$, $\exp(2\pi i \alpha)$ is 
a nearby eigenvalue of monodromy.
\end{theorem}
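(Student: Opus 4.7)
The plan is to compute $Z_\mot(T)$ directly from the combinatorial data of $\Newt(f)$ and the simplicial fan $\Delta$, read off a natural set of candidate poles indexed by rays of $\Delta$, and then, for each such candidate pole, produce a nearby point $x \in X_f$ and a face of $\Newt(f)$ witnessing the prescribed eigenvalue of monodromy.

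First I would obtain an explicit formula for $Z_\mot(T)$ using the simpliciality of $\Delta$. Following Denef--Loeser's computation for nondegenerate singularities, adapted to $\Q$-factorial toric models in the log-regular framework of Bultot--Nicaise \cite{BultotNicaise20}, the zeta function decomposes as a sum over cones $\sigma \in \Delta$ whose summands are products indexed by the primitive generators $v \in \sigma(1)$, with denominator factors $1 - \L^{-\nu(v)} T^{N(v)}$, where $\nu(v) = v_1 + \cdots + v_n$ and $N(v) = \min_{u \in \Newt(f)} \<v, u\>$. Simpliciality of $\Delta$ is precisely what allows this to be done without introducing auxiliary rays by a further subdivision, so the candidate poles are drawn from the natural set $\{-\nu(v)/N(v) : v \in \Delta(1)\}$, modulo standard removals for coordinate rays not actually dividing the strict transform.

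For each candidate pole $\alpha = -\nu(v)/N(v)$, the task is then to exhibit a nearby point $x \in X_f$ at which $\exp(2\pi i \alpha)$ is a monodromy eigenvalue. The face $F_v \subset \Newt(f)$ on which $\<v, \cdot\>$ attains $N(v)$ singles out a stratum of $X_f$ indexed by the support of $v$, and a generic point $x$ of this stratum is the natural candidate. The restriction of $f$ to the corresponding coordinate subspace through $x$ remains nondegenerate with respect to its Newton polyhedron, and Varchenko's formula, together with its generalization by A'Campo and Steenbrink--Varchenko, computes the monodromy zeta function at $x$ as an alternating product of factors $(1 - t^{N_\tau})^{\pm}$ indexed by the compact faces $\tau$ of this restricted polyhedron. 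Provided the Euler characteristic contribution of $F_v$ is nonzero, the factor corresponding to $F_v$ produces the desired root of unity as an eigenvalue.

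The principal obstacle, and where the bulk of the work will lie, is the case in which the Euler characteristic contribution of $F_v$ vanishes --- the so-called $B_1$-type phenomenon --- so that the expected eigenvalue cancels from the monodromy at a generic point of the expected stratum. Two strategies are available: either delete such $v$ from $\cP$ while preserving the candidate-pole property, which is delicate because $\M$ is not an integral domain and subtle cancellations among cone summands can occur; or find the eigenvalue at a nongeneric nearby point via a deformation along a one-parameter family sweeping through faces of $\Newt(f)$ adjacent to $F_v$, so that an eigenvalue coming from a neighboring face specializes to $\exp(-2\pi i \nu(v)/N(v))$ in the limit. Simpliciality of $\Delta$ is exactly what makes the second strategy tractable: each ray has a combinatorially controlled star, and the numerical data of adjacent faces fit together cleanly enough that the required specialization can be exhibited in closed form. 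Assembling these cases yields $\cP$ as a valid set of candidate poles satisfying the nearby monodromy property.
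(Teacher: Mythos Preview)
Your outline correctly identifies the basic dichotomy --- either produce the eigenvalue or remove the candidate pole --- but the concrete mechanisms you propose for each side are not the ones that work, and the criterion separating the two cases is missing.

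On the eigenvalue side, you propose to read off the eigenvalue at a generic point of a stratum via Varchenko's formula, falling back on a ``deformation to nongeneric nearby points'' when the Euler characteristic contribution vanishes. The paper does something fundamentally different. Varchenko's and A'Campo's formulas involve heavy sign cancellation, which is exactly why the problem has resisted attack; the paper instead proves a \emph{nonnegative} formula (Theorem~\ref{t:nonnegativeVarchenko}) for an alternating sum of eigenvalue multiplicities over generic points of all coordinate subspaces contained in $X_f$, expressed as a sum of products of local $h$-polynomial values $\ell(\Delta,C;1)$ and Ehrhart-type counts. Each term is nonnegative, so it suffices to show a single term is positive. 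The Ehrhart factor is handled by the essential-face construction; the local $h$-polynomial factor requires a delicate nonvanishing theorem (Theorem~\ref{thm:nonvanish}), proved by computing a refined toric self-intersection number, which shows that $\ell(\Delta,C_E;t)=0$ forces $G$ to be a $U$-pyramid, equivalently $\UB$. None of this apparatus --- local $h$-polynomials, essential faces, $U$-pyramids, or the self-intersection computation --- appears in your plan, and your deformation-to-nongeneric-points idea is not used (and in fact is not needed in the simplicial case; see the paragraph before \S1.5).

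On the pole side, your suggestion to ``delete such $v$ from $\cP$'' is correct in spirit, but you have no criterion for which $v$ to delete and no mechanism for showing the deletion preserves the candidate-pole property. The paper's criterion is precise: delete $\alpha\notin\Z_{<0}$ exactly when every face in $\Contrib(\alpha)$ is $\UB$ (Theorem~\ref{thm:nopolesimplicial}). The mechanism is not a direct cancellation in $\M$ but passage to a \emph{local formal zeta function} over an integral domain, followed by a construction of small polyhedral neighborhoods of dual cones and a complete fan labeled by base directions (an ``$\alpha$-compatible pair''), built from saturated chains in $\Contrib(\alpha)_M$. This is substantially more intricate than your sketch, and the $\UB$ hypothesis (not merely $B_1$) is essential for piecing the local cancellations together globally.

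In short: the missing idea is the nonnegative eigenvalue formula via local $h$-polynomials and its nonvanishing criterion; the missing criterion is $\UB$; and the pole-removal argument needs the formal zeta function and $\alpha$-compatible fan machinery, not a direct manipulation in $\M$.
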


\noindent In other words, the local motivic monodromy conjecture is true for any nondegenerate singularity with a simplicial Newton polyhedron.  This was known previously for $n = 2$ \cite{BultotNicaise20}. 
Our definition of simplicial Newton polyhedron agrees with that in \cite{JKYS19}.  A \emph{convenient} Newton polyhedron, i.e., one that intersects each of the coordinate axes \cite{Kouchnirenko76}, is simplicial if and only if each of its compact faces is a simplex.

\begin{remark}
The methods used in the proof of Theorem~\ref{thm:lmmsimplicial} are discussed in Section~\ref{sec:methods}. Roughly speaking, we have one collection of arguments, presented in Sections~\ref{sec:nearby}-\ref{sec:vanishing} that proves the existence of eigenvalues corresponding to candidate poles associated to many facets of $\Newt(f)$.  Another collection of arguments, presented in Sections~\ref{sec:poles}-\ref{sec:fakepoles}, shows that certain such candidate poles are \emph{fake} and can be removed to give a smaller set of candidate poles.  Each of these arguments is carried out not only for simplicial Newton polyhedra, but in somewhat greater generality. As a result, we are able to prove a range of cases of the local motivic monodromy conjecture where $f$ is nondegenerate and $\Newt(f)$ is not necessarily simplicial, including all such cases for $n = 3$. See Section~\ref{sec:beyondsimplicial} for details.  
\end{remark}

The local motivic monodromy conjecture is a motivic analogue of the local $p$-adic and topological monodromy conjectures, and the following cases of the latter conjectures are consequences of Theorem~\ref{thm:lmmsimplicial}. 

The local motivic zeta function specializes to the local topological zeta function $Z_{\t}(s) \in \Q(s)$ by expanding $Z_{\mot}(T)$ as a power series in $\L - 1$ and setting $T \mapsto \L^{-s}$ and $[Y] \mapsto \chi(Y/\mh)$ \cite[Section~2.3]{DenefLoeser98}. 
It follows that the poles of $Z_{\t}(s)$ are contained in every set of candidate poles for $Z_{\mot}(T)$.

\begin{theorem} \label{thm:ltmsimplicial}
Suppose $\Newt(f)$ is simplicial and $f$ is nondegenerate. If $\alpha$ is a pole of $Z_{\t}(s)$, then $\exp(2\pi i \alpha)$ is a nearby eigenvalue of monodromy.
\end{theorem}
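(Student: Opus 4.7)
The proof plan is essentially a direct deduction from Theorem~\ref{thm:lmmsimplicial} together with the specialization relationship between the motivic and topological zeta functions explained just before the statement.

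First, I would invoke Theorem~\ref{thm:lmmsimplicial} to obtain a specific finite set $\cP \subset \Q$ of candidate poles for $Z_{\mot}(T)$ with the property that $\exp(2\pi i \alpha')$ is a nearby eigenvalue of monodromy for every $\alpha' \in \cP$. The theorem applies because we have assumed exactly its hypotheses: $\Newt(f)$ is simplicial and $f$ is nondegenerate.

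Next, I would appeal to the specialization $Z_{\mot}(T) \mapsto Z_{\t}(s)$ obtained by expanding as a power series in $\L - 1$, substituting $T \mapsto \L^{-s}$, and applying $[Y] \mapsto \chi(Y/\mh)$, as recalled in the paragraph preceding the theorem. The key consequence of this specialization, noted in that paragraph, is that the poles of $Z_{\t}(s)$ are contained in every set of candidate poles for $Z_{\mot}(T)$. In particular, if $\alpha$ is a pole of $Z_{\t}(s)$, then $\alpha \in \cP$, and hence $\exp(2\pi i \alpha)$ is a nearby eigenvalue of monodromy by the conclusion of Theorem~\ref{thm:lmmsimplicial}.

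In short, Theorem~\ref{thm:ltmsimplicial} is an immediate corollary of Theorem~\ref{thm:lmmsimplicial} combined with the specialization from the motivic to the topological setting; there is no additional combinatorial or geometric work required. The only substantive content is the subtle point, alluded to in the remark after the Local Motivic Monodromy Conjecture, that the notion of a set of candidate poles for $Z_{\mot}(T)$ is not intrinsic, but Theorem~\ref{thm:lmmsimplicial} provides at least one such set $\cP$ with the desired eigenvalue property, and that single set is enough to absorb all poles of $Z_{\t}(s)$. The genuine difficulty is thus entirely inside Theorem~\ref{thm:lmmsimplicial}, and consequently I do not anticipate an obstacle in the present deduction.
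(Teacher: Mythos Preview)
Your proposal is correct and matches the paper's own treatment: the paper presents Theorem~\ref{thm:ltmsimplicial} as an immediate consequence of Theorem~\ref{thm:lmmsimplicial} together with the fact (stated just before the theorem) that the poles of $Z_{\t}(s)$ lie in every set of candidate poles for $Z_{\mot}(T)$.
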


\noindent This confirms the local topological monodromy conjecture \cite[Conjecture~3.3.2]{DenefLoeser92} for singularities that are nondegenerate with respect to a simplicial Newton polyhedron.  

If $f \in \Z_p[x_1, \ldots, x_n]$ has good reduction mod $p$, i.e., if $\overline f \in \F_p[x_1, \ldots, x_n]$ is nondegenerate with $\Newt(\overline f) = \Newt(f)$, then $Z_{\mot}(T)$ also specializes to the Igusa local $p$-adic zeta function $Z_{(p)}(s) \in \Q(p^s)$, which is viewed as a global meromorphic function in the complex variable $s$.  In this case, the real part of any pole of $Z_{(p)}(s)$ is contained in every set of candidate poles for $Z_{\mot}(T)$.

\begin{theorem} \label{thm:lpmsimplicial}
Suppose $f \in \Z_p[x_1, \ldots, x_n]$, $\Newt(f)$ is simplicial, and $f$ is nondegenerate with good reduction mod $p$. If $\alpha$ is a pole of $Z_{(p)}(s)$, then $\exp(2\pi i \Re(\alpha))$ is 
a nearby eigenvalue of monodromy.
\end{theorem}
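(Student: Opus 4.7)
The plan is to deduce Theorem~\ref{thm:lpmsimplicial} as a direct specialization of Theorem~\ref{thm:lmmsimplicial}. First, I apply Theorem~\ref{thm:lmmsimplicial} to obtain a finite set $\cP \subset \Q$ of candidate poles for $Z_{\mot}(T)$ such that $\exp(2\pi i \alpha)$ is a nearby eigenvalue of monodromy for every $\alpha \in \cP$. By definition, $Z_{\mot}(T)$ admits an expression as an element of $\M\big[T, \tfrac{1}{1 - \L^a T^b}\big]_{(a,b) \in \Z \times \Z_{>0},\, a/b \in \cP}$.

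Next, I invoke the specialization from the motivic to the $p$-adic zeta function recalled in the paragraph preceding the theorem. Under the hypothesis of good reduction mod $p$, this specialization sends $\L \mapsto p$ and $T \mapsto p^{-s}$, so it takes the rational expression for $Z_{\mot}(T)$ above to an expression for $Z_{(p)}(s)$ whose denominator factors have the form $1 - p^{a - bs}$ with $a/b \in \cP$. As a meromorphic function of $s \in \C$, each such factor vanishes precisely at the points $s = \tfrac{a}{b} + \tfrac{2\pi i k}{b \log p}$ for $k \in \Z$, all of which have real part equal to $a/b \in \cP$.

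It follows that every pole $\alpha$ of $Z_{(p)}(s)$ satisfies $\Re(\alpha) \in \cP$, so by our choice of $\cP$, $\exp(2\pi i \Re(\alpha))$ is a nearby eigenvalue of monodromy, as required.

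The entire substantive content lives in Theorem~\ref{thm:lmmsimplicial}; the passage from motivic to $p$-adic zeta functions is a classical construction of Denef--Loeser that is already recorded in the excerpt and presents no real obstacle. In particular, the subtlety about $K^\mh$ not being an integral domain, which makes the set of candidate poles for $Z_{\mot}(T)$ ambiguous, is harmless here: any single set of candidate poles produced by Theorem~\ref{thm:lmmsimplicial} suffices to bound the real parts of the $p$-adic poles.
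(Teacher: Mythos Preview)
Your proof is correct and matches the paper's approach. The paper does not give a separate formal proof of Theorem~\ref{thm:lpmsimplicial}; it simply records, in the paragraph preceding the statement, that under good reduction the motivic zeta function specializes to $Z_{(p)}(s)$ and that consequently the real part of any pole of $Z_{(p)}(s)$ lies in every set of candidate poles for $Z_{\mot}(T)$, so the result follows from Theorem~\ref{thm:lmmsimplicial} exactly as you argue.
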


\noindent If $f \in \Z[x_1, \ldots, x_n]$ is nondegenerate, then $f$ has good reduction mod $p$ for all but finitely many primes $p$. In this sense, Theorem~\ref{thm:lpmsimplicial} implies that the local $p$-adic  monodromy conjecture holds for nondegenerate singularities with simplicial Newton polyhedra.

\subsection{Background and motivation}

We now discuss the background and motivation for the local monodromy conjectures in more detail. In particular, we recall the definitions of the local motivic, $p$-adic, and topological zeta functions and how they relate to the geometry of embedded log resolutions. We also recall A'Campo's formula for the zeta function of monodromy at the origin.

\subsubsection{Archimedean zeta functions}
The motivation for the local monodromy conjectures comes from a theorem of Malgrange concerning the following archimedean analogues of local zeta functions.  Suppose $\k = \R$ or $\C$, and let $\Phi$ be a smooth function supported on a compact set that does not contain any critical points of $f$ other than $0$.  Consider the function 
\[
Z_\Phi(s) := \int_{\k^n} \Phi(x) | f(x)|^{\delta s} dx,
\]
where $\delta = 1$ if $\k = \R$ and $\delta = 2$ if $\k = \C$.  This integral converges for $s \in \C$ with $\Re(s) > 0$.

\begin{theorem}[\cite{Malgrange74}] \label{thm:Malgrange}
The function $Z_\Phi(s)$ extends to a meromorphic function on $\C$ whose poles are rational numbers. Moreover, if $\alpha$ is a pole of $Z_\Phi(s)$, then $\exp(2 \pi i \alpha)$ is an eigenvalue of monodromy.
\end{theorem}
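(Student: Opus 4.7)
The plan is to prove the result in two stages: first establish meromorphic continuation with rational poles, then identify each pole with an eigenvalue of monodromy. The first stage can be carried out by resolution of singularities or by Bernstein's functional equation, while the second stage is the deeper input.

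For meromorphic continuation via resolution, I would choose an embedded log resolution $\pi \colon Y \to \A^n$ with $\pi^{-1}(X_f) = \sum_i N_i E_i$ a simple normal crossings divisor and $K_{Y/\A^n} = \sum_i (\nu_i - 1) E_i$. After fixing a partition of unity on $Y$ and changing to local monomial coordinates, the pullback of $Z_\Phi(s)$ becomes a finite sum of products of one-variable beta-like integrals in $y_i^{\delta N_i s + \nu_i - 1}$. Each such piece extends meromorphically to $\C$ with at most simple poles at the rational numbers $-\nu_i/N_i$, giving the extension and rationality of poles. A cleaner alternative uses the Bernstein--Sato polynomial $b_f(s)$ and the functional equation $b_f(s) f^s = P(s, x, \partial_x) f^{s+1}$, where $P$ is a differential operator with polynomial coefficients. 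Taking the formal adjoint of $P(s)$ and integrating by parts gives
\[
b_f(s)\, Z_\Phi(s) = Z_{P(s)^* \Phi}(s+1) \quad \text{for } \Re(s) > 0,
\]
and iterating extends $Z_\Phi(s)$ meromorphically to all of $\C$ with poles contained in $\{\alpha - k : b_f(\alpha) = 0,\ k \in \Z_{\geq 0}\}$. By Kashiwara's theorem, the roots of $b_f$ are negative rational numbers, so every pole of $Z_\Phi(s)$ is rational.

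The main obstacle, and the heart of Malgrange's contribution, is the eigenvalue statement. For this I would invoke the Malgrange--Kashiwara theorem that if $\alpha$ is a root of $b_f(s)$ then $\exp(2\pi i \alpha)$ occurs as an eigenvalue of the monodromy on the cohomology of the Milnor fiber at some point of $X_f$. Since $\exp(2\pi i (\alpha - k)) = \exp(2\pi i \alpha)$, this transfers immediately to every pole of $Z_\Phi(s)$ produced above. The proof of this identification relates $b_f$ to the minimal polynomial of the action of $-\partial_t t$ on the nearby cycle complex $\psi_f \O$, via the Malgrange--Kashiwara $V$-filtration on the $\mathcal{D}$-module generated by $f^s$; the logarithms of the monodromy eigenvalues are by construction the exponents appearing in the associated graded of this filtration. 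It is this $\mathcal{D}$-module theoretic input that drives the conclusion and that cannot be replaced by soft arguments from the resolution alone.
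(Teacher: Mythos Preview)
The paper does not supply a proof of this theorem: it is stated as a background result, attributed to \cite{Malgrange74}, and no argument is given. So there is no ``paper's own proof'' to compare your proposal against.

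That said, your sketch is a reasonable modern account and is essentially correct in outline. A few remarks. First, the meromorphic continuation with rational poles was established independently by Atiyah and by Bernstein--Gelfand via resolution (your first approach), and by Bernstein via the functional equation (your second approach); either is fine here. Second, your treatment of the eigenvalue statement is somewhat anachronistic: you invoke the Malgrange--Kashiwara $V$-filtration machinery, but this was developed after and in large part because of the 1974 paper you are trying to prove. Malgrange's original argument in the isolated singularity case (which is all that is needed here, since $\Phi$ is supported away from critical points other than $0$) proceeds more directly, using the Gauss--Manin connection on the relative de~Rham cohomology of the Milnor fibration and identifying the asymptotic expansion of $Z_\Phi(s)$ near its poles with periods of the monodromy. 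Citing the $V$-filtration theorem to prove Malgrange's theorem is not logically circular, but it does obscure what was actually new in 1974. Third, a minor point: because $\Phi$ is localized near $0$, the relevant $b$-function is the \emph{local} Bernstein--Sato polynomial $b_{f,0}$, and the eigenvalues produced are eigenvalues of monodromy at $0$ specifically, matching the statement.
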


\noindent  Furthermore, every eigenvalue of monodromy of $f$ at the origin corresponds to a pole of $Z_\Phi(s)$ for some $\Phi$. 

\subsubsection{Log resolutions and zeta functions of monodromy}

Let $h  \colon \cF_x \to \cF_x$ denote the monodromy action on the Milnor fiber of $f$ at $x \in X_f$.  The zeta function of monodromy of $f$ at $x$ is then
\begin{equation} \label{eq:HWFrob}
\zeta_x(t) := \frac{\det\big( 1-t h^* \, | \, H^{\mathrm{even}}(\cF_x, \C) \big)}{\det\big( 1-t h^*  \, | \, H^{\mathrm{odd}}(\cF_x, \C) \big)}.
\end{equation}
The zeta function of monodromy at $0$ may be expressed in terms of the numerical data of a log resolution and the topological Euler characteristics of the strata in the fiber, as follows.

Let $\pi \colon Y \to \A^n$ be a proper morphism that is an isomorphism away from $X_f$, such that the support of $D := \pi^{-1}(X_f)$ is a divisor with simple normal crossings. Let $D_1, \ldots, D_r$ be the irreducible components of $D$. The associated \emph{numerical data} of this log resolution are the pairs of integers $(N_i, \nu_i)$, where $N_i$ and $\nu_i -1$ are the orders of vanishing of $\pi^*(f)$ and $\pi^* (dx_1 \wedge \cdots \wedge dx_n)$, respectively, along $D_i$.

For $I \subset \{1, \ldots, r\}$, let $D_I := \bigcap_{i \in I} D_i$ and $D_I^\circ := D_I \smallsetminus \bigcup_{j \not \in I} D_{I \cup \{ j \}}.$ We then define $$E_I := D_I \cap \pi^{-1}(0) \mbox{ and } E_I^\circ = D_I^\circ \cap \pi^{-1}(0)$$ for the corresponding closed and locally closed strata in the fiber over $0$.

\begin{theorem}[\cite{ACampo75}]
The zeta function of monodromy acting on the cohomology of $\cF_0$ is
\begin{equation} \label{eq:ACampo}
\zeta_0(t) = \prod_{i = 1}^r (1-t^{N_i})^{\chi(E_i^\circ)},
\end{equation}
where we omit the corresponding term if $E_i^{\circ}$ is empty. 
\end{theorem}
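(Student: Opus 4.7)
The plan is to follow A'Campo's stratification argument: compute $\zeta_0(t)$ by lifting the Milnor fiber through $\pi$, stratifying by the locally closed strata $\{E_I^\circ\}$ on $\pi^{-1}(0)$, and assembling local monodromy computations using the normal crossings structure of $D$.  Since $\pi$ is proper and an isomorphism outside $X_f$, the preimage $\pi^{-1}(\cF_0)$ is diffeomorphic to $\cF_0$, and the stratification $\{E_I^\circ\}$ of $\pi^{-1}(0)$ induces a stratification of $\pi^{-1}(\cF_0)$ compatible with the monodromy.  The role of nondegeneracy is to guarantee the existence of such a log resolution with explicitly controlled numerical data (e.g., via toric methods).

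At any $y \in E_I^\circ$ with $|I| = k$, choose local coordinates $(z_1, \ldots, z_n)$ on $Y$ in which $D_{i_j} = \{z_j = 0\}$ and $\pi^* f = u \cdot \prod_{j=1}^{k} z_j^{N_{i_j}}$ for a unit $u$.  The local nearby fiber $\{\pi^* f = \epsilon\}$ fibers over a neighborhood of $y$ in $E_I^\circ$ with fiber
\[
F_I = \Bigl\{(z_1, \ldots, z_k) \in (\C^*)^k : \prod_{j=1}^{k} z_j^{N_{i_j}} = \epsilon\Bigr\},
\]
which is a disjoint union of $d := \gcd(N_{i_1}, \ldots, N_{i_k})$ copies of $(\C^*)^{k-1}$ cyclically permuted by monodromy.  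For $k = 1$, this reduces to $N_i$ cyclically permuted points, whose local zeta factor is $1 - t^{N_i}$; for $k \geq 2$, each component has $\chi((\C^*)^{k-1}) = 0$, so the local zeta factor is trivial.  Assembling via multiplicativity of the zeta function of monodromy over the stratification yields
\[
\zeta_0(t) = \prod_{i=1}^{r} (1 - t^{N_i})^{\chi(E_i^\circ)},
\]
since only the $|I| = 1$ strata $E_i^\circ$ with $i \geq 1$ contribute.

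The main technical obstacle is justifying multiplicativity across the stratification when the local fiber $F_I$ is only a twisted cyclic cover of a trivial bundle, not a naive product, so that the pointwise trace does not literally multiply across strata without care.  The cleanest approach uses Deligne's nearby cycle functor: $\zeta_0(t)$ equals the alternating trace of monodromy on $H^*(\pi^{-1}(0), \psi_{\pi^* f} \mathbf{1}_Y)$, which decomposes additively by excision across constructible stratifications, with each stratum's contribution governed by the local model above.  A secondary point is the strict transform $D_0$ (where $N_0 = 1$), whose factor $(1-t)^{\chi(E_0^\circ)}$ is absent from the stated formula; this omission is justified either by choosing a log resolution in which $E_0^\circ$ does not meet $\pi^{-1}(0)$ or by absorbing the factor into a normalization convention for $\cF_0$.
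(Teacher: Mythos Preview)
The paper does not give a proof of this statement; it is quoted from A'Campo as background. Your sketch is essentially A'Campo's original argument and is sound.

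Two clarifications are worth recording. First, the nondegeneracy hypothesis plays no role here: embedded log resolutions exist for arbitrary $f$ in characteristic zero, and your stratification computation goes through unchanged. The hypothesis appears in the paper's statement only because it is the standing assumption throughout, not because the formula requires it. Second, the omission of $D_0$ can be justified more sharply than you indicate. Since $Y$ and $\A^n$ are smooth and $\pi$ is proper birational, the exceptional locus of $\pi$ is pure of codimension one and is contained in $\bigcup_{i \geq 1} D_i$ (because $D_0$ maps birationally onto $X_f$ and so is not contracted). Hence $\pi$ is a local isomorphism at every point of $D_0^\circ$, and any point of $E_0^\circ = D_0^\circ \cap \pi^{-1}(0)$ would force $0$ to be a smooth point of $X_f$. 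In the singular case, which is the only one of interest, $E_0^\circ = \emptyset$ and there is no missing factor; it is not a matter of choosing the resolution carefully or of absorbing a factor into a normalization convention.
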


\noindent Note that the exponents $\chi(E_i^\circ)$ can be positive or negative, and there can be a great deal of cancellation in simplifying this rational function expression for $\zeta_0(t)$ down to a quotient of two relatively prime polynomials.  In particular, it is difficult to determine from the numerical data of the log resolution whether any given root of $1- t^{N_i}$ is an eigenvalue of monodromy.

\subsubsection{Igusa's local $\p$-adic zeta functions}
Let $\k$ be a finite extension of $\Q_p$, equipped with the unique extension of the $p$-adic valuation and its associated norm. Let $R \subset \k$ be the valuation ring, with $\p \subset R$ the maximal ideal.  For instance, if $f \in \mathbb{Q}_p[x_1, \dotsc, x_n]$, then $R = \Z_p$ and $\p = p\Z_p$.  Igusa introduced and studied the local zeta function
\[
Z_{\p}(s) := \int_{\p^n} |f(x)|^s dx,
\]
in his proof of a conjecture of Borewicz and Shafarevich \cite[p.~63]{BorewiczShafarevich66} on rationality of generating functions for the number of solutions mod $p^m$ to a polynomial equation with integer coefficients. Here, $dx$ denotes the normalized Haar measure on the compact additive group $\p^n$.  Note that $Z_{\p}(s)$ is a  nonarchimedean analogue of the asymptotic integrals $Z_\Phi(s)$; the role of $\Phi$ is played by the indicator function of the compact subset $\p^n$. 
Igusa proved that $Z_{\p}(s)$ is a rational function in $q^{-s}$ \cite{Igusa75}, where $q = |R/\p|$, and that its real poles other than $-1$ are all of the form $\alpha_i := -\nu_i/N_i$, where $(N_i, \nu_i)$ is the numerical data associated to some exceptional divisor in a given log resolution \cite{Igusa78}. Denef gave a second proof of the rationality of $Z_{\p}(s)$, using $p$-adic cell decompositions \cite{Denef84}.  

\begin{remark}
Note that some sources in the literature define a local $\p$-adic zeta function by integrating with respect to the restriction of the normalized Haar measure on $R^n$; the result differs from our $Z_\p(s)$ by a factor of $q^{-n}$.  Such renormalizations do not affect the poles of the local zeta functions. 
\end{remark}

Typically, very few of the rational numbers $\alpha_i$ associated to the numerical data in a log resolution are actually poles of $Z_\p(s)$.  In the archimedean setting, this is explained by Malgrange's theorem (Theorem~\ref{thm:Malgrange}), since many rational numbers that appear in this way do not correspond to eigenvalues of monodromy.

Both Denef \cite{Denef85} and Igusa \cite{Igusa88} observed that the analogue of Malgrange's theorem seems to hold for $Z_\p(s)$; in all examples that had been computed, whenever $\alpha_j$ is a pole of $Z_\p(s)$, the corresponding root of unity $\exp(2\pi i \alpha_j)$ is an eigenvalue of monodromy. Loeser proved that this is true for $n = 2$ \cite{Loeser88} and for certain  nondegenerate singularities in higher dimensions  \cite{Loeser90}. 
By the early 1990s, the expectation that this nonarchimedean analogue of Malgrange's theorem should hold was known as the monodromy conjecture.  See, e.g., \cite[Conjecture~4.3]{Denef91b}, \cite[Conjecture~2.3.2]{Denef91}, and \cite[p.~546--547]{Veys93}.  We will follow the usual convention and call this the \emph{local $\p$-adic monodromy conjecture} to distinguish it from the topological and motivic variants that followed.  

\begin{LPMC}
Suppose $\k$ is a number field.  For all but finitely many primes $\p \subset \mathcal{O}_\k$, if $\alpha$ is a pole of $Z_\p(s)$, then $\exp(2 \pi i \Re(\alpha))$ is 
a nearby eigenvalue of monodromy.
\end{LPMC}

\noindent 
Interest in this conjecture has persisted through the decades \cite{Nicaise10, ViuSos22}.  Bories and Veys proved it for $n = 3$ when $f$ is nondegenerate 
\cite[Theorem 0.12]{BoriesVeys16}. There has been little progress in higher dimensions.  

\subsubsection{Good reduction}  The local $\p$-adic zeta function has a particularly simple expression when $X = X_f \subset \A^n$ has an embedded log resolution with good reduction mod $\p$.  Most results 
showing that poles of $Z_\p(s)$ correspond to nearby eigenvalues of monodromy
for $n \geq 3$, including those of \cite{BoriesVeys16} and our Theorem~\ref{thm:lpmsimplicial}, have a good reduction hypothesis.  

Suppose the log resolution $\pi \colon Y \to \A^n$ factors through a closed embedding $Y \hookrightarrow \P^m \times \A^n$ over $\k$.  Let $\P^m_R$ and $\A^n_R$ denote the projective and affine spaces of dimension $m$ and $n$, respectively, over $R$.  Let $X_R$ and $Y_R$ be the closures of $X$ and $Y$ in $\A^n_R$ and $\P^m_R \times \A^n_R$.  Then $\pi$ extends naturally to a projective morphism $\pi_R \colon  Y_R \to X_R$.  Let $\overline X$ and $\overline Y$ be the respective special fibers of $X_R$ and $Y_R$.  Base change to $\F_q = R/\p$ gives a projective morphism 
$
\overline \pi \colon \overline Y \to \overline X.
$ 
Let $\overline D_i$ be the special fiber of the closure of $D_i$ in $Y_R$.  
\begin{definition}
The resolution $\pi \colon Y \to \A^n$ has \emph{good reduction} mod $\p$ if 
\begin{itemize}
\item $\overline Y$ is smooth in a neighborhood of $\overline \pi^{-1}(0)$;
\item $\overline D_1, \ldots, \overline D_r$ are smooth and distinct over $\F_q$, and they meet each other transversely.
\end{itemize}
\end{definition}

\noindent 
Note that, if $f$ and $\pi$ are defined over a number field $K$, then $\pi$ has good reduction mod $\p$ for all but finitely many primes $\p$ in the ring of integers $\mathcal{O}_K$ \cite[Theorem~2.4]{Denef87}.

Any resolution with good reduction mod $\p$ gives rise to a pleasant formula for $Z_\p(s)$ in terms of the numerical data of the resolution and the number of $\F_q$-points in the strata of the fiber over $0$.  Let $$\overline E_I^\circ = \{ x \in \overline \pi^{-1}(0) : x \in \overline D_i \mbox { if and only if } i \in I \}.$$

\begin{theorem}[{\cite[Theorem~3.1]{Denef87}}]
Suppose $\pi$ has good reduction mod $\p$.  Then
\begin{equation} \label{eq:goodreduction}
Z_\p(s) = \sum_{I \subset \{1, \ldots, r\}} (q-1)^{|I|} |\overline E_I^\circ(\F_q)| \prod_{i \in I} \frac{q^{-N_i s - \nu_i}}{1-q^{-N_i s - \nu_i}}.
\end{equation}
\end{theorem}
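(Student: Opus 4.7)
The plan is to reduce the $p$-adic integral defining $Z_\p(s)$ to a sum of local integrals indexed by the $\F_q$-points of the strata $\overline E_I^\circ$, via change of variables along the log resolution $\pi$, and then to evaluate each local integral explicitly using the simple normal crossings structure of $D$ together with a standard one-variable computation.

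First, I would change variables using $\pi$. Since $X_f(\k) \subset \p^n$ has measure zero and $\pi$ restricts to an isomorphism over $\A^n \smallsetminus X_f$, one has
\[
Z_\p(s) \; = \; \int_{Y_R(R) \,\cap\, \pi^{-1}(\p^n)} \lvert (\pi^* f)(y) \rvert^s \, \lvert J_\pi(y) \rvert \, dy,
\]
where $J_\pi$ is a Jacobian of $\pi$ and $dy$ is the Haar measure on $Y_R(R)$ induced locally by a trivialization of the canonical bundle of $Y_R$, normalized to be compatible with the given measure on $\p^n$.

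Next, I would use good reduction to partition the integration domain via the reduction map. Since $Y_R$ is smooth over $R$ in a neighborhood of $\overline \pi^{-1}(0)$, the reduction $Y_R(R) \to \overline Y(\F_q)$ is surjective onto that neighborhood, with each fiber an $n$-dimensional $p$-adic polydisk $U_{\overline y}$. A point $y$ has $\pi(y) \in \p^n$ if and only if its reduction lies in $\overline \pi^{-1}(0)$, so
\[
Y_R(R) \,\cap\, \pi^{-1}(\p^n) \; = \; \bigsqcup_{I \subset \{0, \dotsc, r\}} \; \bigsqcup_{\overline y \in \overline E_I^\circ(\F_q)} U_{\overline y}.
\]
For each $\overline y \in \overline E_I^\circ(\F_q)$, smoothness together with transversality of the $\overline D_i$ allow me to choose étale local coordinates $(y_1, \dotsc, y_n)$ on $Y_R$ near a lift of $\overline y$ so that $D_i$ is cut out by $y_i$ for $i \in I$. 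In these coordinates,
\[
\lvert (\pi^* f)(y) \rvert^s \, \lvert J_\pi(y) \rvert \; = \; u(y) \prod_{i \in I} \lvert y_i \rvert^{N_i s + \nu_i - 1},
\]
with $\lvert u(y) \rvert = 1$ for all $y \in U_{\overline y}$, because $\pi^* f$ and $J_\pi$ vanish to orders $N_i$ and $\nu_i - 1$ along $D_i$ and no other component of $D$ meets $U_{\overline y}$.

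Finally, Fubini together with the elementary identity
\[
\int_{\p} \lvert y \rvert^{Ns + \nu - 1} \, dy \; = \; (q-1) \, \frac{q^{-Ns - \nu}}{1 - q^{-Ns - \nu}},
\]
and $\int_\p dy_j = 1$ for each coordinate $y_j$ with $j \notin I$, evaluate the local integral on $U_{\overline y}$ to $(q-1)^{|I|} \prod_{i \in I} q^{-N_i s - \nu_i}/(1 - q^{-N_i s - \nu_i})$. Summing over $\overline y \in \overline E_I^\circ(\F_q)$ and then over $I$ yields the claimed formula. The main subtlety is the normalization of the Haar measure on each polydisk $U_{\overline y}$ so that the one-variable integrals above have the stated values; this requires rescaling by a factor of $q^n$ relative to the standard $R$-normalized Haar measure, and must be tracked carefully through the change of variables.
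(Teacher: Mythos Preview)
The paper does not give its own proof of this theorem; it is stated as background and attributed to \cite[Theorem~3.1]{Denef87}. Your sketch is the standard argument and is essentially the one Denef gives in the cited reference: pull back the integral along $\pi$, stratify $Y_R(R)\cap\pi^{-1}(\p^n)$ by the reduction map using good reduction, choose local coordinates adapted to the simple normal crossings divisor, and reduce to the one-variable computation on $\p$.

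One point worth sharpening: your one-variable identity is correct precisely under the normalization in which $\p$ has measure $1$ (which is the convention adopted in this paper for $Z_\p$), so the ``rescaling by $q^n$'' you mention is already built into that choice. The genuine care needed is not in the final rescaling but in checking that the measure $\lvert J_\pi(y)\rvert\,dy$ on $Y_R(R)$, restricted to each residue polydisk $U_{\overline y}$ and expressed in the chosen \'etale coordinates, agrees with the product of the one-variable measures you use; this is where smoothness of $\overline Y$ near $\overline\pi^{-1}(0)$ and the transversality of the $\overline D_i$ are actually invoked.
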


\noindent Note that point counts over finite fields are analogous to topological Euler characteristics over $\C$; both are additive with respect to disjoint unions and multiplicative with respect to products.  The role of $|\overline E_i^\circ(\F_q)|$ in \eqref{eq:goodreduction} is analogous to that of $\chi(E_i^\circ)$ in \eqref{eq:ACampo}. When $|\overline E_i^\circ(\F_q)|$ vanishes, then a term in \eqref{eq:goodreduction} involving a pole at $\alpha_i$ vanishes, and when the Euler characteristic $\chi(E_i^\circ)$ vanishes, a term in \eqref{eq:ACampo} involving the multiplicity of the corresponding eigenvalue of monodromy vanishes.  For more explicit connections, see \cite{Denef91b}.  

\subsubsection{Local topological zeta functions}
The analogy between Euler characteristics and the point counts over finite fields that appear in formulas for the local zeta functions in cases of good reduction leads to the topological zeta functions of Denef and Loeser.  Heuristically, these are limits of local $\p$-adic zeta functions.  The local topological zeta function is defined as follows:
\[
Z_\t(s) := \sum_{I \subset \{1, \ldots, r\}} \chi(E_I^\circ) \prod_{i \in I} \frac{1}{N_i s + \nu_i}.
\]
It is independent of the choice of resolution \cite[Theorem~2.1.2]{DenefLoeser92}.

Suppose $f$ has coefficients in a number field $K$.  Then poles of $Z_{\t}(s)$ give rise to poles of most local $\p$-adic zeta functions.  More precisely, after clearing denominators, we may assume that $f$ has coefficients in the ring of integers.  In this case, if $\alpha$ is a pole of $Z_{\t}(s)$ then, for all but finitely many primes $\p$ in the ring of integers, there are infinitely many unramified extensions $\k \, | \, K_\p$ such that $\alpha$ is a pole of the local $\p$-adic zeta function of $f$ over $\k$. See \cite[Theorem~2.2]{DenefLoeser92}.  

\begin{LTMC}[{\cite[Conjecture~3.3.2]{DenefLoeser92}}]
If $\alpha$ is a pole of $Z_\t(s)$, then $\exp(2 \pi i \alpha)$ is 
a nearby eigenvalue of monodromy.
\end{LTMC}

\noindent We note that local topological zeta functions have a pleasantly simple expression for singularities that are nondegenerate \cite[Section 5]{DenefLoeser92}.  For the corresponding formula for the local $\p$-adic zeta function of a nondegenerate singularity with good reduction mod $\p$, see \cite[Theorem~4.2]{DenefHoornaert01}.

One naturally expects that the local topological monodromy conjecture should be easier to prove than the local $\p$-adic monodromy conjecture, even in the good reduction case, and experience does bear this out.  For instance, both conjectures are known in the special case when $f$ is nondegenerate and $n = 3$. However, the proof of the topological case \cite{LemahieuVanProeyen11} preceded the proof of the $\p$-adic case \cite{BoriesVeys16} by a few years and is considerably shorter. 
See \cite[Exercise~3.65]{ViuSos22} for an example of a nondegenerate hypersurface (for $n = 5$) with a real pole of its local $\p$-adic zeta functions that is not a pole of its local topological zeta function.

\subsubsection{The local motivic zeta function}

The local motivic zeta function of $f$ at $0$ is a formal power series with coefficients in a localization of the $\mh$-equivariant Grothendieck ring of varieties and can be defined in terms of an embedded log resolution, as follows.

\medskip

Let $\mu_m := \Spec \k[t]/(t^m-1)$ denote the group of $m$th roots of unity over $\k$, and let $$\mh := \varprojlim \mu_m.$$
An action of $\mh$ on a $\k$-variety $Y$ is \emph{good} if the action factors through $\mu_m$ for some $m$, and $Y$ is covered by invariant affine opens.  The \emph{Grothendieck ring} $K^\mh$ 
is additively generated by classes $[Y]$, where $Y$ is a $\k$-variety with good $\mh$-action, subject to the relations:
\begin{itemize}
\item if $Z$ is a closed $\mh$-invariant subvariety, then $[Y] = [Y \smallsetminus Z] + [Z]$;
\item if $W \to Y$ is a $\mh$-equivariant $\A^m$-bundle, then $[W] = [\A^m \times Y]$.
\end{itemize}
In the second relation, $\mh$ acts trivially on $\A^m$. Multiplication in the Grothendieck ring $K^\mh$ is given by $[Y] \cdot [Z] = [Y \times Z]$, with the diagonal $\mh$-action on $Y \times Z$. 

\medskip 

For each nonempty subset $I \subset \{1, \ldots, r\}$, let $m_I := \gcd \{ N_i : i \in I \}$.  Then $D_I^\circ$ is covered by Zariski open subsets $U \subset Y$ on which $\pi^*f$ is of the form $u g^{m_I}$, where $u$ is a unit on $U$, and $g$ is a regular function.  Consider the Galois cover $\widetilde D_I^\circ \to D_I^\circ$, with Galois group $\mu_{m_I}$ whose restriction to such an open set $D_I^\circ \cap U$ is
\[
\{ (z,y) \in \A^1 \times (D_I^\circ \cap U) : z^{m_I} = u^{-1} \}.
\]
Then $\widetilde D_I^\circ$ comes with the evident good $\mh$-action that factors through $\mu_{m_I}$ and commutes with the projection to $\A^n$. Let
\[
\widetilde E_I^\circ := \widetilde D_I^\circ \times_{\A^n} \{ 0 \} 
\]  
be the induced Galois cover of the fiber of $D_I^\circ$ over $0$, with the good $\mh$-action that it inherits from $\widetilde D_I^\circ$.

Let $\L := [\A^1]$, and set $\M = K^\mh[\L^{-1}]$.  The local motivic zeta function of $f$ at $0$ is the formal power series expansion in $\M \llbracket T \rrbracket$ of the following rational function in $\M(T)$: 
\[
Z_\mot(T) = \sum_{I \subset \{1, \ldots, r\}} (\L-1)^{|I|} [\tE_I^\circ] \prod_{i \in I} \frac{\L^{-\nu_i} T^{N_i}}{1-\L^{-\nu_i} T^{N_i}}.
\]

Note, in particular, that $Z_{\mot}(T)$ is contained in the subring of $\M \llbracket T \rrbracket$ generated over $\M$ by $T$ and $\big \{ \frac{1}{1-\L^{-\nu_i}T^{N_i}} : 1 \leq i \leq r  \big\}$. This subring depends on the choice of a log resolution, but the power series $Z_{\mot}(T)$ is independent of all choices. 

\begin{remark}\label{r:conventions}
In the literature, an additional multiplicative factor of $\L^{-n}$ sometimes appears in the definition of the local motivic zeta function. See, e.g. \cite[(0.1.2)]{RodriguesVeys03} and \cite[Theorem~3.18]{ViuSos22}. Other versions differ from ours by a factor of $\L-1$ \cite[Corollary~5.3.2]{BultotNicaise20}. These renormalizations are not relevant to the local motivic monodromy conjecture.
\end{remark}

Grothendieck rings of varieties are not integral domains \cite{Poonen02}, so care is required in defining poles of $Z_\mot(T)$.  Various notions are possible.  See, for instance, \cite[\S4]{RodriguesVeys03}.  We follow the now standard convention and state the local motivic monodromy conjecture in terms of sets of candidate poles, as in \cite{BoriesVeys16, BultotNicaise20}.  

\begin{definition}
Let $\cP$ be a finite 
set of rational numbers. Then $\cP$ is a \emph{set of candidate poles} for $Z_\mot(T)$ if $Z_\mot(T)$ is contained in $$\M\bigg[T, \frac{1}{1-\L^a T^b}\bigg]_{(a,b) \in \Z \times \Z_{>0}, a/b \in \cP}.$$
\end{definition}

\noindent Roughly speaking, if $\alpha$ satisfies any reasonable notion of being a pole of $Z_{\mot}(T)$, then it is contained in every set of candidate poles.

\begin{remark}
In practice, passing to an embedded log resolution $\pi$ is not a useful way of computing local zeta functions; this  typically introduces many exceptional divisors whose numerical data correspond neither to poles of the zeta function nor to eigenvalues of monodromy.  One obtains more efficient expressions for the local zeta functions of nondegenerate singularities by first proving that they can be computed from a log smooth partial resolution \cite{BultotNicaise20} or a stacky resolution \cite{Quek22}.
\end{remark}

\subsection{Prior results}

The local monodromy conjectures remain wide open in general, despite the persistent efforts of many mathematicians over a period of decades.  Perhaps most surprising is that the local topological monodromy conjecture remains open for isolated nondegenerate singularities, even though there are well-known and relatively simple combinatorial formulas for both the characteristic polynomial of monodromy
\cite[Theorem~4.1]{Varchenko76}
 and the local topological zeta function \cite[Theorem~5.3]{DenefLoeser92}. Nevertheless, there is a vast literature on the local monodromy conjectures, far more than can reasonably be reviewed  here. We give only a brief and largely ahistorical review of prior work closely related to our main theorems, and recommend the excellent survey articles \cite{Nicaise10, ViuSos22} for more detailed discussions and further references.

\subsubsection{Local monodromy conjectures}\label{ssec:history} For $n = 2$, Bultot and Nicaise proved the local motivic monodromy conjecture in full generality \cite[Theorem~8.2.1]{BultotNicaise20}, building on earlier work of Loeser \cite{Loeser88}.

For nondegenerate singularities when $n = 3$, Lemahieu and Van Proeyen proved the local topological monodromy conjecture \cite{LemahieuVanProeyen11}.  Bories and Veys used the same arguments for existence of eigenvalues and developed new arguments to reduce the size of sets of candidate poles, proving the  local $\p$-adic monodromy conjecture  \cite{BoriesVeys16}.  They also proved a \emph{naive} variant of the local motivic monodromy conjecture for nondegenerate singularities with $n = 3$. In the naive variant, the ring $K^\mh$ is replaced with the ordinary Grothendieck ring of varieties (without $\mh$-action); the local motivic zeta function specializes to the local naive motivic zeta function by  setting $[Y] \mapsto [Y/\mh]$. 

Esterov, Lemahieu, and Takeuchi introduced new arguments for both existence of eigenvalues and cancellation of poles for local topological zeta functions of nondegenerate singularities, especially for $n = 4$, and stated a conjecture for how these should generalize to higher dimensions \cite[Conjecture~1.3]{ELT}.  Recently, while this paper was in the final stages of preparation, Quek produced a naive motivic upgrade for some of the pole cancellation arguments from \cite{ELT}, giving a new proof of the main result of Bories and Veys for $n = 3$.  Quek also suggested a different way in which the pole cancellation statements for small $n$  might generalize to higher dimensions \cite[Question~5.1.8]{Quek22}.  Neither of these predictions is correct.  See Examples~\ref{ex:ELTconj} and \ref{ex:QuekConj}.  We also note that some of the claimed results in \cite{ELT} are incorrect already for $n = 4$. In particular, the classification of facets of Newton polyhedra in \cite[Lemma~5.18]{ELT} is incomplete (Example~\ref{ex:ELT518}) and there are counterexamples to their claimed results on existence of eigenvalues (Example~\ref{ex:ELTdim4}) and cancellation of poles (Example~\ref{ex:ELTprop}).

In higher dimensions, Budur and van der Veer recently proved the local monodromy conjectures for nondegenerate singularities whose Newton polyhedron is a large dilate of a convenient Newton polyhedron \cite[Theorem~1.10]{Budur}.  Indeed, they show that when $P = \Newt(f)$ is convenient and $k$ is sufficiently large, every candidate eigenvalue corresponding to a facet of $k P$ is an eigenvalue of monodromy. The proof is an application of Varchenko's formula \cite[Theorem~4.1]{Varchenko76}, and the bound on $k$ depends on $P$.  Here we show, by different arguments that depend on Ehrhart theory and positivity properties of local $h$-polynomials, that any $k \geq 2$ is large enough. 
We also prove a generalization of this result when $P$ is not necessarily convenient. See Theorem~\ref{thm:dilate} and 
Proposition~\ref{p:Budurextend}.

\subsubsection{Global zeta functions and strong monodromy conjectures} There are \emph{global} versions of the local motivic, $\p$-adic, and topological zeta functions and their associated monodromy conjectures. See, e.g., \cite{DenefLoeser92} for a discussion of the local and global topological zeta functions.  The difference between the local and global motivic zeta functions is illustrated by \cite[Theorems 8.3.2 and 8.3.5]{BultotNicaise20}. The global zeta functions are invariants of $X_f \subset \A^n$, while the local zeta functions are invariants of its germ at $0$.

Replacing ``local" by ``global" in each of the local monodromy conjectures gives rise to its global counterpart. There are also \emph{strong} versions of the local and global monodromy conjectures proposing that the real parts of the poles of the corresponding zeta functions are zeros of the Bernstein-Sato polynomial $b_f$.  If $\alpha$ is a zero of $b_f$ then $\exp(2\pi i \alpha)$ is an eigenvalue of monodromy, and all eigenvalues of monodromy occur in this way \cite{Malgrange74}. It is also conjectured that the orders of poles of local zeta functions are bounded by the multiplicities of zeros of $b_f$ \cite[Conjecture 3.3.1$'$]{DenefLoeser92}.

The strong local and global motivic monodromy conjectures are known for $n = 2$ \cite{BultotNicaise20}.  Loeser has given a combinatorial condition on Newton polyhedra that guarantees that each candidate pole associated to a facet is a zero of the Bernstein-Sato polynomial \cite{Loeser90}.  Nondegenerate polynomials with such Newton polyhedra therefore satisfy the strong local 
motivic monodromy conjecture. 

Aside from this, we note that if $X_f$ is smooth aside from an isolated singularity at $0$, then each local monodromy conjecture at 0 implies the corresponding global monodromy conjecture.  The Newton polyhedra whose nondegenerate singularities are isolated were classified by Kouchnirenko \cite{Kouchnirenko76}.  Furthermore, if $f$ has such a Newton polyhedron and $f|_F$ has no singularities outside the coordinate hyperplanes for \emph{all} faces $F$ of $\Newt(f)$, not just the compact faces, then $X_f$ is smooth away from $0$.  Thus the global motivic monodromy conjecture for isolated singularities with simplicial Newton polyhedra that satisfy this stronger nondegeneracy condition follows from Theorem~\ref{thm:lmmsimplicial}. 

\subsubsection{Further variants of the local zeta functions and monodromy conjectures}
There are also monodromy and holomorphy conjectures for $\p$-adic zeta functions \emph{twisted} by a character, and topological analogues of twisted $\p$-adic zeta functions.  For discussions of these variants, see, e.g., \cite{Denef91}. Another variant is the topological zeta function for a variety equipped with a holomorphic form that plays the role of $\Phi(x) dx$ in Malgrange's archimedean zeta functions \cite{Veys07}. 
Our results on eigenvalues of monodromy in Sections~\ref{sec:nearby}-\ref{sec:vanishing} are applicable to all such variants.

\subsection{Methods and structure of the paper} \label{sec:methods}

We conclude the introduction with a brief overview of our approach to the local motivic monodromy conjecture and outline the content of each section of the paper.

\subsubsection{Key definitions}
We first recall the notion of candidate poles and candidate eigenvalues.
In the literature, a candidate pole and candidate eigenvalue is associated to each facet of $\Newt(f)$. For our purposes, it is important to extend these notions to a
wider class of faces of $\Newt(f)$. To be precise, let $G$ be a proper face of $\Newt(f)$ that contains the vector $\1 = (1,...,1)$ in its linear span, denoted 
$\Span(G)$. Let $\psi_G$ be the unique linear function on $\Span(G)$ with value $1$ on $G$. 
Then $$\alpha_G := - \psi_G(\1)$$ is the \emph{candidate pole} associated to $G$, and $\exp(2 \pi i \alpha_G)$ is the corresponding \emph{candidate eigenvalue} of monodromy. 
We say that $G$ \emph{contributes} $\alpha_G$ as a candidate pole.
If $G'$ contains $G$ as a face, then $G'$ also contains $\1$ in its linear span and 
$\alpha_{G'} = \alpha_G$.

\begin{definition}
Let $\Contrib(\alpha)$ be the set of faces of $\Newt(f)$ that contribute the candidate pole $\alpha$.
\end{definition}

\noindent Then $\{ \alpha \in \mathbb{Q} : \Contrib(\alpha) \neq \emptyset \} \cup \{ -1 \}$ is a set of candidate poles for $Z_\mot(T)$ \cite[Corollary 8.3.4]{BultotNicaise20}.  This set of candidate poles is standard in the literature.  The key difference here is that we consider faces in $\Contrib(\alpha)$ of arbitrary codimension, not just facets. This change in perspective is crucial in what follows.

Let $C$ be a cone in $\Delta$, the fan over the faces of $\Newt(f)$. 
The rays of $C$ are the union of rays through vertices in $\Newt(f)$ and rays disjoint from $\Newt(f)$ that contain a coordinate vector $e_\ell$ for some $1 \le \ell \le n$. 
In particular,
for each ray of $C$, there is a corresponding distinguished generator: either the corresponding vertex of $\Newt(f)$, or the corresponding coordinate vector $e_\ell$. We let $\gens(C)$ be the set of distinguished generators of the rays of $C$.

We say that a vertex $A$ in $G$ is an \emph{apex} with \emph{base direction} $e_\ell^*$ if $\langle e_{\ell}^*, A \rangle > 0$, and
$\langle e_{\ell}^*, V \rangle = 0$ for all  $V \in \gens(C_G)$  with $V \neq A$.
In this case, $G \cap \{ V \in \R^n_{\ge 0} : \langle e_\ell^*, V \rangle = 0 \}$ is the corresponding \emph{base} of $G$. 

\begin{definition}\label{def:B1}
A face $G$ of $\Newt(f)$ is \emph{$B_1$} 
if it has an apex $A$ with base direction 
$e_\ell^*$, and $\langle e_{\ell}^*, A \rangle = 1$. 

\end{definition}
The notion of $B_1$ was introduced for simplicial facets in \cite[Definition 3]{LemahieuVanProeyen11}. For arbitrary facets, our definition agrees with \cite[Definition 1.1.7]{Quek22} but is more restrictive than \cite[Definition 3.1]{ELT}. All of these definitions of $B_1$-facets agree when $\Newt(f)$ is simplicial.  Note that the base direction $e_{\ell}^*$ determines the apex $A$. The converse is not true. A $B_1$-face may have several apices, and when the face is not a facet, each of those apices can have multiple base directions.  We introduce the following definition.

\begin{definition}\label{def:UB1}
A face $G$ of $\Newt(f)$ is \emph{$\UB$} 
if it has an apex $A$ with a unique base direction 
$e_\ell^*$, and $\langle e_{\ell}^*, A \rangle = 1$. 

\end{definition}
Theorems~\ref{t:mainsimplicialeigenvalue} and \ref{thm:nopolesimplicial} show the importance of the notion of $\UB$-faces.   Note that every $B_1$-facet is $\UB$; the distinction between $B_1$ and $\UB$ is only relevant when considering higher codimension faces.

\subsubsection{Eigenvalue multiplicities and local $h$-polynomials} The starting point for our work is the third author's nonnegative formula for the multiplicities of eigenvalues of monodromy at $0$ when $f$ is nondegenerate and $\Newt(f)$ is convenient
\cite[Section~6.3]{Stapledon17}. Specializing \cite[Theorem~6.20]{Stapledon17} from equivariant mixed Hodge numbers to equivariant 
multiplicities,
one obtains a combinatorial formula with nonnegative integer coefficients for the multiplicities of the eigenvalues of monodromy on the reduced cohomology of  $\cF_0$. 

Assume that $\Newt(f)$ is simplicial.
If we forget the lattice structure of the fan $\Delta$, we may view $\Delta$ as encoding a triangulation of a simplex, e.g., by slicing with a transverse hyperplane. 
Then the combinatorial formula for eigenvalues is a sum over 
cones $C$ in $\Delta$
of a contribution that is a product 
of two nonnegative factors, one coming from Ehrhart theory (the number of lattice points in a polyhedral set). The other factor is the evaluation of the local $h$-polynomial 
$\ell(\Delta,C;t)$ 
at $t = 1$.
These local $h$-polynomials were first introduced and studied by Stanley in the special case where $C = 0$ and later generalized by Athanasiadis, Nill, and Schepers  \cite{Athanasiadis12b, NillSchepers12}.
They have nonnegative, symmetric integer coefficients and 
naturally appear when applying the decomposition theorem to toric morphisms. See \cite[Theorem~5.2]{Stanley92}, \cite[Theorem~6.1]{KatzStapledon16} and \cite{deCataldoMiglioriniMustata18}. 

This formula for eigenvalue multiplicities in the convenient nondegenerate case offers fundamental advantages over earlier approaches to existence of eigenvalues.  Whereas the formulas of A'Campo and Varchenko for zeta functions of monodromy typically involve a great deal of cancellation, the third author's formula is a sum of nonnegative terms.  
Moreover, for each compact face $G$ in $\Contrib(\alpha)$, 
there is a canonically associated \emph{essential face $E \subset G$}. See Definition~\ref{d:essentialface}. 
Then the Ehrhart  factor in the 
summand associated to $C_E$
for the multiplicity of $\exp(2 \pi i \alpha)$ is strictly positive. 
Thus, either $\exp(2 \pi i \alpha)$ is an eigenvalue of monodromy or $\ell(\Delta, C_E;t)$ is zero.  There are a number of simple sufficient conditions for the nonvanishing of $\ell(\Delta, C_E; t)$; for instance, if $E$ meets the interior of the positive orthant, then $\ell(\Delta, C_E;0) = 1$. The general problem of classifying when local $h$-polynomials vanish was posed by Stanley \cite[Problem~4.13]{Stanley92}.  See \cite{dMGPSS20} for a classification  when $n \leq 4$ and $E = \emptyset$ and for partial results in higher dimensions.

\subsubsection{A nonnegative formula for nearby eigenvalues} In Section~\ref{sec:nearby}, we extend the results of \cite{Stapledon17} to the case where $\Newt(f)$ is simplicial but not necessarily convenient. In this setting, the singularity of $X_f$ at $0$ may not be isolated, and the Milnor fibers at $0$ and at nearby points may have cohomology in multiple positive degrees.

In this setting, we consider $\widetilde{\chi}(\cF_x) := \sum_i (-1)^i \widetilde{H}^i(\cF_x,\C)$ as a virtual representation, where $\widetilde{H}$ denotes reduced cohomology. Now $\exp(2 \pi i \alpha)$ has a multiplicity $\widetilde{m}_x(\alpha)$,  which may be positive or negative, as an eigenvalue in this virtual representation.  We consider these multiplicities not only at $0$ but also at a general point $x_I$ in each coordinate subspace $\A^I$ contained in $X_f$. The idea of studying the eigenvalues at these points was first introduced when $n = 3$ in \cite{LemahieuVanProeyen11} and  further developed in \cite{ELT}. We give a nonnegative formula for the alternating sum
\[
\sum_{\A^I \subset X_f} (-1)^{n - 1 - |I|} \widetilde{m}_{x_I}(\alpha). 
\]
See Theorem~\ref{t:nonnegativeVarchenko} for a precise statement.  

Theorem~\ref{t:nonnegativeVarchenko} implies, in particular, the remarkable fact that the corresponding alternating product of monodromy zeta functions is a polynomial, i.e., 
\begin{equation}
\prod_{\A^I \subset X_f} \left( \frac{\zeta_{x_I}(t)}{1 - t}\right)^{(-1)^{n-1-|I|}} \in \Z[t].
\end{equation}
From this perspective, the theorem provides a nonnegative formula for the vanishing order of this polynomial at $\exp(2\pi i \alpha)$.  See Remark~\ref{r:monodromyzetaformula} for the precise formula.

Just as in the convenient case, 
this nonnegative formula is 
a sum over 
cones $C$ in $\Delta$, and each of the terms is once again an Ehrhart factor times $\ell(\Delta,C;1)$.
Moreover, for each compact $G \in \Contrib(\alpha)$,
we have an essential face $E \subset G$, and 
the Ehrhart  factor in the  
$C_E$-summand
for the multiplicity of $\exp(2 \pi i \alpha)$ is strictly positive. We deduce the following corollary.
 See Corollary~\ref{c:nonvanishingeigenvalue} for an equivalent statement.

\begin{corollary}\label{c:intrononvanishingeigenvalue}
Suppose $\Newt(f)$ is simplicial and $f$ is nondegenerate.
Let $G$ be a compact face in  $\Contrib(\alpha)$ with essential face $E$. 
If 
$\ell(\Delta,C_E;t)$ is nonzero, then 
$\sum_{\A^I \subset X_f} (-1)^{n - 1 - |I|} \widetilde{m}_{x_I}(\alpha) > 0$. In particular, 
 $\exp(2 \pi i \alpha)$ is 
a nearby eigenvalue of monodromy (for reduced cohomology).

\end{corollary}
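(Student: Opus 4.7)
The proof is an immediate consequence of Theorem~\ref{t:nonnegativeVarchenko} together with the nonnegativity of local $h$-polynomials. My plan has three steps, each requiring essentially no calculation beyond what has already been set up.

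First, I apply Theorem~\ref{t:nonnegativeVarchenko} to express the alternating sum $\sum_{\A^I \subset X_f}(-1)^{n-1-|I|}\widetilde{m}_{x_I}(\alpha)$ as a sum over cones $C \in \Delta$ of terms of the form (Ehrhart factor) $\cdot\, \ell(\Delta, C; 1)$. Every summand is nonnegative: the Ehrhart factors are lattice point counts, hence nonnegative integers, and the local $h$-polynomials $\ell(\Delta, C; t)$ have nonnegative integer coefficients (by Athanasiadis and Nill--Schepers), so their evaluations at $t = 1$ are nonnegative. It therefore suffices to exhibit a single strictly positive summand.

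Second, I isolate the $C_E$-summand. The discussion preceding the corollary records exactly what is needed: for $G \in \Contrib(\alpha)$ with essential face $E$, the Ehrhart factor in the summand associated to $C_E$ that contributes to the multiplicity of $\exp(2\pi i \alpha)$ is strictly positive. Meanwhile, the hypothesis $\ell(\Delta, C_E; t) \neq 0$, combined with the nonnegativity of its coefficients, gives $\ell(\Delta, C_E; 1) > 0$. Hence the $C_E$-summand is strictly positive and so the alternating sum is strictly positive, which is the first conclusion of the corollary.

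Finally, I deduce the nearby eigenvalue statement. Strict positivity of the alternating sum forces $\widetilde{m}_{x_I}(\alpha) \neq 0$ for at least one $\A^I \subset X_f$, so $\exp(2\pi i \alpha)$ is an eigenvalue of monodromy at the generic point of $\A^I \cap X_f$. Since $0 \in \A^I$, the origin lies in the Zariski closure of the locus of such points, so $\exp(2\pi i \alpha)$ is a nearby eigenvalue of monodromy. The substantive work is all concentrated upstream, in Theorem~\ref{t:nonnegativeVarchenko} and in the identification of the essential face with strictly positive Ehrhart contribution; granted those inputs, the only thing to verify is the implication $\ell(\Delta, C_E; t) \neq 0 \Rightarrow \ell(\Delta, C_E; 1) > 0$, which is immediate from nonnegativity of the coefficients.
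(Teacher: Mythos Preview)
Your proof is correct and follows essentially the same approach as the paper's proof of the equivalent Corollary~\ref{c:nonvanishingeigenvalue}: invoke the nonnegative formula of Theorem~\ref{t:nonnegativeVarchenko}, observe via \eqref{e:box2} that the $C_E$-summand contributes $\ell(\Delta,C_E;1)[\alpha_G]$ with strictly positive Ehrhart factor, and conclude by nonnegativity of the local $h$-polynomial. Your additional third step making the nearby-eigenvalue deduction explicit is a minor elaboration the paper leaves implicit.
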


This motivates a detailed study of necessary conditions for the vanishing of $\ell(\Delta, C_E;t)$ when $E$ is the essential face 
associated to some compact
face $G \in \Contrib(\alpha)$. 
In this situation, we also have some additional structure which is crucial for our arguments.
The face $C_G \smallsetminus C_E \in \lk_{\Delta}(C_E)$ admits what we call a \emph{full partition}. See Lemma~\ref{l:existencefullpartition} and Definition~\ref{d:fullpartition}.

\subsubsection{A necessary condition for the vanishing of the local $h$-polynomial}

Motivated by the results of Section~\ref{sec:nearby}, in Section~\ref{sec:vanishing} we undertake a detailed investigation of the conditions under which $\ell(\Delta, C';t)$ vanishes, where $C'$ is a cone in $\Delta$ that is contained in a cone that admits a full partition. This section is self-contained and applies to any local $h$-polynomial of a geometric triangulation. See \cite{LPS2} for further work on necessary conditions for the vanishing of the local $h$-polynomial in a more general setting, for quasi-geometric homology triangulations. 

Recall  
that $\ell(\Delta, C';t)$ is naturally identified with the Hilbert function of a module $L(\Delta, C')$  \cite{Athanasiadis12, Athanasiadis12b}, as follows.  Consider the ideal in the face ring $\Q[\lk_{\Delta}(C')]$ generated by monomials $x^{C}$ such that $C \sqcup C'$ meets the interior of the orthant $\R_{>0}^n$.
Then $L(\Delta, C')$ is the image of this ideal in the quotient of $\Q[\lk_{\Delta}(C')]$ by a special linear system of parameters.  Thus $\ell(\Delta, C';t) = 0$ if and only if every such monomial is contained in the ideal generated by a special linear system of parameters.  
When $C$ admits a full partition, we can associate a 
distinguished monomial with image in $L(\Delta, C')$.
By reducing to a result in \cite{LPS2}, we show that this monomial is nonzero in $L(\Delta, C')$. 

Using this calculation, we prove the following theorem, which is  an immediate consequence of Theorem~\ref{thm:nonvanish}.
A cone $C$ in $\lk_{\Delta}(C')$ is a \emph{$U$-pyramid} if it meets the interior of the positive orthant and there is a ray $r \in C$ such that $(C \sqcup C') \smallsetminus r$ is contained in a unique 
coordinate hyperplane in $\R^n$,
i.e., $C$ is a pyramid with a \emph{unique} base 
direction with respect to the apex $r$ in $\lk_\Delta(C')$. See Definition~\ref{d:Upyramid}.

\begin{theorem}\label{thm:intrononvanish}
Let $\Delta$ be a simplicial fan supported on $\R^n_{\ge 0}$. Let $C'$ be a cone in $\Delta$, and let $C \in \lk_\Delta(C')$. If $\ell(\Delta, C';t) = 0$ and $C$ admits a full partition, then $C$ is a 
$U$-pyramid. 
\end{theorem}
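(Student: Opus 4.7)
The plan is to argue by contrapositive: assuming $C$ admits a full partition and $C$ is \emph{not} a $U$-pyramid, I will exhibit a nonzero element in $L(\Delta, C')$, which contradicts $\ell(\Delta, C'; t) = 0$. Recall that $L(\Delta, C')$ is the image, in the quotient $\Q[\lk_\Delta(C')]/(\Theta)$ by a special linear system of parameters $\Theta$, of the ideal generated by those monomials $x^D$ with $D + C'$ meeting the interior of $\R_{>0}^n$. Producing a single such monomial whose image is nonzero suffices.

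The full partition of $C$ singles out a distinguished monomial supported on $C$ together with the partition data. Because the quotient $\Q[\lk_\Delta(C')]/(\Theta)$ is canonically identified with the rational Chow ring of a complete simplicial toric variety $X$, the image of this monomial should be read as a refined self-intersection class $[V]\cdot[V]$ in top degree, where $V \subset X$ is a compact half-dimensional $T$-invariant subvariety determined by the partition. Nonvanishing of the image in $L(\Delta, C')$ is then the same as nonvanishing of the intersection number $\deg([V]^2)$.

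The heart of the argument is an explicit combinatorial computation of this intersection number. Using the Stanley--Reisner presentation and the linear relations imposed by $\Theta$, the self-intersection expands as a sum over maximal cones of $\lk_\Delta(C')$ that contain $V$, with weights given by ratios of determinants of the corresponding ray generators. The full partition provides the organizing principle: its blocks allow one to pair up and telescope contributions so that the apparent cancellations resolve into a sum of strictly positive terms, except in one borderline configuration --- when all of the rays in $\gens(C \sqcup C')$ except a single apex are contained in a common coordinate hyperplane. This is exactly the $U$-pyramid condition, and the contrapositive follows.

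I expect the main obstacle to be this last identification: proving that the \emph{only} configuration in which the self-intersection degenerates to zero is the $U$-pyramid one. Carrying this out requires delicate bookkeeping: one must organize the expansion according to which rays in $\gens(C \sqcup C')$ lie on which coordinate hyperplanes, exploit the apex structure of the full partition to keep the telescoping sum transparent, and show that any ray with two or more base directions contributes strictly positively. A secondary obstacle is constructing $\Theta$ compatibly with the partition so that the determinant ratios factor cleanly enough for the telescoping to work; this will likely be handled by choosing $\Theta$ to respect the flag of coordinate subspaces cut out by the apex rays of the partition.
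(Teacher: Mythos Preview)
Your high-level strategy matches the paper's: argue by contrapositive, interpret the distinguished monomial from the full partition as a refined self-intersection of a compact $T$-invariant subvariety, and compute this number explicitly in the cohomology of a complete toric variety. The geometric reading you give is exactly the content of Remark~\ref{r:selfintersection}.

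There is, however, a genuine structural gap. For your monomial to land in top degree---equivalently, for $V$ to be \emph{half-dimensional} so that $[V]^2$ is a number---you need $\codim(C \sqcup C') = |\mathcal{A}_C|$. This is not automatic for an arbitrary non-$U$-pyramid $C$; it holds precisely for \emph{maximal} non-$U$-pyramids (Proposition~\ref{prop:reduce}). The paper handles this by first extending $C$ to a maximal non-$U$-pyramid $C''$ (Lemma~\ref{lem:inherit} ensures the full partition is inherited), and only then computing the self-intersection. A consequence of maximality that the paper exploits throughout the computation is that every apex satisfies $|V_A| = 2$, which allows the normalization $V_{A_i} = \{2i-1, 2i\}$ and the explicit projection $\pi$ in Section~\ref{ss:positivity}. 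Without this reduction your self-intersection is not a top-degree class, and your proposed telescoping has no natural target.

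Your description of the computation also diverges from what actually happens. The paper does \emph{not} resolve the self-intersection into a sum of strictly positive terms apex-by-apex. Instead it introduces the notion of a \emph{positive cone} (Definition~\ref{d:positive}) and proves, via a recursive expansion governed by the elements $\theta(\widehat{H},S,R,i_k)$ (Definition~\ref{d:theta} and Proposition~\ref{p:main}), that $(-1)^r y^{A_1}\cdots y^{A_r}$ equals a \emph{single} term $\mult(\widehat{G})\,y^{\widehat{G}}$ for the unique facet $\widehat{G}$ whose image under $\pi$ is the positive cone. The sign and the cancellations are controlled by Proposition~\ref{l:cancel} and Lemma~\ref{l:inductionzerov2}, not by a telescoping organized around individual apex contributions. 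Finally, your ``secondary obstacle'' is a non-issue: the special l.s.o.p.\ is the standard one induced by the lattice (the ideal $J_E$ in Section~\ref{ss:commutative}), not something one chooses to fit the partition.
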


When $G$ is compact and 
$C = C_G \smallsetminus C_E \in \lk_{\Delta}(C_E)$, the
condition that $G$ is $\UB$ is equivalent to the condition that $C$ is a $U$-pyramid. 
See Lemma~\ref{l:weaklyUB1}. 
This leads to the following theorem, which is our main result on existence of eigenvalues.

\begin{theorem}\label{t:mainsimplicialeigenvalue}
Suppose $\Newt(f)$ is simplicial and $f$ is nondegenerate.
Let $\alpha \in \mathbb{Q}$. Then either every face in 
$\Contrib(\alpha)$ is $\UB$, or $\exp(2 \pi i \alpha)$ 
is an eigenvalue of monodromy for the reduced cohomology of the Milnor fiber at the generic point of some coordinate subspace $\A^I \subset X_f$.
\end{theorem}

\noindent Section~\ref{sec:nearby} proves that this theorem follows from Theorem~\ref{thm:intrononvanish}, whose proof is given in Section~\ref{sec:vanishing}.

\subsubsection{The local formal zeta function and its candidate poles}

In Sections~\ref{sec:poles} and \ref{sec:fakepoles}, we prove the following
theorem, which is complementary to Theorem~\ref{t:mainsimplicialeigenvalue}. 

\begin{theorem}\label{thm:nopolesimplicial}
Suppose $\Newt(f)$ is simplicial and $f$ is nondegenerate.
Let $$\mathcal{P} = \{ \alpha \in \mathbb{Q} : \Contrib(\alpha) \neq \emptyset \} \cup \{ -1 \}, \text{ and }\mathcal{P}' = \{ \alpha \in \mathcal{P} : \alpha \notin \Z, \textrm{every face in } \Contrib(\alpha) \textrm { is } \UB \}.$$
Then 
$\mathcal{P} \smallsetminus \mathcal{P}'$
is  a set of candidate poles for $Z_{\mot}(T)$. 

\end{theorem}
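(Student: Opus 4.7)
My plan is to produce an explicit rational function expression for $Z_\mot(T)$ from which each candidate pole $\alpha \in \mathcal{P}'$ can be removed using partial fraction identities that exploit the $\UB$ structure of the faces in $\Contrib(\alpha)$.

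I would begin by expressing $Z_\mot(T)$ cone-by-cone. Using the simplicial hypothesis on $\Newt(f)$ together with nondegeneracy, one can write
$$Z_\mot(T) = \sum_{\sigma \in \Delta} M_\sigma \cdot (\L-1)^{|\sigma(1)|} \prod_{\rho \in \sigma(1)} \frac{\L^{-\nu_\rho}T^{N_\rho}}{1-\L^{-\nu_\rho}T^{N_\rho}},$$
for some motivic classes $M_\sigma \in \M$ and numerical data $(N_\rho, \nu_\rho)$ attached to each ray, via a stacky/formal toric partial resolution of the kind used by Bultot-Nicaise. The denominator $1-\L^{-\nu_\rho}T^{N_\rho}$ contributes a candidate pole at $-\nu_\rho/N_\rho$, and the pole $\alpha_G$ associated to a face $G$ of $\Newt(f)$ arises precisely from the cones $\sigma$ whose minimal containing face of $\Newt(f)$ lies in $\Contrib(\alpha_G)$.

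For $\alpha \in \mathcal{P}'$, I would then group the contributing cones using the $\UB$ structure of each $G \in \Contrib(\alpha)$: every such $G$ has an apex $A$ with a unique base direction $e_\ell^*$ satisfying $\<e_\ell^*, A\> = 1$. This uniqueness provides a canonical pairing of each cone $\sigma$ containing the ray through $A$ with a partner cone obtained by replacing $A$ by $e_\ell$. The condition $\<e_\ell^*, A\> = 1$ ensures that the exponents of $\L$ align so that a partial fraction identity of the form
$$\frac{(\L - 1)\L^{-\nu_A}T^{N_A}\L^{-1}T^{N_\ell}}{(1-\L^{-\nu_A}T^{N_A})(1-\L^{-1}T^{N_\ell})} = \frac{\L^{-\nu_A}T^{N_A}}{1-\L^{-\nu_A}T^{N_A}} - \frac{\L^{-(\nu_A+1)}T^{N_A+N_\ell}}{1-\L^{-(\nu_A+1)}T^{N_A+N_\ell}}$$
can be invoked to trade the $\alpha$-pole for denominators whose candidate poles lie in $\mathcal{P}\smallsetminus\mathcal{P}'$. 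The exclusion $\alpha \notin \Z_{<0}$ enters here because the residual factors produced by this identity can include pieces like $\frac{1}{1-\L^{-1}T^{N_\ell}}$, whose candidate pole $-1/N_\ell$ must be collected into $\mathcal{P}\smallsetminus\mathcal{P}'$; for integer $\alpha = -k$ this collection is obstructed by the always-present $-1$ and by potential integer overlaps.

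The main obstacle is coordinating the cancellations across faces of $\Contrib(\alpha)$ of different codimensions. When several $\UB$-faces contribute to the same $\alpha$, their apices and base directions may interact, and one needs a globally consistent pairing of cones to avoid double-counting. The uniqueness built into $\UB$ (as opposed to ordinary $B_1$) is exactly what makes this global scheme canonical. The natural plan is to first establish the single-facet cancellation (the kind of statement referenced in the paper as Theorem~\ref{thm:nopole}) and then iterate over $\Contrib(\alpha)$ by induction on codimension, showing at each step that the partial expression for $Z_\mot(T)$ remains in $\M\bigl[T, \bigl\{\tfrac{1}{1-\L^a T^b} : a/b \in \mathcal{P}\smallsetminus\mathcal{P}'\bigr\}\bigr]$.
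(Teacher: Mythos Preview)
Your core intuition—that the height-one condition $\langle e_\ell^*,A\rangle=1$ drives a partial-fraction cancellation—is correct and matches the paper's Lemma~\ref{lem:cancellation}. But two genuine gaps remain.

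First, you never address the non-domain issue. Removing each $\alpha\in\mathcal{P}'$ separately yields, for each such $\alpha$, a set of candidate poles $\mathcal{P}_\alpha\not\ni\alpha$; to conclude that $\mathcal{P}\smallsetminus\mathcal{P}'$ works you need $\bigcap_\alpha\mathcal{P}_\alpha$ to still be a set of candidate poles, and over $\M$ this is not automatic. The paper handles this by introducing a \emph{local formal zeta function} $Z_{\for}(T)$ living in a polynomial ring over an integral domain that specializes to $Z_\mot(T)$; there, candidate-pole sets do intersect (Lemma~\ref{l:intersectcandidate}), and the entire argument is run for $Z_{\for}$.

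Second, ``iterate by induction on codimension with a canonical pairing from $\UB$'' is not a mechanism. Different faces in $\Contrib(\alpha)$ can have different apices and base directions, so no single $e_\ell^*$ works uniformly across $\sigma_M$, and the cone-pairing you describe is not well-defined as stated (nor is the displayed partial-fraction identity actually an identity). The paper's solution is substantial: it localizes to a neighborhood $N_{M,\le\delta}$ of each minimal $M\in\Contrib(\alpha)$ (Section~\ref{ssec:NMdelta}), builds an auxiliary complete fan $\Sigma_{\mathcal{Z}}$ whose cones carry compatible base-direction labels (an \emph{$\alpha$-compatible pair}, Definitions~\ref{d:weaklys0compatible}--\ref{d:compatible3}), and applies the cancellation lemma cone-by-cone in the common refinement $\Sigma\cap\Sigma_{\mathcal{Z}}$. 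The $\UB$ hypothesis enters exactly in proving that such a fan exists, via an operative labeling (Lemma~\ref{prop:operativeiff}) and an explicit polytope-deformation construction (Sections~\ref{ss:exist1}--\ref{ss:exist2}); this construction is the heart of the proof, and your proposal offers no substitute for it.
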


Note that Theorem~\ref{thm:lmmsimplicial} follows directly from Theorems~\ref{t:mainsimplicialeigenvalue} and \ref{thm:nopolesimplicial}, using the fact that
$1$ is an eigenvalue of monodromy on $H^0(\mathcal{F}_0, \mathbb{C})$.

\bigskip

Our starting point for the proof of Theorem~\ref{thm:nopolesimplicial} is the formula for $Z_{\mot}(T)$ in \cite[Theorem 8.3.5]{BultotNicaise20}, which expresses $Z_{\mot}(T)$ as a sum over lattice points in the dual fan 
to $\Newt(f)$. We introduce the \emph{local formal zeta function}
$Z_{\for}(T)$, which is a power series over a polynomial ring that specializes to $Z_{\mot}(T)$. The local formal zeta function depends only on $\Newt(f)$, unlike $Z_{\mot}(T)$ which depends on $f$.
The advantage of working with $Z_{\for}(T)$ is that an intersection of two sets of candidate poles of $Z_{\for}(T)$ is a set of candidate poles (Lemma~\ref{l:intersectcandidate}), so it suffices to show that, for each $\alpha \not \in \Z$ such that $\Contrib(\alpha)$ consists entirely of $\UB$-faces, there is a set of candidate poles for $Z_{\for}(T)$ not containing $\alpha$.
Explicitly, 
\begin{equation}\label{e:formalexpression}
Z_{\for}(T) =  \sum_{G}
Y_G \bigg( (L - 1)^{n  - \dim G} \sum_{u \in \sigma^{\circ}_{G} \cap \mathbb{N}^{n}} L^{-\langle u, \mathbf{1} \rangle}T^{N(u)} \bigg),
\end{equation}
where $G$ varies over all nonempty compact faces of $\Newt(f)$, $\sigma_G$ denotes the dual cone to $G$,
$C^\circ$ denotes the relative interior of a polyhedral cone $C$, $N$ is a certain piecewise linear function, and $Y_G$ and $L$ are formal variables satisfying the following relations:
\begin{enumerate}
\item $Y_V = 1$ if $V$ is a primitive vertex of $\Newt(f)$, and
\item\label{i:B1equation} $Y_G + Y_F = \frac{(L-1)^{\dim F}}{1 - L^{-1}T}$ if $F$ is a compact $B_1$-face  with nonempty base $G$.
\end{enumerate}
See Definition~\ref{d:formallocal} for details. 
The key relation above is \eqref{i:B1equation}, which specializes to a natural relation in the $\mh$-equivariant Grothendieck ring of varieties.
See Lemma~\ref{l:simplifiedrelations}.

Given a subset $C \subset \R^n_{\ge 0}$, we can define 
the \emph{contribution} $Z_{\for}(T)|_{C}$ of $C$ to $Z_{\for}(T)$ to be the same expression as the right-hand side of \eqref{e:formalexpression}, except that the second summation runs over $u \in \sigma_G^\circ \cap C \cap \N^n$. See \eqref{e:contribution}. 
When $F$ is a compact $B_1$-face with nonempty base $G$ and apex $A$ in the direction $e^*_{\ell}$, 
and  $C'  \subset \sigma_F^{\circ}$ is a nonzero rational polyhedral cone, then
we deduce the following relation: 
\begin{equation}\label{e:technicaltool}
Z_{\for}(T)|_{C^{\circ}} + Z_{\for}(T)|_{(C')^{\circ}} = (L - 1)^{n} \bigg(\sum_{u \in (C^{\circ} \cup (C')^{\circ}) \cap \mathbb{N}^n} L^{-\langle u, \mathbf{1} \rangle} T^{\langle u, A \rangle} \bigg),
\end{equation}
where $C \subset \sigma_G$ is the cone spanned by $C'$ and
$e_\ell^*$. See Lemma~\ref{lem:cancellation}. The above equation \eqref{e:technicaltool} is a key technical tool underlying our strategy to show fakeness of poles, and it is analogous to a formula involving the local topological zeta function in \cite[Lemma 3.3]{ELT}.
For example, if $\Contrib(\alpha)$ consists of a single $B_1$ (and hence $\UB$) facet $F$ with apex $A$, then
 one can deduce an expression for $Z_{\for}(T)$ with no candidate pole at $\alpha$ by
 applying \eqref{e:technicaltool} with $C' = \sigma_G^\circ$, as $G$ varies over all faces of $G$ 
not containing $A$. This is analogous to approaches to showing fakeness of poles under certain assumptions
for the local topological zeta function in \cite{ELT,LemahieuVanProeyen11} and the local naive motivic zeta function in \cite[Theorem A]{Quek22}.

When we only assume that every face in $\Contrib(\alpha)$ is $B_1$, it may not be possible to extend the above approach. The key difficulty is that it may not be possible to choose a single base direction $e_\ell^*$ for all faces of $\Contrib(\alpha)$. 
In our case, we assume that all elements of $\Contrib(\alpha)$ are $\UB$. 
This assumption implies that 
we may choose base directions for elements of $\Contrib(\alpha)$
satisfying a natural 
compatibility condition:
to every face $G$ in $\Contrib(\alpha)$, we may assign a pair $(A_G, e_G^*)$ such that  $G$ is $B_1$ with apex $A_G$ and base direction $e_G^*$, and, if $G \subset G'$ and $A_{G} = A_{G'}$, then $e_G^* = e_{G'}^*$.
See Definition~\ref{def:operative} and Lemma~\ref{prop:operativeiff}. 

We now sketch the remainder of the proof, and refer the reader to Section~\ref{ssec:overview} for a more detailed overview.
We first fix a minimal element $M$ of $\Contrib(\alpha)$ and reduce to considering only elements of $\Contrib(\alpha)$ that contain $M$. See Section~\ref{ssec:NMdelta}. 
We then use the above compatibility condition to construct a fan with support $\R^n$ satisfying certain properties. 
See Section~\ref{ss:exist1} and Section~\ref{ss:exist2}.
In particular, we assign to every cone $\tau$ in the fan a coordinate vector $e_\tau^*$ such that, if $M \subset G$ and $\sigma_G \cap \tau \neq (0)$, then $G$ is a $B_1$-face with base direction $e_\tau^*$. In this sense, we locally choose a single base direction.
Then
$Z_{\for}(T)$ is the sum of all contributions $Z_{\for}(T)|_{\tau^\circ}$ as $\tau$ varies over all cones of the fan. Analogously to the case when $\Contrib(\alpha)$ is a single $B_1$-facet, we then intersect each cone $\tau$ with the dual fan to $\Newt(f)$ and repeatedly apply \eqref{e:technicaltool} to obtain an expression for $Z_{\for}(T)|_{\tau^\circ}$  with no candidate pole $\alpha$, allowing us to complete the proof of 
Theorem~\ref{thm:nopolesimplicial}. See Section~\ref{ss:s0compatible}.

\subsubsection{Beyond the simplicial case}
In this introduction, we have stated our main results under the assumption that $\Newt(f)$ is simplicial. However, both our arguments about eigenvalues and about poles are carried out in somewhat greater generality. In Section~\ref{sec:beyondsimplicial}, we state our most general result on the local motivic monodromy conjecture for nondegenerate singularities (Theorem~\ref{thm:optimalclass}), which is sufficient to prove the local motivic monodromy conjecture in all cases when $f$ is nondegenerate and $n=3$ (Theorem~\ref{thm:n=3}).

Finally, observe that $\exp(2 \pi i \alpha)$ appearing as 
a zero or pole of the monodromy zeta function implies that $\exp(2 \pi i \alpha)$ is an eigenvalue of monodromy, but the converse is not true.
When $\Newt(f)$ is simplicial and $X_f$ is nondegenerate, we prove that there is a set of candidate poles $\cP$ such that, for all $\alpha \in \mathcal{P} \smallsetminus \mathbb{Z}$, $\exp(2 \pi i \alpha)$ is a zero or pole of the monodromy zeta function at the generic point of some coordinate subspace $\A^I \subset X_f$.
This stronger statement about when certain monodromy zeta functions are sufficient to detect eigenvalues is not true when $\Newt(f)$ is not simplicial and $n \geq 4$. In such cases, there may be poles of local topological zeta functions such that 
$\exp(2 \pi i \alpha)$ appears as 
a zero or pole of the monodromy zeta function only at points 
along strata that are properly contained in coordinate subspaces.  See \cite[Example~7.5]{ELT}. For one combinatorial approach to detecting such eigenvalues,  
see \cite{Esterov21}.

\subsection{Notation}
We now set up some additional notation which we will use for the remainder. With the exception of Section~\ref{sec:beyondsimplicial}, the notation of the various sections is otherwise largely independent.

Let $F \subset \R^n_{\ge 0}$ be a rational polyhedron whose affine span 
does not contain the origin, and let $\Span(F)$ denote the linear span of $F$. 
Let $\psi_F$ be the unique $\Q$-linear function on $\Span(F)$ with value $1$ on $F$. The \emph{lattice distance} $\rho_F$ of $F$ from the origin is the smallest positive integer $\rho_F$ such that $\rho_F \psi_F$ is a $\Z$-linear function. 
If $F \subset G$ is an inclusion of such rational polyhedra, then $\rho_F$ divides $\rho_G$.

A face of $\Newt(f)$ is \emph{interior} if it meets $\mathbb{R}^n_{>0}$. The functions $\psi_F$, for $F$ a face of a  proper interior face of $\Newt(f)$,
assemble into a 
function $\psi$ on $\mathbb{R}^n_{\ge 0}$ that is piecewise linear with respect to $\Delta$.

For a polyhedral cone $C$, let $\partial C$ denote its boundary, defined to be the union of all faces of $C$ of dimension strictly less than $\dim C$.
Let $C^{\circ} = C \smallsetminus \partial C$ denote the relative interior of a polyhedral cone. A nonzero vector $v$ in $\mathbb{Z}^n$ is \emph{primitive} if it generates the group $\mathbb{R} v \cap \mathbb{Z}^n$. 
Recall that $C$ is \emph{simplicial} if it is a pointed cone generated by $\dim C$ rays. For a set of vectors $S$ in   $\mathbb{R}^n$, let $\langle S \rangle$ denote the cone that they span.

A \emph{geometric triangulation} of a simplex is a subdivision of a geometric simplex into a union of geometric simplices that meet along shared faces. 

For a positive integer $\ell$, we write $[\ell] = \{ 1,\ldots, \ell \}$. 

\medskip

\noindent \textbf{Acknowledgments.} We thank M. Musta\c{t}\u{a} and J. Nicaise for helpful conversations related to the monodromy conjectures, K. Karu for explaining aspects of the commutative algebra of local $h$-polynomials, and M. H. Quek for insightful comments on an earlier draft of this paper and telling us about Example~\ref{ex:ELTdim4}. We thank the referee for helpful comments. The work of ML is supported by an NDSEG fellowship, and the work of SP is supported in part by NSF grants DMS-2001502 and DMS-2053261.

\section{Examples}

\subsection{Basic examples}
Our first two examples are intended to serve as a guide to the main constructions in the paper. 
In these examples, $\Newt(f)$ is simplicial and $f$ is supported at the vertices of $\Newt(f)$, so $f$ is nondegenerate \cite{BrzostowskiOleksik16}.

Below, $\widetilde{E}(\cF_x) \in \Z[\Q/\Z]$ is an alternative encoding of $\frac{\zeta_{x}(t)}{1 - t}$ (see \eqref{eq:Etilde} in Section~\ref{sec:nearby}). The local formal zeta function $Z_{\for}(T)$ lies in a quotient ring of 
$\Z[L,L^{-1}][Y_K : K \textrm{ nonempty compact face of } \Newt(f)]\llbracket T \rrbracket$, 
where $L,T,Y_K$ are formal variables. See
Definition~\ref{d:formallocal}.

\begin{example}\label{e:cusp}
Let $f(x_1,x_2) = x_2^2 - x_1^3$. Then $X_f$ has an isolated cusp at $0$, and   
$\Newt(f)$ is convenient and has a unique compact facet $F$ with vertices $v =  (3,0)$ and $w = (0,2)$.
Note that $\alpha_F = -5/6$, and $F$ is not $B_1$.
Theorem~\ref{t:mainsimplicialeigenvalue} says that $\exp(2 \pi i \alpha_F)$ is an eigenvalue of monodromy for $f$ at $0$. 

Then $\Delta$ is the trivial fan, and  $\ell(\Delta,C;t)$ equals $1$ if $C = C_F$, and equals $0$ otherwise.
We have monodromy zeta function $\zeta_{0}(t) = \frac{1 - t}{1 - t + t^2}$, and 
$\widetilde{E}(\cF_0) = [1/6] + [5/6]$. 
The local formal zeta function is  
$$Z_{\for}(T)=  \frac{(L - 1) \left( Y_F  L^{-5}T^6 + Y_{v} L^{-2}T^{3}(1 + L^{-3}T^{3}) + Y_w L^{-1}T^{2}(1 + L^{-2}T^{2} + L^{-4}T^{-4}) \right)
}{1 - L^{-5}T^6}.$$
The local motivic zeta function $Z_{\mot}(T)$ is
$$\frac{(\L - 1) \big( \big(\frac{(\L - 1)\L^{-1}T}{1 - \L^{-1}T} + [Y_F(1)]\big)  \L^{-5}T^6 + [\mu_3] \L^{-2}T^{3}(1 + \L^{-3}T^{3}) + [\mu_2] \L^{-1}T^{2}(1 + \L^{-2}T^{2} + \L^{-4}T^{-4}) \big)
}{1 - \L^{-5}T^6},$$
where
$Y_F(1)$ is an elliptic curve minus $6$ points, 
with a free $\mu_6$-action, and 
$Y_F(1)/\mu_6$ is isomorphic to $\P^1$ minus $3$ points. 
After simplification, the local naive  motivic zeta function is
\[
\frac{ (\L - 1)(
\L^{-1}T^{2} - \L^{-4}T^{5} +  \L^{-4} T^{6}  - \L^{-6}T^7 ) }{(1 - \L^{-1}T)(1 - \L^{-5}T^6)}.
\]
For $p \notin \{2,3\}$, $f$ has good reduction mod $p$, and the local $p$-adic zeta function is 
$$Z_{(p)}(s) =   \frac{(p - 1)(
p^{5s + 5} - p^{2s + 2} + p^{s + 2}  -1 )}{(p^{s + 1} - 1)(p^{6s + 5} - 1)}.$$
The local topological zeta function is   $Z_{\t}(s) = \frac{(4s + 5)}{(s + 1)(6s + 5)}$.

\end{example}

\begin{example}\label{e:Whitney}
Let $f(x_1, x_2, x_3) = x_1^2 - x_2^2x_3$.
Then
$X_f$ is the Whitney umbrella. There are three coordinate subspaces contained in $X_f$:
$\A^{\emptyset} = \{0\}$, $\A^{\{ 2 \}} =  \{ x_1 = x_3 = 0 \}$  and 
$\A^{\{ 3 \}} =  \{ x_1 = x_2 = 0 \}$.
The singular locus of $X_f$ is $\A^{\{ 3 \}}$. In particular, $f$ does not have an isolated singularity at the origin and $\Newt(f)$ is not convenient. 
The only maximal compact face of $\Newt(f)$ is a $1$-dimensional face $F$ with vertices $v =  (2,0,0)$ and $w = (0,2,1)$. Note that $F$ is $\UB$ and $(1,1,1) \notin \Span(F)$.
There are two unbounded 
interior facets: 
$F_1 = \{ e_1^* + e_2^* = 2 \} = F + \R_{\ge 0} e_3$ and  $F_2 = \{ e_1^* + 2e_3^* = 2 \} = F + \R_{\ge 0} e_2$, with $F_1 \cap F_2 = F$.

Then $\alpha_{F_1} = -1$, and $F_1$ is not $B_1$.
Theorem~\ref{t:mainsimplicialeigenvalue} predicts that $\exp(2 \pi i \alpha_{F_1}) = 1$ is a nearby eigenvalue of monodromy for reduced cohomology. 
Also, $\alpha_{F_2} = -3/2$, and  $F_2$ is $\UB$.
Then Theorem~\ref{thm:nopolesimplicial} predicts that there is a set of candidate poles for $Z_{\mot}(T)$ not containing $-3/2$. In this particular case,  the corresponding candidate eigenvalue $\exp(2 \pi i \alpha_{F_2}) = -1$ is a nearby eigenvalue  of monodromy.

The fan $\Delta$ has two maximal cones $C_{F_1}$ and $C_{F_2}$ intersecting in a unique interior 
$2$-dimensional face $C_F$.  
We have 
\[
\ell(\Delta,C;t) = \begin{cases}
1 & \textrm{ if } C = C_{F_1} \textrm{ or } C = C_{F_2}, \\
1 + t & \textrm{ if } C = C_{F}, \\
0 & \textrm{ otherwise. } \\
\end{cases}
\]
The monodromy zeta function at a general point $x_I$ of each coordinate subspace $\A^I$ is given by:
$\zeta_{0}(t) = (1 - t)(1 + t)$, $\zeta_{x_{\{ 2 \}}} = 1 - t$,  $\zeta_{x_{\{ 3 \}}} = 1$. 
Then $\prod_{\A^I \subset X_f} \left( \frac{\zeta_{x_I}(t)}{1 - t}\right)^{(-1)^{n-1-|I|}} = 1 - t^2.$  
Equivalently,
$\widetilde{E}(\cF_0) = [1/2]$, $\widetilde{E}(\cF_{x_{\{ 2 \}}}) = 0$, 
$\widetilde{E}(\cF_{x_{\{ 3 \}}}) = -1$, and 
$\sum_{\A^I \subset X_f} (-1)^{n - 1 - |I|} \widetilde{E}(\cF_{x_I}) = 1 + [1/2]$.
The local formal zeta function is  
$$Z_{\for}(T)=  \frac{L^{-3} T^2 (L - 1)^3  ( Y_{v} (1 - L^{-1}T) + L^{-1}T(1 - L^{-1}))}{(1 - L^{-1})^2(1 - L^{-1}T)(1 - L^{-2} T^{2})}.$$
The local motivic zeta function is
$$Z_{\mot}(T)=  \frac{\L^{-3} T^2 (\L - 1)^3  ( [\mu_2] (1 - \L^{-1}T) + \L^{-1}T(1 - \L^{-1}))}{(1 - \L^{-1})^2(1 - \L^{-1}T)(1 - \L^{-2} T^{2})}.$$
After simplifying, the local  naive motivic zeta function is 
\[
\frac{\L^{-1} T^2 (\L - 1) (  1 - \L^{-2}T)}{(1 - \L^{-1}T)(1 - \L^{-2} T^{2})}.
\]
For $p \neq 2$, $f$ has good reduction mod $p$, and the local $p$-adic zeta function is 
$$Z_{(p)}(s) =   \frac{(p - 1)(
p^{s + 2} - 1 )}{(p^{s + 1} - 1)^2(p^{s + 1} + 1)}.$$
The local topological zeta function is   $Z_{\t}(s) = \frac{(s + 2)}{2(s + 1)^2}$.

\end{example}

\subsection{Counterexamples}

We present counterexamples to Conjecture~1.3,  Proposition~3.7, and Lemma~5.18 of \cite{ELT} and show that the answer to \cite[Question 5.1.8]{Quek22} is negative. Example~\ref{ex:ELTdim4} shows that \cite[Conjecture~1.3]{ELT} is wrong already for $n = 4$.

The polyhedral computations in these examples were done using polymake \cite{polymake:2000}, and the computation of the zeta functions can be verified using the Sage code of \cite{ViuSos}.

\begin{example}\label{ex:ELTconj}
Let 
$$f(x_1, x_2, x_3, x_4, x_5, x_6) = x_1^8 + x_2^5 + x_3^{24} + x_4^{13} + x_5^{17} + x_6^{14} + x_3x_5x_6 + x_2^3x_4 + x_4x_5x_6 + x_1x_3^2x_4x_6.$$
Then $f$ is a nondegenerate polynomial with an isolated singularity at $0$ whose Newton polyhedron is simplicial and convenient, with sixteen compact facets and ten vertices. There are five compact facets containing the face with vertices $\{(8, 0, 0, 0, 0, 0), (0, 5, 0, 0, 0, 0), (0, 0, 1, 0, 1, 1), (0, 3, 0, 1, 0, 0)\},$
each of which contributes the candidate pole $-69/40$. All of these facets are $B_1$, and no other facets contribute $-69/40$. Two of these facets are obtained by adding either $\{(0, 0, 0, 0, 0, 14), (0, 0, 0, 1, 1, 1)\}$ or $\{(0, 0, 0, 0, 0, 14), (1, 0, 2, 1, 0, 1)\}$ to the above face. The existence of these two facets implies that the condition in \cite[Conjecture 1.3]{ELT} is not satisfied, so the conjecture predicts that $\exp(2 \pi i (-69/40))$ is an eigenvalue of monodromy. But this is not one of the $1912$ eigenvalues of monodromy at the origin. 

The local topological zeta function of $f$ is
$Z_{\t}(s) = -\frac{6142656 s^3 - 2948088 s^2 - 93769198 s - 115234075}{17 (s + 1) (104 s + 157)^2 (168 s + 275)},$
which does not have $-69/40$ as a pole. One can deduce from Theorem~\ref{thm:nopole} that there is a set of candidate poles for $Z_{\mot}(T)$ which does not contain $-69/40$. 
\end{example}

\begin{example}\label{ex:ELTdim4}
Let
$$f(x_1, x_2, x_3, x_4) = x_1^{10} + x_2^{5} + x_3^6 + x_4^{10} + x_2x_4^2 + x_2x_3 + x_1x_2 + x_1x_3.$$
Then $f$ is a nondegenerate polynomial with an isolated singularity at the origin whose Newton polyhedron is
simplicial and convenient, with six compact facets and eight vertices. There are two compact facets containing the face with vertices $\{(0, 0, 0, 10), (0, 1, 0, 2), (1, 0, 1, 0)\}$, each of which contributes the candidate pole $-19/10$. 
Both of these facets are $B_1$, and they are obtained by adding either $(0, 1, 1, 0)$ or $(1, 1, 0, 0)$.
No other facets contribute $-19/10$.
 The condition in \cite[Conjecture 1.3]{ELT} is not satisfied, so their conjecture predicts that $\exp(2 \pi i (-19/10))$ is an eigenvalue of monodromy. However, the only eigenvalues of monodromy are $\{1, \pm i\}$. This contradicts the claim in \cite{ELT} that they have proven \cite[Conjecture 1.3]{ELT} for polynomials in four variables. 

The local topological zeta function of $f$ is 
$Z_{\mathrm{top}}(s) = \frac{s + 7}{(s+1)(4s + 7)},$
which does not have $-19/10$ as a pole. One can deduce from Theorem~\ref{thm:nopolesimplicial} that there is a set of candidate poles for $Z_{\operatorname{mot}}(T)$ which does not contain $-19/10$.
We thank M. H. Quek for informing us that there are counterexamples to \cite[Conjecture 1.3]{ELT} in four variables. 
\end{example}

\begin{example}\label{ex:QuekConj}
Let 
$$f(x_1, x_2, x_3, x_4, x_5) = x_1^{21} + x_2^{22} + x_3^{24} + x_4^{6} + x_5^{12} + x_1x_2 + x_2x_3^6x_5^5 + x_3 x_4x_5 + x_3^2x_5^9.$$
Then $f$ is a nondegenerate polynomial with an isolated singularity at the origin whose Newton polyhedron is simplicial and convenient, with ten compact facets and nine vertices. Every compact facet contains the face with vertices $\{(1, 1, 0, 0, 0), (0, 0, 1, 1, 1)\}$, and the candidate pole of every facet is $-2$. These ten facets have a choice of compatible apices in the sense of \cite[Definition 5.1.5]{Quek22}, so \cite[Question 5.1.8]{Quek22} predicts that $\{-1\}$ is a set of candidate poles for the local naive  motivic zeta function. 

The local topological zeta function of $f$ is $ Z_{\t}(s) = \frac{7 s^2 + 45 s + 96}{24 (s + 1) (s + 2)^2}$, so any set of candidate poles for the  local naive motivic zeta function contains $-2$. When $n=6$, there are counterexamples to \cite[Question 5.1.8]{Quek22} whose candidate pole is not an integer. 
\end{example}

\begin{example}\label{ex:ELTprop}
In \cite[Definition 3.1]{ELT}, the authors give a different definition of a $B_1$-facet when the facet is non-compact. They say that a facet $F$ with 
$\Unb(C_F) \neq \emptyset$
is a \emph{$B_1$-facet of non-compact type} if the image $\overline{F}$ of $F$ under the projection $\mathbb{R}^n \to
\mathbb{R}^n/\langle \Unb(C_F) \rangle$
 is a $B_1$-facet. Then \cite[Proposition 3.7]{ELT} claims that if a pole $\alpha \not= -1$ is contributed only by a single $B_1$-facet, then $\alpha$ is not a pole of $Z_{\t}(s)$. Consider the nondegenerate polynomial
$$f(x_1, x_2, x_3, x_4) = x_1x_3 + x_2x_3 + x_2x_4^5 + x_4^6.$$
Then $\Newt(f)$ has four vertices and eight facets, one of which is compact. There is a $B_1$-facet $F$ of non-compact type with vertices $\{(1, 0, 1, 0), (0, 1, 1, 0), (0, 1, 0, 5)\}$ and $\Unb(C_F) = \{ e_1, e_2 \}$ 
whose candidate pole is $-6/5$. It is the only facet whose candidate pole is $-6/5$, so \cite[Proposition 3.7]{ELT} claims that $-6/5$ is not a pole of $Z_{\t}(s)$. In fact, $Z_{\t}(s) = \frac{6}{(s + 1) (5 s + 6)}$. 

At the origin, the monodromy zeta function is $1$. The singular locus of $X_f$ is the set of points of the form $\{(c, -c, 0, 0)\}$. At any $c \not= 0$, the monodromy zeta function is equal to $ -(t - 1) (t^4 + t^3 + t^2 + t + 1)$, so $\exp(2 \pi i (-6/5))$ is an eigenvalue of monodromy. 
\end{example}

\begin{example}\label{ex:ELT518}
Let
$$f(x_1, x_2, x_3, x_4) = x_1^{45} + x_2^{45} + x_3^{23} + x_4^{18} + x_1x_4 + 2x_2x_4.$$
Then $f$ is a nondegenerate polynomial with an isolated singularity at the origin whose Newton polyhedron is convenient, with two compact facets and six vertices. The vertices 
$$\{(45, 0, 0, 0), (0, 45, 0, 0), (0, 0, 23, 0), (1, 0, 0, 1), (0, 1, 0, 1)\}$$
form a facet $F$. Then $F$ is not a ``$B$-facet'' in the sense of \cite[Definition 3.10]{ELT}, but any four affinely independent vertices form a $B_1$-facet. This contradicts \cite[Lemma 5.18]{ELT}. See \cite{Selyanin} for a revised classification of $B$-facets for $n = 4$. 

Observe that $F$ is the unique facet with candidate pole 
$-1103/1035$. 
The topological zeta function is $Z_{\mathrm{top}}(s) = \frac{67s + 1103}{(s + 1)(1035s + 1103)}$, so $-1103/1035$ is a pole of the topological zeta function. The lattice point $(1, 44, 0, 0) = 1/45 (45, 0, 0, 0) + 44/45 (0, 45, 0, 0)$ is contained in $F$, and the lattice simplex with vertices
$$\{(1, 44, 0, 0), (0, 0, 23, 0), (1, 0, 0, 1), (0, 1, 0, 1)\}$$
is contained in $F$ and is not $B_1$. This implies that $F$ is not pseudo-$\UB$ (see Definition~\ref{d:pseudoUB1} below), and so $\exp(2 \pi i (-1103/1035))$ is an eigenvalue of monodromy by Theorem~\ref{thm:optimalclass}(1). 
\end{example}

\section{A nonnegative formula for nearby eigenvalues} \label{sec:nearby}

Here and throughout, $f \in \k[x_1, \ldots, x_n]$ is a nondegenerate polynomial that vanishes at $0$.  
In this section, we do not assume that $\Newt(f)$ is simplicial. 
For a geometric point $x$ in the hypersurface $X_f$, we write $\widetilde{m}_x(\alpha)$ to denote the multiplicity of $\exp(2 \pi i \alpha)$ in the virtual representation $\widetilde{\chi}(\cF_x) := \sum_i (-1)^i \widetilde{H}^i(\cF_x,\C)$,  where $\widetilde{H}$ denotes reduced cohomology. We define 
\begin{equation} \label{eq:Etilde}
\widetilde{E}(\cF_x)  := \sum_{[\alpha] \in \Q/\Z}
\widetilde{m}_x(\alpha) [\alpha]   \in \Z[\Q/\Z], 
\end{equation}
where $\Z[\Q/\Z]$ is the group algebra of $\Q/\Z$. 
For example, when $\Newt(f)$ is convenient, then $X_f$ has an isolated singularity at the origin \cite[1.13(ii)]{Kouchnirenko76}.  In this case, 
the Milnor fiber $\cF_0$ has the homotopy type of a wedge sum of $(n - 1)$-dimensional spheres \cite{Milnor68}, and 
$\exp(2 \pi i \alpha)$ is a nearby eigenvalue for reduced cohomology if and only if the coefficient $\widetilde{m}_0(\alpha)$ is nonzero.

Note that $\widetilde{E}(\cF_x)$ encodes information equivalent to that in the monodromy zeta function $\zeta_{x}(t)$.  This alternative notation is useful 
when studying how the monodromy action interacts with finer invariants of the cohomology of the Milnor fiber, such as its mixed Hodge structure \cite[Section~6.3]{Stapledon17}.  
The additive structure of $\widetilde{E}(\cF_x)$ and the restriction to reduced cohomology are also more natural from a combinatorial perspective: it aligns with standard formulas for local $h$-polynomials and thereby leads to the nonnegative formula for nearby eigenvalues that we prove here.

Given $\beta \in \Q$, let $D(\beta) \in \Z_{> 0}$ be the denominator of $\beta$, 
written as a reduced fraction. Fix  $M \in \Z_{> 0}$, and
consider the following $\Z$-module homomorphism:
\[
\Psi_M \colon \Z[\Q/\Z] \to \Z[\Q/\Z], \text{ given by } 
\Psi_M([\beta])  =  \begin{cases}
[\beta] &\textrm{ if } M \textrm{ divides } D(\beta), \\
0   &\textrm{ otherwise. }
\end{cases}
\]
For example, when $M = 1$, then $\Psi_M$ is the identity map.

Let $x_I$ denote a 
general point in the coordinate subspace $\A^I \subset \A^n$ for $I \subset [n]$. Under appropriate conditions, the main result of this section is a formula with nonnegative integer coefficients for
\[
\Psi_M \left( \sum_{\A^I \subset X_f} (-1)^{n - 1 - |I|}
\widetilde{E}(\cF_{x_I}) \right).
\]
For example, when $\Newt(f)$ is simplicial or convenient, we will see that we obtain a nonnegative formula when $M=1$ (see Remark~\ref{r:simplicialconvenient}). 
The set of coordinate subspaces contained in $X_f$ depends only on $\Newt(f)$.  
Explicitly, $\A^I \subset X_f$ if and only if $\R^I_{\ge 0} \cap \Newt(f) = \emptyset$.

\subsection{The local $h$-polynomial}

Let $\Delta'$ be a simplicial fan with support $\R^n_{\ge 0}$. 
If we forget the lattice structure of the fan $\Delta'$, we may view $\Delta'$ as encoding a triangulation of a simplex, e.g., by slicing with a transverse hyperplane.
 For $C'$ a cone in $\Delta'$, let $\sigma(C')$ be the smallest face of the cone $\R^n_{\ge 0}$ containing $C'$. Set
 $$e(C') := \dim \sigma(C') - \dim C'.$$
We use the following definition of the local $h$-polynomial (see, for example, \cite[Lemma~4.12]{KatzStapledon16}).

\begin{definition}\label{d:localdef}
Let $\Delta'$ be a simplicial fan with support $\R^n_{\ge 0}$, and let $C'$ be a cone in $\Delta'$. 
Then the local $h$-polynomial $\ell(\Delta', C'; t)$ is defined as
\[
\ell (\Delta',C';t) := \sum_{\substack{ C' \subset C \in \Delta'}} (-1)^{\codim(C)} t^{\codim(C') - e(C)} (t - 1)^{e(C)}.
\]
\end{definition}
The local $h$-polynomial has several important properties. Most important for our purposes is that its coefficients are nonnegative integers 
\cite{Athanasiadis12b}.
We refer the reader to \cite{KatzStapledon16} for more details and a more general setting. Observe that
\begin{equation}\label{e:specializelocalreplace}
\ell (\Delta',C';1) := \sum_{\substack{ C' \subset C \in \Delta' \\ e(C) = 0}} (-1)^{\codim(C)}.
\end{equation}

Recall that $\Delta$ denotes the fan over the faces of $\Newt(f)$. The following definition extends the notions of $\gens(C)$,$\ver(C)$ and $\Unb(C)$, for $C$ a cone in $\Delta$. 

\begin{definition}
 Let $C'$ be a polyhedral 
 cone contained in a cone of $\Delta$, such that every ray of $C'$ not in $\Delta$ 
  intersects $\partial \Newt(f)$ at a lattice point.
    Let $\gens(C') = \ver(C') \cup \Unb(C')$ be the set of distinguished lattice point generators of the rays of $C'$ defined as follows: 
$$\ver(C') = \{ w \in \Z^n : \{ w \} = r \cap \partial \Newt(f) \textrm{ for some ray } r \textrm{ of } C' \}, \text{ and}$$ 
$$\Unb(C') = \{ e_i : e_i \in C', \R_{\ge 0} e_i \cap \Newt(f) = \emptyset \}.$$
\end{definition}

When $C'$ is simplicial, we need the following  definition.

\begin{definition}\label{d:boxgeneral}
 Let $C'$ be a simplicial cone contained in a cone of $\Delta$, such that every ray of $C'$ not in $\Delta$ 
  intersects $\partial \Newt(f)$ at a lattice point. Let $\gens(C') = \{w_1,\ldots,w_r \}$, and define finite sets 
$\BBox_{C'}^\circ$
and $\BBox_{C'}$ as follows:  
\[
\BBox_{C'}^\circ := 
\Big\{ w \in \Z^n : 
w = \sum_{i = 1}^r \lambda_i w_i, 0 < \lambda_i < 1 \Big\} \mbox{ \ \ and \ \ } \BBox_{C'} 
= 
\Big\{ w \in \Z^n : 
w = \sum_{i = 1}^r \lambda_i w_i, 0 \leq \lambda_i < 1 \Big\}.
\]
When $C' = \{0\}$, then $\BBox_{C'}^{\circ} = \BBox_{C'} = \{0\}$. 
\end{definition}

\noindent Note that 
$\BBox_{C'} = \cup_{C \subset C'} \BBox_{C}^\circ$,
and 
$\BBox_{C'} = \{ 0 \}$ if $\gens(C') \subset \{ e_1,\ldots, e_n \}$.

Recall that if $F \subset \R^n_{\ge 0}$ is a rational polyhedron whose affine span $\aff(F)$ does not contain the origin, we write $\rho_F$ for the lattice distance of $F$ to the origin. 
If $F$ is a lattice polytope, then 
we may consider the \emph{normalized volume} $\Vol(F) \in \Z_{> 0}$ of $F$, i.e., 
the Euclidean volume on $\aff(F)$ scaled such that the volume of a unimodular lattice simplex is $1$. 
When $F = \emptyset$, $\rho_F = \Vol(F) = 1$. 
We will need the following basic lemma.

\begin{lemma}\label{l:normalizedvolume}
 Let $C'$ be a simplicial cone contained in a cone of $\Delta$ such that every ray of $C'$ not in $\Delta$ 
  intersects $\partial \Newt(f)$ at a lattice point.
Let $F$ be the convex hull of the elements of $\gens(C')$, and let $\phi$ be the linear function on $C'$ with value $1$ on $F$.
Then
\[
\sum_{w \in \BBox_{C'}} [-\phi(w)] = \Vol(F) \sum_{i = 0}^{\rho_F - 1} [i/\rho_F].
\]
\end{lemma}
\begin{proof}
The result follows from the fact that $\phi$ induces a group homomorphism:
\[
\widetilde{\phi} \colon (\Span(C') \cap \Z^n)/(\Z w_1 + \cdots + \Z w_r) \to \Q/\Z, 
\] 
where the domain is a finite set in bijection with $\BBox_{C'}$, 
$\Ima(\widetilde{\phi}) = \frac{1}{\rho_F}\Z/\Z$ and 
$|\ker(\widetilde{\phi}) | = \Vol(F)$. 
See also \cite[Examples~4.12-4.13]{Stapledon17}.
\end{proof}

\subsection{Nearby eigenvalues along coordinate subspaces} 
We now state our nonnegative formula for the multiplicity of nearby eigenvalues along coordinate subspaces. We first introduce some notation.

Recall that  $\psi$ is the unique piecewise linear function on $\mathbb{R}^n_{\ge 0}$ with value $1$ on all interior faces of $\partial \Newt(f)$.
Let $C$ be a cone in $\Delta$. If $C = C_F$ for some face $F$ of $\Newt(f)$, then we set $\rho_C := \rho_F$ to be the lattice distance of $F$ to the origin. Otherwise, we set $\rho_C := 1$. 
Equivalently, $\rho_C$  is the smallest positive integer such that $\rho_C \psi|_C$ is the restriction of a $\Z$-linear function on $\Span(C)$. 
Observe that if $C \subset \widetilde{C} \in \Delta$, then $\rho_C$ divides $\rho_{\widetilde{C}}$.

Fix $M \in \Z_{> 0}$. Let $\Delta_{M}$ be the (possibly empty) subfan of $\Delta$ consisting of all maximal cones $C$ in $\Delta$ such that $M$ divides $\rho_C$, together with all the faces of $C$.
Observe that
if $C$ is a cone in $\Delta$ and $M$ divides $\rho_C$, then all cones in $\Delta$ containing $C$ lie in $\Delta_M$.

If $\Delta'$ is a fan refining $\Delta$, 
 let $\Delta'_M$ denote the restriction of 
$\Delta'$ to $\Delta_M$. Given a cone $C'$ in $\Delta'$ contained in a cone of $\Delta$, let $\tau(C')$ denote the smallest cone in $\Delta$  containing $C'$. 

\begin{theorem}\label{t:nonnegativeVarchenko}
Assume that $f$ is nondegenerate.
Let $M \in \Z_{>0}$ and let $\Delta'$ be a simplicial fan refining $\Delta$. 
Assume that
every ray of $\Delta' \smallsetminus \Delta$
 intersects the boundary of  $\Newt(f)$ at a lattice point, 
and $\Unb(C') = \Unb(\tau(C'))$ 
for all $C'$ in $\Delta'_M$.
Then 
\begin{equation}\label{e:nonnegativeVarchenkoreplace}
\Psi_{M} \left( \sum_{\A^I \subset X_f} (-1)^{n - 1 - |I|}
\widetilde{E}(\cF_{x_I}) \right)
= 
\Psi_{M} \left(
\sum_{  C' \in \Delta' }  
\ell(\Delta', C'; 1)
\sum_{ w \in \BBox_{C'}^\circ}  [-\psi(w)] \right).
\end{equation}
\end{theorem}

\begin{remark}\label{r:simplicialconvenient}
We consider two important special cases when such a $\Delta'$ exists when $M=1$. Firstly, if $\Newt(f)$ is simplicial, then the hypotheses of the theorem hold with $\Delta' = \Delta$. Secondly, if $\Newt(f)$ is convenient, then there exists a simplicial fan $\Delta'$ that refines $\Delta$ and has the same rays as $\Delta$.
In this case, as $\Unb(C') = \Unb(\tau(C')) = \emptyset$ for all $C'$ in $\Delta'$, the hypotheses of the theorem hold.

As a simple example where the hypotheses fail, suppose there exists $C \in \Delta$ with $\dim C = 3$
and $|\ver(C)| = |\Unb(C)| = 2$. 
Then there is no simplicial refinement of $C$ in which all maximal cones contain $\Unb(C)$. 
\end{remark}

\begin{remark}\label{r:monodromyzetaformula}
When $M = 1$, \eqref{e:nonnegativeVarchenkoreplace}
 can be restated in terms of monodromy zeta functions at $x_I$, as follows:
\[
\prod_{\A^I \subset X_f} \left( \frac{\zeta_{x_I}(t)}{1 - t}\right)^{(-1)^{n-1-|I|}} = \prod_{C' \in \Delta'} \prod_{w \in \BBox_{C'}^\circ} (1 - \exp(-2 \pi i \psi(w) )t)^{\ell(\Delta', C'; 1)}.
\]
For the remainder, we work in terms of $\widetilde E(\cF_{x_I})$ rather than the corresponding monodromy zeta function.
\end{remark}

We now give three examples which show that the sum $\sum_{\A^I \subset X_f} (-1)^{n - 1 - |I|}
\widetilde{E}(\cF_{x_I})$ appearing in Theorem~\ref{t:nonnegativeVarchenko} can fail to detect nearby eigenvalues and  can have strictly negative coefficients. 
\begin{example}\label{ex:ELTprop2}
In Example~\ref{ex:ELTprop}, consider the facet $F$ with candidate pole $\alpha = -6/5$. It was shown that 
 there exists   $x \in X_f$ arbitrarily close to the origin such that $\widetilde{m}_x(\alpha)$ is nonzero. 
On the other hand, $X_f$ is smooth at $x_I$ for $I \neq \emptyset$, so 
 $\sum_{\A^I \subset X_f} (-1)^{n - 1 - |I|}
\widetilde{E}(\cF_{x_I}) = -\widetilde{E}(\cF_{0}) = 1$.  In particular, $[-\alpha]$ does not appear. 
 See also Example~\ref{ex:ELTprop3} below.

\end{example}

\begin{example}\label{e:ELT}
The following example appeared in \cite[Example~7.4]{ELT}. 
Consider the nondegenerate polynomial $f(x_1,x_2,x_3,x_4) = x_3^6 + x_2^4x_3^5 + (x_1^2 + x_4^2)x_2^{13}x_3^2$.
Let $F \subset \R^4$ be the $3$-dimensional lattice simplex with vertices
$w_1 = (0,0,6,0)$, 
$w_2 = (0,4,5,0)$, 
$w_3 = (2,13,2,0)$,
$w_4 = (0,13,2,2)$. Then $F$ is the unique compact facet of $\Newt(f)$, and it has candidate pole $\alpha = -1/3$. 
The authors showed that 
$\widetilde{E}(\cF_0) = -(\sum_{0 \le i < 24} [i/24] - \sum_{1 \le i < 6} [i/6])$, 
 and that $[\alpha]$ does not appear in $\widetilde{E}(\cF_{x_I})$  for a general point $x_I$ in any $\A^I \subset X_f$, but that there exists   $x \in X_f$ arbitrarily close to the origin such that $\widetilde{E}(\cF_{x})$ contains $[\alpha]$. We have 
\[
(-1)^{n - 1 - |I|}\widetilde{E}(\cF_{x_I}) = \begin{cases}
\sum_{0 \le i < 24} [i/24] - \sum_{1 \le i < 6} [i/6] &\textrm{if } I = \emptyset \\
\sum_{1 \le i < 5} [i/5] &\textrm{if } I = \{ 2 \} \\
-\sum_{0 \le i < 78} [i/78] + \sum_{1 \le i < 6} [i/6] &\textrm{if } I = \{ 1 \} \textrm{ or } \{ 4 \} \\
\sum_{0 \le i < 78} [i/78] - \sum_{1 \le i < 6} [i/6] &\textrm{if } I = \{ 1 , 4 \}  \\
-[1/2] &\textrm{if } I = \{ 1, 2 \} \textrm{ or } \{ 2, 4 \} \\
[1/2] &\textrm{if } I = \{ 1, 2 , 4 \}.
\end{cases}
\]
In particular, $\sum_{\A^I \subset X_f} (-1)^{n - 1 - |I|}
\widetilde{E}(\cF_{x_I})$ has strictly negative integer coefficients.  See also Example~\ref{e:ELT2} below.

\end{example}

\begin{example}
Consider $f$ as a polynomial in $n + 1$ variables, i.e.,  $f \in \k[x_1, \ldots, x_n] \subset \k[x_1, \ldots, x_n, x_{n + 1}]$. Then one can verify that both sides of \eqref{e:nonnegativeVarchenkoreplace} are identically zero. Geometrically, if $I \subset \{1, \dotsc, n\}$ and $I' = I \cup \{n + 1\}$, then $(-1)^{n - 1 - |I|} \widetilde{E}(\mathcal{F}_{x_I}) + (-1)^{n - 1 - |I'|}\widetilde{E}(\mathcal{F}_{x_{I'}}) = 0$.
\end{example}

We will deduce Theorem~\ref{t:nonnegativeVarchenko} from Varchenko's formula for the monodromy zeta function of a nondegenerate singularity \cite[Theorem~4.1]{Varchenko76}.
First, recall that $\rho_C$ is the lattice distance from the origin to $F$ if $C = C_F$, and is $1$ otherwise. 
Recall that $\Delta_M$ is the (possibly empty) subfan of $\Delta$ consisting of all maximal cones $C$ in $\Delta$ such that $M$ divides $\rho_C$, together with all the faces of $C$.
\begin{lemma}\label{l:conesnotinDeltaM}
Let $M \in \Z_{>0}$. 
If $C$ is a cone in $\Delta \smallsetminus \Delta_M$, then 
$\Psi_M([i/\rho_C]) = 0$ for any $i \in \Z$. In particular, $\Psi_M([-\psi(w)]) = 0$ for all $w \in C \cap \Z^n$. 
\end{lemma}
\begin{proof}
If $\Psi_M([i/\rho_C]) \neq 0$, then $M$ divides $D(i/\rho_C)$. 
Therefore for $\widetilde{C}$ a maximal cone in $\Delta$ containing $C$, $M$ divides $\rho_{\widetilde{C}}$ and hence $C \in \Delta_M$, a contradiction. The second statement follows since the restriction of $\rho_C \psi$ to $C$ is the restriction of a $\Z$-linear function on $\Span(C)$ for all cones $C$ in $\Delta$. 
\end{proof}

Observe that when $\Delta_M$ is empty, the proposition below states that 
$\Psi_{M} ( \widetilde{E}(\cF_{0})) = 0$.

\begin{proposition} \label{p:varchenko}
Let $M \in \Z_{>0}$ and let $\Delta'_M$ be a simplicial fan refining $\Delta_M$. 
Assume that
every ray of $\Delta'_M \smallsetminus \Delta_M$ intersects the boundary of $\Newt(f)$ at a lattice point, 
and $\Unb(C') = \Unb(\tau(C'))$ 
for all $C'$ in $\Delta'_M$.
Then 
\[
\Psi_{M} ( \widetilde{E}(\cF_{0}) )
=   \sum_{\substack{ C' \in \Delta'_M \\ \Unb(C') = \emptyset  \\ e(C') = 0}} (-1)^{\dim C' + 1}  
\Psi_{M} \left( \sum_{w \in \BBox_{C'}} [-\psi(w)] \right).
\]
\end{proposition}
\begin{proof}
Suppose that $C$ in $\Delta$ satisfies $\Unb(C) = \emptyset$. Then  $C = C_F$ for some bounded face $F$ of $\Newt(f)$, and we may set $\Vol(C) := \Vol(F)$. For example, when $C = \{ 0 \}$, then $F = \emptyset$ and $\Vol(C) = 1$. 
With this notation, Varchenko's formula for the monodromy zeta function  \cite[Theorem~4.1]{Varchenko76} states that
\begin{equation}\label{e:varchenko}
\widetilde{E}(\cF_{0}) 
= \sum_{\substack{ C \in \Delta \\ \Unb(C) = \emptyset \\ e(C) = 0}} (-1)^{\dim C + 1}  \Vol(C) \sum_{i = 0}^{\rho_C - 1} [i/\rho_C].
\end{equation}

Let $S = \{C \in \Delta_M : \Unb(C) = \emptyset, e(C) = 0 \}.$
Applying $\Psi_M$ to both sides of the above equation, and using Lemma~\ref{l:conesnotinDeltaM}, we obtain the equation

\begin{equation*}
\Psi_M ( \widetilde{E}(\cF_{0}) )
= \sum_{C \in S} (-1)^{\dim C + 1}  \Vol(C) \Psi_M \left( \sum_{i = 0}^{\rho_C - 1} [i/\rho_C] \right).
\end{equation*}

Let $C \in S$ and set $S'_C = \{ C' \in \Delta'_M : \tau(C') = C, \dim C' = \dim C \}$. Let $C' \in S'_C$.
By assumption, $\Unb(C') = \Unb(C) = \emptyset$.
Then
 $C = C_F$ for some lattice polytope $F$, and 
 $C'$ is the cone over a lattice polytope $G \subset F$. Define $\Vol(C') := \Vol(G)$ and $\rho_{C'} := \rho_{G}$.  Note that $\dim C' = \dim C$ implies that $\rho_{C'} = \rho_{C}$.
By the additivity of normalized volume, we have 
\[
\Psi_M ( \widetilde{E}(\cF_{0}) )
= \sum_{C \in S} (-1)^{\dim C + 1} \Psi_M \left( \sum_{i = 0}^{\rho_{C} - 1} [i/\rho_{C}] \right) \sum_{C' \in S'_C} \Vol(C').
\]
Since $\dim C' = \dim C$, the condition $e(C') = 0$ is equivalent to the condition $e(C) = 0$. Let $S' = \{C' \in \Delta'_M : \Unb(C') = \emptyset, e(C') = 0 \}.$
Then rearranging the above equation gives
\[
\Psi_M ( \widetilde{E}(\cF_{0}) )
= \sum_{C' \in S'} (-1)^{\dim C' + 1} \Psi_M \left( \Vol(C')   \sum_{i = 0}^{\rho_{C'} - 1} [i/\rho_{C'}]     \right) \\
\]
By Lemma~\ref{l:normalizedvolume}, we obtain our desired result.
\end{proof}

We also need the following remark. 
Given $c = (c_1,\ldots,c_n) \in \k^n$, let 
$f_c(x_1,\ldots,x_n) := f(x_1 + c_1,\ldots,x_n + c_n)$.
Consider a coordinate subspace $\A^I \subset X_f$, and a general point $x_I$ in $\A^I$. 
Let $J = [n] \smallsetminus I$, and consider the projection map
$\pr_{J} \colon \R^n \to \R^{J}$ and the polyhedron $\Newt(f)_J := \pr_{J}(\Newt(f)) \subset \R^{J}$.
Let $g$ be a nondegenerate polynomial with Newton polyhedron $P$ and Milnor fiber $\widehat{\cF}_{0}$ at the origin. By \cite[Theorem~4.1]{Varchenko76}, $\widetilde{E}(\widehat{\cF}_{0})$ depends only on $P$, and not on the choice of $g$. We set $$\widetilde{E}(P) := \widetilde{E}(\widehat{\cF}_{0}).$$ 

\begin{remark}\label{r:corofproofv3}
With the notation above,
$\cF_{x_I}$ is the Milnor fiber of $f_{x_I}$ at the origin. It follows from \cite[Proposition~7.2]{ELT} and its proof that $f_{x_I}$ is nondegenerate with Newton polyhedron equal to $\Newt(f)_J \times \R_{\ge 0}^{I}$.
Then 
\begin{equation}\label{e:reducecompact}
\widetilde{E}(\cF_{x_I}) = \widetilde{E}(\Newt(f)_J \times \R_{\ge 0}^I) = \widetilde{E}(\Newt(f)_J).
\end{equation}
We deduce that if the coefficient of $[\alpha]$ in $\widetilde{E}(\Newt(f)_J)$ is nonzero for some such choice of $I$, then $\exp(2 \pi i \alpha)$ is a nearby eigenvalue of monodromy (for reduced cohomology). 
\end{remark}
\noindent
Recall that for a set of vectors $S$, $\langle S \rangle$ denotes the cone that they span. We now prove Theorem~\ref{t:nonnegativeVarchenko}. Our strategy is to apply Proposition~\ref{p:varchenko} to each coordinate projection of $\Newt(f)$. 

\begin{proof}[Proof of Theorem~\ref{t:nonnegativeVarchenko}]
Consider a coordinate subspace $\A^I$ in $X_f$. 
Let $J = [n] \smallsetminus I$, and consider the projection map
$\pr_{J} \colon \R^n \to \R^{J}$ and the polyhedron $\Newt(f)_J = \pr_{J}(\Newt(f)) \subset \R^{J}$.
By \eqref{e:reducecompact},
$\widetilde{E}(\cF_{x_I}) =  \widetilde{E}(\Newt(f)_J)$, 
where $\widetilde{E}(\Newt(f)_J)$ is the invariant $\widetilde{E}$ applied to the Milnor fiber at the origin of any nondegenerate polynomial with Newton polyhedron $\Newt(f)_J$. Our first goal is to  apply Proposition~\ref{p:varchenko} to compute $\Psi_M(\widetilde{E}(\Newt(f)_J))$.

If $C \subset \R^n_{\ge 0}$ is a cone, then we use the notation 
$C_J := \pr_J(C)$.
Let $\Delta_J$ be the fan over the faces of $\Newt(f)_J$. 
Then $\Delta_J = \{ C_J : C \in \Delta, \R_{\ge 0}^I \subset \langle \Unb(C) \rangle \}$.
 Let  $\Delta_{J, M}$ be the subfan of $\Delta_J$ consisting of all maximal cones $C_J$ in $\Delta_J$ such that $M$ divides $\rho_{C_J}$, together with all the faces of $C_J$.
Observe that if $C \in \Delta$ and  $\R_{\ge 0}^I \subset \langle \Unb(C) \rangle$, then $\rho_{C} = \rho_{C_J}$. It follows that $\Delta_{J, M} = \{ C_J : C \in \Delta_M, \R_{\ge 0}^I \subset \langle \Unb(C) \rangle \}$.

If $\Delta_{J, M}$ is empty, then 
 Proposition~\ref{p:varchenko} implies that $\Psi_M(\widetilde{E}(\Newt(f)_J)) = 0$. Assume that $\Delta_{J, M}$ is nonempty. Then  
$\R^I_{\ge 0} \in \Delta_M$.  In order to  apply Proposition~\ref{p:varchenko}, we want to construct a simplicial refinement of $\Delta_{J, M}$. 
Since each ray in $\Delta' \smallsetminus \Delta$ intersects $\Newt(f)$, no such ray is contained in $\R^I_{\ge 0} $, and hence $\R^I_{\ge 0} \in \Delta'_M$.
Consider the simplicial fan $\Delta'_{J, M} = \{ C_J' : C' \in \Delta'_M, \R_{\ge 0}^I \subset \langle \Unb(C') \rangle \}$. 
This is the star of $\R^I_{\ge 0}$ in $\Delta'_M$, and  
it follows from \cite[Exercise 3.4.8]{CoxLittleSchenck11} that $\Delta'_{J, M}$ is a refinement of $\Delta_{J, M}$.

We next verify that $\Delta'_{J, M}$ satisfies the hypotheses of Proposition~\ref{p:varchenko}. 
Firstly, the intersection of a ray of $\Delta'_{J, M} \smallsetminus \Delta_{J, M}$ with the boundary of $\Newt(f)_J$ is the image of the intersection of a ray in $\Delta'_M \smallsetminus \Delta_M$ with the boundary of $\Newt(f)$, and hence is a lattice point.
Secondly, for a cone $C \subset \R^n$ containing $\R_{\ge 0}^I$, 
let $\Unb_J(C_J) :=  \{ e_i \in \R^J : e_i \in C_J, \R_{\ge 0}e_i \cap \Newt(f)_J = \emptyset \}$. 
 Fix $C'$ in $\Delta'_M$ such that $\R_{\ge 0}^I \subset \langle \Unb(C') \rangle$. Since $\Unb(C') =  \Unb(\tau(C'))$ by assumption, we compute:
 \[
 \Unb_J(C_J') = \{ \pr_J(e_i) : e_i \in \Unb(C'), i \notin I \} = \{ \pr_J(e_i) : e_i \in \Unb(\tau(C')), i \notin I \} = \Unb_J(\tau(C')_J). 
 \]
Moreover, $\tau(C')_J$ is the smallest cone in $\Delta_J$ containing $C_J'$. We conclude that the hypotheses of Proposition~\ref{p:varchenko} hold. 

Let $\psi_J$ be the 
unique piecewise linear function on $\R^J$ with value $1$  on all interior faces of
$\partial \Newt(f)_J$.  Let $\sigma_J(C_J')$ be the smallest face of $\R^J_{\ge 0}$ containing $C_J'$, and let  
 $e_J(C_J') = \dim \sigma_J(C_J') - \dim C_J'$.
Then applying Proposition~\ref{p:varchenko} gives
\[
 \Psi_M(\widetilde{E}(\Newt(f)_J)) 
=   \sum_{\substack{ C_J' \in \Delta'_{J, M} \\ \Unb_J(C_J') = \emptyset  \\ e_J(C_J') = 0}} (-1)^{\dim C_J' + 1}  
\Psi_{M} \left( \sum_{w \in \BBox_{C_J'}} [-\psi_J(w)] \right).
\]
We compute that
$e_J(C_J') = \dim \sigma_J(C_J') - \dim C_J' = (\dim \sigma(C')  - |I|) - (\dim C' - |I|) = e(C'),$ so 
 
\[
 \Psi_M(\widetilde{E}(\Newt(f)_J)) 
=   \sum_{\substack{ C' \in \Delta'_M \\ \langle \Unb(C') \rangle = \R^I_{\ge 0}  \\ e(C') = 0}} (-1)^{\dim C' - |I| + 1}  
\Psi_{M} \left( \sum_{w \in \BBox_{C_J'}} [-\psi_J(w)] \right).
\]

Consider  $C' \in \Delta'_M$ such that  $\R^I_{\ge 0} \subset \langle \Unb(C') \rangle$. 
We define a bijection $\phi \colon \BBox_{C'} \to \BBox_{C_J'}$ as follows. Write $\gens(C') = \{ w_1,\ldots , w_r \} \cup \{ e_i : i \in I \}$. 
If $w = \sum_{i = 1}^r \lambda_i w_i + \sum_{i \in I} \mu_i e_i \in \BBox_{C'}$, 
then $\phi(w) = \sum_{i = 1}^r \lambda_i \pr_J(w_i)$. Observe that $\psi(w) = \psi_J(\phi(w))$. 
We deduce that 
\begin{equation}\label{e:boxcomparison}
\sum_{w \in \BBox_{C'}} [-\psi(w)] = \sum_{w \in \BBox_{(C')_J}} [-\psi_J(w)]. 
\end{equation}
Substituting \eqref{e:boxcomparison} into the above equation gives

\[
\Psi_M(\widetilde{E}(\Newt(f)_J))
=   \sum_{\substack{ C' \in \Delta'_M \\ \langle \Unb(C') \rangle = \R_{\ge 0}^I  \\ e(C') = 0}} (-1)^{\dim C' + |I| + 1}  
\Psi_{M} \left( \sum_{w \in \BBox_{C'}} [-\psi(w)] \right).
\]

When $\Delta_{J, M}$ is empty, we have seen that $\Psi_M(\widetilde{E}(\Newt(f)_J)) = 0$, so the above formula holds then as well.

Let $C' \in \Delta'$ and assume that  $\Psi_{M}([-\psi(w)]) \neq 0$ for some $w \in  \BBox_{C'}$. 
By Lemma~\ref{l:conesnotinDeltaM}, every cone in $\Delta$ containing $C'$ lies in $\Delta_M$, and so 
every cone in $\Delta'$ containing $C'$ lies in $\Delta'_M$.

Putting this all together, we 
compute
\begin{align*}
\Psi_{M} \left( \sum_{\A^I \subset X_f} (-1)^{n - 1 - |I|}
\widetilde{E}(\cF_{x_I}) \right)
&= \sum_{\A^I \subset X_f} (-1)^{n - 1 - |I|} \Psi_M(\widetilde{E}(\Newt(f)_J))
\\
&= \sum_{\A^I \subset X_f} (-1)^{n - 1 - |I|}
\sum_{\substack{ C' \in \Delta'_M \\ \langle \Unb(C') \rangle = \R_{\ge 0}^I  \\ e(C') = 0}} (-1)^{\dim C' + |I| + 1}  
\Psi_{M} \left( \sum_{w \in \BBox_{C'}} [-\psi(w)] \right)
\\
&= \sum_{\substack{ C' \in \Delta'_M \\ e(C') = 0}}
(-1)^{\codim C'} 
\Psi_{M} \left( \sum_{w \in \BBox_{C'}} [-\psi(w)] \right)
\\
&= \sum_{C \in \Delta'_M} \Psi_{M} \left( \sum_{w \in \BBox_{C}^\circ} [-\psi(w)] \right)
\sum_{\substack{ C \subset C' \in \Delta'_M \\ e(C') = 0}}
(-1)^{\codim C'}
\\
&= \Psi_{M} \left( \sum_{C \in \Delta'}  \sum_{w \in \BBox_{C}^\circ} [-\psi(w)] 
\sum_{\substack{ C \subset C' \in \Delta' \\ e(C') = 0}} 
(-1)^{\codim C'} \right)
\\
&= 
\Psi_{M} \left( \sum_{C \in \Delta'}  \sum_{w \in \BBox_{C}^\circ} [-\psi(w)] 
\ell (\Delta,C;1) \right).
\\
\end{align*}
Here the final equality follows from \eqref{e:specializelocalreplace}.
\end{proof}

\subsection{An existence result for nearby eigenvalues of monodromy}
\label{s:essential}
We use  Theorem~\ref{thm:intrononvanish}, a vanishing result for the local $h$-polynomial, and Theorem~\ref{t:nonnegativeVarchenko} to study nearby eigenvalues of monodromy.  

Let $\Delta'$ be a simplicial fan refining $\Delta$. 
Assume that
every ray of $\Delta' \smallsetminus \Delta$
 intersects the boundary of  $\Newt(f)$ at a lattice point. 
 Let $C'$ be a cone in $\Delta'$ with $\gens(C') = \{w_1,\ldots,w_r \}$.
We consider the function
\[
\bbox_{C'} \colon \Span(C') \cap \Z^n \to \BBox_{C'} \text{ defined by }\bbox_{C'}\bigg(\sum_{i = 1}^r \lambda_i w_i \bigg) = \sum_{i = 1}^r \{ \lambda_i \} w_i,
\]
where $\lambda_i \in \Q$ and 
$\{\lambda_i\}$ denotes the fractional part of $\lambda_i$.

Throughout this section, let $G$ be a lattice simplex contained in $\partial \Newt(f)$ such that $\1 \in \Span(G)$ and $C_G \in \Delta'$. 
Let $\psi_{G}$ be the unique linear function on $\Span(G)$ with value $1$ on $G$. Equivalently,  $\psi_{G}$ is determined by the condition that
$\psi_{G}|_{C_G} = \psi|_{C_G}$. Let $\alpha = - \psi_{G}(\1)$. Then $\alpha$ is the candidate pole associated to any proper face $F$ of $\Newt(f)$ containing $G$.  

\begin{definition}\label{d:essentialface}
The \emph{essential face}  of $G$ is the unique face 
$E \subset G$ such that  
$\bbox_{C_G}(\1)$ is in $\BBox_{C_E}^\circ$. 
\end{definition}
Equivalently, one may verify that if $\{w_1,\ldots,w_r \}$ are the vertices of $G$ and we write $\mathbf{1} = \sum_{i=1}^{r} \lambda_i w_i$ for some $\lambda_i \in \Q$, then $E$ is the unique face of $G$ with $\gens(C_E) = \{w_i : \lambda_i \notin \Z \}$. 
Note that, for any lattice point 
$w \in \Span(C_G) \cap \Z_{\geq 0}^n$,
$[ \psi_G (w)] = [ \psi(\bbox_{C_G}(w))]$ in   $\Q/\Z$.
We deduce that 
\begin{equation}\label{e:box3}
[\alpha] = [-\psi(\bbox_{C_G}(\1))]     \mbox{ in }  \Q/\Z.
\end{equation}

We deduce the following corollary of Theorem~\ref{t:nonnegativeVarchenko}. When $\Newt(f)$ is simplicial and we set $\Delta' = \Delta$,  this is equivalent to 
Corollary~\ref{c:intrononvanishingeigenvalue}.

\begin{corollary}\label{c:nonvanishingeigenvalue}
Let $\alpha \in \Q$, and let $M = D(\alpha)$. 
Let $\Delta'$ be a simplicial fan refining $\Delta$. 
Assume that
every ray of $\Delta' \smallsetminus \Delta$
 intersects the boundary of  $\Newt(f)$ at a lattice point, 
and $\Unb(C') = \Unb(\tau(C'))$ 
for all $C'$ in $\Delta'_M$.
Let $G$ be a lattice simplex contained in $\partial \Newt(f)$ such that $\1 \in \Span(G)$ and $C_G \in \Delta'$, and let $E$ be the essential face of $G$.  
Assume that $\alpha = - \psi_{G}(\1)$.
If 
$\ell(\Delta',C_E;t)$ is nonzero, then 
the coefficient of $[\alpha]$ in $\widetilde{E}(\cF_{x_I})$ is nonzero
at a general point $x_I$ of some coordinate subspace $\A^I \subset X_f$.

\end{corollary}
\begin{proof}
By definition, $\Psi_{D(\alpha)}([\alpha]) = [\alpha]$. 
By \eqref{e:box3}, $\ell(\Delta',C_E;1)[\alpha]$ is a term in the right-hand side of 
\eqref{e:nonnegativeVarchenkoreplace}.
The result now follows from the nonnegativity of the local $h$-polynomial.
\end{proof}

Extending Definition~\ref{def:UB1}, we may define the notion of $G$ being $\UB$ exactly as for (compact) faces of $\Newt(f)$. 
Explicitly, a vertex  $A$ of $G$ is an \emph{apex} with \emph{base direction} $e_\ell^*$ if $\langle e_{\ell}^*, A \rangle > 0$, and
$\langle e_{\ell}^*, V \rangle = 0$ for all  $V \in \gens(C_G)$  with $V \neq A$, i.e., for all vertices of $G$ not equal to $A$. 
Then $G$ is $\UB$ if there exists an apex $A$ in $G$ with a unique choice of base direction $e_\ell^*$, 
and $\langle e_{\ell}^*, A \rangle = 1$. 

The following definition is a special case of Definition~\ref{d:Upyramid}. 
We say that $C_G \smallsetminus C_E$ in $\lk_{\Delta'}(C_E)$
is a 
\emph{$U$-pyramid} if there exists 
an apex $A$ in $G$ 
with a unique choice of base direction $e_\ell^*$, and $A \notin
\gens(C_E)$. 

\begin{lemma}\label{l:weaklyUB1}
With the notation above, $G$ is 
$\UB$ if and only if $C_G \smallsetminus C_E$ in $\lk_{\Delta'}(C_E)$ is a $U$-pyramid.
\end{lemma}
\begin{proof}
Let $\gens(C_G) = \{ w_1,\ldots,w_r \}$, and uniquely write 
$\1 = \sum_{i = 1}^r \lambda_i w_i$ for some $\lambda_i \in \Q$. 
Let $w_j$ be an apex with a base direction $e_\ell^*$.
Let $h = \langle e_\ell^* , w_j \rangle \in \Z_{> 0}$.
Then $1 = \langle e_\ell^* , \1 \rangle = \langle e_\ell^* , \sum_{i = 1}^r \lambda_i w_i \rangle = \sum_{i = 1}^r \lambda_i \langle e_\ell^* , w_i \rangle = \lambda_j \langle e_\ell^* , w_j \rangle =  \lambda_j h$, and we deduce that $\lambda_j = 1/h \in \Q_{> 0}$. 
The result then follows since
\[
w_j \notin C_E \iff \lambda_j \in \Z \iff h = 1. \qedhere
\] 
\end{proof}

Let $\mathcal{A}_G$ be the set of apices of $G$ which are not in $E$. For a face $G'$ of $G$, let $\sigma(G')$ be the smallest face of $\R^n_{\ge 0}$ containing $G'$. 
 Let $B_G =  \{ \ell \in [n] : \textrm{ there exists } A \in \mathcal{A}_G \textrm{ with base direction } e_\ell^* \}$.
Below, we identify faces of $\R^n_{\ge 0}$ with their corresponding subsets of $[n]$ and identify simplices with their set of vertices.

The following definition is a special case of Definition~\ref{d:fullpartition}. 
Below, by associating faces of $G$ with their corresponding cones in $\Delta'$, we may view faces of $G \smallsetminus E$ as faces in $\lk_{\Delta'}(C_E)$. 
A \emph{full partition} of $G \smallsetminus E$ is a decomposition
\[
G \smallsetminus E = G_1 \sqcup G_2 \sqcup \mathcal{A}_G
\]
such that 
\begin{enumerate}

\item $\sigma(G_1 \sqcup \mathcal{A}_G \sqcup E) = [n],$

\item $\sigma(G_2 \sqcup E) = [n] \smallsetminus B_G.$

\end{enumerate}

\begin{lemma}\label{l:existencefullpartition}
With the notation above, $G \smallsetminus E$ admits a full partition.
\end{lemma}
\begin{proof}
Let $\gens(C_G) = \{ w_1,\ldots,w_r \}$, and uniquely write 
$\1 = \sum_{i = 1}^r \lambda_i w_i$ for some $\lambda_i \in \Q$. We have 
\begin{equation}\label{e:proposedfullpartition2}
G \smallsetminus E = G_1 \sqcup G_2 \sqcup \mathcal{A}_G,
\end{equation}
where $G_1 = \{ w_i : w_i \notin \mathcal{A}_G, \lambda_i \in \Z_{> 0} \}$, and $G_2 = \{ w_i : \lambda_i \in \Z_{\le 0} \}$. Note that $w_i \in \mathcal{A}_G$ implies that $\lambda_i = 1$. 
We claim that \eqref{e:proposedfullpartition2} is a full partition.

For each $w_i$ in $\gens(C_G)$, write $(w_i)_\ell \in \Z_{\ge 0}$ for the $\ell$th coordinate of $w_i$. For each coordinate 
$\ell \in [n]$, 
\begin{equation}\label{e:1}
1 = \sum_{i = 1}^n \lambda_i (w_i)_\ell.
\end{equation}
If $\ell \notin \sigma(\mathcal{A}_G \sqcup G_1 \sqcup E)$, then the right-hand side of \eqref{e:1} is a sum of nonpositive terms, a contradiction. We conclude that
$\sigma(\mathcal{A}_G \sqcup G_1 \sqcup E) = [n].$

It remains to show that
$\sigma(G_2 \sqcup E) = [n] \smallsetminus B_G.$
It follows from the definitions that $\sigma(G_2 \sqcup E) \subset [n] \smallsetminus B_G$.
It remains to prove that $[n] \smallsetminus \sigma(G_2 \sqcup E) \subset B_G$.
Suppose that $\ell \in [n] \smallsetminus \sigma(G_2 \sqcup E)$. Then all terms on the right-hand side of \eqref{e:1} are nonnegative integers, and we deduce that there is a unique index $k$ such that $\lambda_k = (w_k)_\ell = 1$ and $\lambda_i (w_i)_\ell = 0$ for $i \neq k$. If $(w_i)_\ell \ne 0$ for some $i \ne k$, then $\lambda_i = 0$ and hence $\ell \in \sigma(G_2)$, a contradiction. We deduce that 
$w_i \in \mathcal{A}_G$ has base direction $e_\ell^*$. That is, 
$\ell \in B_G$. 
\end{proof}

The following corollary is immediate from Theorem~\ref{thm:intrononvanish}, 
 together with Lemma~\ref{l:weaklyUB1} and Lemma~\ref{l:existencefullpartition}. Here Theorem~\ref{thm:intrononvanish} is 
 a consequence of Theorem~\ref{thm:nonvanish}, whose proof is the subject of Section~\ref{sec:vanishing}. 

\begin{corollary}\label{c:zeroimpliesUB1}
With the notation above,
if 
$\ell(\Delta',C_E;t)=0$, then $G$ is  $\UB$.
\end{corollary}

\subsection{Existence of simplicial refinements}

We now give a criterion for the existence of a simplicial refinement of 
$\Delta$
 that 
 satisfies the hypotheses of Corollary~\ref{c:nonvanishingeigenvalue} and allows us to prove our strongest result on the existence of eigenvalues. 
We will obtain  Theorem~\ref{t:mainsimplicialeigenvalue} as a consequence.

 We first introduce a combinatorial condition on the unbounded faces of $\Newt(f)$. 

\begin{definition}\label{d:goodprojection}
Say that a cone $C$ in $\Delta$ has \emph{good projection} if for any face $C'$ of $C$ such that 
$C' \cap \Unb(C) = \emptyset$, 
$\dim(C' + \langle \Unb(C) \rangle ) = \dim C' + |\Unb(C)|$. Equivalently, for any face $C'$ of $C$ disjoint from $\Unb(C)$, the images of the elements of $\Unb(C)$ are linearly independent in $\R^n/\Span(C')$. 

We say that $\Newt(f)$ has \emph{good projection} if all cones in $\Delta$ have good projection. 
Let $M \in \Z_{>0}$.
Then $\Newt(f)$ has \emph{$M$-good projection} if every maximal cone $C$ in $\Delta$ such that $M$ divides $\rho_C$ has good projection.
\end{definition}

Observe that $\Delta$ has $M$-good projection if and only if all cones in $\Delta_M$ have good projection. 
Clearly, $\Newt(f)$ has good projection if and only if $\Newt(f)$ has $M$-good projection for $M = 1$ if and only if $\Newt(f)$ has $M$-good projection for all $M \in \Z_{>0}$. 
If all cones $C \in \Delta$ with $\Unb(C) \not= \emptyset$ are simplicial, then  $\Newt(f)$ has good projection.
For example, when $\Newt(f)$ is simplicial or convenient, then $\Newt(f)$ has good projection.
Also, if $|\Unb(C)| \le 1$ for all maximal cones $C$ in $\Delta$, then $\Newt(f)$ has good projection.

\begin{lemma}\label{l:allgood}
Let $\alpha \in \Q$. 
If $\Newt(f)$ has $D(\alpha)$-good projection,
then $C_F$ has good projection for all 
 $F \in \Contrib(\alpha)$. 
\end{lemma}
\begin{proof}
 By definition, $\alpha = i/\rho_{C_F}$ for some $i \in \Z$, and hence $D(\alpha)$ divides $\rho_{C_F}$. Since $\Newt(f)$ has $D(\alpha)$-good projection, it follows that $C_F$ has good projection. 
\end{proof}

If $\Newt(f)$ has $D(\alpha)$-good projection, then we will be able to apply Theorem~\ref{t:nonnegativeVarchenko} to deduce that $\exp(2 \pi i \alpha)$ is a nearby eigenvalue of monodromy if a certain local $h$-polynomial does not vanish. 
The condition that $\Newt(f)$ has $D(\alpha)$-good projection is inspired in part by a stricter condition in
Saito \cite[Definition~3.12]{Saito19},  that itself  follows ideas from
\cite{TakeuchiTibar16}. 
In the language of our paper, they consider
the condition on $\Newt(f)$ that every maximal cone $C$ in $\Delta$ such that $D(\alpha)$ divides $\rho_C$ satisfies $\Unb(C) = \emptyset$.

We now formulate the condition on $\Newt(f)$ that will allow us to apply 
Corollary~\ref{c:zeroimpliesUB1}.
Let $F$ be a face of $\Newt(f)$ such that $C_F \in \Delta$. 
Let $\overline{F}$ denote the image of $F$ under the projection 
$\mathbb{R}^n \to \mathbb{R}^{n}/\langle \Unb(C_F) \rangle$. 

\begin{definition}\label{d:pseudoUB1}
Let $F$ be a face of $\Newt(f)$ such that $C_F \in \Delta$.
Assume that 
$C_F$ has good projection. 
Then $F$ is \emph{pseudo-$\UB$} if every $(\dim \overline{F})$-dimensional lattice simplex contained in $\overline{F}$ is $\UB$.

\end{definition}

If $\Newt(f)$ is simplicial, then all pseudo-$\UB$ faces are $\UB$. The $B_2$-facets of \cite[Definition~3.9]{ELT} are examples of pseudo-$\UB$ faces which are not $\UB$.

\begin{remark}\label{r:goodprojection}
Suppose that $C$ in $\Delta$ has good projection. Then $\Sigma = \{ C_1 + C_2 : C_1 \subset C, \Unb(C_1) = \emptyset, C_2 \subset \langle \Unb(C) \rangle \}$ is a fan refining $C$. 
Consider $I \subset [n]$ such that $\R^I_{\ge 0} = \langle \Unb(C) \rangle$.
Let 
$J = [n] \smallsetminus I$
and consider the projection $\pr_J : \R^n \to \R^{J}$. 
Then $$\Star_\Sigma(\langle \Unb(C) \rangle) := \{ \pr_J(C') : \langle \Unb(C) \rangle \subset C' \in \Sigma \} = \{ \pr_J(C_1) : C_1 \subset C, \Unb(C_1) = \emptyset  \}$$ is a refinement of $\pr_J(C)$ \cite[Exercise 3.4.8]{CoxLittleSchenck11}.  
Let $F$ be a face of $\Newt(f)$ such that $C = C_F$  has good projection. Then it follows that $F$ is $\UB$ if and only if 
$\overline{F} = \pr_J(F)$ is $\UB$. 
\end{remark}

\begin{lemma}\label{l:UB1impliespseudoUB1}
Let $F$ be a face of  $\Newt(f)$ such that $C_F \in \Delta$ and $C_F$ has good projection. If $F$ is $\UB$, then $F$ is pseudo-$\UB$. 
\end{lemma}
\begin{proof}
Using Remark~\ref{r:goodprojection}, we reduce to the case when $F$ is compact. In that case, suppose $F$ has an apex $A$ with unique base direction $e_\ell^*$, and $\langle e_\ell^*, A \rangle = 1$. Let $G$ be a lattice simplex contained in $F$ with $\dim G = \dim F$. Then $A$ is an apex of $G$ with base direction $e_\ell^*$.  
 Suppose $e_j^*$ is a base direction of $A$ in $G$. 
If $V \neq A$ is a vertex of $F$, then 
$V \in  \Span(F \cap \{ e_\ell^* = 0 \}) = \Span(G \cap \{ e_\ell^* = 0 \}) = \Span(G \cap \{ e_j^* = 0 \})$. 
Hence $e_j^*$ is a base direction of $A$ in $F$, and $j = \ell$. We conclude that $G$ is $\UB$, as desired.
\end{proof}

We now state our strongest result on the existence of eigenvalues of monodromy.

\begin{theorem}\label{thm:eigenvalue}
Suppose $f$ is nondegenerate.
Let $\alpha \in \mathbb{Q}$. 
Assume that $\Newt(f)$ has $D(\alpha)$-good projection.
Then either every face in 
$\Contrib(\alpha)$ is pseudo-$\UB$, or $\exp(2 \pi i \alpha)$ 
is a nearby eigenvalue of monodromy (for reduced cohomology).
\end{theorem}

Before giving the proof, we present some applications and examples. 

\begin{proof}[Proof of Theorem~\ref{t:mainsimplicialeigenvalue}]
Assume that $\Newt(f)$ is simplicial. 
Then $\Newt(f)$ has good projection, and a face $F$ of $\Newt(f)$ is  pseudo-$\UB$ if and only if it is $\UB$.
The result now follows from Theorem~\ref{thm:eigenvalue}.
\end{proof}

\begin{theorem}\label{thm:dilate}
Let $f$ be a nondegenerate polynomial with $\Newt(f) = kP$ for some $k \ge 2$ and some Newton polyhedron $P$. If $\Newt(f)$ has good projection, then 
every candidate eigenvalue is a nearby eigenvalue of monodromy. 
\end{theorem}
\begin{proof}
Note that none of the vertices of $kP$ have any coordinate equal to one, so no face of $k P$ is pseudo-$\UB$. The result follows from Theorem~\ref{thm:eigenvalue}. 
\end{proof}

\begin{example}\label{e:convenient}
Suppose that $\Newt(f)$ is convenient. Then $\Newt(f)$ has good projection.
 In this case, Theorem~\ref{thm:eigenvalue} states that  either every face in 
$\Contrib(\alpha)$ is pseudo-$\UB$, or $\exp(2 \pi i \alpha)$ 
is a nearby eigenvalue of monodromy (for reduced cohomology).
\end{example}

\begin{example}\label{ex:ELTprop3}
In Example~\ref{ex:ELTprop} and Example~\ref{ex:ELTprop2}, consider the facet $F$ with candidate pole $\alpha = -6/5$. 
We have $\Contrib(\alpha) = \{ F \}$ and $F$ is not $\UB$, although $\overline{F}$ is $\UB$. 
By Remark~\ref{r:goodprojection}, 
$C_F$ does not have good projection. In particular, $F$ is not pseudo-$\UB$. 
By Lemma~\ref{l:allgood}, $\Newt(f)$ does not have $D(\alpha)$-good projection, so Theorem~\ref{thm:eigenvalue} does not apply.

\end{example}

\begin{example}\label{e:ELT2}
Consider the set up of Example~\ref{e:ELT} with $F$ the bounded facet with candidate pole $\alpha = -1/3$. 
We have $\Contrib(\alpha) = \{ F \}$, and $F$ is not $\UB$.
Consider the unbounded facet $G$ of $\Newt(f)$ defined by $\psi_G = \frac{1}{78}(4 e_2^* + 13 e_3^*) = 1$. Then $\ver(C_G) = \{ w_1, w_3, w_4 \}$ and $\Unb(C_G) = \{ e_1, e_4 \}$. In particular, $C_G$ does not have good projection. Since $D(\alpha) = 3$ divides $\rho_{C_G} = 78$, we conclude that $\Newt(f)$ does not have $D(\alpha)$-good projection, so Theorem~\ref{thm:eigenvalue} does not apply.
\end{example}

\begin{proof}[Proof of Theorem~\ref{thm:eigenvalue}]
Let $F$ be a face in $\Contrib(\alpha)$. Suppose there exists a $(\dim \overline{F})$-dimensional lattice simplex $G$ contained in $\overline{F}$ that is not $\UB$. We need to show that $\exp(2 \pi i \alpha)$ 
is a nearby eigenvalue of monodromy (for reduced cohomology).

First assume that $F$ is compact. 
Our first goal is to construct an appropriate simplicial fan $\Delta'$ that refines $\Delta$ and contains $C_G$ as a cone.
Let $\{ i_1, \ldots, i_s \} = \{ i \in [n] : \A^{\{ i \}} \subset X_f \}$. 
Consider positive integers 
$0 \ll m_{i_1} \ll \cdots \ll  m_{i_s}$, and let 
$\hat{f} := f + \sum_{j = 1}^s x_{i_j}^{m_{i_j}}$ with corresponding Newton polyhedron $\Newt(\hat{f})$ and fan over the faces  
$\widehat{\Delta}$. Then $\widehat{\Delta}$ refines $\Delta$ and has the same rays as $\Delta$. If $C \in \Delta$ has good projection, then every cone in $\widehat{\Delta}|_{C}$ is a sum of a cone $C_1$ with $\Unb(C_1) = \emptyset$ and a cone spanned by a subset of $\Unb(C)$.  
Using, for example, a pulling triangulation  \cite[Section 4.3.2]{Loera10}, one can construct a simplicial refinement
 $\Delta'$ of $\widehat{\Delta}$ such that $C_G$ is a cone in $\Delta'$, and the rays of $\Delta'$ are the union of the rays of $\Delta$ and the rays of $C_G$. 

 Given $C' \in \Delta'$, recall that $\tau(C')$ denotes the smallest face of $\Delta$ containing $C'$, and $\Unb(C') \subset \Unb(\tau(C'))$. 
If $C \in \Delta$ has good projection, then every cone in 
$\Delta'|_{C}$ is a sum of a cone $C_1 \in \Delta'|_{C}$ with $\Unb(C_1) = \emptyset$ and a cone spanned by a subset of $\Unb(C)$. 
In particular, if $C' \in \Delta'|_{C}$, then $\Unb(C') = \Unb(\tau(C'))$.

By Corollary~\ref{c:nonvanishingeigenvalue}  and Corollary~\ref{c:zeroimpliesUB1},
the coefficient of $[\alpha]$ in $\widetilde{E}(\cF_{x_I})$ is nonzero
at a general point $x_I$ of some coordinate subspace $\A^I \subset X_f$.

Now consider the case when $F$ is not necessarily compact. Consider $I \subset [n]$ such that $\R^I_{\ge 0} = \langle \Unb(C_F) \rangle$. Let 
$J = [n] \smallsetminus I$
and consider the projection $\pr_J : \R^n \to \R^{J}$, and  
$\Newt(f)_J := \pr_{J}(\Newt(f)) \subset \R^{J}$.
Let $\Delta_J$ be the fan over the faces of $\Newt(f)_J$. The maximal cones of $\Newt(f)_J$ are precisely the cones of the form
$\pr_J(C)$, where $C$ is a maximal cone 
of $\Delta$ such that $\R_{\ge 0}^I \subset \langle \Unb(C) \rangle$. For any such cone $C$, $\rho_C = \rho_{\pr_J(C)}$ and if $C$ has good projection, then $\pr_J(C)$ has good projection. 
We deduce that $\Newt(f)_J$ has $D(\alpha)$-good projection. Also,  $\overline{F} = \pr_J(F)$ has candidate pole $\alpha$. 
 
Let $g$ be a nondegenerate polynomial with Newton polyhedron $\Newt(f)_J$ and Milnor fiber $\widehat{\cF}_{0}$ at the origin. By the compact case above, we deduce that the coefficient of $[\alpha]$ in $\widetilde{E}(\cF_{y_{\hat{I}}})$ is nonzero
at a general point $y_{\hat{I}}$ of some  coordinate subspace $\A^{\hat{I}} \subset X_g$ with  $\hat{I} \subset J$.
Let $I' = I \cup \hat{I}$ and $J' = [n] \smallsetminus I'$. 
Applying \eqref{e:reducecompact} to both $\widetilde{E}(\cF_{x_{I'}})$ and $\widetilde{E}(\cF_{y_{\hat{I}}})$ yields the equality $\widetilde{E}(\cF_{x_{I'}}) =\widetilde{E}(\Newt(f)_{J'} ) =  \widetilde{E}(\cF_{y_{\hat{I}}})$. 
We conclude that the coefficient of $[\alpha]$ in $\widetilde{E}(\cF_{x_{I'}})$ is nonzero. 
\end{proof}

Finally, as a corollary of the proof above, we may extend the result of Budur and van der Veer \cite[Theorem 1.10]{Budur} on dilates of Newton polyhedron by removing the convenient hypothesis.

\begin{proposition}\label{p:Budurextend}
Fix a Newton polyhedron $P$. 
Let $f$ be a nondegenerate polynomial with $\Newt(f) = kP$ for some $k \in \Z_{>0}$ chosen sufficiently large. Then 
every candidate eigenvalue is a nearby eigenvalue of monodromy.
\end{proposition}
\begin{proof}
Let $F$ be a facet of $P$ and let $\alpha$ be the corresponding candidate pole. Then $\alpha/k$ is the corresponding candidate pole associated to the facet $kF$ of $kP$.  After possibly replacing $F$ by $\overline{F}$, we reduce to the case when $F$ is compact. Then the proof of \cite[Theorem 1.10]{Budur} applies. Explicitly, assume that $F$ is compact and
let $c_{k} \in \Z$ denote the coefficient of $[\alpha/k]$ in 
$\widetilde{E}(kP)$. Then Varchenko's Theorem (see \eqref{e:varchenko} above) implies that 
\begin{equation}\label{e:coefflimit}
\lim_{k \to \infty} c_k/k^{n - 1} = (-1)^n \sum_{F'} \Vol(F'),
\end{equation}
where $F'$ varies over all facets of $P$ such that 
$D(\alpha)$ divides $\rho_{F'}$. 
Since $F' = F$ appears in the sum on the right-hand side of \eqref{e:coefflimit}, we deduce that the left-hand side of \eqref{e:coefflimit} is nonzero, and the result follows.
\end{proof}

\section{A necessary condition for the vanishing of the local $h$-polynomial} \label{sec:vanishing}

\subsection{Overview}

In this section, we prove a necessary condition for the vanishing of the local $h$-polynomial of a geometric triangulation of a simplex. The section is self-contained and combinatorial in nature. As such, the notation used is independent from the rest of the paper.

Let $\sigma\colon \mathcal{S} \to 2^{[n]}$ be a geometric triangulation of a simplex. A face $G$ of  $\mathcal{S}$ is \emph{interior} if $\sigma(G) = [n]$. 
Let $E$ be a face of $\mathcal{S}$, and let $F \in \lk_{\mathcal{S}}(E)$ be a face.  
Then $F$ is a  \emph{pyramid} with apex $A \in F$ if $F \sqcup E$ is interior and $(F \sqcup E) \smallsetminus A$ is not interior.
Let $$\mathcal{A}_F := \{ A \in F : F \mbox{ is a pyramid with apex } A \}, \mbox{ \ and \ } V_A = V_A(F) := [n] \smallsetminus \sigma( (F \sqcup E) \smallsetminus A)$$
for $A \in \mathcal{A}_F$.
In what follows, we identify simplices with their sets of vertices.

\begin{definition}\label{d:Upyramid}
We say that $F$ is a \emph{$U$-pyramid} if $|V_A| = 1$ for some $A \in \mathcal{A}_F$.

\end{definition}

\begin{definition}\label{d:fullpartition}
A \emph{full partition} of $F$ is a decomposition
\[
F = F_1 \sqcup F_2 \sqcup \mathcal{A}_F
\]
such that $F_1 \sqcup \mathcal{A}_F \sqcup E$ is interior and $\sigma( F_2 \sqcup E) = [n] \smallsetminus \bigcup_{A \in \mathcal{A}_F} V_A$. 
\end{definition}

Recall that $\ell(\mathcal{S}, E; t)$ denotes the corresponding local $h$-polynomial. See Definition~\ref{d:localdef}. 
Our goal is to prove the following theorem.

\begin{theorem}\label{thm:nonvanish}
Let $\sigma \colon \mathcal{S} \to 2^{[n]}$ be a 
geometric
triangulation of a simplex, and fix a face $E \in \mathcal{S}$.  Let $F \in \lk_\mathcal{S}(E)$ be a face that admits a full partition 
$F = F_1 \sqcup F_2 \sqcup \mathcal{A}_F$. If the coefficient of $t^{|F_1| + |\mathcal{A}_F|}$ in $\ell(\mathcal{S}, E; t)$ is zero, then $F$ is a $U$-pyramid. 
\end{theorem}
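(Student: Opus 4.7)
My plan is to use the combinatorial commutative algebra interpretation of the local $h$-polynomial recalled in Section~\ref{ss:commutative}: after quotienting by a special linear system of parameters, $\ell(\mathcal{S}, E; t)$ is the Hilbert series of a finite-dimensional graded quotient $L(\mathcal{S}, E)$ of an ideal in the face ring $\mathbb{Q}[\lk_\mathcal{S}(E)]$, namely the ideal generated by monomials $x^G$ with $G \sqcup E$ interior. Thus the coefficient of $t^{|F_1| + |\mathcal{A}_F|}$ equals $\dim_{\mathbb{Q}} L(\mathcal{S}, E)_{|F_1| + |\mathcal{A}_F|}$, and the theorem is equivalent, via the contrapositive, to showing that this graded piece is nonzero whenever $F$ is not a $U$-pyramid.

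The natural candidate for a nonzero element is the class $[x^{F_1 \sqcup \mathcal{A}_F}]$. It has the correct degree $|F_1| + |\mathcal{A}_F|$, and the definition of a full partition guarantees that $F_1 \sqcup \mathcal{A}_F \sqcup E$ is interior, so the monomial belongs to the defining ideal. This is the distinguished monomial referred to in the overview, and verifying its nonvanishing is the crux of the argument.

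To this end I would interpret $[x^{F_1 \sqcup \mathcal{A}_F}]$ geometrically. Under the standard dictionary between face rings and toric cohomology, $L(\mathcal{S}, E)$ realizes a piece of the cohomology ring of the complete simplicial toric variety associated to $\Star_\mathcal{S}(E)$, and $[x^{F_1 \sqcup \mathcal{A}_F}]$ represents the refined self-intersection of the class of a compact $T$-invariant subvariety of half the relevant dimension. The dimension count is exactly what a full partition encodes: the condition $\sigma(F_2 \sqcup E) = [n] \smallsetminus \bigcup_{A \in \mathcal{A}_F} V_A$ ensures that the complementary monomial $x^{F_2}$ pairs with $x^{F_1 \sqcup \mathcal{A}_F}$ to a top-degree class in $L(\mathcal{S}, E)$. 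Computing this pairing explicitly requires rewriting each apex variable $x_A$ via the linear system of parameters, which expresses $x_A$ as a combination of the variables indexed by its base set $V_A$; this reduces the theorem to the positivity statement flagged in the overview as Proposition~\ref{prop:deg1}.

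The main obstacle is precisely this positivity, which occupies Section~\ref{ss:positivity}. The guiding heuristic is that when $|V_A| \geq 2$ for every apex $A$, the substitution of $x_A$ by a linear combination over $V_A$ has enough freedom that at least one term in the resulting expansion pairs with $x^{F_2}$ to give a nonzero top-degree monomial, avoiding the quadratic relations in the face ring. Conversely, if some $|V_A| = 1$, the unique allowed substitution forces $x_A$ to coincide with a variable already hit by $F_2 \sqcup E$, producing a squared variable that annihilates the product; this is exactly why $U$-pyramids are the correct obstruction. Turning this heuristic into a rigorous calculation requires a careful inductive bookkeeping of substitutions and cancellations---ultimately an explicit combinatorial computation of the refined self-intersection---which is the technical core of the section.
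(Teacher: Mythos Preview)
Your overall strategy is the paper's: work by contrapositive, identify $x^{F_1 \sqcup \mathcal{A}_F}$ as the candidate nonzero element of $L(\mathcal{S},E)$ in the correct degree, and reduce its nonvanishing to the positivity statement Proposition~\ref{prop:deg1}. But there is a genuine gap in how you bridge to that proposition.

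Proposition~\ref{prop:deg1} is stated only for a \emph{maximal} non-$U$-pyramid, and this is not a cosmetic restriction. Your pairing heuristic---that $x^{F_2}$ multiplies $x^{F_1\sqcup\mathcal{A}_F}$ to a top-degree class---does not work as written: the product is $x^F$, of degree $|F|$, which is generally far below the top degree $n-|E|$ of $H^*(E)$, so the degree map gives no information there. What the paper actually does is first enlarge $F$ to a maximal non-$U$-pyramid $F' = F \sqcup G$ (existence is immediate; Lemma~\ref{lem:inherit} shows the full partition persists with $|F_1'| + |\mathcal{A}_{F'}| = |F_1| + |\mathcal{A}_F|$), and then compute not in $H^*(E)$ but after pulling back to $H^*(F' \sqcup E)$ via the maps of Section~\ref{ss:commutative}. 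The point of maximality is Proposition~\ref{prop:reduce}: it forces $\codim(F'\sqcup E) = |\mathcal{A}_{F'}|$, which is exactly the dimension match needed for $\iota^*(x^{\mathcal{A}_{F'}})$ to land in top degree where $\deg_{F'\sqcup E}$ detects nonvanishing. Without this enlargement step, neither your dimension count nor the invocation of Proposition~\ref{prop:deg1} goes through.

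A smaller remark: your heuristic for why $|V_A|=1$ kills the class (``the unique allowed substitution forces $x_A$ to coincide with a variable already hit by $F_2 \sqcup E$'') is not accurate---the linear relation from the unique base direction expresses $x_A$ in terms of vertices \emph{outside} $F\sqcup E$, not inside. The actual mechanism in Section~\ref{ss:positivity} is quite different, proceeding through a careful expansion argument organized by the notion of positive cones; your sketch does not approximate it.
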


Our strategy is as follows: assume that $F$ admits a full partition and is not a $U$-pyramid. 
We argue that the nonvanishing of the local $h$-polynomial is implied by the nonvanishing of a specific element in the cohomology of a (possibly non-compact) toric variety. We verify this nonvanishing by reducing to a result in \cite{LPS2}.

\subsection{The commutative algebra of local $h$-polynomials}\label{ss:commutative}

Let $\Delta$ be a rational simplicial fan in $\R^n$ with support $\R^n_{\ge 0}$. For each ray of $\Delta$, choose a rational, nonzero point $v$. Consider the unique piecewise $\Q$-linear function $\psi \colon \R^n_{\ge 0} \to \R$ defined by $\psi(v) = 1$ for all such $v$, and let $\mathcal{S} = \{ x \in \R^n_{\ge 0} : \psi(x) = 1 \}$. Then $\mathcal{S}$ is a simplicial complex with vertices $\{ v \}$, and $\mathcal{S}$ induces a geometric triangulation $\sigma \colon \mathcal{S} \to 2^{[n]}$ of a simplex by projecting onto a transverse hyperplane. The combinatorial type of this triangulation is independent of both the choice of $\{ v \}$ and the choice of transverse hyperplane. 
Explicitly, if $F$ is a face of $\mathcal{S}$,
then 
$\R^{\sigma(F)}$ is the smallest coordinate hyperplane containing $F$. 
Conversely, given a geometric triangulation of a simplex, we may deform the vertices without changing the combinatorial type to assume that the triangulation is rational, and then the triangulation is realized by some such $\mathcal{S}$.

If $F$ is a 
face of $\mathcal{S}$, 
let $C_F$ denote the cone over $F$. 
For example, when $F = \emptyset$, then $C_F = \{ 0 \}$. Then $\Delta = \{ C_F : F \in \mathcal{S} \}$.  
Fix a face $E$ of $\mathcal{S}$. 
Then the collection of cones $\Delta_E$ given by the images of 
$\{ C_F : F \in \lk_\mathcal{S}(E) \}$ in $\R^n/\spn(C_E)$ forms a fan. For example, $\Delta_\emptyset = \Delta$, and 
$\Delta_E$ is complete if and only if $E$ is an interior face of $\mathcal{S}$. Consider the standard lattice $\mathbb{Z}^n \subset \mathbb{R}^n$, and let $X_E$ denote the toric variety associated to $\Delta_E$. The torus orbits in $X_E$ are in 
inclusion-reversing bijection with the faces in $\lk_\mathcal{S}(E)$. 
If $E \subset E'$, then $X_{E'}$ is the closure in $X_E$ of the 
torus orbit corresponding to the face $E' \smallsetminus E$ of  $\lk_\mathcal{S}(E)$.

Given a finite simplicial complex $\T$,
let $\mathbb{Q}[\T]$ denote the \emph{face ring} of $\T$ over $\Q$, i.e., the quotient of the polynomial ring over $\mathbb{Q}$ with variables corresponding to the vertices of $\T$ by the ideal generated by monomials corresponding to non-faces. For a face $F \in \T$, let $x^F \in \mathbb{Q}[\T]$ denote the product of the variables corresponding to the vertices of $F$. Note that $\mathbb{Q}[\mathcal{T}]$ is graded by degree. We write $\vert G \vert$ for the number of vertices in a face $G$.  In particular, $x^F$ is a squarefree monomial of degree $\vert F \vert$.

A \emph{linear system of parameters} (l.s.o.p.) for a finitely generated graded $\mathbb{Q}$-algebra $R$ of Krull dimension $d$ is a sequence of elements $\theta_1, \ldots, \theta_d$ in $R_1$ such that $R/(\theta_1, \ldots, \theta_d)$ is a finite-dimensional $\mathbb{Q}$-vector space. 
If $\T$ has dimension $d - 1$, then 
$\mathbb{Q}[\T]$ has Krull dimension $d$.

Let $c = n - \vert E \vert$. Note that $c$ is the Krull dimension of $\mathbb{Q}[\lk_{\mathcal{S}}(E)]$. 
The \emph{support}  of an element $\theta = \sum a_v x^v \in \Q[\lk_{\mathcal{S}}(E)]_1$ 
is $\supp(\theta) := \{v : a_v \not= 0\}$. 
A linear system of parameters $\theta_1, \dotsc, \theta_{c}$ for $\mathbb{Q}[\lk_{\mathcal{S}}(E)]$ is \textit{special}, as defined in \cite{Stanley92, Athanasiadis12b}, if, for each vertex 
$v \in [n] \smallsetminus \sigma(E)$, 
there is an element $\theta_v$ of the l.s.o.p. such that $\supp(\theta_v)$ consists of vertices $w$ in $\lk_{\mathcal{S}}(E)$  such that 
$v \in \sigma(w)$, 
and such that $\theta_v \not= \theta_{v'}$ for $v \not= v'$.

\begin{proposition}{ \cite{Athanasiadis12b,Athanasiadis12}, see also \cite[Proof of Theorem 1.2]{LPS2}}
Let $I$ be the ideal in $\mathbb{Q}[\lk_{\mathcal{S}}(E)]$ generated by  
$ \{ x^F : F \sqcup E \mbox{ is interior}\,\}.$
Let $L(\mathcal{S}, E)$ be the image of $I$ in $\mathbb{Q}[\lk_{\mathcal{S}}(E)]/(\theta_1, \dotsc, \theta_{c})$, where $\theta_1, \dotsc, \theta_{c}$ is a special l.s.o.p. Then the Hilbert series of $L(\mathcal{S}, E)$ is $\ell(\mathcal{S}, E; t)$.
\end{proposition}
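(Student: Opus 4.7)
The plan is to compute the Hilbert series of $L(\mathcal{S}, E)$ via the short exact sequence
\[
0 \to L(\mathcal{S}, E) \to \Q[\lk_{\mathcal{S}}(E)]/(\theta) \to \Q[\lk_{\mathcal{S}}(E)]/\bigl(I + (\theta)\bigr) \to 0
\]
of graded $\Q$-vector spaces, where $\theta = (\theta_1, \ldots, \theta_c)$ is the special l.s.o.p., and then to match the resulting alternating expression with the formula in Definition~\ref{d:localdef}. The first input is that a geometric triangulation of a simplex is shellable, so the face ring of every link $\lk_{\mathcal{S}}(E)$ is Cohen-Macaulay. Consequently, $\theta$ is a regular sequence on $\Q[\lk_{\mathcal{S}}(E)]$ and
\[
\mathrm{Hilb}\bigl(\Q[\lk_{\mathcal{S}}(E)]/(\theta); t\bigr) = h(\lk_{\mathcal{S}}(E); t).
\]

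Next I would analyze the third term using the special structure of the l.s.o.p. For each subset $T \subseteq [n]\setminus\sigma(E)$, let $\lk_{\mathcal{S}}(E)_T$ denote the subcomplex of faces $F \in \lk_{\mathcal{S}}(E)$ with $\sigma(F) \subseteq \sigma(E) \cup T$. Because each $\theta_v$ is supported on vertices whose carrier contains $v$, successively killing the $\theta_v$ for $v \notin \sigma(E) \cup T$ produces the face ring of $\lk_{\mathcal{S}}(E)_T$ modulo the restrictions of the remaining $\theta_v$'s, which themselves form a special l.s.o.p. for that smaller Cohen-Macaulay subcomplex. The defining condition of the ideal $I$ (that $F \sqcup E$ be interior, i.e., $\sigma(F \sqcup E) = [n]$) is precisely what excludes the cases $T \subsetneq [n] \setminus \sigma(E)$. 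An inclusion-exclusion over the ``missing'' coordinates outside $\sigma(E)$ then yields
\[
\mathrm{Hilb}\bigl(L(\mathcal{S}, E); t\bigr) \;=\; \sum_{T \subseteq [n]\setminus\sigma(E)} (-1)^{|([n]\setminus\sigma(E))\setminus T|}\, h\bigl(\lk_{\mathcal{S}}(E)_T; t\bigr).
\]

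Finally, I would expand each $h(\lk_{\mathcal{S}}(E)_T; t)$ as a sum over faces $F$ of the subcomplex via the standard $f$-to-$h$ conversion, swap the order of summation to group by $F$, and collapse the resulting inner sum over $T \supseteq \sigma(F)\setminus\sigma(E)$. Reindexing in terms of the cones $C' = C_{F \sqcup E}$ containing $C_E$ produces exactly the weights $(-1)^{\codim C'} t^{\codim C_E - e(C')}(t-1)^{e(C')}$ appearing in Definition~\ref{d:localdef}: the factor $(t-1)^{e(C')}$ comes from the $e(C')$ coordinates of $\sigma(F \sqcup E)$ not spanned by $F \sqcup E$, and the alternating sign and power of $t$ are exactly the contributions of the inclusion-exclusion combined with the face-polynomial expansion.

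The main obstacle is justifying the inductive step in the second paragraph: one must verify that quotienting $\Q[\lk_{\mathcal{S}}(E)]$ by the $\theta_v$ for $v \notin \sigma(E) \cup T$ really does give $\Q[\lk_{\mathcal{S}}(E)_T]$ modulo the images of the remaining $\theta_{v'}$'s, and that these images still form a special l.s.o.p. for the subcomplex. The key point is that, by the support condition, each $\theta_v$ involves only variables $x_w$ with $v \in \sigma(w)$, so it can be used as an elimination variable for one such $x_w$; repeatedly using the $\theta_v$'s to eliminate vertices not in $\lk_{\mathcal{S}}(E)_T$ works because $\lk_{\mathcal{S}}(E)_T$ remains Cohen-Macaulay (being a subcomplex cut out by coordinate restriction from a shellable complex), preserving the regular-sequence property at every stage.
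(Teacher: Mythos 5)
The paper does not prove this proposition; it is quoted from the literature (Stanley's original argument for $E=\emptyset$ and its extension by Athanasiadis, cf.\ also \cite[Proof of Theorem 1.2]{LPS2}), and your outline follows exactly that standard route: the short exact sequence with middle term $\Q[\lk_{\mathcal{S}}(E)]/(\theta)$, Cohen--Macaulayness to identify its Hilbert series with the $h$-polynomial, inclusion--exclusion over the restricted subcomplexes $\lk_{\mathcal{S}}(E)_T$, and an $f$-to-$h$ regrouping to match Definition~\ref{d:localdef}. So the architecture is the right one. However, as a self-contained proof it has two real problems. First, a small but genuine one: a geometric triangulation of a simplex need not be shellable (Rudin's ball is a non-shellable linear triangulation of a tetrahedron), so shellability cannot be the source of Cohen--Macaulayness. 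The conclusion is still true: $|\mathcal{S}|$ is a ball, links of faces in a triangulated ball are balls or spheres, and Reisner's criterion then gives that $\Q[\lk_{\mathcal{S}}(E)]$ and all the restricted complexes are Cohen--Macaulay over $\Q$.

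The serious gap is the inclusion--exclusion step itself, which you assert rather than prove; it is precisely the technical heart of the cited results. Before quotienting by $\theta$, the identification $\Q[\lk_{\mathcal{S}}(E)]/I \cong \Q[\Gamma']$, with $\Gamma'$ the subcomplex of non-interior faces $\Gamma' = \bigcup_{T \subsetneq [n]\smallsetminus\sigma(E)} \lk_{\mathcal{S}}(E)_T$, and the Mayer--Vietoris/inclusion--exclusion of Stanley--Reisner rings are easy; what must be shown is that the corresponding alternating sum of Hilbert series survives passage to the quotient by $(\theta_1,\dots,\theta_c)$, i.e.\ that the \v{C}ech-type complex built from the restriction surjections $\Q[\lk_{\mathcal{S}}(E)]/(\theta) \to \Q[\lk_{\mathcal{S}}(E)_T]/(\theta|_T)$ is exact. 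This is where the \emph{special} hypothesis on the l.s.o.p.\ and the (quasi-)geometricity of the triangulation are used in an essential way, and it is false for arbitrary l.s.o.p.'s or arbitrary topological subdivisions, so it cannot follow from soft arguments. Moreover, your proposed justification runs backwards: quotienting $\Q[\lk_{\mathcal{S}}(E)]$ by the linear forms $\theta_v$ with $v \notin \sigma(E)\cup T$ does \emph{not} yield $\Q[\lk_{\mathcal{S}}(E)_T]$ modulo the remaining parameters (those forms are not collections of variables, and the quotient still contains every vertex variable). The correct mechanism is the opposite one: the map setting to zero the variables of vertices outside $\lk_{\mathcal{S}}(E)_T$ kills each $\theta_v$ with $v \notin \sigma(E)\cup T$ (by the support condition) and carries the remaining parameters to a special l.s.o.p.\ for the subcomplex. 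With that corrected, one still has to prove exactness of the resulting complex (or, equivalently, the direct-sum decomposition of $\Q[\lk_{\mathcal{S}}(E)]/(\theta)$ into local face modules of restrictions), which is the content you would need to import from Stanley/Athanasiadis or reprove along the lines of \cite{LPS2}.
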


\noindent We call $L(\mathcal{S}, E)$ the \emph{local face module}. Note that the local face module depends on the choice of a special l.s.o.p. 
In this paper, we will consider a particular special l.s.o.p. that is defined in terms of $\Delta$.

Below we view elements of $(\mathbb{Q}^n/\spn(E))^* \hookrightarrow (\mathbb{Q}^n)^*$ as $\Q$-linear functions vanishing on $\spn(E)$. 
For 
$u \in (\mathbb{Q}^n/\spn(E))^* $, let $\theta_{u} = \sum_{v \in \lk_{\mathcal{S}}(E)} \langle u , v \rangle x^v \in \mathbb{Q}[\lk_{\mathcal{S}}(E)]$. Consider the ideal $J_E = (\theta_u : u \in (\mathbb{Q}^n/\spn(E))^*)$ in $\mathbb{Q}[\lk_{\mathcal{S}}(E)]$. 
Note that $J_E$ is generated by a special l.s.o.p., obtained by extending 
$\{e_i^* : i \in [n] \smallsetminus \sigma(E)\}$ to a basis for $(\mathbb{Q}^n/\spn(E))^*$.

Let $H^*(E) = \mathbb{Q}[\lk_{\mathcal{S}}(E)]/J_E$.
Then $H^*(E)$  is isomorphic to the rational cohomology ring $H^*(X_E, \Q)$ of $X_E$. 
The ideal in $H^*(E)$ generated by $\{x^F : F \in \lk_{\mathcal{S}}(E), F \sqcup E \text{ interior}\}$ is $L(\mathcal{S}, E)$. 
We will show the nonvanishing of $\ell(\mathcal{S}, E; t)$ by showing that a certain element of $L(\mathcal{S}, E)$ 
is nonzero. To achieve this, we require three constructions.

First, if $E \subset E'$ is an inclusion of faces in $\mathcal{S}$, then there is a graded $\Q$-algebra homomorphism  $\iota^* = \iota_{E, E'}^* \colon H^*(E) \to H^*(E')$ corresponding to the pullback map on cohomology.  The \emph{closed star} $\Star(E' \smallsetminus E)$ of 
$E' \smallsetminus E$ 
is the subcomplex of $\lk_{\mathcal{S}}(E)$ that consists of faces $H$ such that $H \cup (E' \smallsetminus E)$ is a face of $\lk_{\mathcal{S}}(E)$.
Then $\iota^*$ 
may be characterized as follows: let $v \in \lk_\mathcal{S}(E)$. Then 
\begin{enumerate}
\item\label{cond:pullback1} 
$\iota^*(x^v) = 0$ if $v \notin  \Star(E' \smallsetminus E)$, 
\item\label{cond:pullback2} 
$\iota^*(x^v) = x^v$ if $v \in \lk_\mathcal{S}(E')$. 
\end{enumerate}
Note that 
$\Star(E' \smallsetminus E)$ is the join of $\lk_{\mathcal{S}}(E')$ with $E'  \smallsetminus E$. If 
$v \in E'  \smallsetminus E$, then there exists a linear form $u_v$ in $(\mathbb{Q}^n/\spn(E))^*$ that takes value $1$ on $v$ and vanishes on all other $v' \in E'$, and the above properties imply that
$\iota^*(x^v) = -\sum_{v' \in \lk_{\mathcal{S}}(E')} \langle u_v , v' \rangle x^{v'}$. 

\medskip

Second, let $j_* = j_{E',E,*} \colon  H^*(E') \to H^*(E)$ be defined by $j_*(x^G) = x^G x^{E' \smallsetminus E}$ for all $G \in \lk_{\mathcal{S}}(E')$, corresponding to the Gysin pushforward map on cohomology. It then follows from the characterization of $\iota^*$ via \eqref{cond:pullback1} and \eqref{cond:pullback2}, that $j_* \circ \iota^* \colon H^*(E) \to H^*(E)$ is multiplication by $x^{E' \smallsetminus E}$. 

\medskip

Finally, we will make use of a nondegenerate bilinear form $B_E \colon L(\mathcal{S}, E) \times L(\mathcal{S}, E) \to \mathbb{Q}$. In the case $E = \emptyset$, this bilinear form was constructed in \cite[Corollary 4.19]{Stanley92} using a description of the canonical module of the face ring of a triangulation of a disk. We give an equivalent geometric description below.

Let $\hat{\Delta}$ be the complete fan obtained by adding a ray $\rho_{c}$ spanned by $(-1, \dotsc, -1)$ to $\Delta$, and adding the cone generated by $\rho_{c}$ and $C$ for each cone $C$ of $\Delta$ which is contained in the boundary of $\mathbb{R}^n_{\ge 0}$. Let $\hat{\mathcal{S}}$ be the simplicial complex of faces of $\hat{\Delta}$. 

For each face $E$ of $\Delta$, let $X_{\hat{E}}$ be the toric variety whose fan is the image of the cones of $\hat{\Delta}$ that contain $C_E$  in $\mathbb{R}^n/\spn(C_E)$. Note that $X_{\hat{E}}$ is a proper simplicial toric variety, and so its cohomology ring $H^*(X_{\hat{E}})$ is equipped with an isomorphism $\deg \colon H^{n - |E|}(X_{\hat{E}}) \to \mathbb{Q}$ such that the \emph{Poincar\'{e} pairing}, i.e., the induced bilinear form $H^{*}(X_{\hat{E}}) \times H^{n - |E| - *}(X_{\hat{E}}) \to \mathbb{Q}$, is nondegenerate.

The cohomology ring $H^*(X_{\hat{E}})$ has a similar presentation to that of $H^*(X_{E})$: it is the quotient of $\mathbb{Q}[\operatorname{lk}_{\hat{S}}(E)]$ by the linear forms corresponding to linear functions on $\mathbb{R}^n/\spn(C_E)$. In particular, $H^*(X_{\hat{E}})/(x^c) = H^*(X_E)$. 
This implies that the restriction map $H^*(X_{\hat{E}}) \to H^*(X_E)$ induces an isomorphism
$$\frac{(x^F : F \in \lk_{\mathcal{S}}(E), \,  F \sqcup E \text{ interior})}{(x^F : F \in \lk_{\mathcal{S}}(E), \, F \sqcup E \text{ interior}) \cap (x^{c})} \stackrel{\sim}{\to} L(\mathcal{S}, E).$$

\begin{lemma}
The restriction of the Poincar\'{e} pairing to the ideal 
$(x^F : F \in \lk_{\mathcal{S}}(E), \, F \sqcup E \text{ interior})$
descends to a nondegenerate bilinear form on $L(\mathcal{S}, E)$. 
\end{lemma}

\begin{proof}
We first show that $(x^F : F \in \lk_{\mathcal{S}}(E), \, F \sqcup E \text{ interior}) = \operatorname{ann}(x^c).$
For each face $F$ such that $F \sqcup E$ is interior, $x^{F} \cdot x^{c} = 0$ in $\mathbb{Q}[\operatorname{lk}_{\hat{S}}(E)]$, so this also holds in $H^{*}(X_{\hat{E}})$. This gives that $(x^F : F \in \lk_{\mathcal{S}}(E), \, F \sqcup E \text{ interior}) \subseteq \operatorname{ann}(x^c).$ For the other inclusion, note that
$H^*(X_{\hat{E}})/(x^F : F \in \lk_{\mathcal{S}}(E), \, F \sqcup E \text{ interior})$ is the cohomology of the toric divisor on $X_{\hat{E}}$ corresponding to the ray $\rho_c$. In particular, Poincar\'{e} duality holds for $H^*(X_{\hat{E}})/(x^F : F \in \lk_{\mathcal{S}}(E), \, F \sqcup E \text{ interior})$. Therefore, if there is some nonzero
$$u \in \ker\left (H^*(X_{\hat{E}})/(x^F : F \in \lk_{\mathcal{S}}(E), \, F \sqcup E \text{ interior}) \to H^*(X_{\hat{E}})/\operatorname{ann}(x^c)\right ),$$
then there is some $v$ such that $u\cdot v$ is nonzero and lies in the degree $n - |E| - 1$ part of $H^*(X_{\hat{E}})/(x^F : F \in \lk_{\mathcal{S}}(E), \, F \sqcup E \text{ interior})$, which is $1$-dimensional. This contradicts that $H^*(X_{\hat{E}})/\operatorname{ann}(x^c)$ is nonzero in degree $n - |E| - 1$, spanned by the image of any $w \in H^{n - |E| - 1}(X_{\hat{E}})$ such that $x^c \cdot w \not= 0$.

For any ideal $I$ in $H^*(X_{\hat{E}})$, the kernel of the restriction of the Poincar\'{e} pairing to $I$ is $\operatorname{ann}(I) \cap I$. Applying this with $I = \operatorname{ann}(x^c)$ and using that $\operatorname{ann}(\operatorname{ann}(x^c)) = (x^c)$ gives the result. 
\end{proof}

Let $B_E \colon L(\mathcal{S}, E) \times L(\mathcal{S}, E) \to \mathbb{Q}$ be the induced nondegenerate bilinear form.
If $E \subset E'$ is an inclusion of faces, then the pullback map $\iota^*$ and Gysin pushforward $j_*$ are adjoints: for any $u \in L(\mathcal{S}, E)$ and $v \in L(\mathcal{S}, E')$, we have
$$B_{E'}(\iota^* u, v) = B_E(u, j_*v).$$
This follows from the construction of the bilinear forms and the corresponding fact on the complete toric varieties $X_{\hat{E}}$ and $X_{\hat{E}'}$.

\subsection{Proof of nonvanishing}\label{ss:nonvanishing}

We will prove Theorem~\ref{thm:nonvanish} by reducing to a special case which was proved in \cite{LPS2}. Because the Hilbert series of $L(\mathcal{S}, E)$ is $\ell(\mathcal{S}, E; t)$, the following result is a strengthening of the case of Theorem~\ref{thm:nonvanish} when $F_1 = F_2 = \emptyset$. 

\begin{proposition}\cite[Remark 3.2]{LPS2}\label{prop:specialnonvanish}
Let $F \in \operatorname{lk}_{\mathcal{S}}(E)$ be a face such that $F \sqcup E$ is interior and $\mathcal{A}_F = F$. If $x^{\mathcal{A}_F} = 0$ in $L(\mathcal{S}, E)$, then $F$ is a $U$-pyramid. 
\end{proposition}

Proposition~\ref{prop:specialnonvanish} is proved by computing a resolution of $L(\mathcal{S}, E)$. This gives a description of the kernel of the map $(x^F : F \in \lk_{\mathcal{S}}(E), \, F \sqcup E \text{ interior}) \to L(\mathcal{S}, E)$. From this description, it is clear that if the kernel contains $x^{\mathcal{A}_F}$, then $F$ must be a $U$-pyramid.

\begin{proof}[Proof of Theorem~\ref{thm:nonvanish}]
Suppose we have a full partition $F = F_1 \sqcup F_2 \sqcup \mathcal{A}_F$ 
and $F$ is not a $U$-pyramid.
We claim that $x^{F_1 \sqcup \mathcal{A}_F} \in L(\mathcal{S}, E)$ is nonzero; note that this makes sense because $F_1 \sqcup \mathcal{A}_F \sqcup E$ is interior. 
To see this, it suffices to prove that it is nonzero after restriction to $L(\mathcal{S}, F_2 \sqcup E)$. 
Consider the Gysin map $j_* \colon L(\mathcal{S}, F_1 \sqcup F_2 \sqcup E) \to L(\mathcal{S}, F_2 \sqcup E)$. 
Then $x^{F_1 \sqcup \mathcal{A}_F} = j_*(x^{\mathcal{A}_F}) \in L(\mathcal{S}, F_2 \sqcup E)$. 
By Proposition~\ref{prop:specialnonvanish}, $x^{\mathcal{A}_F}$ is nonzero in $L(\mathcal{S}, F_1 \sqcup F_2 \sqcup E)$.

The Gysin map $j_*$ is adjoint to the pullback map $\iota^* \colon L(\mathcal{S}, F_2 \sqcup E) \to L(\mathcal{S}, F_1 \sqcup F_2 \sqcup E)$. Because $\sigma(F_2 \sqcup E) = \sigma(F_1 \sqcup F_2 \sqcup E)$, this pullback map is surjective (cf. the proof of \cite[Theorem~1.6]{LPS2}). Explicitly, if $G \in \lk_{\mathcal{S}}(F_1 \sqcup F_2 \sqcup E)$ and $G \sqcup F_1 \sqcup F_2 \sqcup E$ is interior, then $G \sqcup F_2 \sqcup E$ is interior and  $\iota^*(x^G) = x^G$. This implies that $j_*$
is injective, giving the desired nonvanishing. 
\end{proof}

\begin{remark}
In \cite{LPS2}, Proposition~\ref{prop:specialnonvanish} is proved in the setting of quasi-geometric homology triangulations of simplices, a much more general class of triangulations of simplices. The proof of Theorem~\ref{thm:nonvanish} can be adapted to work in this setting. 
\end{remark}

\begin{remark}
In a previous version of this article \cite{LPSarxiv}, Theorem~\ref{thm:nonvanish} is proved in a different way. Using the language above, the proof there constructs a face $G$ with $G \sqcup E$ interior and checks, via a lengthy computation, that $B_E(x^{F_1 \sqcup \mathcal{A}_F}, x^G) \not=0$. The proof shows that, up to a simple normalization factor and an explicit sign, $B_E(x^{F_1 \sqcup \mathcal{A}_F}, x^G)$ is equal to $1$. 
\end{remark}

\section{The local formal zeta function and its candidate poles} \label{sec:poles}

\subsection{Overview}
In this section, we introduce the local formal zeta function (Definition~\ref{d:formallocal}) and develop its fundamental properties. In Section~\ref{ss:formula}, we recall the formula for $Z_{\mot}(T)$ in \cite[Theorem 8.3.5]{BultotNicaise20}. In Section~\ref{ss:formal}, we define the local formal zeta function and discuss its candidate poles. In Section~\ref{ss:simplifyformal}, we prove a relation that the local formal zeta function satisfies that will be a crucial tool in the proof of Theorem~\ref{thm:nopolesimplicial}.

We first introduce some notation for use in this section and in Section~\ref{sec:fakepoles}. We use capital letters to denote elements of the vector space containing $\Newt(f)$ and use lowercase letters to denote elements of the dual space.
Let $\Gamma$ 
be the union of  the proper interior faces of $\Newt(f)$ and their subfaces. That is, $\Gamma$ is the union of faces $F$ of $\Newt(f)$ that are visible from the origin in the sense that for every $W \in F$, the intersection of $\Newt(f)$ with the interval from the origin to $W$ equals $\{ W \}$.
Let $\Sigma = \Sigma_f$ be the dual fan of $\Newt(f)$. For each face $F$ of $\Newt(f)$, let $\sigma_F$  be the cone of $\Sigma$ dual to $F$.

\subsection{Formula for the local motivic zeta function}\label{ss:formula}
We now recall the formula of Bultot and Nicaise for the local motivic zeta function of a 
nondegenerate polynomial $f$.

Consider a nonempty compact face $K$ of $\Gamma$.
Following \cite{BultotNicaise20}, 
we associate two 
classes in $\mathcal{M}^{\hat{\mu}}$ to $K$.
For $i \in \{ 0,1 \}$, let $Y_K(i)$ 
be the closed subscheme  of 
$\Spec \k[\Span(K) \cap \mathbb{Z}^n]$ cut out by $f_{K} = i$. 
When $i = 0$, we endow $Y_{K}(0)$ 
with the trivial $\hat{\mu}$-action and obtain 
a class $[Y_{K}(0)] \in \mathcal{M}^{\hat{\mu}}$.
We define a $\hat{\mu}$-action on $Y_{K}(1)$ 
as follows. 
Let $\rho_K$ be the lattice distance of $K$ to the origin, and let $w = w_K := \rho_K \psi_K \in 
\Hom(\Span(K) \cap \mathbb{Z}^n,\Z)$.
Then $w$ determines a cocharacter $\Spec \k[\Z] \to \Spec \k[\Span(K) \cap \mathbb{Z}^n]$, which we can restrict via 
$\Spec \k[T]/(T^\rho - 1) \to \Spec \k[\Z]$ to determine a $\mu_\rho$-action on $\Spec \k[\Span(K) \cap \mathbb{Z}^n]$. This induces an action of $\mu_{\rho}$ on $Y_K(1)$. Explicitly, choose a basis for $\Span(K) \cap \mathbb{Z}^n$ and write $w = (w_1, \ldots, w_r)$ and $f = \sum_{a \in \Z^r} \lambda_a x^a$. Then for each
$a = (a_1,\ldots,a_r)$ with $\lambda_a \neq 0$,  $\sum_{i = 1}^r a_i w_i$ is divisible by $\rho$, and the action is 
$$\zeta \cdot (x_1, \ldots, x_r) = (\zeta^{w_1} x_1, \ldots, \zeta^{w_r} x_r).$$
This gives a class $[Y_{K}(1)]$ in $\mathcal{M}^{\hat{\mu}}$. 
When $K$ is the empty compact face of $\Gamma$, $\Span(K) \cap \mathbb{Z}^n = \{ 0 \}$, and we let $Y_K(0)$ be the point $\Spec \k[\Span(K) \cap \mathbb{Z}^n]$ and let $Y_K(1) = \emptyset$. Then $[Y_K(0)] = 1$ and $[Y_K(1)] = 0$.

\begin{remark}
The above construction differs slightly from that in
\cite{BultotNicaise20}. Explicitly,  
for $i \in \{ 0,1 \}$,  \cite{BultotNicaise20} let $X_K(i)$ 
be the closed subscheme  of 
$\Spec \k[\Z^n]$ cut out by $f_{K} = i$. Consider $X_K(0)$ with the trivial $\hat{\mu}$-action. Let 
$w$ be any linear function in $\mathbb{Z}^n$ that restricts to $\rho \psi_K$, and, as above, consider the corresponding $\mu_\rho$-action on $X_K(1) \subset \Spec \k[\Z^n]$. 
They consider the classes $[X_{K}(i)]$ in $\mathcal{M}^{\hat{\mu}}$. It follows from \cite[Proposition~7.1.1]{BultotNicaise20} that 
$[X_K(i)] = [Y_K(i)](\L - 1)^{n - 1 - \dim K}$.  
\end{remark}

The following lemma will be important in the proof of 
Theorem~\ref{thm:nopole}.

\begin{lemma}\label{l:simplifiedrelations}
Let $G \subset F$ be an inclusion of compact faces of $\Gamma$. Suppose there exists a vertex $A$ of $F$ such that $F = \Conv{G,A}$ and $\Span(F) \cap \mathbb{Z}^n = \Span(G) \cap \mathbb{Z}^n + \Z \cdot A$. Then for $i \in \{0,1\}$, 
$[Y_G(i)] + [Y_F(i)] = (\L - 1)^{\dim F} \in \mathcal{M}^{\hat{\mu}}$. 
\end{lemma}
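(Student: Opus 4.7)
My plan is to explicitly trivialize $\Spec \k[\Span(F) \cap \Z^n]$ as a product $T_G \times \mathbb{G}_m$, where $T_G := \Spec \k[\Span(G) \cap \Z^n]$, and then to identify $Y_F(i)$ with $T_G \smallsetminus Y_G(i)$ in a $\mh$-equivariant way. I would first note that one may assume $F \supsetneq G$. Because $F, G \in \Gamma$ both admit linear functions $\psi_F, \psi_G$ of value $1$, their affine spans avoid the origin, giving $\dim \Span(G) = \dim G + 1$ and $\dim \Span(F) = \dim F + 1$; together with $\dim F = \dim G + 1$, this forces $A \notin \Span(G)$ and makes the sum $\Span(F) \cap \Z^n = \Span(G) \cap \Z^n \oplus \Z \cdot A$ direct. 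Setting $t := x^A$ then yields $\Spec \k[\Span(F) \cap \Z^n] \cong T_G \times \mathbb{G}_m$.

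Next I would establish that $f_F = f_G + a_A t$. The $\Z$-linear functional $\phi$ on $\Span(F) \cap \Z^n$ vanishing on $\Span(G)$ with $\phi(A) = 1$ is integer-valued on lattice points but satisfies $\phi \in [0,1]$ on $F = \Conv{G,A}$, so every lattice point of $F$ lies in $G$ or equals $A$. The equation $f_F = i$ then becomes $t = (i - f_G)/a_A$, which has a unique solution in $\mathbb{G}_m$ precisely when $f_G \neq i$; hence the first projection gives an isomorphism of $\k$-varieties $Y_F(i) \xrightarrow{\sim} T_G \smallsetminus Y_G(i)$.

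For $i = 1$ I would upgrade this to a $\mh$-equivariant isomorphism (the case $i = 0$ is automatic). Uniqueness forces $\psi_F|_{\Span(G)} = \psi_G$ and $\psi_F(A) = 1$, so $\rho_F \psi_F$ is $\Z$-valued on $\Span(F) \cap \Z^n$ if and only if $\rho_F \psi_G$ is $\Z$-valued on $\Span(G) \cap \Z^n$; minimality gives $\rho := \rho_F = \rho_G$. The cocharacter $w_F = \rho \psi_F$ then scales $t = x^A$ by $\zeta^{\rho} = 1$ for each $\zeta \in \mu_\rho$, so the action on the $\mathbb{G}_m$ factor is trivial and the projection $Y_F(1) \xrightarrow{\sim} T_G \smallsetminus Y_G(1)$ is $\mu_\rho$-equivariant, with $T_G$ carrying the action induced by $w_G$.

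Finally, to evaluate $[T_G]$ in $\M$, I would apply inclusion--exclusion to the coordinate stratification of $\A^{\dim G + 1} \supset T_G$ and use the $\mh$-equivariant $\A^m$-bundle relation to show that each coordinate subspace $\A^k$, endowed with its linear $\mu_\rho$-action, has class $\L^k$. This yields $[T_G] = (\L - 1)^{\dim G + 1} = (\L - 1)^{\dim F}$, and combined with the equivariant isomorphism above it gives $[Y_G(i)] + [Y_F(i)] = (\L - 1)^{\dim F}$. The only genuine subtlety is the bookkeeping of the $\mh$-action in the $i = 1$ case, verifying $\rho_F = \rho_G$ so that the $\mathbb{G}_m$ factor really falls out as a trivial direction; everything else is a direct unpacking of the definitions.
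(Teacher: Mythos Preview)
Your proposal is correct and follows essentially the same approach as the paper: both identify $Y_F(i)$ with $T_G \smallsetminus Y_G(i)$ via an explicit $\mu_\rho$-equivariant isomorphism using the splitting $\Span(F)\cap\Z^n = (\Span(G)\cap\Z^n)\oplus\Z A$, then use that the class of an $r$-dimensional torus with good $\mh$-action is $(\L-1)^r$. You supply more detail than the paper (the justification of $\rho_F=\rho_G$, the triviality of the action on the $x^A$-coordinate, and an inclusion--exclusion computation of $[T_G]$ in place of the paper's citation of \cite[Lemma~7.1.1]{BultotNicaise20}), but the argument is the same.
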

\begin{proof}
Let $r = \dim F$. 
Let 
$\rho_G$ and $\rho_F$ be the smallest positive integers such that $w_G = \rho_G \psi_G$  and $w_F = \rho_F \psi_F$
lie in  $\Hom(\Span(G) \cap \mathbb{Z}^n,\Z)$ and $\Hom(\Span(F) \cap \mathbb{Z}^n,\Z)$ respectively.

Then, we may choose coordinates such that
$Y_F(i)$ is defined by $\{ f_F(x_0,\ldots,x_r) = i \}$ in $\Spec \k[\Span(F) \cap \mathbb{Z}^n]$, 
$Y_G(i)$ is defined by $\{ f_G(x_1,\ldots,x_r) = i \}$ in $\Spec \k[\Span(G) \cap \mathbb{Z}^n]$, 
and 
$f_F(x_0,\ldots,x_r) = x_0 + f_G(x_1,\ldots,x_r)$. 
Also, we may set $\rho = \rho_F = \rho_G$ and write 
$w_G = (w_1,\ldots,w_r)$ and $w_F = (1,w_1,\ldots,w_r)$. As above, $w_G$ and $w_F$ induce $\mu_\rho$-actions on $\Spec \k[\Span(G) \cap \mathbb{Z}^n]$ 
and 
$\Spec \k[\Span(F) \cap \mathbb{Z}^n]$
respectively. Consider the $\mu_\rho$-equivariant map
\[
\phi\colon
\Spec \k[\Span(G) \cap \mathbb{Z}^n]
\smallsetminus Y_G(i) \to Y_F(i),
\]
\[
\phi(x_1,\ldots,x_r) = (i - f_G(x_1,\ldots,x_r), x_1,\ldots,x_r).
\]
Then $\phi$ is an isomorphism, with inverse $\phi^{-1}(x_0,x_1,\ldots,x_r) = (x_1,\ldots,x_r)$. 
By \cite[Lemma~7.1.1]{BultotNicaise20}, the class of any $r$-dimensional torus in $\mathcal{M}^{\hat{\mu}}$ is $(\L - 1)^r$, and the result follows.
\end{proof}

\begin{example}\label{e:primitivevertex}
Let $A$ be a primitive vertex of $\Gamma$. 
Then Lemma~\ref{l:simplifiedrelations}, with 
$F = \{ A \}$ and $G = \emptyset$, implies that $[Y_A(0)] = 0$ and $[Y_A(1)] = 1$. 
\end{example}

\begin{example}\label{e:nonemptybase}
Let $F$ be a compact $B_1$-face of $\Gamma$ with nonempty base $G$. Then Lemma~\ref{l:simplifiedrelations} implies that 
$[Y_G(0)] \frac{\L^{-1}T}{1-\L^{-1}T} + [Y_G(1)] + [Y_F(0)] \frac{\L^{-1}T}{1-\L^{-1}T} + [Y_F(1)] = \frac{(\L - 1)^{\dim F}}{1 - \mathbb{L}^{-1}T}$.
\end{example}

We now discuss two results on lattice point enumeration. The first result is standard. 
Let $C$ be a nonzero rational simplicial polyhedral cone in $\R_{\ge 0}^n$ with rays spanned by primitive integer vectors $u_1,\ldots,u_r$. 
Let $\BBox^+(C) = \{ u \in \N^n : u = \sum_{i = 1}^r \lambda_i u_i \text{ for some } 0 < \lambda_i \le 1 \}$.
Then 
\begin{equation}\label{e:infiniteseries}
\sum_{u \in C^\circ \cap  \N^n} x^u = \frac{\sum_{u \in \BBox^+(C)} x^u}{\prod_{i = 1}^r (1 - x^{u_i})} \in \Z\llbracket x_1,\ldots,x_n\rrbracket .
\end{equation}

We also need the following lemma.

\begin{lemma}\label{lem:powerseries}
Let $C$ be a rational polyhedral cone of dimension $d$ contained in $\mathbb{R}^n_{\ge 0}$, and let $Y$ be 
a $\Z$-linear function
that 
takes nonnegative values on $C$ and is not identically zero on $C$.
Let $u_1, \dotsc, u_r$ be the primitive generators of the rays of $C$. Let $I = \{i \in [r] : \langle u_i, Y \rangle \not= 0\}$. Assume that $\langle u_j, \mathbf{1} \rangle = 1$ for $j \not \in I$. Then
$$(L - 1)^{d - 1} \sum_{u \in C^{\circ} \cap \mathbb{N}^{n}} L^{-\langle u, \mathbf{1} \rangle}T^{\langle u, Y \rangle}$$
lies in the subring 
$$\mathbb{Z}[L, L^{-1}, T]\Big [\frac{1}{1 - L^{-\langle u_i, \mathbf{1} \rangle} T^{\langle u_i, Y\rangle }} \Big]_{i \in I} \subset \mathbb{Z}[L]\llbracket  L^{-1}, T\rrbracket .$$
\end{lemma}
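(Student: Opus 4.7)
The plan is to reduce to the simplicial case by triangulating $C$, apply Lemma~\ref{l:infiniteseries} to each piece, and then verify that the power of $(L-1)$ in the denominator is absorbed by the prefactor $(L-1)^{d-1}$.

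First I would triangulate $C$ into simplicial subcones \emph{without introducing any new rays}. This is possible because the polytope $P := \{u \in C : \langle u, \mathbf{1}\rangle = 1\}$ is compact (each $\langle u_i, \mathbf{1}\rangle > 0$ since $u_i \in \mathbb{R}_{\ge 0}^n \smallsetminus \{0\}$), and any polytope admits a triangulation using only its vertices (e.g., a pulling triangulation). Since every point of $C$ lies in the relative interior of a unique cell of the resulting fan $\Delta$, we can decompose
$$\sum_{u \in C^\circ \cap \mathbb{N}^n} L^{-\langle u, \mathbf{1}\rangle} T^{\langle u, Y\rangle} \;=\; \sum_{\sigma \in \Delta,\; \sigma^\circ \subset C^\circ} \sum_{u \in \sigma^\circ \cap \mathbb{N}^n} L^{-\langle u, \mathbf{1}\rangle} T^{\langle u, Y\rangle}.$$

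Next, for each simplicial interior cell $\sigma$ with rays generated by $\{u_j : j \in S_\sigma\} \subset \{u_1,\dots,u_r\}$, I apply Lemma~\ref{l:infiniteseries}, specialized via $x^u \mapsto L^{-\langle u, \mathbf{1}\rangle} T^{\langle u, Y\rangle}$. For $j \notin I$ the denominator factor is $1 - L^{-1} = L^{-1}(L-1)$, while for $j \in I$ it is one of the allowed factors $1 - L^{-\langle u_j, \mathbf{1}\rangle} T^{\langle u_j, Y\rangle}$. Writing $k_\sigma := |S_\sigma \smallsetminus I|$, the contribution of $\sigma$ becomes
$$\frac{L^{k_\sigma}\, N_\sigma}{(L-1)^{k_\sigma} \prod_{j \in S_\sigma \cap I}\bigl(1 - L^{-\langle u_j, \mathbf{1}\rangle} T^{\langle u_j, Y\rangle}\bigr)},$$
where $N_\sigma = \sum_{u \in \BBox^+(\sigma)} L^{-\langle u, \mathbf{1}\rangle} T^{\langle u, Y\rangle} \in \mathbb{Z}[L, L^{-1}, T]$ is a polynomial (finite sum over a bounded set of lattice points with $\langle u, \mathbf{1}\rangle, \langle u, Y \rangle \in \mathbb{Z}_{\ge 0}$).

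The crux of the argument is the inequality $k_\sigma \le d - 1$ for every interior cell $\sigma$. If $\dim \sigma < d$, then $k_\sigma \le |S_\sigma| = \dim \sigma \le d-1$. If $\dim \sigma = d$, then $\sigma$ is full-dimensional in $\Span(C)$; were all its rays to lie outside $I$, $Y$ would vanish on each ray, hence on $\sigma$, and hence on $\Span(\sigma) = \Span(C)$, contradicting the hypothesis that $Y$ is not identically zero on $C$. So $S_\sigma \cap I \ne \emptyset$, giving $k_\sigma \le d - 1$. Multiplying by $(L-1)^{d-1}$ therefore produces the polynomial numerator $(L-1)^{d-1-k_\sigma} L^{k_\sigma} N_\sigma \in \mathbb{Z}[L, L^{-1}, T]$ with denominator only the allowed factors, and summing over the finitely many interior $\sigma$ yields the required ring containment. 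The main obstacle (and where the hypotheses are used) is the dimensional argument of this last step: it is crucial that the triangulation introduces no new rays, so the only ``extra'' denominator factors that can appear are $1 - L^{-1}$, and that the non-vanishing of $Y$ forces every full-dimensional interior cell to contain at least one ray from $I$.
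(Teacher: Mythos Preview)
Your proof is correct and follows essentially the same approach as the paper's: reduce to simplicial pieces via a triangulation with no new rays, apply Lemma~\ref{l:infiniteseries} to each piece, and check that the $(L-1)^{d-1}$ prefactor absorbs all the $1/(1-L^{-1})$ factors coming from rays $j\notin I$. The paper's proof is only a sketch (citing \cite{BultotNicaise20}); you have supplied exactly the details it omits, including the key dimensional argument that any full-dimensional cell must contain a ray from $I$.
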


\begin{proof}
This is essentially \cite[Lemma 5.1.1]{BultotNicaise20}; the point is that we may reduce to the case when $C$ is simplicial and then apply 
\eqref{e:infiniteseries}.
The $1/(1 - L^{-1})$ terms that arise from $j \not \in  I$ are cancelled by the $(L-1)^{d-1}$ factor. 
\end{proof}

Define a piecewise linear function $N$ on $\Sigma$ by 
$$N(u) = \min \{ \langle u, W \rangle : W \in \Newt(f) \}.$$

\begin{remark}\label{r:Nzerorays}
If $u$ is a primitive generator of a ray in the dual fan,  corresponding to a facet $F$ of $\Newt(f)$, then $N(u)$ is the lattice distance of $F$ to the origin. 
If $N(u) = 0$, then $u = e_i^*$ for some $1 \le i \le n$, and hence $\langle u , \One \rangle = 1$.
\end{remark}

\begin{lemma}\cite[proof of Theorem 8.3.5]{BultotNicaise20}\label{l:liesinsubring}
Let $u_1,\ldots,u_r$ be the primitive generators of the rays of $\sigma_{K}$.
The element
$$ (L - 1)^{n - \dim K} \sum_{u \in \sigma^{\circ}_{K} \cap \mathbb{N}^{n}} L^{-\langle u, \mathbf{1} \rangle}T^{N(u)}$$
lies in the subring $\mathbb{Z}[L, L^{-1}, T]\Big[
\frac{1}{1 - L^{-\langle u_i, \mathbf{1} \rangle} T^{N(u_i)}}
\Big]_{\{ i \in [r] : N(u_i) \neq 0\}}$ of 
$\Z[L]\llbracket L^{-1},T\rrbracket $.
\end{lemma}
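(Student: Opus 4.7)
My plan is to reduce to the simplicial case by triangulating $\sigma_K$ without introducing new rays, and then applying Lemma~\ref{lem:powerseries} to each simplicial piece. The cone $\sigma_K$ is a pointed rational polyhedral cone contained in $\mathbb{R}^n_{\ge 0}$ (since the dual fan $\Sigma$ is supported on $\mathbb{R}^n_{\ge 0}$ and $\Newt(f)$ is full-dimensional), so I may choose a simplicial subdivision $\Sigma'$ of $\sigma_K$ whose rays are a subset of $\{\rho_1, \ldots, \rho_r\}$, the rays of $\sigma_K$; such a subdivision exists by a standard pulling argument. Since every point of $\sigma_K$ lies in the relative interior of a unique cone of $\Sigma'$, and such a point belongs to $\sigma_K^\circ$ precisely when that cone is not contained in $\partial \sigma_K$, I obtain the decomposition
$$\sigma_K^\circ = \bigsqcup_{\substack{\tau' \in \Sigma' \\ \tau' \not\subset \partial \sigma_K}} (\tau')^\circ,$$
and the sum over $\sigma_K^\circ \cap \mathbb{N}^n$ splits accordingly.

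For each piece $\tau' \in \Sigma'$ with $\tau' \not\subset \partial \sigma_K$ and $d' := \dim \tau'$, I will apply Lemma~\ref{lem:powerseries} with $C = \tau'$ and $Y$ equal to the restriction of $N$ to $\tau'$. Note that $N$ is linear and nonnegative on $\sigma_K$, and hence on $\tau'$, because $\sigma_K$ is a single cone of the dual fan $\Sigma$. Moreover, by Remark~\ref{r:Nzerorays}, any ray of $\tau'$ (being a ray of $\sigma_K$) with primitive generator $u_i$ satisfying $N(u_i) = 0$ must be a coordinate vector, so $\langle u_i, \mathbf{1}\rangle = 1$, verifying the hypothesis of Lemma~\ref{lem:powerseries}. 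When $N$ is not identically zero on $\tau'$, the lemma yields that $(L-1)^{d'-1}$ times the sum over $(\tau')^\circ \cap \mathbb{N}^n$ lies in $\mathbb{Z}[L, L^{-1}, T]\big[\frac{1}{1 - L^{-\langle u_i, \mathbf{1}\rangle} T^{N(u_i)}}\big]_{i : u_i \text{ is a ray of } \tau',\, N(u_i) \ne 0}$; since the rays of $\tau'$ are among those of $\sigma_K$, this is a subring of the target ring. In the degenerate case where $N \equiv 0$ on $\tau'$ (which forces every ray of $\tau'$ to be a coordinate vector with $\langle u_i, \mathbf{1}\rangle = 1$), a direct application of Lemma~\ref{l:infiniteseries} gives that $(L-1)^{d'}$ times the sum is already a Laurent polynomial in $L$, which also lies in the target subring.

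Finally, multiplying the contribution of each piece $\tau'$ by the additional factor $(L-1)^{(n - \dim K) - d' + 1}$ (or $(L-1)^{(n-\dim K) - d'}$ in the degenerate case), which is a polynomial in $L$ because $d' \le \dim \sigma_K = n - \dim K$, keeps us in the subring. Summing over all $\tau' \in \Sigma'$ with $\tau' \not\subset \partial \sigma_K$ yields the lemma. The main technicality is the existence of a simplicial subdivision of $\sigma_K$ using no new rays, which is standard for pointed rational polyhedral cones, together with routine bookkeeping of the $(L-1)$ factors, especially in the degenerate $N \equiv 0$ case on lower-dimensional pieces of the triangulation.
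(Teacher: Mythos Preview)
Your proof is correct, but more elaborate than needed. The paper applies Lemma~\ref{lem:powerseries} directly to $C=\sigma_K$: that lemma does not require $C$ to be simplicial (its own proof already contains the reduction to simplicial cones), so your preliminary triangulation of $\sigma_K$ duplicates work that Lemma~\ref{lem:powerseries} does internally. One simply observes that $N$ restricts to a nonnegative $\mathbb{Z}$-linear function on $\sigma_K$, invokes Remark~\ref{r:Nzerorays} to check the hypothesis on rays with $N(u_i)=0$, and then multiplies by one extra factor of $(L-1)$ to pass from $(L-1)^{\dim\sigma_K-1}$ to $(L-1)^{n-\dim K}$.

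Your careful handling of the degenerate case $N\equiv 0$ on a piece is also unnecessary in the intended application, where $K$ is a nonempty compact face of $\Gamma$. In that case $\sigma_K^\circ\subset\mathbb{R}^n_{>0}$ (otherwise the face minimized would have a nontrivial recession direction), and since any vertex $V$ of $K$ lies in $\mathbb{Z}^n_{\ge 0}\setminus\{0\}$, one has $N(u)=\langle u,V\rangle>0$ for all $u\in\sigma_K^\circ$; hence $N$ is not identically zero on $\sigma_K$, nor on any $\tau'$ with $(\tau')^\circ\subset\sigma_K^\circ$.
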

\begin{proof}
Observe that the restriction of $N$ to $\sigma_K$ is a nonnegative linear function. 
The result follows from Lemma~\ref{lem:powerseries} and Remark~\ref{r:Nzerorays}.
\end{proof}

In \cite{BultotNicaise20}, they
define 
$$ (\L - 1)^{n - \dim K} \sum_{u \in \sigma^{\circ}_{K} \cap \mathbb{N}^{n}} \L^{-\langle u, \mathbf{1} \rangle}T^{N(u)} \in \mathcal{M}^{\hat{\mu}}\llbracket T \rrbracket$$
to be the image of the expression in Lemma~\ref{l:liesinsubring} 
under the specialization map  
$\Z[L,L^{-1}]\llbracket T \rrbracket \to \mathcal{M}^{\hat{\mu}}\llbracket T \rrbracket$ that sends $L$ to $\L$.

\begin{theorem}\cite[Theorem 8.3.5]{BultotNicaise20}\label{thm:combformula}
Suppose 
$f$ is nondegenerate.
Then
\begin{equation}\label{eq:BNformula}
Z_{\mot}(T) = \sum_{K}
\left( [Y_K(0)] \frac{\L^{-1}T}{1-\L^{-1}T} + [Y_K(1)] \right) \bigg ((\L - 1)^{n  - \dim K} \sum_{u \in \sigma^{\circ}_{K} \cap \mathbb{N}^{n}} \mathbb{L}^{-\langle u, \mathbf{1} \rangle}T^{N(u)} \bigg) \in \mathcal{M}^{\hat{\mu}}\llbracket T \rrbracket,
\end{equation}
where the sum is over nonempty compact faces $K \in \Gamma$.
\end{theorem}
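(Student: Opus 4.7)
The plan is to pass through an embedded toric log resolution adapted to $\Newt(f)$, apply the standard log-resolution expression for $Z_\mot(T)$, and then reorganize the sum indexed by rays of a smooth refinement into a sum indexed by compact faces $K$ of $\Gamma$ together with lattice points in $\sigma_K^\circ$.

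Concretely, fix any smooth refinement $\Sigma'$ of the dual fan $\Sigma$, supported on $\R^n_{\ge 0}$, and let $\pi \colon Y_{\Sigma'} \to \A^n$ be the associated toric birational morphism. Nondegeneracy of $f$ ensures that, together with the strict transform $D_0$ of $X_f$, $\pi$ is an embedded log resolution; for each ray $\rho$ of $\Sigma'$ with primitive generator $u_\rho$, a local computation at the torus-fixed point shows that the toric divisor $D_\rho$ has numerical data $(N_\rho,\nu_\rho) = (N(u_\rho), \langle u_\rho,\1\rangle)$, while $D_0$ has numerical data $(1,1)$. Substituting into the log-resolution formula for $Z_\mot(T)$ and grouping the terms according to the unique cone $\sigma_K \in \Sigma$ whose relative interior contains the cone spanned by the rays in $I$, one rewrites $Z_\mot(T)$ as a sum over nonempty compact faces $K$ of $\Gamma$ of contributions indexed by subsets $I$ of rays $\rho$ of $\Sigma'$ with $u_\rho \in \sigma_K^\circ$. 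The binary choice of whether $D_0 \in I$ produces either the factor $\tfrac{\L^{-1}T}{1-\L^{-1}T}$ or $1$, matching the parenthesized bracket in \eqref{eq:BNformula}.

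Two ingredients then finish the argument. First, the stratum class $[\widetilde E_I^\circ]$ must be identified $\mh$-equivariantly with $[Y_K(1)]$ (when $D_0 \notin I$) or with $[Y_K(0)]$ carrying the trivial action (when $D_0 \in I$). Locally, $E_I^\circ$ is a torus bundle over the hypersurface cut out by $f_K$ inside a subtorus of $\Spec\k[\Span(K) \cap \Z^n]$, and the monodromy along each $D_\rho$ is governed by the linear form $\langle u_\rho,\cdot\rangle$; summing these forms over $\rho \in I$ produces a positive integer multiple of $w_K = \rho_K\psi_K$, so the locally defined Galois cover $\widetilde E_I^\circ$ assembles into the $\mu_{\rho_K}$-cover used to define $[Y_K(1)]$. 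Second, once the stratum class is pulled out of the sum, the remaining sum over $I \subset \{\rho : u_\rho \in \sigma_K^\circ\}$ collapses by the geometric-series identity of Lemma~\ref{l:infiniteseries} into $(\L-1)^{n-\dim K}\sum_{u \in \sigma_K^\circ \cap \N^n}\L^{-\langle u,\1\rangle}T^{N(u)}$, and the resulting expression is manifestly independent of the chosen $\Sigma'$. The main obstacle is the $\mh$-equivariant gluing: the cover $\widetilde E_I^\circ$ is built locally from $N_I$th roots of the unit part of $\pi^* f$, and one must verify that on the toric stratum over $K$ these local pieces patch into a globally defined $\mu_{\rho_K}$-torsor matching the cocharacter $w_K$, so that the pushforward class is exactly $[Y_K(1)]$ rather than a twist of it.
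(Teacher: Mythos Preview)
The paper does not give its own proof of this theorem; it is quoted verbatim from \cite[Theorem~8.3.5]{BultotNicaise20}, so there is no in-paper argument to compare your sketch against. For context, the Bultot--Nicaise proof does not proceed via a smooth toric refinement as you propose: they work with log-smooth models and their formula for the motivic zeta function on such models, which allows them to compute directly on the (generally singular) toric variety associated to $\Sigma$ without ever choosing a refinement $\Sigma'$.

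Your classical route is a reasonable alternative, but two steps would need tightening before it becomes a proof. First, the ``collapsing'' is not a sum over all subsets $I \subset \{\rho : u_\rho \in \sigma_K^\circ\}$; only those $I$ that span a cone of $\Sigma'$ index strata of the log resolution, and the factor $(\L-1)^{n-\dim K}$ comes not from $|I|$ alone but from the torus-fiber dimension of each $E_I^\circ$ over the hypersurface cut out by $f_K$. Getting the exponents to match requires an explicit identification of $E_I^\circ$ as a product of that hypersurface with a split torus of the correct rank, uniformly across cones $\tau$ with $\tau^\circ \subset \sigma_K^\circ$. Second, the $\hat\mu$-equivariant identification you flag is genuine work: one must check that the cover $\widetilde E_I^\circ$ is $\hat\mu$-equivariantly such a product, with the torus factor carrying trivial action. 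In the Bultot--Nicaise setup this is packaged as their Proposition~7.1.1, and something equivalent would be needed here.
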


\begin{remark}

There is an extra factor of $(\L - 1)$ in \eqref{eq:BNformula} 
that does not appear in \cite[Theorem 8.3.5]{BultotNicaise20} for consistency with our choice of normalization of the local motivic zeta function, cf. Remark~\ref{r:conventions}.
\end{remark}

\subsection{The local formal zeta function}\label{ss:formal} 
We now introduce the \emph{local formal zeta function} of $f$, denoted $Z_{\for}(T)$, which is a power series over a polynomial ring that specializes to $Z_{\mot}(T)$. The key advantage of $Z_{\for}(T)$ is that it lies in a power series ring  over an integral domain, so it is easier to understand 
sets of candidate poles 
of $Z_{\for}(T)$. Also,  $Z_{\for}(T)$ depends only on $\Newt(f)$, as opposed to $Z_{\mot}(T)$ which depends on $f$. 

Let $D$ be a ring containing $\Z[L,L^{-1},T,\frac{1}{1 - L^{-1}T}]$ as a subring. 
Let $$R_D = D[Y_K : \emptyset \neq K \in \Gamma, K \textrm{ compact}]/(\mathcal{I}_1 + \mathcal{I}_2), \text{ where}$$
$$\mathcal{I}_1 = (Y_{V} - 1 : V \text{ primitive vertex of } \Gamma), \text{ and}$$
$$\mathcal{I}_2 = (Y_G + Y_F - \frac{(L-1)^{\dim F}}{1 - L^{-1}T} : F \text{ compact $B_1$-face with nonempty base $G$}).$$
When $D =  \Z[L,L^{-1}]\llbracket T \rrbracket$, we write $R := R_D$. 
It follows from Example~\ref{e:primitivevertex} and Example~\ref{e:nonemptybase} that we have a well-defined $\Z\llbracket T \rrbracket$-algebra homomorphism\[
\csp \colon R \to \mathcal{M}^{\hat{\mu}}\llbracket T \rrbracket,  \text{ given by }
\csp(L) = \L, 
\: \csp(Y_K) = [Y_K(0)] \frac{\L^{-1}T}{1-\L^{-1}T} + [Y_K(1)].
\]

\begin{definition}\label{d:formallocal}
With the notation above,
$$Z_{\for}(T) :=  \sum_{\substack{\emptyset \neq K \in \Gamma \\ K \textrm{ compact}}}
Y_K \bigg( (L - 1)^{n  - \dim K} \sum_{u \in \sigma^{\circ}_{K} \cap \mathbb{N}^{n}} L^{-\langle u, \mathbf{1} \rangle}T^{N(u)} \bigg) \in R.$$
\end{definition}

Above, the fact that the right-hand side lies in $R$ follows from Lemma~\ref{l:liesinsubring}. By Theorem~\ref{thm:combformula}, $\csp(Z_{\for}(T)) = Z_{\mot}(T)$.

\begin{lemma}\label{l:polynomialring}
The ring $R_D$ 
is isomorphic to a polynomial ring over $D$. 
Moreover, if $D$ is a subring of $D'$, then $R_D$ is naturally a subring of $R_{D'}$. 
\end{lemma}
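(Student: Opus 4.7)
The plan is to identify $R_D$ with a polynomial ring $P_D = D[X_K : K \text{ essential}]$, where I call a compact face $K \in \Gamma$ \emph{essential} if it is neither a primitive vertex nor a $B_1$-face. Granting this isomorphism, the second assertion $R_D \hookrightarrow R_{D'}$ whenever $D \subset D'$ is immediate: the essential faces depend only on $\Newt(f)$, and the evident inclusion $P_D \hookrightarrow P_{D'}$ of polynomial rings over the same set of variables gives the required inclusion.

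To construct the isomorphism, I would first define a $D$-algebra homomorphism $\rho \colon D[Y_K : \emptyset \ne K \in \Gamma, K \text{ compact}] \to P_D$ by specifying $\rho(Y_K)$ recursively on $\dim K$: set $\rho(Y_K) = X_K$ when $K$ is essential; set $\rho(Y_V) = 1$ when $V$ is a primitive vertex (using $\mathcal{I}_1$); and for a $B_1$-face $K$ of dimension $\ge 1$, choose any $B_1$ apex $A$ with base $G = K \setminus A$ and set $\rho(Y_K) = \frac{(L-1)^{\dim K}}{1 - L^{-1}T} - \rho(Y_G)$ (using $\mathcal{I}_2$). The hard part will be showing that $\rho(Y_K)$ is independent of the choice of apex. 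By strong induction on $\dim K$, it suffices to treat two distinct $B_1$ apices $A_1, A_2$ of a face $F$ with corresponding bases $G_i = F \setminus A_i$. A direct check from Definition~\ref{def:B1} shows that for $i \ne j$ the vertex $A_j$ remains a $B_1$ apex of $G_i$ with the same base direction it had in $F$, and its base in $G_i$ is the common subface $H = F \setminus \{A_1, A_2\}$. Applying the inductive formula to each $G_i$ via the apex $A_j$ yields
\[
\rho(Y_{G_i}) = \frac{(L-1)^{\dim F - 1}}{1 - L^{-1}T} - \rho(Y_H)
\]
for each $i$, independent of $i$, as required. When $\dim F = 1$ one has $H = \emptyset$; in that case both $G_i$ are primitive $B_1$ vertices and $\rho(Y_{G_i}) = 1$ handles the comparison directly.

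With $\rho$ well-defined, all generators of $\mathcal{I}_1 + \mathcal{I}_2$ are sent to zero by construction, so $\rho$ descends to a $D$-algebra homomorphism $\tilde\rho \colon R_D \to P_D$. The natural $D$-algebra map $\iota \colon P_D \to R_D$ sending $X_K \mapsto Y_K$ satisfies $\tilde\rho \circ \iota = \mathrm{id}_{P_D}$ on generators, while a short induction on $\dim K$ using the defining relations of $R_D$ shows that $\iota(\rho(Y_K)) = Y_K$ in $R_D$ for every compact face $K$, so $\iota \circ \tilde\rho = \mathrm{id}_{R_D}$ as well. Hence $\tilde\rho$ is the required isomorphism, and the lemma follows.
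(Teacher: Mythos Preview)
Your proof is correct, but it follows a different route from the paper's.  The paper makes the single change of variables
\[
Z_K := (-1)^{\dim K}\Big( Y_K - \frac{(L - 1)^{\dim K + 1}}{1 - L^{-1}T} \sum_{i = 0}^{\,n - \dim K - 1} (1 - L)^i\Big),
\]
which turns the relations in $\mathcal{I}_2$ into the symmetric form $Z_G = Z_F$ and the relations in $\mathcal{I}_1$ into $Z_V = c$ for a constant $c$ that is the same for every primitive vertex.  After this substitution the quotient is visibly a polynomial ring, with one free variable for each equivalence class (under the relation generated by $G\sim F$ for $F$ a $B_1$-face with base $G$) that contains no primitive vertex.

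Your approach instead builds the isomorphism by recursion on $\dim K$ and singles out the free variables as what you call the \emph{essential} faces.  The substance of your argument --- that the value $\rho(Y_F)$ does not depend on the choice of $B_1$-apex, because two apices $A_1,A_2$ of $F$ descend to $B_1$-faces $G_1,G_2$ sharing the common base $H=F\smallsetminus\{A_1,A_2\}$ --- is exactly what is needed to make the recursion work, and it simultaneously shows that each $\sim$-equivalence class contains a unique essential face (so your index set agrees with the paper's).  The paper's substitution hides this combinatorial check inside the algebra: once $\mathcal I_2$ reads $Z_G=Z_F$, the well-definedness across different apices is automatic.  Your argument is slightly longer but more explicit about where the free generators come from; the paper's is shorter but requires spotting the right $Z_K$.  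Both deliver the second assertion (that $R_D\hookrightarrow R_{D'}$ when $D\subset D'$) immediately from the polynomial-ring description.

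One small remark: your word \emph{essential} clashes with the paper's Definition~3.3.1, where ``essential face'' already has a different meaning; if you incorporate this into the paper you should rename the notion (e.g.\ ``basic'' or ``non-$B_1$'').
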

\begin{proof}
Consider the following change of variables. If $K$ is a nonempty compact face of $\Gamma$, then let
$$Z_K := (-1)^{\dim K}\Bigg( Y_K - \frac{(L - 1)^{\dim K + 1}}{1 - L^{-1}T} \sum_{i = 0}^{n - \dim K - 1} (1 - L)^i\Bigg).$$  
Then $R_D = D[Z_K : \emptyset \neq K \in \Gamma, K \textrm{ compact}]/(\mathcal{I}_1 + \mathcal{I}_2)$, where
$$\mathcal{I}_1 = (Z_{V} - 1 + \frac{L - 1}{1 - L^{-1}T} \sum_{i = 0}^{n - 1} (1 - L)^i : V \text{ primitive vertex of } \Gamma),$$
$$\mathcal{I}_2 = (Z_G - Z_F  : F \text{ compact $B_1$-face with nonempty base $G$}).$$
Consider the equivalence relation on nonempty compact faces in $\Gamma$ generated by $G \sim F$ whenever $F$ is a compact $B_1$-face with nonempty base $G$. Then $R$ is isomorphic to a polynomial ring over $D$ with variables indexed by all equivalence classes that do not contain a primitive vertex of $\Gamma$.
If $D$ is a subring of $D'$, then $R_{D'}$ is a polynomial ring over $D'$ in the same variables as above. It follows that the natural map $R_D \to R_{D'}$ is injective. 
\end{proof}

When $D = \Z[L]\llbracket L^{-1},T\rrbracket $, we let $\tilde{R} := R_D$. By Lemma~\ref{l:polynomialring}, $R$ is a subring of $\tilde{R}$. In what follows,  
we will freely view $Z_{\for}(T)$ as an element of $\tilde{R}$,    in order to ensure relevant infinite sums in $L^{-1}$ and $T$ are well-defined.

We next define the notion of a set of candidate poles for the local formal zeta function.
Let $\mathcal{P}$ be a finite set of rational numbers containing $-1$. Then  $\mathcal{P}$ is a \emph{set of candidate poles} for some power series $Z(T) \in R$ if $Z(T)$ belongs to the subring $R_D$ of $R$, where 
$$D = \Z [L,L^{-1},T]\bigg[ \frac{1}{1 - L^aT^b}\bigg]_{(a, b) \in \mathbb{Z} \times \mathbb{Z}_{>0}, a/b \in \mathcal{P}}.$$
By Lemma~\ref{l:liesinsubring}, 
$\{ \alpha \in \mathbb{Q} : \Contrib(\alpha) \neq \emptyset \} \cup \{ -1 \}$
is a 
set of candidate poles for $Z_{\for}(T)$. 

\begin{remark}\label{r:specializecandidate}
Since $\csp(Z_{\for}(T)) = Z_{\mot}(T)$, any set of candidate poles for $Z_{\for}(T)$ is a set of candidate poles for $Z_{\mot}(T)$.
\end{remark}

\begin{remark}\label{r:unioncandidate}
Let $\mathcal{P}_1$ and $\mathcal{P}_2$ be sets of candidate poles for elements $Z_1(T)$ and $Z_2(T)$ in $R$ respectively. 
It follows from the definition that 
$\mathcal{P}_1 \cup \mathcal{P}_2$ is a set of candidate poles for $Z_1(T) + Z_2(T)$. 
\end{remark}

The main benefit of working with candidate poles of $Z_{\for}(T)$ is that they satisfy the following lemma. 

\begin{lemma}\label{l:intersectcandidate}
Let $\mathcal{P}_1$ and $\mathcal{P}_2$ be sets of candidate poles for $Z(T) \in R$. Then 
$\mathcal{P}_1 \cap \mathcal{P}_2$ is a set of candidate poles for $Z(T)$. 
\end{lemma}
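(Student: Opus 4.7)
The plan is to reduce the statement to a claim purely about the coefficient rings and then verify that claim using that $\Q[L,L^{-1},T]$ is a UFD in which the polynomials $1-L^aT^b$ corresponding to distinct values of $a/b$ are coprime. For a finite set of rationals $\cP$ containing $-1$, write
$$D(\cP) := \Z[L,L^{-1},T]\bigg[\frac{1}{1-L^aT^b}\bigg]_{(a,b) \in \Z \times \Z_{>0},\, a/b \in \cP},$$
viewed as a subring of $D_0 := \Z[L,L^{-1}]\llbracket T \rrbracket$ via formal power series expansion. By Lemma~\ref{l:polynomialring}, the rings $R_{D(\cP_1)}$, $R_{D(\cP_2)}$, and $R_{D(\cP_1 \cap \cP_2)}$ are polynomial rings over their respective coefficient rings in the \emph{same} variables, compatibly embedded in $R=R_{D_0}$. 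Writing $Z(T)$ as a polynomial in these variables, the lemma reduces to showing the equality of coefficient rings
$$D(\cP_1) \cap D(\cP_2) = D(\cP_1 \cap \cP_2) \quad \text{inside } D_0.$$

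The inclusion $\supset$ is immediate. For $\subset$, the key input is the following coprimality fact in $\Q[L,L^{-1},T]$: if $a_1/b_1 \ne a_2/b_2$ with $b_i > 0$, then $1-L^{a_1}T^{b_1}$ and $1-L^{a_2}T^{b_2}$ share no common irreducible factor, and more generally every irreducible factor of $1-L^aT^b$ also divides $1-L^{a'}T^{b'}$ only when $a/b = a'/b'$. Granting this, suppose $z \in D(\cP_1) \cap D(\cP_2)$, and write $z = A/Q_1 = B/Q_2$ with $A, B \in \Z[L,L^{-1},T]$ and each $Q_i$ a product of factors $1-L^aT^b$ with $a/b \in \cP_i$. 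Setting $Q := \gcd(Q_1, Q_2)$ in $\Q[L,L^{-1},T]$, the relation $B Q_1 = A Q_2$ together with the coprimality of $Q_1/Q$ and $Q_2/Q$ gives $z = C/Q$ with $C \in \Q[L,L^{-1},T]$. By the coprimality fact, every irreducible factor of $Q$ divides some $1-L^aT^b$ with $a/b \in \cP_1 \cap \cP_2$, so we may choose a product $Q_{12}$ of such factors, with integer coefficients, satisfying $Q \mid Q_{12}$ in $\Q[L,L^{-1},T]$. Then $E := zQ_{12}$ lies a priori in $\Q[L,L^{-1},T]$, but it also lies in $D_0$ because $z \in D_0$ and $Q_{12} \in \Z[L,L^{-1},T] \subset D_0$. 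Since the intersection $\Q[L,L^{-1},T] \cap D_0$, taken inside $\Q[L,L^{-1}]\llbracket T \rrbracket$, is simply $\Z[L,L^{-1},T]$ (a polynomial in $T$ whose coefficients lie in both $\Q[L,L^{-1}]$ and $\Z[L,L^{-1}]$ already has coefficients in $\Z[L,L^{-1}]$), we conclude $E \in \Z[L,L^{-1},T]$, and hence $z \in D(\cP_1 \cap \cP_2)$.

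The main obstacle is the coprimality fact, which is the only nonformal input. I would verify it geometrically inside the 2-dimensional torus $\Spec \Q[L^{\pm 1}, T^{\pm 1}]$: writing $a/b = a_0/b_0$ in lowest terms with $b_0 > 0$, the zero set of $1-L^aT^b$ is contained in the union of the curves $\{T^{b_0} L^{a_0} = \zeta\}$ for $b_0$-th roots of unity $\zeta$, and these curve families are disjoint for distinct values of $a/b$. Hence $V(1-L^{a_1}T^{b_1}) \cap V(1-L^{a_2}T^{b_2})$ is zero-dimensional whenever $a_1/b_1 \ne a_2/b_2$, whereas a common irreducible factor (an irreducible of height one in a two-dimensional UFD) would cut out a one-dimensional subscheme contained in both zero sets, a contradiction.
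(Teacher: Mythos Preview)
Your proof is correct. Both your argument and the paper's rest on the same essential fact---that $1-L^{a_1}T^{b_1}$ and $1-L^{a_2}T^{b_2}$ are coprime whenever $a_1/b_1\ne a_2/b_2$---but the organization differs. You first invoke Lemma~\ref{l:polynomialring} to reduce the question to the coefficient rings, i.e.\ to proving $D(\cP_1)\cap D(\cP_2)=D(\cP_1\cap\cP_2)$ inside $\Z[L,L^{-1}]\llbracket T\rrbracket$, and then handle this via a gcd argument in the UFD $\Q[L,L^{-1},T]$ followed by an integrality check. The paper instead works directly in $R'=R_{\Z[L,L^{-1},T,1/(1-L^{-1}T)]}$, clears denominators, applies the division algorithm in $T$ to peel off one factor $1-L^cT^d$ with $c/d\in\cP_1\smallsetminus\cP_2$ at a time, and shows the remainder vanishes by evaluating at the roots $\exp(2\pi ij/d)L^{-c/d}$ in an extension. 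Your geometric coprimality argument (finite intersection of the zero loci in the torus) is equivalent to the paper's root-counting; your explicit reduction to coefficient rings makes the role of Lemma~\ref{l:polynomialring} more transparent, while the paper's inductive division-algorithm approach avoids passing to $\Q$-coefficients and the separate integrality step.
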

\begin{proof}
Let $D' = \Z[L,L^{-1},T,\frac{1}{1 - L^{-1}T}]$ and $R' = R_{D'}$. 
We can write $Z(T) = \frac{F_i(T)}{G_i(t)}$ for $i \in \{1,2\}$ for some $F_i(T) \in R$ and some $G_i(t)$ a finite product of terms of the form $\{ 1 - L^a T^b : (a,b) \in \Z \times \Z_{> 0}, a/b \in \mathcal{P}_i \}$. Suppose that $G_1(T) = (1 - L^c T^d) G_1'(T)$ for some $(c,d) \in \Z \times \Z_{> 0}, c/d \in \mathcal{P}_1 \smallsetminus \mathcal{P}_2$. 
By induction on $\deg(G_1(T))$, the result will follow if we can show that $F_1(T) = (1 - L^cT^d)F_1'(T)$ for some $F_1'(T) \in R'$. 

Since the leading coefficient of $1 - L^cT^d$ is a unit in $R'$, we may apply the division algorithm to write $F_1(T) = (1 - L^cT^d)F_1'(T) + \tilde{F}_1(T)$ for some $F_1'(T),\tilde{F}_1(T) \in R'$, with $\deg \tilde{F}_1(T) < d$. 
The equality $F_1(T)G_2(T) = F_2(T)(1 - L^c T^d) G_1'(T)$ in $R'$ implies that 
$1 - L^cT^d$ divides $\tilde{F}_1(T)G_2(T)$. 
Over an appropriate choice of ring $R_D$ containing $R$ as a subring, $1 - L^c T^d$ has roots
$\{ \exp(\frac{2 \pi i j}{d}) L^{-c/d} : 0 \le j < d \}$. Similarly, $G_2(T)$ has roots contained in $\{ 
\exp(\frac{2 \pi i j}{b}) L^{-a/b} : 0 \le j < b, (a,b) \in \Z \times \Z_{> 0}, a/b \in \mathcal{P}_2 \}$. Since $c/d \notin \mathcal{P}_2$ and $\deg \tilde{F}_1(T) < d$, we deduce that 
$\tilde{F}_1(T) = 0$, and $Z(T) = \frac{F_1'(T)}{G_1'(T)}$, as desired. 
\end{proof}

\begin{remark}\label{r:minimalsetcandidate}
By Lemma~\ref{l:intersectcandidate}, if $Z(T) \in R$ 
admits a set of candidate poles, then there exists a 
minimal set of candidate poles. 
In particular, there exists a minimal set of candidate poles of $Z_{\for}(T)$. 
\end{remark}

\subsection{Simplifying the local formal zeta function}\label{ss:simplifyformal}
We now develop some tools to manipulate the local formal zeta function. 
Given a subset $C \subset \R^n_{\ge 0}$, 
we write
\begin{equation}\label{e:contribution}
Z_{\for}(T)|_{C} := \sum_{\substack{\emptyset \neq K \in \Gamma \\ K \textrm{ compact}}}
Y_K \bigg ((L - 1)^{n  - \dim K} \sum_{u \in \sigma^{\circ}_{K} \cap C \cap \mathbb{N}^{n}} L^{-\langle u, \mathbf{1} \rangle}T^{N(u)} \bigg) \in \tilde{R}.
\end{equation}
We call $Z_{\for}(T)|_{C}$ the \emph{contribution} of $C$ to $Z_{\for}(T)$.

We will now prove a key technical tool we will use to manipulate $Z_{\for}(T)$.
Lemma~\ref{lem:cancellation} is analogous to \cite[Lemma 3.3]{ELT}.

\begin{lemma}\label{lem:cancellation}
Let $F$ be a compact $B_1$-face with nonempty base $G$ and apex $A$ in the direction $e^*_{\ell}$. 
Let $C'$ be a nonzero rational polyhedral cone with $(C')^{\circ} \subset \sigma_F^{\circ}$, and let $C \subset \sigma_G$ be the convex hull of $C'$ and 
$\mathbb{R}_{\ge 0}e_\ell^*$. 
Then
$$Z_{\for}(T)|_{(C^{\circ}\cup (C')^{\circ})}= (L - 1)^{n} \bigg(\sum_{u \in (C^{\circ} \cup (C')^{\circ}) \cap \mathbb{N}^n} L^{-\langle u, \mathbf{1} \rangle} T^{\langle u, A \rangle} \bigg)
\in \tilde{R}.$$
\end{lemma}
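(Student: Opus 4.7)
The approach is to identify, for each lattice point $u$ in $C^\circ \cup (C')^\circ$, the unique cone $\sigma_K^\circ$ containing it, expand $Z_{\for}(T)|_{C^\circ \cup (C')^\circ}$ as a sum over the two relevant faces $K = F$ and $K = G$, and then collapse everything using the $B_1$ relation $Y_F + Y_G = (L-1)^{\dim F}/(1 - L^{-1}T)$. First, I would record the relevant geometric facts. Since $F$ is compact, so is its face $G$, which forces $I_F = I_G = \emptyset$. The apex condition gives $\langle e_\ell^*, A \rangle = 1$ and $\langle e_\ell^*, V \rangle = 0$ for every other $V \in \ver(F)$, so $e_\ell^* \in \sigma_G \smallsetminus \sigma_F$ and in particular $e_\ell^* \notin \Span(C')$. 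Consequently $\dim C = \dim C' + 1$, $C'$ is a proper face of $C$, and $C^\circ \cap (C')^\circ = \emptyset$. Each $u \in C^\circ$ then has a unique expression $u = u' + t e_\ell^*$ with $u' \in (C')^\circ$ and $t > 0$, and since $C' \subset \sigma_F \subset \sigma_G$, we have $C \subset \sigma_G$.

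Next, I would pin down the face $K$ hosting each lattice point. For $u \in (C')^\circ$, we have $K = F$ and $N(u) = \langle u, A \rangle$. For $u = u' + t e_\ell^* \in C^\circ$, a vertex-by-vertex check shows that $\langle u, \cdot \rangle$ attains its minimum over $\Newt(f)$ exactly on $G$: vertices $V$ of $G$ give $\langle u, V \rangle = \langle u', V \rangle = N(u')$ because $\langle e_\ell^*, V \rangle = 0$; the apex yields $\langle u, A \rangle = N(u') + t > N(u')$; and for $V \notin F$, the nonnegativity of the $\ell$th coordinate $V_\ell$ combined with $u' \in \sigma_F^\circ$ gives $\langle u, V \rangle \geq \langle u', V \rangle > N(u')$. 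Hence $u \in \sigma_G^\circ$ with $N(u) = \langle u', A \rangle$ (independent of $t$!). This vertex-level verification, together with the compactness of $F$ and $G$ that lets us ignore the unbounded direction contributions, is the only mildly delicate step.

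Finally, the computation. The map $(u', t) \mapsto u' + t e_\ell^*$ bijects $((C')^\circ \cap \N^n) \times \Z_{\geq 1}$ with $C^\circ \cap \N^n$. Writing $S := \sum_{u' \in (C')^\circ \cap \N^n} L^{-\langle u', \mathbf{1} \rangle} T^{\langle u', A \rangle}$, the $K = F$ piece of $Z_{\for}(T)|_{C^\circ \cup (C')^\circ}$ contributes $Y_F (L-1)^{n - \dim F} S$, while summing the geometric series $\sum_{t \geq 1} L^{-t} = 1/(L-1)$ (noting $N(u)$ is independent of $t$) together with $\dim G = \dim F - 1$ shows the $K = G$ piece contributes $Y_G (L-1)^{n - \dim F} S$. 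The $B_1$ relation collapses the total to $(L-1)^n S/(1 - L^{-1}T)$. An entirely parallel computation on the right-hand side of the lemma, in which $\langle u, A \rangle$ does depend on $t$ and one instead uses $\sum_{t \geq 0} (L^{-1}T)^t = 1/(1 - L^{-1}T)$, produces the same expression, completing the proof.
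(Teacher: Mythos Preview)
Your argument is correct and tracks the paper's closely: both compute the left-hand side via the $\mathcal{I}_2$-relation $Y_F+Y_G=(L-1)^{\dim F}/(1-L^{-1}T)$ after identifying that $(C')^\circ\subset\sigma_F^\circ$ and $C^\circ\subset\sigma_G^\circ$ with $N$ constant along the $e_\ell^*$-direction, then match against the right-hand side by summing the geometric series in $L^{-1}T$. The paper first reduces to simplicial $C'$ and phrases the lattice step as $\BBox^+(C)=e_\ell^*+\BBox^+(C')$, whereas you go directly with the bijection $C^\circ\cap\N^n\leftrightarrow((C')^\circ\cap\N^n)\times\Z_{\ge1}$; the one point you should make explicit is why this bijection respects integrality, namely that for $u\in C^\circ\cap\N^n$ the parameter $t$ is recovered as $t=\langle u,A-V\rangle\in\Z$ for any $V\in\ver(G)$ --- this is exactly the paper's $\lambda_0=1$ computation and is where the $B_1$ height condition $\langle e_\ell^*,A\rangle=1$ is actually used.
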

\begin{proof}
A simplicial refinement of $C'$ induces a simplicial refinement of $C$, so we may reduce to the case that $C'$ is simplicial. 
Let $u_1,\ldots,u_r$ be the primitive integer vectors spanning the rays of $C'$. Then $u_0 = e_{\ell}^*, u_1,\ldots,u_r$ are the primitive integer vectors spanning the rays of $C$.
With the notation of 
\eqref{e:infiniteseries},
$\BBox^+(C') = \{ u \in \N^n : u = \sum_{i = 1}^r \lambda_i u_i \text{ for some } 0 < \lambda_i \le 1 \}$ and $\BBox^+(C) = \{ u \in \N^n : u = \sum_{i = 0}^r \lambda_i u_i \text{ for some } 0 < \lambda_i \le 1 \}$. We claim that $\BBox^+(C) = \{ u + e_{\ell}^* : u \in \BBox^+(C')\}$. 

Clearly, $\{ u + e_{\ell}^* : u \in \BBox^+(C')\} \subset \BBox^+(C)$. Conversely, consider an element $u' = \sum_{i = 0}^r \lambda_i u_i \in \BBox^+(C)$ for some $0 < \lambda_i \le 1$. 
Let $X$ be a point in $G$. For $1 \le i \le r$, since $u_i \in \sigma_F$, we have $\langle u_i, A - X \rangle = 0$. Also, $N(e_{\ell}^*) = \langle e_{\ell}^* , X \rangle = 0$ and $\langle e_{\ell}^* , A \rangle = 1$. We compute:
\[
\langle u', A - X \rangle = \lambda_0 + \sum_{i = 1}^r \lambda_i \langle u_i, A - X \rangle = \lambda_0 \in \Z.
\]
Hence $\lambda_0 \in \Z \cap (0,1] = \{1 \}$. 
Then $u' = \sum_{i = 1}^r \lambda_i u_i + e_{\ell}^* \in \{ u + e_{\ell}^* : u \in \BBox^+(C')\}$, 
which proves the claim. 

Observe that if $u \in \BBox^+(C')$, then $N(u + e_{\ell}^*) = N(u) = \langle u, A \rangle$ and $\langle u + e_{\ell}^*, \One \rangle = \langle u , \One \rangle + 1$.  Also, $N(u_i)  = \langle u_i , A \rangle$ for $1 \le i \le r$. 
Then using 
\eqref{e:infiniteseries}
and the relations in $\mathcal{I}_2$, we compute the left-hand side of the equality in Lemma~\ref{lem:cancellation}:
\begin{align*}
&(L - 1)\left( Y_F (L - 1)^{n - 1 - \dim F} + Y_G (L - 1)^{n - 1 - \dim G} \frac{L^{-1}}{ 1 - L^{-1}} \right) \frac{\sum_{u \in \BBox^+(C')} L^{-\langle u , \One \rangle} T^{N(u)}}{\prod_{i = 1}^r (1 - L^{-\langle u_i , \One \rangle} T^{N(u_i)})}
\\
&= \frac{(L-1)^{n}}{1 - L^{-1}T} \frac{\sum_{u \in \BBox^+(C')} L^{-\langle u , \One \rangle} T^{\langle u, A \rangle}}{\prod_{i = 1}^r (1 - L^{-\langle u_i , \One \rangle} T^{\langle u_i, A \rangle})}. 
\end{align*}
Similarly, using 
\eqref{e:infiniteseries},
we compute the right-hand side of the equality in Lemma~\ref{lem:cancellation}:
\begin{align*}
(L - 1)^{n} \left(1 + \frac{L^{-1}T}{1 - L^{-1}T}\right) \frac{\sum_{u \in \BBox^+(C')} L^{-\langle u , \One \rangle}  T^{\langle u, A \rangle}}{\prod_{i = 1}^r (1 - L^{-\langle u_i , \One \rangle} T^{\langle u_i, A \rangle})}. 
\end{align*}
The result follows.
\end{proof}

\begin{remark}\label{r:manipulateapex}
We note that a version of Lemma~\ref{lem:cancellation} holds when $G$ is empty, i.e., $F = \{A\}$ is a vertex with some coordinate $1$.
Then $A$ is a primitive vertex, so the relations in 
$\mathcal{I}_1$ imply that for  $C \subset \sigma_A^\circ$,
\[
Z_{\for}(T)|_{C} = (L - 1)^{n}
\left(\sum_{u \in C \cap \mathbb{N}^n} L^{-\langle u, \mathbf{1} \rangle} T^{\langle u, A \rangle} \right)
\in \tilde{R}.
\]
\end{remark}

\section{Fake poles for the local formal zeta function}\label{sec:fakepoles}

\subsection{Overview}\label{ssec:overview}
In this section, we prove Theorem~\ref{thm:nopolesimplicial}. We first introduce some notation before stating a strengthening of Theorem~\ref{thm:nopolesimplicial} and outlining its proof.

We let 
$\ver(F)$ denote the set of vertices of $F$. Recall from Section~\ref{sec:poles} that $\Gamma$ 
is the union of  the proper interior faces of $\Newt(f)$ and their subfaces.
Given a face $F$ of $\Gamma$, recall that $C_F$ is the closure of the cone over $F$, with distinguished generators $\gens(C_F)$.
Then $\Span(F) = \Span(C_F)$ and $\gens(C_F) = \ver(F) \cup \Unb(C_F)$.
Given an inclusion of faces $M \subset F$, let  
$\gens(C_F \smallsetminus C_M) = \gens(C_F) \smallsetminus \gens(C_M)$. 
Recall that a face $F$ of $\Newt(f)$ is \emph{$B_1$} 
if it has an apex $A$ with base direction 
$e_\ell^*$, and $\langle e_{\ell}^*, A \rangle = 1$.
Given a $B_1$-face $F$, let 
$\mathcal{A}_F$ be the set of all choices of such an apex $A$.

\begin{definition}\label{d:s0simplicial}
We say that the Newton polyhedron $\Newt(f)$ is \emph{$\alpha$-simplicial} if for any minimal element $M$ in $\Contrib(\alpha)$ and any face $F \supset M$, 
$\dim C_F = \dim C_M + |\gens(C_F \smallsetminus C_M)|$.
Equivalently, 
the images of the elements of $\gens(C_F \smallsetminus C_M)$ are linearly independent in $\R^n/\Span(C_M)$.
\end{definition}

For example, 
if 
$\Newt(f)$
is simplicial,
then it 
is $\alpha$-simplicial.
If all minimal elements in $\Contrib(\alpha)$ are facets, then $\Newt(f)$ is $\alpha$-simplicial. One key property of $\alpha$-simplicial Newton polyhedra is that every face of $\Contrib(\alpha)$ contains a unique minimal face of $\Contrib(\alpha)$ (Lemma~\ref{l:separate}).
We now state our main theorem.

\begin{theorem}\label{thm:nopole}
Suppose 
$f$ is nondegenerate.
Let 
$$\mathcal{P} = \{ \alpha \in \mathbb{Q} : \Contrib(\alpha) \neq \emptyset \} \cup \{ -1 \}, \text{ and}$$
$$\mathcal{P}' = \{ \alpha \in \mathcal{P} : \alpha \notin \Z, \textrm{every face in } \Contrib(\alpha) \textrm { is } \UB  \textrm{ and } \Newt(f) \textrm{ is } \alpha\textrm{-simplicial} \}.$$  
Then 
$\mathcal{P} \smallsetminus \mathcal{P}'$
is  a set of candidate poles for $Z_{\mot}(T)$. 

\end{theorem} 

Our strategy to prove Theorem~\ref{thm:nopole} involves repeatedly applying Lemma~\ref{lem:cancellation}, which will require us to choose apices and base directions for various $B_1$-faces. We will require the following compatibility condition. 

\begin{definition}\label{def:operative}
A \emph{locally unique labeling} of $\Contrib(\alpha)$ 
is a choice of an apex $A_F$ and a base direction $e_F^*$ for each $F \in \Contrib(\alpha)$ such that:
\renewcommand{\labelitemi}{$(*)$}
\begin{itemize}
\item whenever $F \subset F'$ and $A_F = A_{F'}$, we have $e_F^* = e^*_{F'}$. 
\end{itemize}
\end{definition}

If every face of $\Contrib(\alpha)$ is $\UB$, then $\Contrib(\alpha)$ has a locally unique labeling (Lemma~\ref{prop:operativeiff}).

\medskip

We now summarize the rest of the proof of Theorem~\ref{thm:nopole}. We first establish some notation and basic results in Section~\ref{ssec:combprelim}.   Then, using Lemma~\ref{l:intersectcandidate}, we reduce to showing that for each candidate pole $\alpha \not \in \Z$ that is contributed only by $\UB$-faces and such that $\Newt(f)$ is $\alpha$-simplicial, there is a set of candidate poles for $Z_{\for}(T)$ not containing $\alpha$. 
Fix such an $\alpha$. 

Because $\Newt(f)$ is $\alpha$-simplicial, every face of $\Contrib(\alpha)$ contains a unique minimal face $M$ of $\Contrib(\alpha)$. In Section~\ref{ssec:NMdelta}, we develop the tools to argue that we can consider each minimal face $M$ separately. We construct a neighborhood $N_{M, \le \delta}$ of $\sigma_M$. 
In Lemma~\ref{l:outerformalzeta}, 
we show that if $Z_{\for}(T)|_{N_{M, \le \delta}^\circ}$ admits a set of candidate poles not containing $\alpha$ for each minimal face $M$, then the theorem follows. 

To analyze $Z_{\for}(T)|_{N_{M, \le \delta}^\circ}$, our strategy is to construct a complete fan $\Sigma_{\mathcal{Z}}$ where each maximal cone is labeled by a face containing $M$, which necessarily lies in $\Contrib(\alpha)$. 
We construct $\Sigma_{\mathcal{Z}}$ as the normal fan of a polytope where each vertex is labeled by a face containing $M$.
This polytope is determined by an \emph{$\alpha$-compatible pair} (see Definition~\ref{d:compatible3}).

Because we have fixed a locally unique labeling, 
we may associate to each maximal cone of $\Sigma_{\mathcal{Z}}$, which corresponds to a face $F$ containing $M$, a pair $(A_F, e_F^*)$ consisting of an apex and a base direction. 
We then associate a face containing $M$ and 
a pair $(A,e_\ell^*)$ to all cones in  the fan $\Sigma_{\mathcal{Z}} \cap N_{M, \le \delta}$. 
See Definition~\ref{d:max}.
Consider a nonzero cone $C$ in the fan $\Sigma_{\mathcal{Z}} \cap N_{M, \le \delta}$ and an associated pair $(A,e_\ell^*)$. We arrange that for any face 
$F$ containing $M$,
if the dual cone 
of $F$ intersects $C$, then $F$ is 
a $B_1$-face with apex $A$ and base direction $e_\ell^*$.
 By repeatedly applying Lemma~\ref{lem:cancellation}, we show that the contribution of $C^\circ$ to $Z_{\for}(T)$ is equal to $(L - 1)^n \sum_{u \in C^\circ \cap \mathbb{N}^n} L^{- \langle u, \mathbf{1} \rangle} T^{\langle u, A \rangle}$.
 In order to do this, we need to show that  $C$ is locally defined by elements 
 ``orthogonal to $e_\ell^*$'', see Lemma~\ref{l:Cinvariant}. 
 Using an additional genericity condition (see Definition~\ref{d:compatible3}), we deduce that 
the contribution $Z_{\for}(T)|_{C^\circ}$ 
admits a set of candidate poles not containing $\alpha$. Using this strategy, in Section~\ref{ss:s0compatible} we prove that the existence of an $\alpha$-compatible pair implies Theorem~\ref{thm:nopole}.

\begin{figure}
\begin{center}
\begin{tikzpicture}[scale = 0.7]
\filldraw (0,0) circle [fill=black, radius = 0.1];
\draw [very thick] (0,0) -- (3, 3);
\draw[very thick] (0,0) -- (3, -3);
\filldraw[draw=black,fill=black!20, opacity=0.2] (0,0) -- (3,3) -- (3,-3) -- (0,0);
\draw (-0.2, 0.3) node {$\sigma_F$};
\draw (1.4, 2.1) node {$\sigma_{G_1}$};
\draw (1.4, -2.1) node {$\sigma_{G_2}$};
\draw (2.6, 0) node {$\sigma_M$};
\draw [very thick, red] (-3, 0) -- (1, 0);
\draw [very thick, red] (1, 0) -- (2.3, 1.3);
\draw [very thick, red] (1, 0) -- (2.3, -1.3);
\draw [very thick, red] (2.3, 1.3) -- (3, 1.6);
\draw [very thick, red] (2.3, -1.3) -- (3, -1.6);
\draw [very thick, red] (2.3, 1.3) -- (-2.5, 0);
\draw [very thick, red] (2.3, -1.3) -- (-2.5, 0);
\draw (2.3, -0.5) node {$\textcolor{red}M$};
\draw (0, -1.5) node {$\textcolor{red}{ G_2}$};
\draw (0, 1.5) node {$\textcolor{red}{ G_1}$};
\draw (-1.9, 1) node {$\textcolor{red}{ F}$};
\draw (-1.9, -1) node {$\textcolor{red}{ F}$};
\draw[-{Latex[length=2mm]}] (-1.9, 0.7) -- (-0.7, 0.2);
\draw[-{Latex[length=2mm]}] (-1.9, -0.7) -- (-0.7, -0.2);
\end{tikzpicture}
\qquad
\begin{tikzpicture}[scale = 0.7]
\filldraw (0,0) circle [fill=black, radius = 0.1];
\draw [very thick] (0,0) -- (3, 3);
\draw[very thick] (0,0) -- (3, -3);
\filldraw[draw=black,fill=black!20, opacity=0.2] (0,0) -- (3,3) -- (3,-3) -- (0,0);
\draw [very thick, red] (-3, -0.5) -- (1, -0.5);
\draw [very thick, red] (1, -0.5) -- (2.3, 1.2);
\draw [very thick, red] (1, -0.5) -- (2.3, -1.8);
\draw [very thick, red] (2.3, 1.2) -- (3, 1.35);
\draw [very thick, red] (2.3, -1.8) -- (3, -2.1);
\draw [very thick, red] (2.3, 1.2) -- (-2.8, -0.5);
\draw [very thick, red] (2.3, -1.8) -- (-2.5, -0.5);
\draw (2.3, -0.5) node {$\textcolor{red}M$};
\draw (0, -1.95) node {$\textcolor{red}{ G_2}$};
\draw (0, 1.5) node {$\textcolor{red}{ G_1}$};
\draw (-2.1, 0.5) node {$\textcolor{red}{ F}$};
\draw (-1.9, -1.4) node {$\textcolor{red}{ F}$};
\draw[-{Latex[length=2mm]}] (-2.1, 0.25) -- (-0.7, -0.1);
\draw[-{Latex[length=2mm]}] (-1.9, -1.1) -- (-0.7, -0.6);
\end{tikzpicture}
\end{center}

\caption{
A complete fan with maximal cones indexed by 
faces containing $M$
and a corresponding 
deformation.
We show the intersection of $\Span(\sigma_M)$ with an affine hyperplane. 
The cone $\sigma_M$ is shown
in black and grey, 
while the codimension $1$ cones of the complete fan appear in red, with their maximal cones labeled in red.
}\label{fig:saturatedchains}
\end{figure}
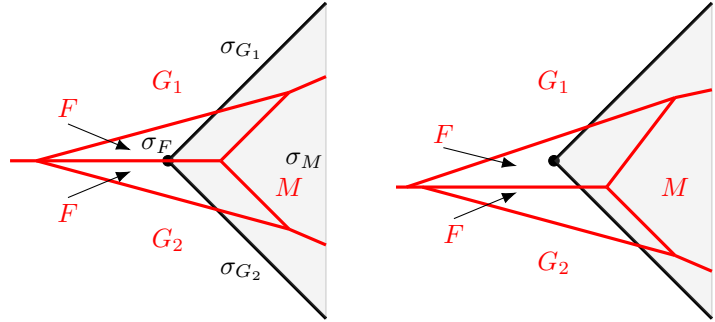

In Section~\ref{ss:exist1}, we show that the existence of an $\alpha$-compatible pair is implied by the existence of a \emph{restricted, weakly $\alpha$-compatible} pair (Definition~\ref{d:weaklys0compatible}, Definition~\ref{d:restricted2}). More precisely, we show that each restricted, weakly $\alpha$-compatible pair can be ``deformed'' into an $\alpha$-compatible pair.  In Section~\ref{ss:exist2}, we give an explicit construction of a restricted, weakly $\alpha$-compatible pair, which is where we use the locally unique condition.  
Figure~\ref{fig:saturatedchains} shows an example of a fan corresponding to a restricted, weakly $\alpha$-compatible pair, and a corresponding 
deformation.

\subsection{Combinatorial preliminaries}\label{ssec:combprelim}
In this section, we prepare for the constructions that will take the rest of the section.
Recall from Definition~\ref{def:UB1} that a face $G$ of $\Newt(f)$ is $\UB$ 
if there exists an apex $A$ in $G$ with a unique choice of  base direction 
$e_\ell^*$, and 
$\langle e_{\ell}^*, A \rangle = 1$. 

\begin{lemma}\label{prop:operativeiff}
If every face in $\Contrib(\alpha)$ is $\UB$, then $\Contrib(\alpha)$ admits a locally unique labeling.
\end{lemma}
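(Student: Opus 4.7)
The plan is to exploit the uniqueness built into the $\UB$ definition to make choices that are automatically compatible across all faces. For each face $F \in \Contrib(\alpha)$, I would let $\mathcal{D}(F)$ denote the set of apices $A$ of $F$ such that $A$ has a unique base direction in $F$ and this direction $e_\ell^*$ satisfies $\langle e_\ell^*, A\rangle = 1$. By hypothesis, $\mathcal{D}(F)$ is nonempty for every $F \in \Contrib(\alpha)$. I then pick any $A_F \in \mathcal{D}(F)$ and set $e_F^*$ to be the associated unique base direction. By construction, the pair $(A_F, e_F^*)$ witnesses $F$ as a $B_1$-face.

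The central observation driving consistency is a monotonicity property of base directions under face inclusions: if $F \subset F'$, then $\gens(C_F) \subset \gens(C_{F'})$, since every vertex of $F$ is a vertex of $F'$ and every unbounded direction of $F$ is an unbounded direction of $F'$. Consequently, the vanishing conditions defining a base direction for a fixed vertex $A$ only become weaker on passing from $F'$ down to $F$, so every base direction of $A$ in $F'$ is automatically a base direction of $A$ in $F$.

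Applying this observation to an inclusion $F \subset F'$ with $A_F = A_{F'}$, the vector $e_{F'}^*$ is in particular a base direction of $A_F$ in $F$; by the uniqueness of the base direction of $A_F \in \mathcal{D}(F)$, we conclude $e_{F'}^* = e_F^*$, which is the required consistency condition. The main potential obstacle would be having to coordinate choices across many overlapping faces simultaneously, but the $\UB$ hypothesis eliminates this: once $A_F$ is chosen, the direction $e_F^*$ is forced, and the monotonicity observation ensures that the forced directions agree wherever the chosen apices agree. No global coordination is needed.
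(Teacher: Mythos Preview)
Your proof is correct and follows essentially the same approach as the paper's: choose for each $F$ an apex $A_F$ with a unique base direction (guaranteed by the $\UB$ hypothesis), then observe that any base direction of $A_F = A_{F'}$ in $F'$ restricts to a base direction in $F$, so uniqueness forces $e_F^* = e_{F'}^*$. You spell out the monotonicity step $\gens(C_F) \subset \gens(C_{F'})$ more explicitly than the paper does, but the argument is the same.
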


\begin{proof}
Suppose that every face $F$ in $\Contrib(\alpha)$ is $\UB$. Then it has an apex $A_F$ 
with a unique choice of  base direction 
$e_F^*$.
We may choose one such apex, and label $F$ by $(A_F, e_F^*)$ to get a locally unique labeling. Indeed, if $F \subset {F'}$ and $A_F = A_{F'}$, then $e^*_{F'}$ is a base direction for $F$ with apex $A_F$.
We deduce that $e_F^* = e^*_{F'}$. 
\end{proof}

Given an element $V \in \Gamma$, we say that $u \in \mathbb{R}^n$ is \emph{critical with respect to $(\alpha, V)$} if $\alpha \langle u, V \rangle + \langle u, \mathbf{1} \rangle = 0$. 
Recall that we have a piecewise linear function $N(u) = \min \{ \langle u, W \rangle : W \in \Newt(f) \}.$
We say that $u \in \R^n_{\ge 0}$ is \emph{critical with respect to $\alpha$} if 
$\alpha N(u) + \langle u, \mathbf{1} \rangle = 0$. Equivalently, 
for some/any $G \in \Gamma$ such that $u \in \sigma_G$, and some/any 
element $V \in G$, $u$ is critical with respect to $(\alpha,V)$.  
A set is critical with respect to $\alpha$ (or $(\alpha,V)$) if 
every element of the set is critical with respect to $\alpha$ (or $(\alpha,V)$).

\begin{remark}\label{r:hyperplane}
Unless $V = -(1/\alpha) \mathbf{1}$, the points $u \in \mathbb{R}^n$ critical with respect to $(\alpha, V)$ form a hyperplane. A special case of this observation is \cite[Lemma 3.4]{ELT}. 
\end{remark}

We conclude this section with three combinatorial observations. 

\begin{lemma}\label{l:existsVM}
Let $M$ be an element of $\Contrib(\alpha)$,
and assume that every face of
$\Contrib(\alpha)$ is $\UB$. 
Assume that $\alpha \neq -|\ver(M)| \in \Z$. 
Then there exists a vertex $V_M$ of $M$ 
such that $M$ is not a $B_1$-face with apex $V_M$.
\end{lemma}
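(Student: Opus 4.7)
My plan is to prove the contrapositive by a short dual-basis argument. Specifically, I would suppose that every vertex $V$ of $M$ realizes $M$ as a $B_1$-face with apex $V$, and then deduce $\alpha = -|\ver(M)|$, contradicting the hypothesis on $\alpha$.

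Under this supposition, for each $V \in \ver(M)$ I can choose a base direction $e_{\ell_V}^*$, meaning $\langle e_{\ell_V}^*, V\rangle = 1$ and $\langle e_{\ell_V}^*, W\rangle = 0$ for every other $W \in \gens(C_M) = \ver(M) \cup I_M$. The key step is then to form the linear functional
$$\phi := \sum_{V \in \ver(M)} e_{\ell_V}^* \in (\R^n)^*.$$
Directly from the choices, $\phi$ evaluates to $1$ on every vertex of $M$ and to $0$ on every $e_i \in I_M$. The function $\psi_M$ satisfies the same values (the latter because $M + \R_{\ge 0} e_i \subset M$ forces $\psi_M(e_i) = 0$ by linearity), and since the generators of $C_M$ span $\Span(M)$, the restriction $\phi|_{\Span(M)}$ coincides with $\psi_M$.

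Because $M \in \Contrib(\alpha)$, we have $\mathbf{1} \in \Span(M)$, and hence
$$\alpha = -\psi_M(\mathbf{1}) = -\phi(\mathbf{1}) = -\sum_{V \in \ver(M)} \langle e_{\ell_V}^*, \mathbf{1}\rangle = -|\ver(M)|,$$
using that each coordinate functional $e_\ell^*$ evaluates to $1$ on $\mathbf{1}$. This contradicts $\alpha \neq -|\ver(M)|$, so the desired vertex $V_M$ must exist.

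I do not foresee a genuine obstacle. The proof is essentially the observation that packaging the $B_1$-apex data from every vertex recovers $\psi_M$ as a sum of coordinate functionals. The $\UB$ hypothesis does not enter directly in the calculation; it serves only to make $B_1$-ness the natural property of faces in $\Contrib(\alpha)$ being reasoned about, and to guarantee $|\ver(M)| \geq 1$ so the quantity $-|\ver(M)| \in \Z_{<0}$ in the hypothesis is meaningful.
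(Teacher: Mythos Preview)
Your proof is correct and follows essentially the same approach as the paper's. Both argue by contradiction, assuming every vertex of $M$ is an apex and deducing $\alpha = -|\ver(M)|$; you package the base directions into the functional $\phi = \sum_V e_{\ell_V}^*$ and identify it with $\psi_M$, while the paper equivalently observes that $\mathbf{1} - \sum_{V \in \ver(M)} V$ lies in the span of the unbounded directions $I_M$ and then applies $\psi_M$.
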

\begin{proof}
Suppose every vertex of $M$ is an apex.
Since $\1 \in \Span(M)$, it follows that $\1 - \sum_{V \in \ver(M)} V$ is a linear combination of the unbounded directions of $M$, and hence $\alpha = -\psi_M(\One) = - |\ver(M)|$, a contradiction.
\end{proof}

\begin{lemma}\label{l:separate}
Assume that $\Newt(f)$ is $\alpha$-simplicial. 
If $M_1,M_2$ are distinct minimal elements in $\Contrib(\alpha)$, then $\sigma_{M_1} \cap \sigma_{M_2} = \{ 0 \}$. 
\end{lemma}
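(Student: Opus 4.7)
I would argue by contradiction. Assume $\sigma_{M_1} \cap \sigma_{M_2} \neq \{0\}$. By standard duality for the fan $\Sigma$, this intersection equals $\sigma_F$, where $F$ is the smallest face of $\Newt(f)$ containing both $M_1$ and $M_2$, and $F$ is then a proper face. This $F$ itself lies in $\Contrib(\alpha)$: since $M_1 \subseteq F$, the linear function $\psi_F$ restricts to $\psi_{M_1}$ on $\Span(M_1)$, so $\1 \in \Span(M_1) \subseteq \Span(F)$ and $\psi_F(\1) = \psi_{M_1}(\1) = -\alpha$.

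Next I would invoke $\alpha$-simpliciality for the pair $M_1 \subseteq F$: the set $\gens(C_F) \smallsetminus \gens(C_{M_1})$ is linearly independent in $\R^n/\Span(C_{M_1})$. Writing $\1 = \sum_{v \in \gens(C_{M_2})} \nu_v v$ and projecting this equation to $\R^n/\Span(C_{M_1})$, where $\1$ maps to zero because $\1 \in \Span(C_{M_1})$, linear independence forces $\nu_v = 0$ for every $v \in \gens(C_{M_2}) \smallsetminus \gens(C_{M_1})$. Hence $\1$ lies in the $\R$-span of $\gens(C_{M_1}) \cap \gens(C_{M_2})$.

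To finish, I would identify this common generating set with $\gens(C_{M_1 \cap M_2})$, verifying separately that $\ver(M_1) \cap \ver(M_2) = \ver(M_1 \cap M_2)$ and $I_{M_1} \cap I_{M_2} = I_{M_1 \cap M_2}$ using that the intersection of two faces of $\Newt(f)$ is again a face. A brief check rules out the degenerate case in which $\gens(C_{M_1}) \cap \gens(C_{M_2})$ consists only of coordinate directions, since expressing $\1$ as a combination of coordinate vectors indexed by a proper subset of $[n]$ is impossible; so $M_1 \cap M_2$ is a nonempty face of $\Newt(f)$. Now $\1 \in \Span(M_1 \cap M_2)$ and $\psi_{M_1 \cap M_2}(\1) = \psi_{M_1}(\1) = -\alpha$, placing $M_1 \cap M_2$ in $\Contrib(\alpha)$. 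Minimality of $M_1$ and $M_2$ then forces $M_1 = M_1 \cap M_2 = M_2$, contradicting $M_1 \neq M_2$. The main obstacle is the linear-algebra step in the second paragraph: once one recognizes that $\alpha$-simpliciality should be applied to the pair $(M_1, F)$ in order to control the coefficients of the $M_2$-expansion of $\1$ modulo $\Span(C_{M_1})$, everything else reduces to routine bookkeeping about the face lattice of $\Newt(f)$.
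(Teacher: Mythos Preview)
Your proof is correct and follows essentially the same architecture as the paper's: argue by contradiction, find a common containing face $F$, invoke $\alpha$-simpliciality, deduce $\1 \in \Span(C_{M_1 \cap M_2})$, and contradict minimality. Your linear-algebra step is in fact slightly more streamlined than the paper's---you apply $\alpha$-simpliciality only to the pair $M_1 \subset F$ and project the $M_2$-expansion of $\1$ to $\R^n/\Span(C_{M_1})$, whereas the paper applies it symmetrically to both $M_1 \subset F$ and $M_2 \subset F$, establishes joint linear independence of $\gens(C_{M_1}\smallsetminus C_{M_1\cap M_2}) \cup \gens(C_{M_2}\smallsetminus C_{M_1\cap M_2})$ in $\R^n/\Span(C_{M_1\cap M_2})$, and then subtracts two expressions for $\1$.
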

\begin{proof}
We argue by contradiction. Suppose  $\sigma_{M_1} \cap \sigma_{M_2} \neq  \{ 0 \}$. 
Then there exists a facet $F$ in $\partial \Newt(f)$ such that 
$\sigma_F \subset \sigma_{M_1} \cap \sigma_{M_2}$. 
Note that $F$ is interior and hence $F \in \Gamma$. 
Equivalently, 
$M_1, M_2 $ are common faces of $F$. 
In particular, $M_1 \cap M_2$ is a (possibly empty) face of $F$, and $C_{M_1 \cap M_2} = C_{M_1} \cap C_{M_2}$.
Let $\gens(C_{M_1} \smallsetminus C_{M_1 \cap M_2}) = \{ W_1,\ldots,W_s \}$ and $\gens(C_{M_2} \smallsetminus C_{M_1 \cap M_2}) = \{ W_1',\ldots,W_{s'}' \}$.
By Definition~\ref{d:s0simplicial} applied to $M_2 \subset F$, 
$\gens(C_{M_1} \smallsetminus C_{M_1 \cap M_2})$
is linearly independent in $\R^n/\Span(C_{M_2})$. 
By Definition~\ref{d:s0simplicial} applied to $M_1 \subset F$,  
$\gens(C_{M_2} \smallsetminus C_{M_1 \cap M_2})$
is linearly independent in $\R^n/\Span(C_{M_1})$.
We claim that $W_1,\dotsc ,W_s,W_1',\dotsc ,W_{s'}'$ are linearly independent in $\R^n/\Span(C_{M_1 \cap M_2})$. 
Indeed, if $\sum_i a_i W_i + \sum_j b_j W_j' = 0$ in  $\R^n/\Span(C_{M_1 \cap M_2})$, then the corresponding equation in $\R^n/\Span(C_{M_1})$ implies 
that $b_j = 0$ for all $j$, and the corresponding equation in $\R^n/\Span(C_{M_2})$ implies that $a_i = 0$ for all $i$.

By assumption,
$\One = a_1 W_1 + \cdots + a_s W_s \in \R^n/\Span(C_{M_1 \cap M_2})$ for some $a_1,\ldots,a_s \in \R$, and 
$\One = a_1' W_1' + \cdots + a_{s'}' W_{s'}' \in \R^n/\Span(C_{M_1 \cap M_2})$ for some $a_1',\dotsc,a_{s'}' \in \R$.
Subtracting one equation from the other, and using the linear independence of $W_1,\dotsc,W_s,W_1',\dotsc,W_{s'}'$ in $\R^n/\Span(C_{M_1 \cap M_2})$, we deduce that $a_i = a_j' = 0$ for all $i,j$. Hence $\One \in \Span(C_{M_1 \cap M_2})$, and $M_1 \cap M_2 \in \Contrib(\alpha)$. This contradicts the minimality of $M_1$ and $M_2$.
\end{proof}

\begin{remark}\label{r:apexisbounded}
If $e^*_{\ell}$ is a base direction of $F$ with apex $A$, then  $e_{\ell} \not \in \Unb(C_F)$. Indeed, if  $e_{j} \in \Unb(C_F)$, then it follows from Definition~\ref{def:B1} that $\langle e^*_{\ell}, e_j \rangle = 0$. 
\end{remark}

\subsubsection*{Assumptions and notation}

For the remainder, we will assume that $\alpha \not \in \Z$, $\Newt(f)$ is $\alpha$-simplicial, and that all faces of $\Contrib(\alpha)$ are $\UB$.
Let $M$ be a minimal element of $\Contrib(\alpha)$. 
Recall that we have chosen a locally unique labeling $(A_F, e_F^*)$ of $\Contrib(\alpha)$. 
By Lemma~\ref{l:existsVM}, we may fix a vertex $V_M$ of $M$ which is not an apex, and hence satisfies 
\begin{equation}\label{e:VM}
\langle e_F^*, V_M \rangle = 0 \textrm{ for all } F \supset M.
\end{equation}
We also 
fix a 
point $W_M$ in the relative interior of $M$. 
Given a nonzero point $W$  
and $\epsilon \in \R$, let 
$\mathrm{H}_{W,\epsilon} = \{ u : \langle u , W \rangle = \epsilon \}$, and consider the associated half-spaces
$\mathrm{H}_{W,\ge \epsilon},\mathrm{H}_{W,> \epsilon},\mathrm{H}_{W,\le \epsilon},\mathrm{H}_{W,< \epsilon}$. We let $\mathrm{H}_W := \mathrm{H}_{W,0}$.
Let $S' = \{ u \in \R^n : \langle u , \One \rangle = 1 \}$, and let $S = \Conv{e_1^*,\ldots,e_n^*} \subset S'$ be the standard $(n-1)$-dimensional simplex.

\subsection{Covering the critical locus}\label{ssec:NMdelta}

The goal of this section is to build small neighborhoods covering the locus of $u \in \mathbb{R}^n_{\ge 0}$ that is critical with respect to $\alpha$. This will allow us to concentrate our attention on a single minimal face in $\Contrib(\alpha)$.

\begin{definition}\label{d:NMdelta}
Let $M$ be a minimal element of $\Contrib(\alpha)$ and let $\delta \in \Q_{\ge 0}$. We define 
$N_{M,\leq \delta}$ to be the cone over $\{ u \in S : \langle u , W_M \rangle - N(u) \le \delta \}$.

\end{definition}

Similarly, we let  $N_{M,< \delta}$, $N_{M,\delta}$ and $N_{M,\ge \delta}$ be the cones over 
$\{ u \in S : \langle u , W_M \rangle - N(u) < \delta \}$, $\{ u \in S : \langle u , W_M \rangle - N(u) = \delta \}$ and 
$\{ u \in S : \langle u , W_M \rangle - N(u) \ge \delta \}$ respectively.

Here $\langle u , W_M \rangle - N(u)$ is a nonnegative function on $\R^n_{\ge 0}$ that is piecewise linear with respect to $\Sigma$. 
Because $N(u)$ is the support function of a 
polyhedron
and hence convex, $N_{M, \leq \delta}$ is convex. 
It follows that $N_{M, \leq \delta}$ is a rational polyhedral cone of dimension $n$. 
Note that $\sigma_M$ is the cone over  $\{ u \in S : \langle u , W_M \rangle - N(u) = 0 \}$, and hence 
$\sigma_M \subset N_{M,\leq \delta}$. 
We can equivalently write 
$N_{M,\leq \delta} = \{ u \in \R^n_{\ge 0} : \langle u , W_M \rangle - N(u) \le \delta \langle u , \One \rangle \}$. 
Also, $N_{M,\leq \delta}^\circ = N_{M,< \delta} \cap \R^n_{> 0}$.

\begin{lemma}\label{l:intersectM2}
Let $C \subset \R^n_{\ge 0}$ be a closed cone such that  $C \cap \sigma_M = \{ 0 \}$. Then $C \cap N_{M,\leq \delta} = \{ 0 \}$ for $\delta$ sufficiently small. 
\end{lemma}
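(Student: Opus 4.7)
The plan is to reduce the claim to a standard compactness argument on the simplex $S$. The crucial object is the piecewise linear function $g(u) := \langle u, W_M \rangle - N(u)$ on $\R^n_{\ge 0}$. It is continuous (piecewise linear) and nonnegative, since $W_M \in \Newt(f)$ and $N(u) = \min_{W \in \Newt(f)} \langle u, W \rangle$. More importantly, the zero locus of $g$ in $\R^n_{\ge 0}$ is exactly $\sigma_M$: if $g(u) = 0$, then $W_M$ attains the minimum of $\langle u, \cdot \rangle$ on $\Newt(f)$, and because $W_M$ lies in the relative interior of $M$, the face of minimizers of $\langle u, \cdot \rangle$ on $\Newt(f)$ must contain all of $M$, forcing $u \in \sigma_M$; the reverse inclusion is immediate from the definition of $\sigma_M$. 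By Definition~\ref{d:NMdelta}, we have $N_{M, \le \delta} \cap S = \{u \in S : g(u) \le \delta\}$.

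Given this, the argument proceeds as follows. The case $C = \{0\}$ is trivial, so assume $C \ne \{0\}$. Then $C \cap S$ is a nonempty closed subset of the compact simplex $S$, hence compact. The hypothesis $C \cap \sigma_M = \{0\}$, together with the fact that $0 \notin S$, gives $C \cap S \cap \sigma_M = \emptyset$, so by the characterization of the zero locus of $g$, the function $g$ is strictly positive on $C \cap S$. Setting $\delta_0 := \min_{u \in C \cap S} g(u) > 0$, we conclude that for every $0 \le \delta < \delta_0$, the slice $N_{M, \le \delta} \cap S$ is disjoint from $C \cap S$. Since both $C$ and $N_{M, \le \delta}$ are cones in $\R^n_{\ge 0}$ (determined, modulo the origin, by their slices at $\langle u, \mathbf{1} \rangle = 1$), this yields $C \cap N_{M, \le \delta} = \{0\}$.

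This is a routine compactness argument and I do not anticipate any real obstacle. The only delicate point is verifying that $\{g = 0\} \cap \R^n_{\ge 0}$ equals $\sigma_M$ rather than some larger set; this relies crucially on $W_M$ being chosen in the relative interior of $M$, and would fail had $W_M$ been taken on the boundary of $M$.
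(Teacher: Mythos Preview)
Your proof is correct and follows essentially the same approach as the paper: both argue by compactness of $C \cap S$ that the continuous function $u \mapsto \langle u, W_M \rangle - N(u)$ attains a strictly positive minimum there, and take $\delta$ below that minimum. Your explicit verification that the zero locus of this function equals $\sigma_M$ (using that $W_M$ lies in the relative interior of $M$) is exactly the observation the paper records just before Definition~\ref{d:NMdelta}.
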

\begin{proof}
We may assume that $C \cap S \neq \emptyset$. 
Since $C \cap S$ is compact, we may consider the  minimal element $b > 0$ of $\{ \langle u , W_M \rangle - N(u) : u \in C \cap S \}$. Then $C \cap N_{M,\leq \delta} \cap S = \emptyset$ for $\delta < b$.
\end{proof}

\begin{lemma}\label{l:notinGamma}
Let $K$ be a face of $\partial \Newt(f)$ and suppose that $K \notin \Gamma$. Let $M$ be a minimal element in $\Contrib(\alpha)$.
Then 
$\sigma_K \cap N_{M,\leq \delta} = \{ 0 \}$ for $\delta$ sufficiently small. 
\end{lemma}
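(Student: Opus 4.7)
The plan is to reduce to the statement that $\sigma_K \cap \sigma_M = \{0\}$ and then appeal to Lemma~\ref{l:intersectM2}. Since $\sigma_K$ is a closed cone in $\R^n_{\geq 0}$, once we know $\sigma_K \cap \sigma_M = \{0\}$, Lemma~\ref{l:intersectM2} gives $\sigma_K \cap N_{M,\leq \delta} = \{0\}$ for all $\delta$ sufficiently small. So the content of the lemma is entirely combinatorial: the hypothesis $K \notin \Gamma$ should prevent $\sigma_K$ and $\sigma_M$ from sharing a ray.

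To prove $\sigma_K \cap \sigma_M = \{0\}$, I would use the standard correspondence between cones in the dual fan $\Sigma$ and faces of $\Newt(f)$. Concretely, for any two faces $K, M$ of $\Newt(f)$, one has $\sigma_K \cap \sigma_M = \sigma_J$, where $J$ is the smallest face of $\Newt(f)$ containing both $K$ and $M$ (interpreted as $J = \Newt(f)$, and hence $\sigma_J = \{0\}$, when no proper face contains both). Thus it suffices to show that no proper face of $\Newt(f)$ can contain both $K$ and $M$.

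Suppose for contradiction that some proper face $J$ of $\Newt(f)$ contains both $K$ and $M$. Since $M \in \Contrib(\alpha) \subset \Gamma$, the face $M$ is interior, meaning $M$ meets $\R^n_{>0}$. Any face $J \supset M$ therefore also meets $\R^n_{>0}$, so $J$ is itself a proper interior face of $\Newt(f)$. But then $K \subset J$ exhibits $K$ as a subface of a proper interior face, so $K \in \Gamma$, contradicting the hypothesis $K \notin \Gamma$. Hence no such $J$ exists, $\sigma_K \cap \sigma_M = \{0\}$, and the lemma follows.

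There is no real obstacle here; the only things to be careful about are the two standard facts used above — namely, that intersections of cones in the dual fan correspond to the join of the dual faces, and that a face of $\Newt(f)$ containing an interior face is itself interior. Both are immediate from the definitions, so the argument is short.
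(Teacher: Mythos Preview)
Your proposal is correct and follows essentially the same route as the paper: reduce via Lemma~\ref{l:intersectM2} to showing $\sigma_K \cap \sigma_M = \{0\}$, then argue that any common proper face $J \supset K, M$ lies in $\Gamma$, forcing $K \in \Gamma$. The only cosmetic difference is that the paper deduces $J \in \Gamma$ by noting $J \in \Contrib(\alpha)$ (since $M \subset J$), whereas you deduce it from $M$ being interior; one small caution is that membership in $\Gamma$ alone does not imply a face is interior, so your justification for ``$M$ is interior'' should cite $\1 \in \Span(M)$ rather than $M \in \Gamma$.
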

\begin{proof}
By Lemma~\ref{l:intersectM2}, it is enough to show that $\sigma_K \cap \sigma_M = \{0\}$. Suppose that
$\sigma_K \cap \sigma_M \neq \{0\}$. Then  $\sigma_K \cap \sigma_M = \sigma_{K'}$ for some face $K'$ of $\partial \Newt(f)$ containing both $K$ and $M$. Since $M \subset K'$, 
$K' \in \Contrib(\alpha)$. 
Since $K \subset K' \in \Gamma$, we deduce that $K \in \Gamma$, a contradiction.
\end{proof}

The following  lemma is immediate from Lemma~\ref{l:separate} and Lemma~\ref{l:intersectM2}.

\begin{lemma}\label{l:separateN}
If $M_1,M_2$ are distinct minimal elements in $\Contrib(\alpha)$, then $N_{M_1,\leq \delta} \cap N_{M_2,\leq \delta} = \{ 0 \}$ for $\delta$ sufficiently small. 
\end{lemma}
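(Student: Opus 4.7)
The plan is to combine the two cited lemmas via a two-step application of Lemma~\ref{l:intersectM2}. Lemma~\ref{l:separate} supplies the base case: $\sigma_{M_1} \cap \sigma_{M_2} = \{0\}$. The trouble with applying Lemma~\ref{l:intersectM2} directly is that Lemma~\ref{l:intersectM2} requires a \emph{closed} cone $C$ transverse to $\sigma_M$, whereas we want both arguments to shrink simultaneously. The natural fix is to shrink one side first, then use the resulting thin neighborhood as the test cone for the second shrinking.

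Concretely, I would first take $C = \sigma_{M_1}$ and apply Lemma~\ref{l:intersectM2} with $M = M_2$. Since $\sigma_{M_1}$ is a closed subcone of $\R^n_{\geq 0}$ and $\sigma_{M_1} \cap \sigma_{M_2} = \{0\}$ by Lemma~\ref{l:separate}, we obtain some $\delta_0 > 0$ such that
\[
\sigma_{M_1} \cap N_{M_2, \leq \delta_0} \;=\; \{0\}.
\]
Next, observe that $N_{M_2, \leq \delta_0}$ is itself a closed rational polyhedral cone in $\R^n_{\geq 0}$, and by the previous display it meets $\sigma_{M_1}$ only at the origin. Applying Lemma~\ref{l:intersectM2} a second time, now with $C = N_{M_2, \leq \delta_0}$ and $M = M_1$, yields some $\delta_1 > 0$ with
\[
N_{M_2, \leq \delta_0} \cap N_{M_1, \leq \delta_1} \;=\; \{0\}.
\]

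Finally, set $\delta = \min(\delta_0, \delta_1)$. Since $N_{M_2, \leq \delta} \subset N_{M_2, \leq \delta_0}$ by monotonicity of the sublevel sets, and $N_{M_1, \leq \delta} \subset N_{M_1, \leq \delta_1}$, we conclude
\[
N_{M_1, \leq \delta} \cap N_{M_2, \leq \delta} \;\subset\; N_{M_2, \leq \delta_0} \cap N_{M_1, \leq \delta_1} \;=\; \{0\},
\]
which completes the argument. There is no serious obstacle here; the only thing to be careful about is that both $\sigma_{M_1}$ and each $N_{M, \leq \delta}$ are closed (they are intersections of $\R^n_{\geq 0}$ with rational polyhedral cones defined by weak inequalities), so both invocations of Lemma~\ref{l:intersectM2} are legitimate.
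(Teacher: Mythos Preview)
Your proof is correct and is precisely the argument the paper intends: the paper states the lemma as ``immediate from Lemma~\ref{l:separate} and Lemma~\ref{l:intersectM2}'' without further detail, and your two-step application of Lemma~\ref{l:intersectM2} (first shrinking one side to get a closed cone disjoint from $\sigma_{M_1}$, then shrinking the other) is exactly the natural way to unpack that citation.
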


\begin{lemma}\label{l:Npositive}
For $\delta$ sufficiently small, if $u \in N_{M,\leq \delta} \smallsetminus \{ 0 \}$, then $N(u) > 0$.  
\end{lemma}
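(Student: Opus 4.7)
The plan is to exploit the fact that since $\mathbf{1} \in \Span(M)$, the relative interior of $M$ must lie in the open orthant $\mathbb{R}^n_{>0}$, so $W_M$ has strictly positive entries.

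First, I would observe that $M$ cannot be contained in any coordinate hyperplane $\{x_i = 0\}$: otherwise $\Span(M) \subset \{x_i = 0\}$, contradicting $\mathbf{1} \in \Span(M)$. Hence for each $i$, the intersection $M \cap \{x_i = 0\}$ is a proper face of $M$, so the relative interior of $M$ avoids every coordinate hyperplane. This gives $W_M \in M^\circ \subset \mathbb{R}^n_{>0}$, and we may set
\[
c := \min_{1 \le i \le n} \langle e_i^*, W_M \rangle > 0.
\]

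Next, I would unpack the defining inequality of $N_{M,\leq\delta}$. Since both $N$ and $u \mapsto \langle u, W_M \rangle$ are positively homogeneous of degree one, and $S = \{u : \langle u, \mathbf{1}\rangle = 1\}$, the cone description of $N_{M,\leq \delta}$ is equivalent to: for every nonzero $u \in N_{M,\leq \delta}$,
\[
\langle u, W_M \rangle - N(u) \;\le\; \delta\, \langle u, \mathbf{1} \rangle.
\]
On the other hand, since $u \in \mathbb{R}^n_{\geq 0}$ and every coordinate of $W_M$ is at least $c$, we have $\langle u, W_M \rangle \ge c\,\langle u, \mathbf{1}\rangle$. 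Combining these two inequalities yields
\[
N(u) \;\ge\; (c - \delta)\,\langle u, \mathbf{1} \rangle.
\]

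Finally, for $u \in \mathbb{R}^n_{\geq 0} \smallsetminus \{0\}$ we have $\langle u, \mathbf{1}\rangle > 0$, so choosing any $\delta < c$ gives $N(u) > 0$, as required. The proof is essentially immediate once one identifies that $W_M$ is componentwise positive; the only mildly delicate point is the interiority of $M$, which is forced by the hypothesis $\mathbf{1} \in \Span(M)$, so there is no real obstacle.
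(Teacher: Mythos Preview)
Your proof is correct and is essentially the same as the paper's: both identify the key constant $c = b = \min_i (W_M)_i > 0$ (which exists precisely because $M$ is interior), and both combine the defining inequality of $N_{M,\le\delta}$ with this lower bound to get $N(u) \ge (c-\delta)\langle u,\mathbf{1}\rangle$. The only cosmetic difference is that the paper first normalizes to $u \in S$ while you work homogeneously, and you spell out why $\mathbf{1}\in\Span(M)$ forces $M$ to be interior whereas the paper simply cites that fact.
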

\begin{proof}
After scaling, we may assume that $u \in N_{M,\leq \delta} \cap S$. Since $S$ is compact, we may consider $b = \min \{ \langle u , W_M \rangle : u \in S \}$.  Since $M$ is interior,  $W_M \notin \partial \R^n_{\ge 0}$, and hence $b > 0$. For $\delta < b$ and $u \in N_{M,\leq \delta} \cap S$, 
$N(u) = \langle u , W_M \rangle - (\langle u , W_M \rangle - N(u)) \ge \langle u , W_M \rangle - \delta \ge b - \delta > 0$. 
\end{proof}

Let $\Sigma'$ be a fan supported on $\R^n_{\ge 0}$ that refines $\Sigma$. We may consider the fan 
$\Sigma' \cap N_{M,\leq \delta}$ supported on $N_{M,\leq \delta}$ 
given by all cones of the form 
$\{ C \cap C' : C \in \Sigma', C' \textrm{ is a face of } N_{M,\leq \delta} \}$.
The lemma below gives a more explicit description.

\begin{lemma}\label{l:DeltacapN}
Let $\Sigma'$ be a fan supported on $\R^n_{\ge 0}$ that refines $\Sigma$. Let $C \in \Sigma'$, and fix $\delta$ sufficiently small. Then
\begin{enumerate}
\item If $C \cap \sigma_M = \{ 0 \}$, then $C \cap N_{M,\leq \delta} = \{ 0 \}$.

\item  If $C \subset \sigma_M$, then $C$ is a cone in 
$\Sigma' \cap N_{M,\leq \delta}$. 

\item If $C \cap \sigma_M \neq \{ 0 \}$ and $C \not \subset \sigma_M$, then $\dim (C \cap N_{M,\leq \delta}) = \dim C$. 
Moreover, $\dim (C \cap N_{M,\delta}) = \dim C - 1$, and $C \cap N_{M,\delta}$ is the only proper face of $C \cap N_{M,\leq \delta}$ that is not contained in a proper face of $C$.

\end{enumerate}

\end{lemma}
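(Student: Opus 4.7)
I introduce the auxiliary function $\phi(u) := \langle u, W_M\rangle - N(u)$ on $\R^n_{\ge 0}$. Since $W_M$ lies in the relative interior of $M \subset \Newt(f)$, we have $\phi(u) \ge 0$, and $\phi(u) = 0$ if and only if $u \in \sigma_M$; hence $N_{M,\le\delta} = \{u \in \R^n_{\ge 0} : \phi(u) \le \delta\langle u,\One\rangle\}$. Moreover, because $\Delta$ refines $\Sigma$, every cone $C \in \Delta$ is contained in a single cone $\sigma_F$ of $\Sigma$ on which $N(u) = \langle u, V\rangle$ for any vertex $V$ of $F$, so $\phi|_C$ is a \emph{linear} functional. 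The three assertions then reduce to elementary statements about slicing a cone by a linear inequality.

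Part (1) is immediate from Lemma~\ref{l:intersectM2}. For part (2), if $C \subset \sigma_M$ then $\phi \equiv 0$ on $C$, so $C \subset N_{M,\le\delta}$, and hence $C = C \cap N_{M,\le\delta}$ is a cone of $\Delta \cap N_{M,\le\delta}$.

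For part (3), set $\ell(u) := \phi(u) - \delta\langle u, \One\rangle$, which is linear on $\Span(C)$. The hypothesis $C \cap \sigma_M \ne \{0\}$ provides a nonzero $u_0 \in C$ with $\phi(u_0) = 0$, so $\ell(u_0) = -\delta\langle u_0,\One\rangle < 0$; the hypothesis $C \not\subset \sigma_M$ gives $v \in C$ with $\phi(v) > 0$, so $\ell(v) > 0$ for $\delta$ sufficiently small. Since both signs of $\ell$ are attained on $C$ and the relative interior of $C$ is dense in $C$, both signs occur at interior points of $C$. Therefore $C \cap \{\ell \le 0\} = C \cap N_{M,\le\delta}$ has dimension $\dim C$, while $C \cap \{\ell = 0\} = C \cap N_{M,\delta}$ meets the relative interior of $C$ and has dimension $\dim C - 1$. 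Every proper face of $C \cap N_{M,\le\delta}$ is either of the form $C' \cap N_{M,\le\delta}$ for a proper face $C'$ of $C$ (hence contained in $C'$), or else a face of the new facet $C \cap N_{M,\delta}$; in the latter case it is either $C \cap N_{M,\delta}$ itself (not contained in any proper face of $C$, since it meets the interior) or of the form $C' \cap N_{M,\delta}$ for a proper face $C'$ of $C$, again contained in $C'$. The argument is essentially mechanical: the only conceptual input is the observation that $\phi \le \delta\langle\cdot,\One\rangle$ restricts to a single linear inequality on $C$ by virtue of $\Delta$ refining $\Sigma$, after which only routine polyhedral geometry remains.
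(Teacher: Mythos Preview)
Your proof is correct and follows essentially the same approach as the paper's: both arguments rest on the observation that, since $\Delta$ refines $\Sigma$, the function $\langle u, W_M\rangle - N(u)$ restricts to a linear functional on $C$, reducing everything to slicing a cone by a single halfspace. The only cosmetic difference is that the paper first intersects with the simplex $S = \{\langle u,\One\rangle = 1\}$ and works with the bounded polytope $P = C \cap S$ and the hyperplane $\mathrm{H}_{W_M - V,\delta}$, whereas you work directly in the cone with $\ell(u) = \phi(u) - \delta\langle u,\One\rangle$; the content is identical.
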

\begin{proof}
If $C \cap \sigma_M = \{ 0 \}$, then $C \cap N_{M,\leq \delta} = \{ 0 \}$ by Lemma~\ref{l:intersectM2}. If 
$C \subset \sigma_M$, then since $\sigma_M \subset N_{M,\leq \delta}$, 
$C \cap N_{M,\leq \delta} = C$.
This establishes the first two properties.
Assume that $C \cap \sigma_M \neq \{ 0 \}$ and $C \not \subset \sigma_M$. 
Since $\Sigma'$ refines $\Sigma$, $C \subset \sigma_K$ for some face $K$ of $\partial \Newt(f)$. Fix a vertex $V$ of $K$, and let $P = C \cap S$. Then $P \cap  N_{M,\leq \delta} = P \cap \mathrm{H}_{W_M - V,\le \delta}$. 
For any $\delta > 0$, the relative interior of $P$ 
intersects both $\mathrm{H}_{W_M - V,>0}$ and $\mathrm{H}_{W_M - V,< \delta}$ because $P \not \subset \sigma_M$ and $P \cap \sigma_M \neq \emptyset$. It follows that for $\delta$ sufficiently small, 
$\mathrm{H}_{W_M - V,\delta}$ intersects the relative interior of $P$. We deduce that $P \cap  N_{M,\leq \delta}$ has dimension $\dim P$, and that the only proper face of $P \cap  N_{M,\leq \delta}$ that is not contained in a 
proper face of $P$ is $P \cap \mathrm{H}_{W_M - V, \delta}$, 
which has dimension $\dim P - 1$. This establishes the result.
\end{proof}

The following two remarks are corollaries of the 
Lemma~\ref{l:DeltacapN} and its proof.

\begin{remark}\label{r:relativeinteriors}
Let $C$ be a rational polyhedral cone such that $C \subset \sigma_K$ for some face $K$ of $\partial \Newt(f)$. 
Then for $\delta$ sufficiently small, 
$C^\circ \cap N_{M,< \delta} = (C \cap N_{M,\leq \delta})^\circ$, and $C^\circ \cap N_{M,\ge \delta} = (C \cap N_{M,\geq \delta})^\circ \cup (C \cap N_{M,\delta})^\circ$.
\end{remark}

\begin{remark}\label{r:tildesigmaK}
Let $K$ be a nonempty face of $\partial \Newt(f)$, 
and let 
$\tilde{\sigma}_K = \sigma_K \cap (\cap_i  N_{M_i,\geq \delta})$. Assume that $\delta$ is chosen sufficiently small.
If $K \in \Contrib(\alpha)$, then $M_i \subset K$ for some $1 \le i \le r$, and hence $\sigma_K \subset N_{M_i,< \delta}$, and $\tilde{\sigma}_K = \emptyset$. 
Assume that $K \notin \Contrib(\alpha)$.
By Lemma~\ref{l:separateN} and Lemma~\ref{l:DeltacapN}, $\tilde{\sigma}_K$ is a rational polyhedral cone of dimension $\dim \sigma_K$, and 
$\sigma_K^\circ \cap (\cap_i  N_{M_i,\geq \delta}) = \tilde{\sigma}_K^\circ \cup (\cup_i (\tilde{\sigma}_K \cap N_{M_i,\delta})^\circ)$. 
\end{remark}

\begin{lemma}\label{l:RcriticalC}
Let $\Sigma'$ be a fan supported on $\R^n_{\ge 0}$ that refines $\Sigma$.
Let $\gamma$ be a ray of the fan $\Sigma' \cap N_{M,\leq \delta}$ such that $\gamma \not \subset \sigma_M$, 
for some $\delta$ chosen sufficiently small. 
Let $C$ be the smallest cone in $\Sigma'$ containing 
$\gamma$. 
The following properties hold:
\begin{enumerate}
\item\label{i:DD1} $C \cap \sigma_M \neq \{ 0 \}$,
\item\label{i:DD2} 
the smallest cone in $\Sigma$ containing $C$ has the form $\sigma_K$ for some $K$ in $\Gamma$,
\item\label{i:DD4} if $\gamma$ is critical with respect to $(\alpha,V)$,
for some vertex $V$ of either $K$ or $M$, then 
$C$ is critical with respect to $(\alpha,V)$. 
\end{enumerate}

\end{lemma}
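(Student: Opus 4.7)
The plan is to handle (1) and (2) with direct applications of the preceding lemmas, then invest effort in a structural reduction forcing $\dim C = 2$ with $\gamma = C \cap N_{M,\delta}$, and finally exploit an automatic criticality of the ray of $C$ lying in $\sigma_M$ together with the linearity of the criticality condition to conclude (3).

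For (1), I argue by contradiction: if $C \cap \sigma_M = \{0\}$, Lemma~\ref{l:DeltacapN}(1) gives $C \cap N_{M,\le\delta} = \{0\}$, contradicting $\gamma \subset C \cap N_{M,\le\delta}$. For (2), let $\sigma_K$ be the smallest cone of $\Sigma$ containing $C$; if $K \notin \Gamma$, Lemma~\ref{l:notinGamma} forces $\sigma_K \cap N_{M,\le\delta} = \{0\}$ for small $\delta$, contradicting $\gamma \subset \sigma_K \cap N_{M,\le\delta}$. For (3), the first step is to show $\dim C = 2$ and $\gamma = C \cap N_{M,\delta}$. Since $\gamma \not\subset \sigma_M$, we have $C \not\subset \sigma_M$, so Lemma~\ref{l:DeltacapN}(3) applies and tells us that $C \cap N_{M,\delta}$ is the only proper face of $C \cap N_{M,\le\delta}$ meeting the relative interior of $C$. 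Because $C$ is the smallest cone of $\Delta$ containing $\gamma$, the relative interior of $\gamma$ must meet $C^\circ$, so $\gamma$ is contained in $C \cap N_{M,\delta}$; since $\gamma$ is one-dimensional, this forces $\dim C = 2$ and $\gamma = C \cap N_{M,\delta}$. Writing $C = \R_{\ge 0} u_1 + \R_{\ge 0} u_2$ and noting that $C \cap \sigma_M = C \cap (\sigma_K \cap \sigma_M)$ is the intersection of $C$ with a face of the ambient cone $\sigma_K \supset C$, a short check in the two-dimensional simplicial cone $C$ shows this is a proper face, so after relabeling I may assume $u_1 \in \sigma_M$ and $u_2 \notin \sigma_M$.

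The crux of (3) is that $u_1$ is automatically critical with respect to $(\alpha, V)$. Let $\sigma_{K'} = \sigma_K \cap \sigma_M$, corresponding to a common containing face $K'$ of $K$ and $M$ in $\Newt(f)$. Since $M \subset K'$ and $M \in \Contrib(\alpha)$, upward-closure of $\Contrib(\alpha)$ gives $K' \in \Contrib(\alpha)$, so $\mathbf{1} \in \Span K'$ and $\psi_{K'}(\mathbf{1}) = -\alpha$. I will then use the identity $\langle u, w\rangle = N(u)\psi_{K'}(w)$ for $u \in \sigma_{K'}$ and $w \in \Span K'$, which I verify separately on vertices of $K'$ (where both sides equal $N(u)$) and on unbounded directions $e_\ell \in I_{K'}$ (where both sides vanish, using $K' + \R_{\ge 0} e_\ell \subset K'$). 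Applied at $w = \mathbf{1}$, this yields $\alpha N(u_1) + \langle u_1, \mathbf{1}\rangle = 0$. Since $V$ is a vertex of $K$ or $M$, both of which are contained in $K'$, and $u_1 \in \sigma_{K'}$ satisfies $\langle u_1, V\rangle = N(u_1)$, we conclude that $u_1$ is critical with respect to $(\alpha, V)$. Finally, since the criticality condition $\alpha\langle u, V\rangle + \langle u, \mathbf{1}\rangle = 0$ is linear in $u$, writing the primitive direction of $\gamma$ as $u_\gamma = \mu_1 u_1 + \mu_2 u_2$ with $\mu_2 > 0$, the criticality of $u_\gamma$ and of $u_1$ forces criticality of $u_2$, and hence of all of $C$. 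The main obstacle will be the structural reduction to $\dim C = 2$ together with placing $u_1$ inside $\sigma_{K'}$ for some $K' \in \Contrib(\alpha)$; once those are in hand, the criticality of $u_1$ and the linearity step are immediate.
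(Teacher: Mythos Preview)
Your proof is correct and follows essentially the same approach as the paper: both use Lemma~\ref{l:DeltacapN} to force $\dim C = 2$ with $\gamma = C \cap N_{M,\delta}$, identify the other ray inside $\sigma_{K'} = \sigma_K \cap \sigma_M$ (you call it $\R_{\ge 0}u_1$, the paper calls it $\gamma'$), observe it is automatically critical because $K' \in \Contrib(\alpha)$, and then use linearity to propagate criticality to all of $C$. The only cosmetic difference is that the paper works with the two rays of $C \cap N_{M,\le\delta}$ while you work with the two rays of $C$ itself; since $\Span(C) = \Span(C \cap N_{M,\le\delta})$ and one ray coincides, the arguments are equivalent.
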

\begin{proof}
It follows from Lemma~\ref{l:DeltacapN} that 
$\gamma = C \cap N_{M,\delta}$,
$C \cap \sigma_M \neq \{ 0 \}$, and $\dim (C \cap N_{M,\leq \delta}) = \dim C = 2$. This establishes 
\eqref{i:DD1}. Lemma~\ref{l:notinGamma} implies \eqref{i:DD2}.
Since $\sigma_K \cap \sigma_M \neq \{ 0 \}$, 
$\sigma_K \cap \sigma_M = \sigma_{K'}$ for some $K'$ in $\Gamma$ containing both $M$ and $K$. 
Fix any vertex $V$ of $K'$. Since $M \in \Contrib(\alpha)$, we have $\sigma_{K'} \subset \mathrm{H}_{\alpha V + \One}$. 
Let $\gamma' \neq \gamma$ be the other ray spanning $C \cap N_{M,\leq \delta}$. 
Then $\gamma' \subset \sigma_{K'} \subset  \mathrm{H}_{\alpha V + \One}$. 
Hence, if $\gamma
\subset  \mathrm{H}_{\alpha V + \One}$, then  $\Span(C) = \Span(C \cap N_{M,\leq \delta}) = \R \gamma + \R \gamma' \subset \mathrm{H}_{\alpha V + \One}$, and $C \subset \mathrm{H}_{\alpha V + \One}$. 
This establishes \eqref{i:DD4}.
\end{proof}

\begin{lemma}\label{l:raynotcrit}
Let $\gamma$ be a ray of the fan $\Sigma \cap N_{M,\leq \delta}$ such that $\gamma \not \subset \sigma_M$,  for some $\delta$ chosen sufficiently small. Then $\gamma$ is not critical with respect to $\alpha$. 
\end{lemma}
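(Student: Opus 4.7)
The plan is to argue by contradiction, combining Lemma~\ref{l:DeltacapN}, Lemma~\ref{l:RcriticalC}, and Lemma~\ref{l:separate}. First I would apply Lemma~\ref{l:DeltacapN} with $\Delta = \Sigma$ to pin down the shape of $\gamma$: since $\gamma \not\subset \sigma_M$, cases (1) and (2) of that lemma are ruled out, so the remaining case forces $\gamma = C \cap N_{M,\delta}$, where $C$ is the smallest cone of $\Sigma$ containing $\gamma$, with $\dim C = 2$, $C \cap \sigma_M \neq \{0\}$, and $C \not\subset \sigma_M$. Since $C \in \Sigma$, we have $C = \sigma_K$ for some nonempty face $K$, and Lemma~\ref{l:RcriticalC}~\eqref{i:DD2} gives $K \in \Gamma$. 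In particular $K$ has at least one vertex.

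Next, suppose for contradiction that $\gamma$ is critical with respect to $\alpha$. I would pick any vertex $V$ of $K$. Since $\gamma \subset \sigma_K$, one has $N(u) = \langle u, V \rangle$ on $\gamma$, so $\gamma$ is critical with respect to $(\alpha, V)$. Lemma~\ref{l:RcriticalC}~\eqref{i:DD4} then promotes this to the statement that all of $C = \sigma_K$ is critical with respect to $(\alpha, V)$, i.e., $\alpha V + \mathbf{1}$ annihilates $\Span(\sigma_K)$. But $\Span(\sigma_K)$ is the orthogonal complement of $\Span(K - V)$ in $\mathbb{R}^n$, so $\alpha V + \mathbf{1} \in \Span(K - V)$. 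Writing $\alpha V + \mathbf{1} = \sum_i c_i (W_i - V)$ with $W_i \in K$ and rearranging yields $\mathbf{1} \in \Span(K)$ with $\psi_K(\mathbf{1}) = -\alpha$, and hence $K \in \Contrib(\alpha)$.

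Finally, I would play this against the minimality of $M$. Let $M_K \subset K$ be any minimal element of $\Contrib(\alpha)$, so that $\sigma_K \subset \sigma_{M_K}$. If $M_K = M$, then $C = \sigma_K \subset \sigma_M$, contradicting $C \not\subset \sigma_M$. Otherwise, Lemma~\ref{l:separate} (which uses the $\alpha$-simplicial hypothesis) gives $\sigma_M \cap \sigma_{M_K} = \{0\}$, whence $C \cap \sigma_M = \sigma_K \cap \sigma_M \subset \sigma_{M_K} \cap \sigma_M = \{0\}$, contradicting $C \cap \sigma_M \neq \{0\}$. The only delicate point in the argument is the linear-algebra identification $\Span(\sigma_K)^{\perp} = \Span(K - V)$ used to extract $K \in \Contrib(\alpha)$ from the criticality of $\sigma_K$; once that is in hand, the $\alpha$-simplicial separation lemma closes the proof cleanly.
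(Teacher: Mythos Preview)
Your proof is correct and follows essentially the same route as the paper's: contradict, lift criticality from $\gamma$ to $\sigma_K$ via Lemma~\ref{l:RcriticalC}, conclude $K \in \Contrib(\alpha)$, and then use the separation of distinct minimal faces to force $\gamma \subset \sigma_M$. Two minor differences worth noting: the paper compresses your linear-algebra paragraph into the single clause ``hence $K \in \Contrib(\alpha)$'' (your unpacking via $\Span(\sigma_K)^\perp = \Span(K - V)$ is a correct justification of that clause), and the paper finishes with Lemma~\ref{l:separateN} rather than Lemma~\ref{l:separate}, which amounts to the same contradiction since $\sigma_K \subset \sigma_{M_K} \subset N_{M_K,\le\delta}$; your direct use of Lemma~\ref{l:separate} is arguably cleaner. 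Your preliminary appeal to Lemma~\ref{l:DeltacapN} is harmless but redundant, since Lemma~\ref{l:RcriticalC} already delivers $C \cap \sigma_M \neq \{0\}$ and $C = \sigma_K$ with $K \in \Gamma$.
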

\begin{proof}
By Lemma~\ref{l:RcriticalC}, there exists $K \in \Gamma$ such that $\sigma_K$ is the smallest cone in $\Sigma$ containing $\gamma$.
Suppose that 
$\gamma$ is critical with respect to $\alpha$. Then $\gamma$ is critical with respect to $(\alpha,V)$ for any vertex $V$ of $K$. By Lemma~\ref{l:RcriticalC}, $\sigma_K$ is critical with respect to $(\alpha,V)$, and hence $K \in \Contrib(\alpha)$. Since $\sigma_K \cap N_{M,\leq \delta} \neq \{ 0 \}$, Lemma~\ref{l:separateN} implies that $M \subset K$, and hence $\gamma \subset \sigma_M$, a contradiction.
\end{proof}

\begin{lemma}\label{l:outersummandnopole}
Let $M_1,\ldots,M_r$ be the minimal elements in $\Contrib(\alpha)$, and let $K$ be a nonempty face of $\partial \Newt(f)$.
Then for $\delta$ sufficiently small,  
$(L - 1)^{n  - \dim K} \sum_{u \in
\sigma_K^\circ \cap (\cap_i  N_{M_i,\geq \delta})   \cap \mathbb{N}^{n}} L^{-\langle u, \mathbf{1} \rangle}T^{N(u)}$
lies in $R$ and admits a set of candidate poles not containing $\alpha$. 

\end{lemma}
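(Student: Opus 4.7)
The plan is to split into three cases according to the position of $K$ with respect to $\Gamma$ and $\Contrib(\alpha)$. First suppose $K \notin \Gamma$. Then Lemma~\ref{l:notinGamma} gives $\sigma_K \cap N_{M_i,\leq\delta} = \{0\}$ for all $i$ once $\delta$ is sufficiently small, so the domain of summation is all of $\sigma_K^\circ$. Every facet of $\Newt(f)$ containing $K$ is non-interior (else $K$ would lie in $\Gamma$), so by Remark~\ref{r:Nzerorays} every primitive ray generator of $\sigma_K$ has $N$-value zero. Lemma~\ref{l:liesinsubring} then places the sum in $\Z[L,L^{-1},T] \subset R$, and $\{-1\}$ is a set of candidate poles not containing $\alpha$ (since $\alpha \notin \Z_{<0}$). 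Next suppose $K \in \Contrib(\alpha)$; then $M_i \subset K$ for some $i$, so $\sigma_K \subset \sigma_{M_i} \subset N_{M_i,<\delta}$ and the sum is identically zero.

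In the remaining case $K \in \Gamma \smallsetminus \Contrib(\alpha)$, Remark~\ref{r:tildesigmaK} writes the domain of summation as the disjoint union of $\tilde\sigma_K^\circ$ and of the codimension-one faces $(\tilde\sigma_K \cap N_{M_i,\delta})^\circ$ of the rational polyhedral cone $\tilde\sigma_K$. Applying Lemma~\ref{lem:powerseries} to each of these cones with $Y$ the restriction of $N$ to $\sigma_K$ (a nonnegative linear function equal to $\langle \cdot, V\rangle$ for any fixed vertex $V \in K$) and combining the resulting sets of candidate poles via Remark~\ref{r:unioncandidate} reduces the problem to showing that no primitive ray $\gamma$ of $\tilde\sigma_K$ is critical with respect to $\alpha$. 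The hypothesis that $Y$ is not identically zero on each of these cones follows from Lemma~\ref{l:Npositive}, together with the observation that $K \in \Gamma$ implies $K$ is contained in some interior facet of $\Newt(f)$, giving a ray of $\sigma_K$ with $N>0$. The auxiliary hypothesis $\langle u_j,\mathbf{1}\rangle = 1$ whenever $N(u_j)=0$ holds by Remark~\ref{r:Nzerorays} for rays of $\sigma_K$, while every new ray has $N>0$ by Lemma~\ref{l:Npositive}.

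Each ray $\gamma$ of $\tilde\sigma_K$ is either (a) a ray of $\sigma_K$, or (b) a new ray obtained by intersecting some two-dimensional face $C$ of $\sigma_K$ with $N_{M_j,\delta}$ for some $j$. In case (a), if the corresponding facet $F$ of $\Newt(f)$ containing $K$ were in $\Contrib(\alpha)$, then $F \supset M_j$ for some $j$ and $\gamma \subset \sigma_F \subset \sigma_{M_j} \subset N_{M_j,<\delta}$, contradicting $\gamma \subset \tilde\sigma_K \subset N_{M_j,\geq\delta}$; hence $F \notin \Contrib(\alpha)$ and the associated candidate pole differs from $\alpha$ (when defined). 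In case (b), Lemma~\ref{l:DeltacapN} identifies $\gamma = C \cap N_{M_j,\delta}$ as a ray of the fan $\Sigma \cap N_{M_j,\leq\delta}$, and $\gamma \not\subset \sigma_{M_j}$ because $\sigma_{M_j} \subset N_{M_j,0}$ meets $N_{M_j,\delta}$ only at the origin for $\delta>0$; Lemma~\ref{l:raynotcrit} then yields that $\gamma$ is not critical. The main obstacle is the classification in case (b): ensuring that every newly-created ray of $\tilde\sigma_K$ fits the hypotheses of Lemma~\ref{l:raynotcrit} via the precise face structure of $\sigma_K \cap N_{M_j,\leq\delta}$ described in Lemma~\ref{l:DeltacapN}.
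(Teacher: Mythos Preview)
Your argument is correct and follows essentially the same route as the paper: decompose the domain via Remark~\ref{r:tildesigmaK}, classify the rays of $\tilde\sigma_K$ as either surviving rays of $\sigma_K$ (corresponding to facets not in $\Contrib(\alpha)$, hence not critical) or new rays on some $N_{M_j,\delta}$ (handled by Lemma~\ref{l:raynotcrit}), then apply Lemma~\ref{lem:powerseries}. The paper does not separate the case $K\notin\Gamma$; your doing so is harmless, but note that Remark~\ref{r:Nzerorays} only gives the implication $N(u)=0\Rightarrow u=e_i^*$, not the statement you invoke---the correct justification is that a non-interior facet lies in some coordinate hyperplane $\{x_j=0\}$, forcing its normal to be $e_j^*$ with $N(e_j^*)=0$.
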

\begin{proof}

Let $\tilde{\sigma}_K = \sigma_K \cap (\cap_i  N_{M_i,\geq \delta})$. 
By Remark~\ref{r:tildesigmaK},  if  $K \in \Contrib(\alpha)$, then $\tilde{\sigma}_K  = \emptyset$.
Assume that $K \notin \Contrib(\alpha)$.
By Remark~\ref{r:tildesigmaK},
$\tilde{\sigma}_K$ is a rational polyhedral cone of dimension $\dim \sigma_K$, and 
$\sigma_K^\circ \cap (\cap_i  N_{M_i,\geq \delta})  = \tilde{\sigma}_K^\circ \cup (\cup_i (\tilde{\sigma}_K \cap N_{M_i,\delta})^\circ)$. 
By Lemma~\ref{l:intersectM2}, the rays of $\tilde{\sigma}_K$ are the union of the  rays of $\sigma_K$ that are not critical with respect to $\alpha$, and the rays of $\sigma_K \cap N_{M_i,\leq \delta}$ that do not lie in $\sigma_{M_i}$ for $1 \le i \le r$.
By Lemma~\ref{l:raynotcrit}, none of the rays of 
$\tilde{\sigma}_K$ are critical with respect to $\alpha$. 
By Remark~\ref{r:Nzerorays} and Lemma~\ref{l:Npositive}, if $u$ is a primitive generator of a ray of $\tilde{\sigma}_K$ and $N(u) = 0$, then $u = e_i^*$ for some $1 \le i \le n$, and hence $\langle u , \One \rangle = 1$.
Since the restriction of $N$ to $\tilde{\sigma}_K \subset \sigma_K$ is linear,
Lemma~\ref{lem:powerseries} implies that
the following elements of $\tilde{R}$ lie in $R$ and admit sets of candidate poles not containing $\alpha$:
\[
(L - 1)^{n  - \dim K} \sum_{u \in \tilde{\sigma}^{\circ}_{K} \cap \mathbb{N}^{n}} L^{-\langle u, \mathbf{1} \rangle}T^{N(u)}, \text{ and } (L - 1)^{n  - \dim K} \sum_{u \in 
(\tilde{\sigma}_K \cap N_{M_i,\delta})^\circ
\cap \mathbb{N}^{n}} L^{-\langle u, \mathbf{1} \rangle}T^{N(u)} 
\]
for $1 \le i \le r$. 
The result now follows from Remark~\ref{r:unioncandidate}. 
\end{proof}

The lemma below follows immediately from Definition~\ref{d:formallocal} and Lemma~\ref{l:outersummandnopole} and will allow us to reduce our study of 
$Z_{\for}(T)$ to the study of 
$Z_{\for}(T)|_{N_{M,< \delta}}$, when $M$ is a minimal element of $\Contrib(\alpha)$. 

\begin{lemma}\label{l:outerformalzeta}
Let $M_1,\ldots,M_r$ be the minimal elements in $\Contrib(\alpha)$.
Then for $\delta$ sufficiently small,  
$Z_{\for}(T)|_{\cap_i  N_{M_i,\geq \delta}}$
lies in $R$ and admits a set of candidate poles not containing $\alpha$.
\end{lemma}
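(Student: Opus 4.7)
The plan is to reduce directly to Lemma~\ref{l:outersummandnopole}, since the statement is essentially a formal combination of that result with Definition~\ref{d:formallocal}. I will unfold the definition, apply the previous lemma term by term, and combine the results via Remark~\ref{r:unioncandidate}.

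First, I will unfold the definition to write
\begin{equation*}
Z_{\for}(T)|_{\cap_i N_{M_i,\geq \delta}} = \sum_{\substack{\emptyset \neq K \in \Gamma \\ K \textrm{ compact}}} Y_K \cdot A_K(T),
\end{equation*}
where
\begin{equation*}
A_K(T) := (L-1)^{n-\dim K} \sum_{u \in \sigma^\circ_K \cap (\cap_i N_{M_i,\geq \delta}) \cap \mathbb{N}^n} L^{-\langle u, \mathbf{1} \rangle} T^{N(u)}.
\end{equation*}
This is a finite sum, since $\Gamma$ contains only finitely many compact faces.

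Next, since only finitely many faces $K$ are involved, I can choose a single $\delta > 0$ small enough that Lemma~\ref{l:outersummandnopole} applies simultaneously to every compact $K \in \Gamma$. For each such $K$, that lemma produces a set of candidate poles $\mathcal{P}_K \not\ni \alpha$ for $A_K(T)$, and in particular shows $A_K(T) \in R$. Multiplying by $Y_K \in R$ preserves both membership in $R$ and the property of admitting $\mathcal{P}_K$ as a set of candidate poles.

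Finally, I will appeal to Remark~\ref{r:unioncandidate}: a finite sum of elements of $R$ admits the union of sets of candidate poles for the individual summands as a set of candidate poles. Thus $\bigcup_K \mathcal{P}_K$ is a set of candidate poles for $Z_{\for}(T)|_{\cap_i N_{M_i,\geq \delta}}$, and since no $\mathcal{P}_K$ contains $\alpha$, neither does the union. There is no substantive obstacle here; the real content was already established in Lemma~\ref{l:outersummandnopole}, and this lemma is just the packaging step.
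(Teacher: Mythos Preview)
Your proposal is correct and matches the paper's approach: the paper states that this lemma ``follows immediately from Definition~\ref{d:formallocal} and Lemma~\ref{l:outersummandnopole}'' and gives no further proof, and your write-up simply unpacks that immediate deduction (together with Remark~\ref{r:unioncandidate}) in the natural way.
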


\subsection{Establishing fake poles using $\alpha$-compatible sets}\label{ss:s0compatible}

The goal of this section is to show that the existence of a fan with certain properties implies that there is a set of candidate poles for the
local formal zeta function not containing $\alpha$.

From this point on, we fix a minimal element $M$ in
$\Contrib(\alpha)$. 
Let $\Contrib(\alpha)_M := \{ F \in \Contrib(\alpha) : F \supset M \}$. 
Fix a nonempty finite set $\mathcal{S}$.
Given a finite collection $\mathcal{Z}  = (Z_s)_{s \in \mathcal{S}}$ of elements in $\mathbb{Q}^n$ indexed by $\mathcal{S}$, 
we let $Q_{\mathcal{Z}}$ denote the convex hull of $\mathcal{Z}$, and let $\Sigma_{\mathcal{Z}}$ be the corresponding (rational) dual fan supported on $\R^n$. 
Given a nonempty face $J$ of $Q_{\mathcal{Z}}$, we write $\tau_J$ for the corresponding cone in 
$\Sigma_{\mathcal{Z}}$. 
Given an element 
$s \in \mathcal{S}$,
we write 
$J_{Z_s}$ for the smallest face of $Q_{\mathcal{Z}}$
containing $Z_s$, and we write $\tau_{Z_s} := \tau_{J_{Z_s}}$.
Explicitly,
$\tau_{Z_s} := \{ u \in \R^n : \langle u , Z_s \rangle \le  \langle u , Z_{s'} \rangle, \textrm{ for all } s' \in \mathcal{S} \}.$
Observe that $\tau_J = \cap_{Z_s \in J} \tau_{Z_s}$.
Also, note that we do not require the elements of $\mathcal{Z}$ to be distinct, and that $\tau_{Z_s}$ may equal $\tau_{Z_{s'}}$ even if $s \not= s'$.

Before defining $\alpha$-compatible pairs (Definition~\ref{d:compatible3}), we introduce a weaker notion, which 
satisfies all
of the properties of $\alpha$-compatible pairs with the exception of a genericity condition. 

\begin{definition}\label{d:weaklys0compatible} 
Consider a pair $(\mathcal{Z},\mathcal{F})$, where 
$\mathcal{Z} = (Z_s)_{s \in \mathcal{S}}$ and $\mathcal{F} = (F_s)_{s \in \mathcal{S}}$ are collections of elements  of $\Q^n$ and $\Contrib(\alpha)_M$ respectively. Then $(\mathcal{Z},\mathcal{F})$ is 
\emph{weakly $\alpha$-compatible} if it satisfies the following property:

Whenever
$\sigma_K^\circ \cap \tau_{Z_{s}} \cap \tau_{Z_{s'}}  \ne \emptyset$ for some  $K \in \Contrib(\alpha)_M$  and $s,s' \in \mathcal{S}$, with possibly $s = s'$,  
then 
\begin{enumerate}
\item\label{i:weak1} $K \subset F_s$, 
\item\label{i:weak2} either $F_s \subset F_{s'}$ or $F_{s'} \subset F_s$, and
\item\label{i:weak3} $
\langle e^*_{F_s}, Z_s \rangle =
\langle e^*_{F_s}, Z_{s'} \rangle = 0$.
\end{enumerate}

\end{definition}

Conditions \eqref{i:weak1} and \eqref{i:weak2} are used below to associate a face in $\Contrib(\alpha)_M$ to every cone of $\Sigma_{\mathcal{Z}}$ with nonzero interection with $\sigma_M$.
See Definition~\ref{d:max}.
 Condition \eqref{i:weak3} is used to control the structure of cones in $\Sigma_{\mathcal{Z}}$ in a way that will allow us to apply Lemma~\ref{lem:cancellation}. See Lemma~\ref{l:Cinvariant}.

\begin{lemma}\label{l:maxelem}
Let $(\mathcal{Z}, \mathcal{F})$ be a weakly $\alpha$-compatible pair, and let $J$ be a nonempty face of $Q_{\mathcal{Z}}$ such that $\sigma_M \cap \tau_J \neq \{ 0 \}$. Then $\{ F_s : Z_s \in J \}$ is the set of elements of a chain of faces in $\Gamma$. Moreover, if $\sigma_K^\circ \cap \tau_J \neq \emptyset$ for some $K \in \Contrib(\alpha)_M$, then $K \subset F_s$ for all $s \in \mathcal{S}$ such that $Z_s \in J$.  

\end{lemma}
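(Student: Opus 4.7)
}

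The plan is to produce a single ``witness'' vector $u$ lying in $\sigma_M \cap \tau_J$ whose support cone in the dual fan $\Sigma$ belongs to $\Contrib(\alpha)_M$, and then repeatedly feed this $u$ into Definition~\ref{d:weaklys0compatible}. First I would choose any nonzero $u \in \sigma_M \cap \tau_J$, which exists by hypothesis, and let $K$ be the unique face of $\Newt(f)$ with $u \in \sigma_K^{\circ}$. Since $u \in \sigma_M$, the cone $\sigma_K$ is a face of $\sigma_M$, so $K \supset M$; because $\Span(K) \supseteq \Span(M) \ni \mathbf{1}$, the face $K$ lies in $\Contrib(\alpha)$, and since $K \supset M$ it in fact lies in $\Contrib(\alpha)_M$ (and hence in $\Gamma$, as every element of $\Contrib(\alpha)$ is an interior face).

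Now the key observation is that $u \in \tau_J = \bigcap_{Z_s \in J} \tau_{Z_s}$, so for every pair $Z_s, Z_{s'} \in J$ (including $s = s'$) we automatically have $u \in \sigma_K^{\circ} \cap \tau_{Z_s} \cap \tau_{Z_{s'}}$. Applying Definition~\ref{d:weaklys0compatible}(2) to this choice of $K, s, s'$ shows that $F_s$ and $F_{s'}$ are comparable under inclusion. Letting $s, s'$ range over all indices with $Z_s, Z_{s'} \in J$ exhibits $\{F_s : Z_s \in J\}$ as a totally ordered collection of faces, that is, the elements of a chain in $\Contrib(\alpha)_M \subset \Gamma$.

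For the ``moreover'' statement, I would pick any $v \in \sigma_K^{\circ} \cap \tau_J$ for the given $K \in \Contrib(\alpha)_M$. For each $s$ with $Z_s \in J$, the vector $v$ lies in $\sigma_K^{\circ} \cap \tau_{Z_s} \cap \tau_{Z_s}$, so Definition~\ref{d:weaklys0compatible}(1), applied with $s = s'$, yields $K \subset F_s$, as required.

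I do not foresee a genuine obstacle: the argument rests only on the standard fact that any nonzero vector of $\R^n$ lies in the relative interior of a unique cone of $\Sigma$, plus direct unpacking of the weak $\alpha$-compatibility axioms. The one point worth verifying cleanly is that the face $K$ produced by the witness $u$ really belongs to $\Contrib(\alpha)_M$, which follows from $K \supset M$ together with the fact that the candidate pole is constant along chains of faces.
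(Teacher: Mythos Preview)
Your proposal is correct and follows essentially the same approach as the paper's proof: both produce a face $K \in \Contrib(\alpha)_M$ with $\sigma_K^\circ \cap \tau_J \neq \emptyset$ and then apply properties~\eqref{i:weak1} and~\eqref{i:weak2} of Definition~\ref{d:weaklys0compatible} pairwise. You are simply more explicit than the paper in constructing this $K$ from a witness vector $u \in \sigma_M \cap \tau_J$ and verifying that it lies in $\Contrib(\alpha)_M$.
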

\begin{proof}
There exists $K \in \Contrib(\alpha)_M$ such that $\sigma_K^\circ \cap \tau_J \neq \emptyset$. Hence, for any 
$s,s' \in \mathcal{S}$ such that $Z_s,Z_{s'} \in J$,
$\sigma_K^\circ \cap \tau_{Z_s} \cap \tau_{Z_{s'}}  \ne \emptyset$. By property \eqref{i:weak2} of Definition~\ref{d:weaklys0compatible}, $F_s$ and $F_{s'}$  are comparable under inclusion. Hence if we consider the set $\{ F_s : Z_s \in J \}$ as a poset under inclusion, then all elements are comparable, and the poset is a chain. The second statement follows from property \eqref{i:weak1} of Definition~\ref{d:weaklys0compatible}. 
\end{proof}

\begin{definition}\label{d:max}
Let $(\mathcal{Z}, \mathcal{F})$ be a weakly $\alpha$-compatible pair, and let $J$ be a nonempty face of $Q_{\mathcal{Z}}$ such that $\sigma_M \cap \tau_J \neq \{ 0 \}$. Then set
$F_J  := \max \{ F_s : Z_s \in J \}$. 

\end{definition}

Lemma~\ref{l:maxelem} implies that the above is well-defined.
Also, it follows from 
Lemma~\ref{l:intersectM2} that we can replace the condition $\sigma_M \cap \tau_J \neq \{ 0 \}$ with the condition that 
$\tau_J \cap N_{M,\leq \delta} \neq \{ 0 \}$ for some $\delta$ chosen sufficiently small.

\begin{remark}\label{r:GinFJ}
Let $J$ be a nonempty face of   
$Q_{\mathcal{Z}}$ and $K \in \Contrib(\alpha)_M$ such that 
$\sigma_K^\circ \cap \tau_J \neq \emptyset$. 
Then Lemma~\ref{l:maxelem} implies that $K \subset F_J$. 
\end{remark}

\begin{remark}\label{r:Jinclusion}
If $J \subset J'$ is an inclusion of nonempty faces of 
$Q_{\mathcal{Z}}$ and $\sigma_M \cap \tau_{J'} \neq \{ 0 \}$, then  $\sigma_M \cap \tau_{J} \neq \{ 0 \}$ and 
$F_{J} \subset F_{J'}$. 
\end{remark}

\begin{lemma}\label{l:NMGinFJ}
Let $(\mathcal{Z}, \mathcal{F})$ be a weakly $\alpha$-compatible pair, and
let $J$ be a nonempty face of   
$Q_{\mathcal{Z}}$ and $K \in \Gamma$ such that 
$\sigma_K \cap \tau_J \cap N_{M,\leq \delta} \neq \{ 0 \}$ for some $\delta$ sufficiently small. 
Then $K \subset F_J$. 
\end{lemma}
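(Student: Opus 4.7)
The plan is to reduce the hypothesis to a statement about the intersection with $\sigma_M$ itself, extract a suitable auxiliary face $K''' \in \Contrib(\alpha)_M$ from a witness point, and then apply the weak $\alpha$-compatibility condition to all $Z_s \in J$ at once.

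First, I would apply (the contrapositive of) Lemma~\ref{l:intersectM2} to the closed cone $C = \sigma_K \cap \tau_J$. Since $\sigma_K \cap \tau_J \cap N_{M,\leq\delta} \neq \{0\}$ for $\delta$ arbitrarily small, this forces $\sigma_K \cap \tau_J \cap \sigma_M \neq \{0\}$. Pick any nonzero $u$ in this triple intersection, and let $K'''$ be the unique face of $\Newt(f)$ with $u \in \sigma_{K'''}^\circ$; equivalently, $K'''$ is the face of $\Newt(f)$ on which $\langle u, \cdot \rangle$ attains $N(u)$. Because $u \in \sigma_K \cap \sigma_M$, both $K$ and $M$ are contained in $K'''$. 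Since $M \subset K'''$ and $M$ is interior, $K''' \in \Gamma$; and since $M \in \Contrib(\alpha)$ with $M \subset K'''$, we get $K''' \in \Contrib(\alpha)_M$.

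Next, for any $s \in \mathcal{S}$ with $Z_s \in J$ we have $u \in \tau_J \subset \tau_{Z_s}$, so taking $s' = s$ gives $\sigma_{K'''}^\circ \cap \tau_{Z_s} \cap \tau_{Z_s} \ni u$, which is nonempty. Weak $\alpha$-compatibility \eqref{i:weak1} of Definition~\ref{d:weaklys0compatible} applied with $K$ replaced by $K'''$ then yields $K''' \subset F_s$ for every such $s$. In particular $K \subset K''' \subset F_s$ for every $Z_s \in J$.

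Finally, to conclude $K \subset F_J$ I would invoke Lemma~\ref{l:maxelem}, whose hypothesis $\sigma_M \cap \tau_J \neq \{0\}$ is an immediate consequence of $\sigma_K \cap \tau_J \cap \sigma_M \neq \{0\}$ established in the first step. This lemma shows that $\{F_s : Z_s \in J\}$ is a chain under inclusion, so $F_J = F_{s_0}$ for some particular $Z_{s_0} \in J$. Since $K \subset F_s$ for every $Z_s \in J$, in particular $K \subset F_{s_0} = F_J$, as desired. The entire argument is a short chain of citations; the only genuine content is the extraction of $K'''$ from the witness $u$, which is exactly what makes weak $\alpha$-compatibility applicable (it is stated only for $K \in \Contrib(\alpha)_M$, not for arbitrary $K \in \Gamma$), and I expect no substantive obstacle beyond this bookkeeping.
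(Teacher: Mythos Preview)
Your proof is correct and follows essentially the same approach as the paper's. The paper first writes $\sigma_K \cap \sigma_M = \sigma_{K'}$ for the smallest face $K'$ containing both $K$ and $M$, then passes to $K'' \supset K'$ with $\sigma_{K''}^\circ \cap \tau_J \neq \emptyset$, and finally invokes Remark~\ref{r:GinFJ}; your $K'''$ plays exactly the role of the paper's $K''$, and your final paragraph is just the content of Remark~\ref{r:GinFJ} (via Lemma~\ref{l:maxelem}) unpacked inline.
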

\begin{proof}
Since $\sigma_K \cap \tau_J \cap N_{M,\leq \delta} \neq \{ 0 \}$,  Lemma~\ref{l:intersectM2} implies that 
$\sigma_K \cap \tau_J \cap \sigma_M \neq \{ 0 \}$. 
In particular, $F_J$ is well-defined. 
Since $\sigma_K \cap \sigma_M  \neq \{0\}$, we have that 
$\sigma_K \cap \sigma_M = \sigma_{K'}$ for some $K' \in \Contrib(\alpha)_M$. 
Since  $\sigma_{K'} \cap \tau_J \neq \{ 0 \}$,   there exists $K' \subset K'' \in \Contrib(\alpha)_M$ such that $\sigma_{K''}^\circ \cap \tau_J \neq \emptyset$. By Remark~\ref{r:GinFJ}, $K'' \subset F_J$. 
Since $K \subset K' \subset K''$, the result follows. 
\end{proof}

Let $\Sigma_1,\Sigma_2$ be 
fans in $\R^n$ dual to polyhedra $P_1,P_2$ in $\R^n$ respectively. Then the Minkowski sum $P_1 + P_2$ is dual to the intersection $\Sigma_1 \cap \Sigma_2$ of $\Sigma_1$ and $\Sigma_2$, where $\Sigma_1 \cap \Sigma_2$ is the fan consisting of all cones $\{ \sigma_1 \cap \sigma_2 : \sigma_i \in \Sigma_i \}$. All faces of $P_1 + P_2$ have the form $J_1 + J_2$ for some faces $J_i$ of $P_i$ for $i = 1,2$. 
If $J_i$ is dual to $\sigma_i$ in $\Sigma_i$ for $i = 1,2$, then a face of the form $J_1 + J_2$ is dual to $\sigma_1 \cap \sigma_2$. 
Conversely, every cone $C$ in $\Sigma_1 \cap \Sigma_2$ has the form $C = \sigma_1 \cap \sigma_2$, where $\sigma_i$ is the smallest face of $\Sigma_i$ containing $C$ for $i = 1,2$. Then $C^\circ = \sigma_1^\circ \cap \sigma_2^\circ$, 
$\Span(C) = \Span(\sigma_1) \cap \Span(\sigma_2)$, 
and if $\sigma_i$ is dual to a face $J_i$ of $P_i$ for $i = 1,2$, then $C$ is dual to 
$J_1 + J_2$. 
We will be interested in the polyhedron 
$\Newt(f)_{\mathcal{Z}} := \Newt(f) + Q_\mathcal{Z}$ 
dual to $\Sigma \cap \Sigma_{\mathcal{Z}}$.

\begin{definition}
Let $(\mathcal{Z}, \mathcal{F})$ be a weakly $\alpha$-compatible pair, let $J$ be a nonempty face of $Q_{\mathcal{Z}}$, and let $C$  be a cone in $\Sigma \cap \Sigma_{\mathcal{Z}}$. Assume that  
$C \subset \tau_J$ 
and 
$C \cap N_{M,\leq \delta} \neq \{ 0 \}$ for some $\delta$ chosen sufficiently small. 
Let 
$
D(C,J) = D(C,J, M,\delta) :=  
(C \cap \sigma_{A_{F_J}} \cap N_{M,\leq \delta}) +  \mathbb{R}_{\ge 0} e^*_{F_J}.
$
\end{definition}

The following lemma will allow us to replace contributions to $Z_{\for}(T)$ from certain cones $C$ by contributions from cones $D(C, J)$, whose structure will allow us to apply Lemma~\ref{lem:cancellation}. 

\begin{lemma}\label{l:Cinvariant}
Let $(\mathcal{Z}, \mathcal{F})$ be a weakly $\alpha$-compatible pair, let $J$ be a nonempty face of $Q_{\mathcal{Z}}$, and let $C$  be a cone in $\Sigma \cap \Sigma_{\mathcal{Z}}$. Assume that  
$C \subset \tau_J$ and that $C \cap N_{M,\leq \delta} \neq \{ 0 \}$ for some $\delta$ chosen sufficiently small. 
Then $C$ is dual to a face $K + J'$ of $\Newt(f)_{\mathcal{Z}}$, for some 
face
$K \in \Gamma$ such that $K  \subset F_J \subset F_{J'}$, and 
for some face $J'$  of $Q_{\mathcal{Z}}$ such that $J \subset J'$. 
Suppose that $C \not \subset \sigma_{A_{F_J}}$.
Then $C \cap N_{M,\leq \delta} = 
D(C,J) \cap N_{M,\leq \delta}$, 
and $e^*_{F_J} \in \Span(C)$.  
\end{lemma}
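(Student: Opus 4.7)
The plan is to separate the two claims.  For the claim about duality, recall that every cone of the common refinement $\Sigma \cap \Sigma_{\mathcal{Z}}$ has the form $\sigma_{K} \cap \tau_{J'}$, where $K$ and $J'$ are respectively the smallest face of $\Newt(f)$ and the largest face of $Q_{\mathcal{Z}}$ containing $C$ in their dual cones. Since $C \cap N_{M,\leq\delta} \neq \{0\}$ implies $\sigma_K \cap N_{M,\leq\delta} \neq \{0\}$, Lemma~\ref{l:notinGamma} forces $K \in \Gamma$, and then Lemma~\ref{l:NMGinFJ} applied to $J$ and separately to $J'$ gives $K \subset F_J$ and $K \subset F_{J'}$. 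Since $C \subset \tau_J$ implies $J \subset J'$, Remark~\ref{r:Jinclusion} yields $F_J \subset F_{J'}$, completing the duality statement.

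For the span statement, note that $\Span(C) = \Span(\sigma_K) \cap \Span(\tau_{J'})$ consists of covectors that are constant on both $K$ and $J'$, so it suffices to verify that $e^*_{F_J}$ has both properties. For constancy on $K$: the hypothesis $C \not\subset \sigma_{A_{F_J}}$ implies $A_{F_J} \not\in \ver(K)$ (otherwise $\{A_{F_J}\} \subset K$ would force $\sigma_K \subset \sigma_{A_{F_J}}$); combined with $K \subset F_J$, every vertex $V$ of $K$ satisfies $\langle e^*_{F_J}, V\rangle = 0$, and for each unbounded direction $e_\ell \in I_K \subset I_{F_J}$, Remark~\ref{r:apexisbounded} shows $\ell$ is not the base index, so $\langle e^*_{F_J}, e_\ell\rangle = 0$. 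For constancy on $J'$: choose $s_0$ with $Z_{s_0} \in J$ and $F_{s_0} = F_J$ (which exists by definition of $F_J$), so that $A_{F_{s_0}} = A_{F_J}$ and hence $e^*_{F_{s_0}} = e^*_{F_J}$ by the operative labeling. Since $C \cap \sigma_M \neq \{0\}$ by Lemma~\ref{l:intersectM2}, any nonzero $u$ in this intersection lies in $\sigma_{K''}^\circ$ for some $K'' \supset M$, hence $K'' \in \Contrib(\alpha)_M$; then $u \in \sigma_{K''}^\circ \cap \tau_{Z_{s_0}} \cap \tau_{Z_s}$ for every $Z_s \in J'$, and weak compatibility property~(3) gives $\langle e^*_{F_J}, Z_s\rangle = 0$.

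To prove $C \cap N_{M,\leq\delta} = D(C,J) \cap N_{M,\leq\delta}$, I will exhibit the two inclusions. The easy direction $D(C,J) \cap N_{M,\leq\delta} \subset C \cap N_{M,\leq\delta}$ uses that moving from a point of $C \cap \sigma_{A_{F_J}}$ in the direction $e^*_{F_J} \in \Span(C)$ preserves $\sigma_K$ and $\tau_{J'}$ (since $e^*_{F_J}$ vanishes on both $K$ and $J'$), so the trajectory remains in $C$; the intersection with $N_{M,\leq\delta}$ is then tautological. The harder direction proceeds via a concrete retraction: given $u \in C \cap N_{M,\leq\delta}$, set $t = \langle u, A_{F_J}\rangle - N(u) \geq 0$ and $u_0 := u - t\, e^*_{F_J}$. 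Then $\langle u_0, V\rangle = \langle u, V\rangle = N(u)$ for every $V \in \ver(K)$ (by constancy of $e^*_{F_J}$ on $K$) while $\langle u_0, A_{F_J}\rangle = N(u)$, showing $u_0 \in \sigma_K \cap \sigma_{A_{F_J}}$.

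The principal obstacle is verifying that $u_0$ actually lies in $C \cap N_{M,\leq\delta}$: one must rule out that the line segment $\{u - s e^*_{F_J} : 0 \leq s \leq t\}$ exits $\sigma_K$ through some facet other than the one $C \cap \sigma_{A_{F_J}}$ determines, and also confirm that $u_0$ satisfies the inequality defining $N_{M,\leq\delta}$.  This is where the freedom to shrink $\delta$ is essential: as $\delta \to 0$, points $u \in C \cap N_{M,\leq\delta}$ are forced arbitrarily close to $\sigma_{F_J}$, so $t$ shrinks with $\delta$, and for every other vertex $W \not\in K$ of $\Newt(f)$ the gap $\langle u, W\rangle - N(u)$ is bounded below by a quantity independent of $\delta$ for $u$ in the compact slice $C \cap N_{M,\leq\delta} \cap S$; hence the perturbation $u \mapsto u_0$ preserves all inequalities cutting out $C$ as well as the inequality defining $N_{M,\leq\delta}$.
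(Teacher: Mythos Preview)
Your duality paragraph is fine and matches the paper's argument.  Your direct proof that $e^*_{F_J}\in\Span(C)=\Span(\sigma_K)\cap\Span(\tau_{J'})$ is actually cleaner than the paper's derivation of that same fact; the paper obtains $e^*_{F_J}\in\Span(C)$ only as a by-product of the path argument.

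The gaps are in the equality $C\cap N_{M,\le\delta}=D(C,J)\cap N_{M,\le\delta}$.  In your ``easy'' direction you assert that moving along $e^*_{F_J}$ from a point of $C\cap\sigma_{A_{F_J}}$ preserves $\tau_{J'}$.  You have shown $\langle e^*_{F_J},Z_s\rangle=0$ for $Z_s\in J'$, but the defining inequalities of $\tau_{J'}$ come from the $Z_{s'}\notin J'$, and nothing prevents $\langle e^*_{F_J},Z_{s'}\rangle<0$; knowing $e^*_{F_J}\in\Span(\tau_{J'})$ is not enough to keep the trajectory inside the cone $\tau_{J'}$.  In your ``hard'' direction the asserted uniform lower bound on $\langle u,W\rangle-N(u)$ for \emph{every} vertex $W\notin K$ is false: for $W\in\ver(F_J)\smallsetminus\ver(K)$ (such vertices exist whenever $K\subsetneq F_J$) and $u\in C\cap\sigma_M$ the gap is zero.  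You also do not address whether $u_0\in\tau_{J'}$, nor whether the $\ell$-th coordinate of $u_0=u-t\,e^*_{F_J}$ stays nonnegative.

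The paper handles all of these issues at once by introducing the auxiliary cone $\tilde C$: it discards every defining half-space $\mathrm{H}_{W,\ge 0}$ or hyperplane $\mathrm{H}_W$ of $C=\sigma_K\cap\tau_{J'}$ for which $C\cap N_{M,\le\delta}\cap\mathrm{H}_W=\{0\}$, and then proves $\langle e^*_{F_J},W\rangle=0$ for every \emph{remaining} $W$.  The key point is that this vanishing is deduced from $C\cap N_{M,\le\delta}\cap\mathrm{H}_W\neq\{0\}$ (via Lemma~\ref{l:intersectM2} and weak compatibility), not from $W$ coming from $J'$ or from $K$.  This yields $\mathbb{R}\,e^*_{F_J}\subset\tilde C$, i.e.\ $e^*_{F_J}$ lies in the \emph{lineality space} of $\tilde C$, which is strictly stronger than $e^*_{F_J}\in\Span(C)$ and is exactly what makes the retraction/path argument go through for both inclusions.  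Your approach can be repaired by proving this same case analysis, but as written the compactness step is incorrect and the constraints from $\Sigma_{\mathcal Z}$ are not handled.
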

\begin{proof}
By Lemma~\ref{l:notinGamma}, 
$C$ is dual to 
a face of $\Newt(f)_{\mathcal{Z}}$
of the form $K + J'$, for some  $K \in \Gamma$, and some nonempty face $J'$ of $Q_{\mathcal{Z}}$. Here  $C = \sigma_{K} \cap \tau_{J'}$ and $\tau_{J'}$ is the smallest cone in $\Sigma_{\mathcal{Z}}$ containing $C$.  
In particular,
$\tau_{J'} \subset \tau_J$ and $J \subset J'$. 
By Lemma~\ref{l:intersectM2}, $\sigma_M \cap \tau_{J'}  \neq \{ 0 \}$, and $F_J,F_{J'}$ are well-defined.
Also,
$\{ 0 \} \neq  \sigma_{K} \cap \tau_{J} \cap N_{M,\leq \delta}$.  Then Lemma~\ref{l:NMGinFJ} implies that $K \subset F_{J}$. 
By Remark~\ref{r:Jinclusion}, $F_J \subset F_{J'}$. 

Fix a vertex $V$ of $K$. Then 
\begin{equation}\label{e:sigmaG}
\sigma_K = \bigg( \bigcap_{V \neq V' \in K} \mathrm{H}_{V' - V} \bigg) \cap \bigg( \bigcap_{V' \notin K} \mathrm{H}_{V' - V, \ge 0} \bigg) \cap \bigg( \bigcap_{W \in \Unb(C_K)} \mathrm{H}_W  \bigg) \cap \bigg( \bigcap_{W \in \{ e_1,\ldots,e_n \} \smallsetminus \Unb(C_K)} \mathrm{H}_{W,\ge 0}  \bigg),
\end{equation}
where $V'$ varies over vertices in $\Gamma$. 
Let $\hat{s} \in \mathcal{S}$ such that $Z_{\hat{s}} \in J$ and 
$F_{\hat{s}} = F_J$. Then $Z_{\hat{s}} \in J'$ and 
\begin{equation}\label{e:tauJ'}
\tau_{J'} =  \Bigg( \bigcap_{\substack{\hat{s} \neq s \in S \\ Z_s \in J'}} \mathrm{H}_{Z_s - Z_{\hat{s}}}  \Bigg) \cap \Bigg( \bigcap_{\substack{s \in S \\ Z_s \notin J'}} \mathrm{H}_{Z_s - Z_{\hat{s}}, \ge 0}  \Bigg).
\end{equation}
It follows that 
we may choose a finite collection of nonzero elements $P_C = \{  W \} \subset \Q^n \smallsetminus \{0\}$ 
such that each $W \in P_C$ is of the form 
\begin{enumerate}
\item $W = Z_s - Z_{\hat{s}}$ for some $s \in \mathcal{S}$,
\item $W = V' - V$ for some vertex $V'$  in $\Gamma$, or 
\item $W \in \{ e_1,\ldots,e_n \}$,
\end{enumerate}
and $C = \sigma_K \cap \tau_{J'}$ is the intersection of half-spaces of the form $\mathrm{H}_{W, \ge 0}$ or hyperplanes of the form $\mathrm{H}_W$ for various $W \in P_C$. 

Suppose that $C \not \subset \sigma_{A_{F_J}}$.
Note that $C \not \subset \sigma_{A_{F_J}}$ if and only if $\sigma_K \not \subset \sigma_{A_{F_J}}$, if and only if $A_{F_J} \notin K$.
Let $\tilde{\sigma}$ be the intersection of all such half-spaces and hyperplanes appearing in
the description \eqref{e:sigmaG} of $\sigma_K$ 
such that the $W \in P_C$ that defines the hyperplane or half-space has $C \cap N_{M,\leq \delta} \cap \mathrm{H}_W \neq  \{ 0 \}$, 
and $W$ doesn't have the form $W = V' - V$ with $V' = A_{F_J}$. Similarly, let $\tilde{\tau}$ be the intersection of all such half-spaces and hyperplanes 
appearing in
the description \eqref{e:tauJ'} of $\tau_{J'}$ such that $C \cap N_{M,\leq \delta} \cap \mathrm{H}_W \neq  \{ 0 \}$. Let $\tilde{C} = \tilde{\sigma} \cap \tilde{\tau}$. 
Then, by construction, 
there exists a cone $U$ over a small open neighborhood of $N_{M,\leq \delta} \cap S'$ in $S'  = \{ u \in \R^n : \langle u , \One \rangle = 1 \}$ 
such that 
\begin{equation}\label{e:Cinit}
C \cap U = \tilde{C} \cap \mathrm{H}_{A_{F_J} - V, \ge 0} \cap  U.
\end{equation}

We claim that $\R e^*_{F_J} \subset \tilde{C}$. 
Let $\mathrm{H}_{W,\ge 0}$ or $\mathrm{H}_W$ be a defining half-space or hyperplane of $\tilde{C}$. We need to show that $e^*_{F_J} \in \mathrm{H}_W$. Equivalently, we need to show that 
$\langle e^*_{F_J}, W \rangle = 0$. 
By assumption, $C \cap N_{M,\leq \delta} \cap \mathrm{H}_W \neq \{ 0 \}$. By Lemma~\ref{l:intersectM2}, $C \cap  \sigma_M \cap \mathrm{H}_W \neq \{ 0 \}$.

First, assume that $W = V' - V$ for some vertex $V' \neq A_{F_J}$ in $\Gamma$.
Then $\{ 0 \} \neq C \cap N_{M,\leq \delta} \cap \mathrm{H}_W \subset\sigma_{V'} \cap \tau_J  \cap N_{M,\leq \delta}$, and 
Lemma~\ref{l:NMGinFJ} implies that $V' \in F_J$. 
Then $V',V$ are vertices of $F_J$ that are not equal to 
$A_{F_J}$, and hence $\langle e^*_{F_J}, V' \rangle =
\langle e^*_{F_J}, V \rangle = 0$, so 
$\langle e^*_{F_J}, W \rangle = 0$. 

Second, assume that  $W =  Z_{s} - Z_{\hat{s}}$ for some $s$ in $\mathcal{S}$. Since $C \subset \tau_{J'}$,
we have  $\{ 0 \} \neq C \cap  \sigma_M \cap \mathrm{H}_W \subset \tau_{J'} \cap  \sigma_M \cap \mathrm{H}_W \subset \sigma_M \cap \tau_{Z_s} \cap \tau_{Z_{\hat{s}}} $.
Hence, there exists  $K' \in \Contrib(\alpha)_M$ such that 
$\sigma_{K'}^\circ \cap \tau_{Z_s} \cap \tau_{Z_{\hat{s}}}  \neq \emptyset$.
By \eqref{i:weak3} in Definition~\ref{d:weaklys0compatible}, 
$\langle e^*_{F_J}, Z_s \rangle =
\langle e^*_{F_J}, Z_{\hat{s}} \rangle = 0$, so 
$\langle e^*_{F_J}, W \rangle = 0$. 

Finally, assume 
$W \in \{ e_1,\ldots,e_n \}$. 
Then $\{ 0 \} \neq C \cap N_{M,\leq \delta} \cap \mathrm{H}_W \subset \sigma_{K}  \cap \tau_J  \cap N_{M,\leq \delta} \cap \mathrm{H}_W$. 
By Lemma~\ref{l:intersectM2}, 
$\sigma_{K} \cap \tau_J \cap \sigma_M \cap \mathrm{H}_W \neq \{ 0 \}$.
It follows that there exists $K' \in \Contrib(\alpha)_M$ such that 
$K \subset K'$ and $\sigma_{K'}^\circ \cap \tau_J \cap \mathrm{H}_W \neq \{ 0 \}$. 
By Remark~\ref{r:GinFJ}, $K' \subset F_J$.  
Then $W \in \Unb(C_{K'}) \subset \Unb(C_{F_J})$, and 
hence $\langle e^*_{F_J}, W \rangle = 0$ by Remark~\ref{r:apexisbounded}.

We conclude that $\R e^*_{F_J} \subset \tilde{C}$. 
Next, we claim that 
\begin{equation}\label{e:Ctilde}
\tilde{C} \cap \mathrm{H}_{A_{F_J} - V, \ge 0} \cap  N_{M,\leq \delta} = 
((\tilde{C} \cap \mathrm{H}_{A_{F_J} - V} \cap  N_{M,\leq \delta}) + \mathbb{R}_{\ge 0} e^*_{F_J}) \cap  N_{M,\leq \delta}.
\end{equation}

By \eqref{e:Cinit}, the left-hand side of \eqref{e:Ctilde} is $C \cap  N_{M,\leq \delta}$.
Let $u \in \tilde{C} \cap \mathrm{H}_{A_{F_J} - V, \ge 0} \cap  N_{M,\leq \delta}$. We aim to show that $u$ lies in the right-hand side of \eqref{e:Ctilde}. It is enough to consider the case when $\langle u , \One \rangle = 1$. 
Consider the function
\[
\phi \colon \R_{\ge 1} \to S' \subset \R^n, \text{ defined by }
\phi(\lambda) = \lambda u + (1 - \lambda) e^*_{F_J}. 
\]
Since $\R e^*_{F_J} \subset \tilde{C}$,  the image of $\phi$ is contained in $\tilde{C}$.
It is enough to show that $\phi^{-1}(\mathrm{H}_{A_{F_J} - V} \cap  N_{M,\leq \delta}) \neq \emptyset$, since if 
$\lambda \in \phi^{-1}(\mathrm{H}_{A_{F_J} - V} \cap  N_{M,\leq \delta})$, then 
\begin{equation}\label{e:phialphaeFJ}
u = (1/\lambda)(\phi(\lambda) + 
(\lambda - 1) e^*_{F_J}),
\end{equation}
and $u$ lies in the right-hand side of \eqref{e:Ctilde}.
Moreover, if we choose $u \in C^\circ$, then $u \notin \sigma_{A_{F_J}}$ and hence $\lambda > 1$ in \eqref{e:phialphaeFJ}. Then 
$e^*_{F_J} = (1/(\lambda - 1)) ( \lambda u - \phi(\lambda)) \in \Span(C)$, which establishes the last statement of the lemma.

Consider the linear function\[
f(\lambda) = \langle \phi(\lambda), W_M - V \rangle. 
\]
Since $u \in S' \cap N_{M,\leq \delta}$, $f(1) \le \delta$. 
Since $\langle e^*_{F_J}, V \rangle = 0$, 
we compute:
$f'(\lambda) = f(1) - 
\langle e^*_{F_J} , W_M  \rangle \le \delta - \langle e^*_{F_J} , W_M  \rangle < 0$. The last inequality follows since $M$ is interior implies that  
$\langle e^*_{F_J} , W_M  \rangle > 0$ and $\delta$ is chosen sufficiently small.  In particular, 
the image of  $f$ is unbounded because it is a non-constant linear function. Hence the image of $\phi$ is unbounded and so 
$\phi^{-1}(U \smallsetminus N_{M, \leq \delta}) \neq \emptyset$. 

We claim that $\phi^{-1}(\mathrm{H}_{A_{F_J} - V, \ge 0} \cap U) \subset \phi^{-1}(N_{M,\leq \delta})$. Indeed, if $\phi(\lambda) \in \mathrm{H}_{A_{F_J} - V, \ge 0} \cap U$, then $\phi(\lambda) \in C$ by \eqref{e:Cinit}. Then 
$
\langle \phi(\lambda), W_M \rangle - N( \phi(\lambda)) = f(\lambda) \le f(1) \le \delta. 
$
Since $\langle \phi(\lambda) , \One \rangle = 1$, we deduce that $\phi(\lambda) \in N_{M,\leq \delta}$. 

It follows that $\emptyset \neq \phi^{-1}(U \smallsetminus N_{M, \leq \delta}) \subset \phi^{-1}(\mathrm{H}_{A_{F_J} - V, < 0} \cap U)$. Since $1 \in \phi^{-1}(\mathrm{H}_{A_{F_J} - V, \ge 0} \cap U)$, we deduce that 
$\phi^{-1}(\mathrm{H}_{A_{F_J} - V} \cap N_{M,\leq \delta}) = \phi^{-1}(\mathrm{H}_{A_{F_J} - V} \cap U) \neq \emptyset$, so the left-hand side is contained in the right-hand side of \eqref{e:Ctilde}.

Conversely, since $\R e^*_{F_J} \subset \tilde{C}$ and $e^*_{F_J} \in \mathrm{H}_{A_{F_J} - V, \ge 0}$, the right-hand side of \eqref{e:Ctilde} is contained in $\tilde{C} \cap \mathrm{H}_{A_{F_J} - V, \ge 0} \cap  N_{M,\leq \delta}$. This establishes \eqref{e:Ctilde}. 
By \eqref{e:Cinit},
\[
\tilde{C} \cap \mathrm{H}_{A_{F_J} - V} \cap  N_{M,\leq \delta} =
(\tilde{C} \cap \mathrm{H}_{A_{F_J} - V,\ge 0} \cap  N_{M,\leq \delta}) \cap \sigma_{A_{F_J}} = (C \cap N_{M,\leq \delta})  \cap \sigma_{A_{F_J}}. 
\]
Substituting this expression into the right-hand side of \eqref{e:Ctilde} and combining with  \eqref{e:Cinit}, we deduce that 
\[
C \cap N_{M,\leq \delta} = \tilde{C} \cap \mathrm{H}_{A_{F_J} - V, \ge 0} \cap  N_{M,\leq \delta} = 
((C \cap \sigma_{A_{F_J}} \cap N_{M,\leq \delta}) +  \mathbb{R}_{\ge 0} e^*_{F_J}) \cap  N_{M,\leq \delta}. \qedhere
\]
\end{proof}

The following lemma is a corollary of the proof of 
Lemma~\ref{l:Cinvariant}. 

\begin{lemma}\label{l:norayinside}
Let $(\mathcal{Z},\mathcal{F})$ be a weakly $\alpha$-compatible pair, and let $J$ be a nonempty face of $Q_{\mathcal{Z}}$ 
such that  $\tau_J \cap N_{M,\leq \delta} \neq \{ 0 \}$ for some $\delta$ chosen sufficiently small. Then no ray of $\tau_J \cap N_{M,\leq \delta}$ is contained in $\sigma_M$. 
\end{lemma}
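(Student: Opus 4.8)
The plan is to argue by contradiction, re-running the analysis of defining functionals that occupies the proof of Lemma~\ref{l:Cinvariant}. Suppose $\gamma$ is a ray of $\tau_J \cap N_{M,\leq \delta}$ with $\gamma \subseteq \sigma_M$. Since $\tau_J \cap N_{M,\leq \delta} \neq \{0\}$, Lemma~\ref{l:intersectM2} gives $\tau_J \cap \sigma_M \neq \{0\}$, so $F_J$ is well defined by Definition~\ref{d:max}; write $A := A_{F_J}$ and $e^* := e_{F_J}^*$, say $e^* = e_\ell^*$. I would first record the auxiliary fact that $A \in M$. Indeed, if $A \notin M$ then $\ver(M) \subseteq \ver(F_J) \smallsetminus \{A\}$, so $\langle e^*, V\rangle = 0$ for every vertex $V$ of $M$, while $\langle e^*, e_j\rangle = 0$ for every $e_j \in I_M \subseteq I_{F_J}$ by Remark~\ref{r:apexisbounded}; hence every element of $\gens(C_M)$ has vanishing $\ell$th coordinate, forcing $M$ into a coordinate hyperplane, which contradicts that $M$ is interior. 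Consequently $\sigma_M \subseteq \sigma_A$, and on every cone of $\Sigma$ contained in $\sigma_A$ the piecewise linear function $N$ agrees with $\langle\,\cdot\,,A\rangle$.

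Next, fix $u$ in the relative interior of $\gamma$, normalized so $\langle u,\mathbf{1}\rangle = 1$, and let $C$ be the smallest cone of $\Sigma \cap \Sigma_{\mathcal{Z}}$ containing $u$; then $u \in C^\circ$, $C \subseteq \tau_J$, and $C \subseteq \sigma_A$. Because $u \in \sigma_M \subseteq \sigma_A$ and $A, V_M, W_M \in M$, we have $\langle u, W_M\rangle - N(u) = \langle u, W_M - A\rangle = 0$, so the inequality cutting out $N_{M,\leq \delta}$ is strictly satisfied at $u$ along $C$; hence near $u$ the set $\tau_J \cap N_{M,\leq \delta}$ coincides with $\tau_J$ intersected with the orthant. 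When $\dim C \geq 2$ this already yields a contradiction: after passing to the intersection of the coordinate hyperplanes through $u$, the point $u$ lies in the relative interior of $C$, so a full neighborhood of $u$ inside $\Span(C)$ remains in $C \cap N_{M,\leq \delta} \subseteq \tau_J \cap N_{M,\leq \delta}$, and the one-dimensional face $\gamma = \R_{\geq 0}u$ cannot meet the relative interior of a cone of dimension $\geq 2$.

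It then remains to treat the case $\dim C = 1$, where $\gamma = C$ is itself a ray of $\Sigma \cap \Sigma_{\mathcal{Z}}$; write $\gamma = \sigma_K \cap \tau_{J'}$ with $K \in \Gamma$, $M \subseteq K \subseteq F_J$ (via Lemma~\ref{l:notinGamma}, Lemma~\ref{l:NMGinFJ} and Remark~\ref{r:GinFJ}), and $J \subseteq J'$. Here I would re-run the functional-by-functional argument from the proof of Lemma~\ref{l:Cinvariant}: describe the tangent cone of $\tau_J \cap N_{M,\leq \delta}$ at $u$ in terms of the functionals $W$ defining $\sigma_K$ and $\tau_{J'}$ — the differences $Z_s - Z_{\hat s}$, the differences $V' - V$ of vertices of $\Gamma$, and the coordinate vectors $e_j$ — using that the $N_{M,\leq \delta}$-constraint is slack at $u$, together with properties \eqref{i:weak1}--\eqref{i:weak3} of weak $\alpha$-compatibility, Lemma~\ref{l:maxelem}, Lemma~\ref{l:NMGinFJ} and Remark~\ref{r:apexisbounded}, to pin down exactly which functionals cut it out; one then checks that this tangent cone has dimension at least two, so that $u$ again lies in the relative interior of a two-dimensional subcone of $\tau_J \cap N_{M,\leq \delta}$, contradicting that $\gamma$ is a ray. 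The main obstacle is precisely this last case: controlling the local structure of $\tau_J \cap N_{M,\leq \delta}$ at a ray lying on $\partial \R^n_{\geq 0}$, which is exactly the situation in which the weak $\alpha$-compatibility hypotheses must be invoked, and which mirrors the bulk of the work already carried out in the proof of Lemma~\ref{l:Cinvariant}.
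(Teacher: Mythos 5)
Your proposal correctly records the auxiliary fact $A_{F_J} \in M$ (which the paper also uses, in the proof of Lemma~\ref{l:Cnocritrays}), and your treatment of the case where the ray generator $u$ lies in the relative interior of a cone $C$ of $\Sigma \cap \Sigma_{\mathcal{Z}}$ with $\dim C \geq 2$ is fine: there the defining inequality of $N_{M,\leq\delta}$ is slack at $u$ because $u \in \sigma_M$, so a full neighborhood of $u$ in $\Span(C)$ lies in $\tau_J \cap N_{M,\leq\delta}$, forcing the minimal face of that cone containing $u$ to have dimension at least two. Note, however, that this case uses none of the weak $\alpha$-compatibility hypotheses, which is a sign that the real content lies elsewhere.

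The genuine gap is the case you defer, $\dim C = 1$, and it is not a technicality: it is the entire lemma (for instance $\tau_J$ itself may be a ray lying in $\sigma_M$, and only the compatibility structure rules this out). You say you would ``re-run the functional-by-functional argument from the proof of Lemma~\ref{l:Cinvariant}'' and then ``check that this tangent cone has dimension at least two,'' but that criterion is not sufficient even if verified: at a point on a ray of $\mathbb{R}^2_{\geq 0}$ the tangent cone is a half-plane of dimension two, yet the point lies on a one-dimensional face. What must be produced is a direction $v \notin \mathbb{R}u$ with $u \pm \epsilon v \in \tau_J \cap N_{M,\leq\delta}$, i.e.\ a line in the tangent cone transverse to $\mathbb{R}u$; lying in the relative interior of a two-dimensional subcone would suffice, but you give no argument for it. This is exactly what the paper extracts from the first half of the proof of Lemma~\ref{l:Cinvariant}, run with $\sigma_K$ replaced by $\mathbb{R}^n_{\geq 0}$ and $\tau_{J'}$ by $\tau_J$: near $N_{M,\leq\delta}$ the set $\mathbb{R}^n_{\geq 0} \cap \tau_J$ coincides with a cone $\tau'$ containing the whole line $\mathbb{R}e^*_{F_J}$ (this is where properties \eqref{i:weak1}--\eqref{i:weak3}, Lemma~\ref{l:NMGinFJ}, and Remark~\ref{r:apexisbounded} are consumed), so any ray generator $u$ with $\langle u, W_M\rangle - N(u) < \delta$ admits the two-sided perturbation $u + \lambda e^*_{F_J}$, $|\lambda| < \epsilon$, inside $\tau_J \cap N_{M,\leq\delta}$; hence every ray of $\tau_J \cap N_{M,\leq\delta}$ satisfies $\langle u, W_M\rangle - N(u) = \delta$ and so avoids $\sigma_M$, with no case split on $\dim C$ at all. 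As written, your argument proves only the easy case and leaves the compatibility-dependent core unproved; filling it in along the lines you indicate would essentially reproduce the paper's proof rather than give an alternative.
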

\begin{proof}
The proof of Lemma~\ref{l:Cinvariant}, with $\sigma_K$ replaced by $\R^n_{\ge 0}$ and $\tau_{J'}$ replaced by $\tau_J$, shows that 
there exists a polyhedral cone $\tau'$ 
and a cone $U$ over a small open neighborhood of $N_{M,\leq \delta} \cap S'$ in $S'$ such that 
\begin{enumerate}
\item $\R e^*_{F_J} \subset \tau'$, and
\item $\R^n_{\ge 0} \cap \tau_J \cap U = \tau' \cap  U.$
\end{enumerate} 
Let $u$ be a generator of a ray in $\tau_J \cap N_{M,\leq \delta}$. We may assume that $\langle u , \One \rangle = 1$. Suppose that $\langle u , W_M \rangle - N(u) < \delta$. Fix $0 < \epsilon \ll 1$ and let $L_\epsilon = \{ u + \lambda e^*_{F_J} : |\lambda| < \epsilon  \}$. Then $L_\epsilon \subset \tau' \cap U = \R^n_{\ge 0} \cap \tau_J \cap U$. It follows that $L_\epsilon \subset  \tau_J \cap N_{M,\leq \delta}$, contradicting the assumption that $u$ generates a ray. 
We deduce that $\langle u , W_M \rangle - N(u) = \delta$. In particular, $u \notin \sigma_M$. 
\end{proof}

\begin{definition}\label{d:s0criticaltau2}
Let $(\mathcal{Z}, \mathcal{F})$ be a weakly $\alpha$-compatible pair, let $J$ be a nonempty face of $Q_{\mathcal{Z}}$, and let $C$ be a cone in $\Sigma \cap \Sigma_{\mathcal{Z}}$.
Then we say $(C,J)$ is \emph{$\alpha$-critical} if 
the following properties hold:
\begin{enumerate}
\item\label{i:CC1} $C \cap \sigma_M \neq \{ 0 \}$,
\item\label{i:CC2} $C \subset  \sigma_{A_{F_J}}  \cap \tau_J$, and
\item\label{i:CC3} $C$ is critical with respect to
$(\alpha,A_{F_J})$.
\end{enumerate}
\end{definition}

\begin{definition}\label{d:compatible3}
We say a weakly $\alpha$-compatible pair $(\mathcal{Z},\mathcal{F})$ is \emph{$\alpha$-compatible} if 
for every $\alpha$-critical pair $(C,J)$, $C \subset \sigma_M$. 
\end{definition}

Note that the notion of an $\alpha$-compatible pair depends on the choice of a minimal face $M$. The main technical result required to prove Theorem~\ref{thm:nopole} is following result on the existence of $\alpha$-compatible pairs. 

\begin{theorem}\label{thm:existence}
Let $\alpha \not \in \Z$, and assume that all faces of $\Contrib(\alpha)$ are $\UB$ and $\Newt(f)$ is $\alpha$-simplicial. Then for any minimal face $M \in \Contrib(\alpha)$, there 
exists an $\alpha$-compatible pair. 
\end{theorem}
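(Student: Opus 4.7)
My plan is to index $\mathcal{S}$ by saturated chains $C\colon M = F_0 \subsetneq F_1 \subsetneq \cdots \subsetneq F_k$ in the poset $\Contrib(\alpha)_M$, setting $F_C := F_k$. The main content is the construction of the points $Z_C \in \Q^n$: starting from an explicit reference polytope whose vertex set is combinatorially indexed by $\mathcal{S}$ and whose face lattice encodes the chain poset (so that vertex-incidences in a face of the polytope correspond to chains sharing an initial common segment), I would deform it so that the resulting vertices $Z_C$ satisfy $\langle e^*_{F_C}, Z_C \rangle = 0$. Since the fixed vertex $V_M$ already satisfies $\langle e^*_F, V_M \rangle = 0$ for all $F \supset M$ by \eqref{e:VM}, the natural deformation is to take
$$Z_C := V_M + \sum_{i=1}^{k} \epsilon^i X_{F_i},$$
where for each $F \in \Contrib(\alpha)_M$ the rational vector $X_F$ is chosen in the subspace $\bigcap_{F' \in \Contrib(\alpha)_M,\, F' \supset F} \ker(e^*_{F'})$, and $\epsilon > 0$ is a small rational parameter. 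The scaling by successive powers of $\epsilon$ plays the role of a lexicographic perturbation, used to break ties in the dual fan.

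With this data in hand, I would verify the three properties of Definition~\ref{d:weaklys0compatible}. The kernel condition on the $X_F$'s immediately gives $\langle e^*_{F_C}, Z_C \rangle = 0$. For sufficiently small $\epsilon$, the lexicographic scaling ensures that a face $J$ of $Q_{\mathcal{Z}}$ corresponds precisely to a collection of chains sharing an initial common segment; consequently $\{F_s : Z_s \in J\}$ forms a chain of faces of $\Contrib(\alpha)_M$, yielding condition \eqref{i:weak2}. Condition \eqref{i:weak1} follows because any $K$ with $\sigma_K^\circ \cap \tau_J \ne \emptyset$ must sit below the common initial segment. The cross-vanishing $\langle e^*_{F_C}, Z_{C'}\rangle = 0$ in \eqref{i:weak3} is the most delicate: when the two cones meet $\sigma_K^\circ$ the chains $C$ and $C'$ must overlap in a way that makes the perturbation terms of $Z_{C'}$ either lie in $\ker(e^*_{F_C})$ by construction or be forced to vanish through the operative labeling compatibility (which ties $e^*_F = e^*_{F'}$ whenever $F \subset F'$ share an apex).

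To verify the stronger $\alpha$-compatibility, I would take an arbitrary $\alpha$-critical pair $(C,J)$ and analyze the three conditions in Definition~\ref{d:s0criticaltau2}. The criticality with respect to $(\alpha, A_{F_J})$ and the containment $C \subset \sigma_{A_{F_J}}$ confine $C$ to the linear slice of $\sigma_{A_{F_J}}$ on which $N(u) = \langle u, A_{F_J}\rangle$ and $\alpha N(u) + \langle u, \mathbf{1}\rangle = 0$, while $C \subset \tau_J$ places $C$ near the region carved out by the chain defining $F_J$. Combining these with $C \cap \sigma_M \ne \{0\}$ and the $\alpha$-simplicial hypothesis (which ensures the link of $\sigma_M$ is simple enough for the perturbation to resolve the combinatorics), I would conclude that $C$ must lie in $\sigma_{F_J} \subset \sigma_M$ as required.

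The hard part will be establishing enough genericity for the vectors $X_F$: the subspaces $\bigcap_{F' \supset F}\ker(e^*_{F'})$ may be small, and the perturbation must simultaneously respect all these kernel constraints while being rich enough to realize the chain-poset combinatorics inside $Q_{\mathcal{Z}}$. Making this work will rely on a careful interplay between the $\alpha$-simplicial hypothesis (providing combinatorial freedom in the link of $\sigma_M$) and the operative labeling (constraining how base directions vary along chains). I expect the construction to split naturally across Sections~\ref{ss:exist1} and \ref{ss:exist2}: first building the reference polytope and establishing the existence of the $X_F$'s, and then verifying both compatibility properties, with the verification of $\alpha$-compatibility proceeding by case analysis on whether $A_{F_J}$ is a vertex of $M$ or lies strictly above it.
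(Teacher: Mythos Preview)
Your outline captures the right high-level idea---index by saturated chains starting at $M$, deform from a reference point $V_M$, and use lexicographic-style perturbations---but it diverges from the paper's proof in a way that leaves a real gap.

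The paper separates the two compatibility conditions. It first constructs $(\mathcal{Z},\mathcal{F})$ to be \emph{restricted} (Definition~\ref{d:restricted2}: each $Z_s$ lies in $\Span(\{V_M\}\cup\gens(C_{F_s}\smallsetminus C_M)\cup\mathcal{A}_M)$) and weakly $\alpha$-compatible, using explicit points $\zeta(V_{s,i})\in\Newt(f)$ and carefully chosen coefficients $b_{i,j}$ (Definition~\ref{d:explicitconstruct}, Lemmas~\ref{l:bij}--\ref{l:original}). The vanishing conditions in \eqref{i:weak3} are \emph{not} built into the $Z_s$; instead a linear operator $\Phi_s$ (Definition~\ref{d:orthogoperator}) is applied afterwards to project out the apex directions. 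Only then is $\alpha$-compatibility achieved, and not by direct verification but by a further deformation $Z_s\mapsto Z_s+\epsilon_s V_M$ (Lemma~\ref{l:weakimpliesstrong}), which destroys $\alpha$-critical pairs one at a time via Lemma~\ref{l:deformnotcritical}. That lemma depends essentially on the restricted condition: the span constraint on $Z_s$ is exactly what allows one to find a perturbation direction $u'$ orthogonal to enough of the defining data.

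Your proposal to verify $\alpha$-compatibility directly by arguing that an $\alpha$-critical $C$ must lie in $\sigma_{F_J}\subset\sigma_M$ is the gap. There is no reason this containment should hold for an arbitrary weakly $\alpha$-compatible pair; the paper's deformation step (Lemma~\ref{l:deformnotcritical}) is precisely what forces it, and the argument there is delicate. Separately, your constraint $X_F\in\bigcap_{F'\supset F}\ker(e^*_{F'})$ may be too rigid: this intersection of kernels can be small, and you have not explained why such $X_F$ can be chosen generically enough to realize the chain-poset combinatorics in $Q_{\mathcal{Z}}$. The paper sidesteps this by keeping $Z_s$ inside the Newton polyhedron (via $\zeta$) and imposing the kernel conditions only afterward through $\Phi_s$, which by Remark~\ref{r:phiFbullet} does not change the restriction of $\Sigma_{\mathcal{Z}}$ to $\sigma_M$.
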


\begin{lemma}\label{l:Cnocritrays}
Consider
an $\alpha$-compatible pair $(\mathcal{Z},\mathcal{F})$. Let $\gamma$ be a ray of $\Sigma \cap \Sigma_{\mathcal{Z}} \cap N_{M,\leq \delta}$ for some $\delta$ chosen sufficiently small. Assume that $\gamma \not \subset \sigma_M$. Then $\gamma$ is not critical with respect to $\alpha$. Moreover, if $\gamma \subset \tau_J$ for some face $J$ of $Q_{\mathcal{Z}}$, then $\gamma$ is not critical with respect to $(\alpha,A_{F_J})$. 
\end{lemma}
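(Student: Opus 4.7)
The plan is to argue by contradiction. In either statement, I will let $C$ be the smallest cone of $\Delta$ containing $\gamma$ and let $K \in \Gamma$ be minimal with $C \subset \sigma_K$. By Lemma~\ref{l:RcriticalC}, $\dim C = 2$, $C \cap \sigma_M \neq \{0\}$, and there is a unique ray $\rho_1 \subset \sigma_M$ of $C$, with $\gamma(\delta) \to \rho_1$ as $\delta \to 0$. For the first statement I will take $J$ minimal with $\gamma \subset \tau_J$, noting that $F_J$ is then defined since $\tau_J \cap N_{M,\leq \delta} \supset \gamma \neq \{0\}$. In either case $C \subset \tau_J$, and Lemma~\ref{l:NMGinFJ} yields $K \subset F_J$. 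I will split the argument based on whether $A_{F_J} \in K$.

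In the case $A_{F_J} \in K$, I will exhibit $(C, J)$ as an $\alpha$-critical pair. Since $A_{F_J} \in \ver(K)$, criticality with respect to $\alpha$ coincides with criticality with respect to $(\alpha, A_{F_J})$ on $\sigma_K$, so under either hypothesis $\gamma$ is critical with respect to $(\alpha, A_{F_J})$. Lemma~\ref{l:RcriticalC}(\ref{i:DD4}) then extends this to $C$. Combined with $C \subset \sigma_{A_{F_J}} \cap \tau_J$ and $C \cap \sigma_M \neq \{0\}$, this shows $(C, J)$ is $\alpha$-critical in the sense of Definition~\ref{d:s0criticaltau2}, so $\alpha$-compatibility (Definition~\ref{d:compatible3}) forces $C \subset \sigma_M$, contradicting $\gamma \not\subset \sigma_M$.

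In the case $A_{F_J} \notin K$, Lemma~\ref{l:Cinvariant} yields $e^*_{F_J} \in \Span(C)$. For the first statement, I will pick any vertex $V \in \ver(K)$ (which is distinct from $A_{F_J}$ since $A_{F_J} \notin K$); then Lemma~\ref{l:RcriticalC}(\ref{i:DD4}) gives that $C$, hence $\Span(C)$, is critical with respect to $(\alpha, V)$. Therefore $\alpha \langle e^*_{F_J}, V \rangle + \langle e^*_{F_J}, \One \rangle = 0$, but $\langle e^*_{F_J}, V \rangle = 0$ (since $V \in \ver(F_J) \setminus \{A_{F_J}\}$) while $\langle e^*_{F_J}, \One \rangle = 1$, yielding $1 = 0$. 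For the moreover statement, if $\rho_1$ were critical with respect to $(\alpha, A_{F_J})$, then $\Span(C) = \R\gamma + \R\rho_1$ would be entirely critical, forcing $e^*_{F_J}$ critical and hence $\alpha + 1 = 0$, contradicting $\alpha \notin \Z_{<0}$. Hence $\rho_1$ is not critical, and by continuity of $\delta \mapsto \alpha \langle v(\delta), A_{F_J} \rangle + \langle v(\delta), \One \rangle$, where $v(\delta)$ is a continuously varying primitive generator of $\gamma(\delta) = C \cap N_{M, \delta}$ converging to a generator of $\rho_1$, this quantity is nonzero in a neighborhood of $\delta = 0$. Since there are only finitely many relevant cones $C$, shrinking $\delta$ uniformly will give a contradiction with $\gamma$ critical. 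The delicate point is precisely this continuity-and-uniformity argument in the final subcase.
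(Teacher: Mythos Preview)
Your proof is correct and follows essentially the same case split as the paper (on whether $A_{F_J}\in K$, equivalently $C\subset\sigma_{A_{F_J}}$), using Lemma~\ref{l:Cinvariant} to place $e^*_{F_J}\in\Span(C)$ in the second case. The one substantive divergence is your handling of the moreover statement in Case~2, where you introduce the continuity--uniformity argument you flag as delicate. This detour is unnecessary, and the point you are missing is exactly the ``or $M$'' clause of Lemma~\ref{l:RcriticalC}\eqref{i:DD4} that you already invoked elsewhere.

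The observation is that $A_{F_J}$ is always a vertex of $M$: otherwise every element of $\gens(C_M)$ lies in the base hyperplane $\mathrm{H}_{e^*_{F_J}}$, hence $\mathbf{1}\in\Span(M)\subset\mathrm{H}_{e^*_{F_J}}$, contradicting $\langle e^*_{F_J},\mathbf{1}\rangle=1$. Consequently $\rho_1\subset\sigma_M$ is \emph{automatically} critical with respect to $(\alpha,A_{F_J})$, so your sub-case ``$\rho_1$ not critical'' never arises; your reductio in sub-case (a) (namely $e^*_{F_J}\in\Span(C)$ critical forces $\alpha+1=0$) already finishes the job. This is precisely how the paper proceeds: it applies Lemma~\ref{l:RcriticalC}\eqref{i:DD4} with $V=A_{F_J}\in\ver(M)$ to pass from ``$\gamma$ critical w.r.t.\ $(\alpha,A_{F_J})$'' directly to ``$C$ critical w.r.t.\ $(\alpha,A_{F_J})$'', then evaluates $\langle e^*_{F_J},\alpha A_{F_J}+\mathbf{1}\rangle=\alpha+1\neq 0$. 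With this in hand, no limiting argument or uniform choice of $\delta$ over cones is needed.
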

\begin{proof}
There is a unique face $J'$ of $Q_{\mathcal{Z}}$ such that $\gamma \subset  \tau_{J'}^\circ$. If $\gamma \subset \tau_J$, then it follows that $J \subset J'$. 
Let $C$ be the smallest cone in $\Sigma \cap \Sigma_{\mathcal{Z}}$ containing $\gamma$. Then $\tau_{J'}$ is the smallest cone of $\Sigma_{\mathcal{Z}}$ containing $C$. 
By Lemma~\ref{l:Cinvariant}, $C$ is dual to 
a face $K + J'$ of $\Newt(f)_{\mathcal{Z}}$, where 
$K \in \Gamma$ such that $K 
\subset F_J$. 
Assume that $\gamma$ is critical with respect to $(\alpha,W)$ for some vertex $W$ of either $K$ or $M$. 
Note that 
one possible choice for $W$ is $A_{F_J}$, since $A_{F_J} \in M$. 
Then Lemma~\ref{l:RcriticalC} implies that $C$ is critical with respect to $(\alpha,W)$, and $C \cap \sigma_M \neq \{ 0 \}$. 

First, assume that $\gamma \subset \sigma_{A_{F_J}}$. Then $C \subset \sigma_{A_{F_J}}$. If $W = A_{F_J}$, then $(C,J)$ is $\alpha$-critical, and Definition~\ref{d:compatible3} implies that $\gamma \subset C \subset  \sigma_M$, a contradiction. We conclude that $W \neq A_{F_J}$. That is, $\gamma$ is not critical with respect to
$(\alpha,A_{F_J})$. Equivalently, in this case, $\gamma$ is not critical with respect to   $\alpha$. 

Second, assume that $\gamma \not \subset \sigma_{A_{F_J}}$. Then $C \not \subset \sigma_{A_{F_J}}$.  Equivalently, 
$A_{F_J} \notin K$. 
By
Lemma~\ref{l:Cinvariant},
$e^*_{F_J} \in \Span(C)$. Let $V$ be a vertex of $K \subset F_J$. 
Since $V \neq A_{F_J}$,  $\langle e^*_{F_J}, V \rangle = 0$, and hence $\langle e^*_{F_J}, \alpha V + \One \rangle = 1 \neq 0$. We deduce that $C$ is not critical with respect to $(\alpha,V)$. 
Hence $W \neq V$, and $\gamma$ is not critical with respect to $\alpha$. 
Similarly,  since $\alpha \neq -1$ by assumption,
$\langle e^*_{F_J}, \alpha A_{F_J} + \One \rangle = \alpha + 1 \neq 0$, and hence $C$ is not critical with respect to $(\alpha, A_{F_J})$. Therefore $W \neq A_{F_J}$, and $\gamma$ is not critical with respect to $(\alpha,A_{F_J})$.
\end{proof}

\begin{proof}[Proof of Theorem~\ref{thm:nopole}]
Let $\alpha \not \in \Z$, and assume that all faces of $\Contrib(\alpha)$ are $\UB$ and $\Newt(f)$ is $\alpha$-simplicial.
By Remark~\ref{r:specializecandidate} and Remark~\ref{r:minimalsetcandidate}, it is enough to show that  there exists a set of candidate poles for $Z_{\for}(T)$ not containing $\alpha$. 
Let $M_1,\ldots,M_r$ be the minimal elements in $\Contrib(\alpha)$.
Assume that $\delta$ is chosen sufficiently small.
By Lemma~\ref{l:separateN},
$$
Z_{\for}(T) = Z_{\for}(T)|_{\cap_i  N_{M_i,\geq \delta}} + \sum_{i = 1}^r Z_{\for}(T)|_{N_{M_i,< \delta}}.
$$ 
By Lemma~\ref{l:outerformalzeta},
$Z_{\for}(T)|_{\cap_i  N_{M_i,\geq \delta}}$
lies in $R$ and admits a set of candidate poles not containing $\alpha$. Choose a minimal face $M \in \Contrib(\alpha)$.
By Theorem~\ref{thm:existence},  we may fix an $\alpha$-compatible pair $(\mathcal{Z}, \mathcal{F})$. By definition, $Z_{\for}(T)|_{\partial \R^n_{\ge 0}} = 0$. Recall that $N_{M,\leq \delta}^\circ = N_{M,< \delta} \cap \R^n_{> 0}$. We have 
$$
Z_{\for}(T)|_{N_{M,< \delta}} = Z_{\for}(T)|_{N_{M,\leq \delta}^\circ} =  \sum_{\substack{\emptyset \neq J \subset Q_{\mathcal{Z}} \\ \tau_J \cap N_{M, \leq \delta}^\circ \not= \emptyset}} Z_{\for}(T)|_{\tau_J^\circ \cap N_{M,\leq \delta}^\circ}.
$$

Let $J$ be a nonempty face of $Q_{\mathcal{Z}}$ such that $\tau_J \cap N_{M, \leq \delta}^\circ \not= \emptyset$.
By Remark~\ref{r:unioncandidate}, 
to show that $Z_{\for}(T)$ lies in $R$ and admits a set of candidate poles not containing $\alpha$,
it is enough to show that 
$Z_{\for}(T)|_{\tau_J^\circ \cap N_{M,\leq \delta}^\circ}$ 
lies in $R$ and admits a set of candidate poles not containing $\alpha$ for every $J$.

Let $C$ be a cone in $\Sigma \cap \Sigma_{\mathcal{Z}}$ such that $C^\circ \subset \tau_J^{\circ}$ and  $C \cap N_{M, \leq \delta}^\circ \not= \emptyset$. By Lemma~\ref{l:Cinvariant}, $C = \sigma_G \cap \tau_{J}$ and $C^\circ = \sigma_G^\circ \cap \tau_{J}^\circ$ for some face $G \in \Gamma$ with $G \subset F_J$. 
Since $C \not \subset \partial \R^n_{\ge 0}$,  
$G$ is compact.

Suppose that $C \not \subset \sigma_{A_{F_J}}$. 
Then $A_{F_J} \notin G$, and 
$G$ is contained in the base of the (possibly unbounded) $B_1$-face $F_J$. 
Since $F_J$ is a $B_1$-face, we may
consider the face 
$F  = \Conv{G,A_{F_J}}$ 
of $F_J$. Then $F$ is a compact $B_1$-face with apex $A_{F_J}$ and base $G$ in the direction $e_{F_J}^*$. 
Consider the face $C' = C \cap \sigma_{A_{F_J}} \cap N_{M,\leq \delta}$ of $C \cap N_{M,\leq \delta}$. 
Note that $C' \subset C \cap \sigma_{A_{F_J}} \subset \sigma_F$. 
By Lemma~\ref{l:Cinvariant}, 
\begin{equation}\label{e:localbehaviorC}
C \cap N_{M,\leq \delta} = D \cap N_{M,\leq \delta},
\end{equation}
where   $
D = D(C,J) =  
C' +  \mathbb{R}_{\ge 0} e^*_{F_J}
\subset \sigma_G $.
By 
Lemma~\ref{l:Cnocritrays},
the rays of  $C \cap N_{M, \leq \delta}$ that are critical with respect to either $\alpha$ or $(\alpha,A_{F_J})$ are contained $\sigma_M$. 
If $V$ is a vertex of $G$, then 
$\langle e^*_{F_J}, V \rangle = 0$, and hence $\langle e^*_{F_J}, \alpha V + \One \rangle = 1 \neq 0$.
Also,  $\langle e^*_{F_J}, \alpha A_{F_J} + \One \rangle = \alpha + 1  \neq 0$, by assumption. 
We deduce that $e^*_{F_J}$ is not critical with respect to $\alpha$ or $(\alpha,A_{F_J})$. 
This implies that no ray of $D \cap N_{M,\geq \delta}$ is critical with respect to $\alpha$ or 
$(\alpha,A_{F_J})$.
Since $\sigma_G^\circ,\sigma_F^\circ \subset \R^n_{>0}$, Remark~\ref{r:relativeinteriors} gives the following equalities:
\begin{equation*}
(C \cap \sigma_{A_{F_J}})^\circ \cap N_{M, \leq \delta}^\circ = (C')^\circ,
\end{equation*}
\begin{equation*}
C^\circ \cap N_{M, \leq \delta}^\circ = (C \cap N_{M,\leq \delta})^\circ = (D \cap N_{M,\leq \delta})^\circ = 
D^\circ \cap N_{M, \leq \delta}^\circ, \text{ and}
\end{equation*}
\begin{equation*}
D^\circ \cap N_{M,\ge \delta} = (D \cap N_{M,\geq \delta})^\circ \cup (D \cap N_{M,\delta})^\circ.
\end{equation*}

Since the restriction of $N$ to $\sigma_G$ is linear, 
Lemma~\ref{lem:powerseries} and
Lemma~\ref{l:Npositive}, 
imply that 
the following elements of $\tilde{R}$ lie in $R$ and admit sets of candidate poles not containing $\alpha$:
\begin{equation*}
(L - 1)^{n - \dim G} \sum_{u \in D^\circ \cap N_{M,\ge \delta} \cap \mathbb{N}^{n}} L^{-\langle u, \mathbf{1} \rangle}T^{N(u)}, \text{ and } 
(L - 1)^{n} \sum_{u \in D^\circ \cap N_{M,\ge \delta} \cap \mathbb{N}^{n}} L^{-\langle u, \mathbf{1} \rangle}T^{\langle u , A_{F_J} \rangle}.
\end{equation*}  

We claim that $(C')^{\circ} \subset \sigma_F^{\circ}$. We have $(C')^{\circ} \subset \sigma_{F'}^{\circ}$ for some $F \subset F'$.
By Lemma~\ref{l:NMGinFJ}, $F' \subset F_J$. If $\sigma_{F'} \subset \partial \R^n_{\ge 0}$, then \eqref{e:localbehaviorC} implies that $C \cap N_{M,\le \delta} \subset \partial \R^n_{\ge 0}$, a contradiction. Hence $F'$ is a compact face of $F_J$ containing $A_{F_J}$. 
It follows that $F'$ is a $B_1$-face with apex $A_{F_J}$ and base $G' \supset G$.  Then \eqref{e:localbehaviorC} implies that $C \cap N_{M,\le \delta} \subset \sigma_{G'}$. Then $\emptyset \neq (C \cap N_{M,\leq \delta})^\circ = C^\circ \cap N_{M, \leq \delta}^\circ \subset \sigma_{G'}$. Since $C^\circ \subset \sigma_{G}^\circ$, we conclude that $G = G'$, and hence $F = F'$, completing the proof of the claim. 
We may then apply 
Lemma~\ref{lem:cancellation} to obtain
$$Z_{\for}(T)|_{(D^{\circ}\cup (C')^{\circ})}= (L - 1)^{n} \Bigg(\sum_{u \in (D^{\circ} \cup (C')^{\circ}) \cap \mathbb{N}^n} L^{-\langle u, \mathbf{1} \rangle} T^{\langle u, A_{F_J} \rangle} \Bigg)
\in \tilde{R}.$$  
Since 
$D^\circ = (D^\circ \cap N_{M,\geq \delta}) \cup  
(D^\circ \cap N_{M,\leq \delta}^\circ)$,
using the above calculations and Remark~\ref{r:unioncandidate},  we deduce that 
\begin{equation}\label{e:diffhasnopole}
Z_{\for}(T)|_{((C \cap \sigma_{A_{F_J}})^\circ \cup C^\circ) \cap N_{M, \leq \delta}^\circ} - 
(L - 1)^{n} \Bigg(\sum_{u \in (((C \cap \sigma_{A_{F_J}})^\circ \cup C^\circ) \cap N_{M,\leq \delta}^\circ) \cap \mathbb{N}^n} L^{-\langle u, \mathbf{1} \rangle} T^{\langle u, A_{F_J} \rangle} \Bigg)
\end{equation}
lies in $R$ and admits a set of candidate poles not containing $\alpha$. 

Since $\Sigma \cap \Sigma_{\mathcal{Z}}$ refines $\Sigma_{\mathcal{Z}}$, we may consider the subfan $\Sigma \cap \Sigma_{\mathcal{Z}} \cap N_{M,\leq \delta}|_{\tau_J}$ of $\Sigma \cap \Sigma_{\mathcal{Z}} \cap N_{M,\leq \delta}$. It follows from the description of $\Sigma \cap \Sigma_{\mathcal{Z}} \cap N_{M,\leq \delta}$ in 
Lemma~\ref{l:DeltacapN} and the fact that $\tau_J \cap \R^n_{> 0} \neq \emptyset$ that 
\begin{equation}\label{e:tauJinterior}
(\tau_J \cap N_{M,\le \delta})^\circ = 
\bigcup_{\substack{C \in \Sigma \cap \Sigma_{\mathcal{Z}} \\ C^\circ \subset \tau_J^\circ \\ C \cap N_{M, \leq \delta}^\circ \not= \emptyset}} (C \cap N_{M,\le \delta})^\circ  = 
\bigcup_{\substack{C \in \Sigma \cap \Sigma_{\mathcal{Z}} \\ C^\circ \subset \tau_J^\circ \\ C \cap N_{M, \leq \delta}^\circ \not= \emptyset}} C^\circ \cap N_{M,\leq \delta}^\circ = 
\tau_J^\circ \cap N_{M,\leq \delta}^\circ.
\end{equation}
It follows from \eqref{e:localbehaviorC} that we may rewrite this as: 
\[
\tau_J^\circ \cap N_{M, \leq \delta}^\circ = (\sigma_{A_{F_J}}^{\circ} \cap \tau_J^\circ \cap N_{M, \leq \delta}^\circ) \cup 
\bigcup_{\substack{C \in \Sigma \cap \Sigma_{\mathcal{Z}} \\ C^\circ \subset \tau_J^\circ \\ C \cap N_{M, \leq \delta}^\circ \not= \emptyset \\ C \not \subset \sigma_{A_{F_J}}}}
((C \cap \sigma_{A_{F_J}})^\circ \cup C^\circ) \cap N_{M, \leq \delta}^\circ.
\]
We deduce from Remark~\ref{r:manipulateapex} and \eqref{e:diffhasnopole} that 
$$Z_{\for}(T)|_{\tau_J^\circ \cap N_{M, \leq \delta}^\circ}
- (L - 1)^n \Bigg(\sum_{u \in \tau_J^\circ \cap N_{M,\le \delta}^\circ \cap \mathbb{N}^n} L^{-\langle u, \mathbf{1} \rangle} T^{\langle u, A_{F_J} \rangle} \Bigg)
$$
lies in $R$ and admits a set of candidate poles not containing $\alpha$. By Remark~\ref{r:unioncandidate} and \eqref{e:tauJinterior}, it is enough to show that 
$$
(L - 1)^n \Bigg(\sum_{u \in (\tau_J \cap N_{M,\le \delta})^\circ \cap \mathbb{N}^n} L^{-\langle u, \mathbf{1} \rangle} T^{\langle u, A_{F_J} \rangle} \Bigg)
$$
lies in $R$ and admits a set of candidate poles not containing $\alpha$.
By Lemma~\ref{l:norayinside} and Lemma~\ref{l:Cnocritrays}, no rays of $\tau_J \cap N_{M,\leq \delta}$  are critical with respect to $(\alpha, A_{F_J})$. The result now follows from Lemma~\ref{lem:powerseries} and
Lemma~\ref{l:Npositive}.
\end{proof}

\subsection{Existence of $\alpha$-compatible sets}\label{ss:exist1}

We continue with the notation of the previous section. Recall that we consider
pairs $(\mathcal{Z},\mathcal{F})$, where 
$\mathcal{Z} = (Z_s)_{s \in \mathcal{S}}$ and $\mathcal{F} = (F_s)_{s \in \mathcal{S}}$ are collections of elements of $\Q^n$ and $\Contrib(\alpha)_M$ respectively.

\begin{definition}\label{d:restricted2}
Consider a pair $(\mathcal{Z},\mathcal{F})$. Then $(\mathcal{Z},\mathcal{F})$ is 
\emph{restricted} if
$Z_s \in \Span(\{ V_M \} \cup \gens(C_{F_s} \smallsetminus C_M) \cup \mathcal{A}_M)$ for every $s \in \mathcal{S}$. 

\end{definition}

Our goal is to reduce the existence of an $\alpha$-compatible set to the existence of a restricted, weakly 
$\alpha$-compatible set. 
We will consider sets $\epsilon = \{ \epsilon_{s} \}_{s \in \mathcal{S} } \in \R^{\mathcal{S}}$; note that we allow $\epsilon_s$ to be negative. 
We say that $\epsilon$ is chosen to be sufficiently 
small if 
$|\epsilon_{s}|$ is chosen to be sufficiently 
small for all 
$s \in \mathcal{S}$. Explicitly, a property holds for $\epsilon$ sufficiently small if there exists $\delta > 0$ such that the property holds for all $\epsilon$ such that 
$|\epsilon_{s}| < \delta$ for all $s \in \mathcal{S}$.
Given a sequence of sets $\{ \epsilon_m \}_{m \in \Z_{\ge 0}}$, where $\epsilon_m = \{ \epsilon_{m,s} \}_{s \in \mathcal{S}}$, for some  $\epsilon_{m,s} \in \R$, we write $\lim_{m \to \infty} \epsilon_m = 0$ if   
$\lim_{m \to \infty} \epsilon_{m,s}  = 0$ for all 
$s \in \mathcal{S}$.
Given a set $\mathcal{Z} = \{ Z_{s} \}_{s \in \mathcal{S} }$ of elements in $\R^n$, we let 
$Z_{s}(\epsilon_{s}) :=  Z_{s} + \epsilon_{s}V_M$, and  let $\mathcal{Z}(\epsilon):= \{ Z_{s}(\epsilon_{s}) \}_{s \in \mathcal{S} }$. 
Then $Q_{\mathcal{Z}(\epsilon)}$ is the convex hull of the elements of $\mathcal{Z}(\epsilon)$, and is dual to the fan $\Sigma_{\mathcal{Z}(\epsilon)}$.
Also, $\Newt(f)_{\mathcal{Z}(\epsilon)} = \Newt(f) + Q_{\mathcal{Z}(\epsilon)}$ is dual to $\Sigma \cap \Sigma_{\mathcal{Z}(\epsilon)}$.

\begin{lemma}\label{l:deformZ}
Consider
a 
pair $(\mathcal{Z},\mathcal{F})$.
For 
$\epsilon \in \Q^{\mathcal{S}}$  sufficiently small, 
$(\mathcal{Z}(\epsilon),\mathcal{F})$
is 
restricted if $(\mathcal{Z},\mathcal{F})$ is restricted,
and  $(\mathcal{Z}(\epsilon),\mathcal{F})$
is 
weakly $\alpha$-compatible if $(\mathcal{Z},\mathcal{F})$ is weakly $\alpha$-compatible.
\end{lemma}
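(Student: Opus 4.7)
The plan is as follows. The restricted property requires no smallness assumption: since $V_M$ itself lies in $\Span(\{V_M\} \cup \gens(C_{F_s} \smallsetminus C_M) \cup \mathcal{A}_M)$, the vector $Z_s(\epsilon_s) = Z_s + \epsilon_s V_M$ lies in this span whenever $Z_s$ does, for any $\epsilon_s \in \Q$. So this part is immediate.

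For weak $\alpha$-compatibility, the key observation is that condition \eqref{i:weak3} of Definition~\ref{d:weaklys0compatible} is unchanged by the perturbation. Indeed, by \eqref{e:VM}, $\langle e^*_{F}, V_M \rangle = 0$ for every $F \supset M$, so $\langle e^*_{F_s}, Z_s(\epsilon_s) \rangle = \langle e^*_{F_s}, Z_s \rangle$ and similarly for $s'$. Thus \eqref{i:weak3} for $(\mathcal{Z}(\epsilon),\mathcal{F})$ is equivalent to \eqref{i:weak3} for $(\mathcal{Z},\mathcal{F})$. It remains to check \eqref{i:weak1} and \eqref{i:weak2}, which are statements about the combinatorial triple $(s,s',K)$ and do not involve any specific witness $u$.

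Since $\mathcal{S}$ and $\Contrib(\alpha)_M$ are finite, it suffices to find, for each triple $(s,s',K)$ with $K \in \Contrib(\alpha)_M$, a number $\delta_{s,s',K} > 0$ such that whenever $|\epsilon_{s''}| < \delta_{s,s',K}$ for all $s'' \in \mathcal{S}$, either $\sigma_K^\circ \cap \tau_{Z_s(\epsilon_s)} \cap \tau_{Z_{s'}(\epsilon_{s'})} = \emptyset$ or both \eqref{i:weak1} and \eqref{i:weak2} hold. The minimum of these $\delta_{s,s',K}$ over the finite set of triples then does the job. If no such $\delta_{s,s',K}$ existed for some triple, there would be a sequence $\epsilon_m \to 0$ and $u_m \in \sigma_K^\circ \cap \tau_{Z_s(\epsilon_{m,s})} \cap \tau_{Z_{s'}(\epsilon_{m,s'})}$ for which \eqref{i:weak1} or \eqref{i:weak2} fails. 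Normalizing $u_m$ to lie on the unit sphere and passing to a convergent subsequence, I obtain a nonzero limit $u^* \in \sigma_K$ that satisfies $\langle u^*, Z_s \rangle = \langle u^*, Z_{s'} \rangle \le \langle u^*, Z_{s''} \rangle$ for every $s'' \in \mathcal{S}$, i.e., $u^* \in \tau_{Z_s} \cap \tau_{Z_{s'}}$, by passing the defining inequalities to the limit.

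The only subtlety is that $u^*$ may lie on $\partial \sigma_K$ rather than $\sigma_K^\circ$. This is resolved by taking $K'$ to be the unique face of $\Newt(f)$ with $u^* \in \sigma_{K'}^\circ$. Then $K \subset K'$, so $M \subset K'$ and $K' \in \Contrib(\alpha)_M$. Applying the weak $\alpha$-compatibility of $(\mathcal{Z},\mathcal{F})$ to the triple $(s,s',K')$ yields $K' \subset F_s$ (hence $K \subset F_s$) and the comparability of $F_s, F_{s'}$, contradicting the assumed failure of \eqref{i:weak1} or \eqref{i:weak2}. The main (and essentially only) obstacle is the bookkeeping around the limit landing on the boundary, handled by the observation that enlarging $K$ to $K'$ keeps us inside $\Contrib(\alpha)_M$ and that \eqref{i:weak1} for $K'$ implies \eqref{i:weak1} for $K$.
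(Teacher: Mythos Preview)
Your proof is correct and follows essentially the same approach as the paper's: both normalize, apply Bolzano--Weierstrass to obtain a nonzero limit in $\sigma_K \cap \tau_{Z_s} \cap \tau_{Z_{s'}}$, pass to the face $K' \supset K$ whose dual cone contains the limit in its relative interior, and invoke the original weak $\alpha$-compatibility at $K'$. One small note: your observation that \eqref{i:weak3} is perturbation-invariant only shows the two conclusions coincide, not that the conclusion holds; the latter still flows from the same compactness step and the original weak $\alpha$-compatibility at $K'$, exactly as in the paper.
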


\begin{proof}
Assume that $(\mathcal{Z},\mathcal{F})$ is restricted.
Since $Z_{s}(\epsilon_{s})$ is a linear combination of  $Z_{s}$ and $V_M$, it follows that $(\mathcal{Z}( \epsilon ), \mathcal{F})$ is restricted. 

Assume that $(\mathcal{Z},\mathcal{F})$ is weakly $\alpha$-compatible.
We want to show that  $(\mathcal{Z}( \epsilon ), \mathcal{F})$ is  weakly $\alpha$-compatible.
Fix a face $K \in \Contrib(\alpha)_M$ 
and $s, s' \in \mathcal{S}$. There is nothing to show if, after possibly shrinking $\epsilon$,   
$\sigma_K^\circ  \cap \tau_{\mathcal{Z}(\epsilon),s} \cap \tau_{\mathcal{Z}(\epsilon),s'}  = \emptyset$. 
Hence we may assume that there exists a sequence of sets $\{ \epsilon_m \}_{m \in \Z_{\ge 0}}$ 
such that $\lim_{m \to \infty} \epsilon_m = 0$ and $\sigma_K^\circ \cap \tau_{\mathcal{Z}(\epsilon_m),s} \cap \tau_{\mathcal{Z}(\epsilon_m),s'} \ne \emptyset$. Then Bolzano--Weierstrass implies that $\sigma_K \cap \tau_{\mathcal{Z},s} \cap \tau_{\mathcal{Z},s'} \neq \{ 0 \}$.  
Hence there exists $K \subset K'$ such that $\sigma_{K'}^\circ \cap \tau_{\mathcal{Z},s} \cap \tau_{\mathcal{Z},s'} \neq \emptyset$.
Since $\mathcal{Z}$ is weakly $\alpha$-compatible, we deduce that
$K' \subset F_s$, 
either $F_{s'} \subset F_s$ or 
$F_s \subset F_{s'}$,
and 
$
\langle e^*_{F_s}, Z_{s} \rangle =
\langle e^*_{F_s}, Z_{s'} \rangle = 0$.  Then $K \subset K' \subset F_s$. By \eqref{e:VM},  
$
\langle e^*_{F_s}, Z_{s}(\epsilon_{s}) \rangle =
\langle e^*_{F_s}, Z_{s} \rangle = 0$ and 
$
\langle e^*_{F_s}, Z_{s'}(\epsilon_{s'}) \rangle =
\langle e^*_{F_s}, Z_{s'} \rangle = 0$.
\end{proof}

Before proceeding, we
need a series of basic lemmas on deforming polyhedra.
Let $\rec(P)$ denote the recession cone of a polyhedron $P$. Let $\sigma^\vee$ denote the dual cone to a cone $\sigma$. Given a face $K$ of $P$, let $\tau_K$ denote the corresponding cone in the dual fan to $P$. 
In particular, $\rec(K)$ is a face of $\rec(P)$, 
$\tau_{\rec(K)}$ is a face of $\rec(P)^\vee$, and $\tau_K^\circ \subset \tau_{\rec(K)}^\circ$. 

Fix a nonempty finite set $T$. 
Let $P = \Conv{V_t : t \in T} + \sigma  \subset \R^n$ be a polyhedron, for some $V_t \in \R^n$, and some pointed (polyhedral) recession cone $\sigma = \rec(P)$.    
Let $\{ P(\epsilon) = \Conv{V_t(\epsilon) : t \in T} + \sigma \}_\epsilon$
be a set of polyhedra with the same recession fan indexed by $\epsilon \in \R^{\ell}$ for some $\ell \ge 1$. 
Assume  that 
$V_t(\epsilon) \in \R^n$ is a continuous function of 
$\epsilon \in \R^{\ell}$, 
for  all $t$ in $T$. 
If $J(\epsilon)$ is a nonempty face of $P(\epsilon)$ and 
$J$ is a nonempty face of $P$,  we write 
$T(J(\epsilon)) := \{ t \in T : V_t(\epsilon) \in J(\epsilon) \}$ and  
$T(J) := \{ t \in T  : V_t \in J \}
$. We may also consider the recession cones $\rec(J(\epsilon))$ and $\rec(J)$, which are both faces of $\sigma$.

\begin{definition}\label{d:refine}
For fixed $\epsilon \in \R^{\ell}$, we say $P(\epsilon)$ \emph{refines} $P$ if 
for any proper nonempty face $J(\epsilon)$ of $P(\epsilon)$, there exists a proper nonempty face $J$ of $P$ such that  $T(J(\epsilon)) \subset T(J)$ and $\rec(J(\epsilon)) \subset \rec(J)$. 
\end{definition}

\begin{lemma}\label{l:refine}
For $\epsilon \in \R^{\ell}$ sufficiently small, $P(\epsilon)$ refines $P$. 
\end{lemma}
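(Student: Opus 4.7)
The plan is to pass to the normal fan picture. Since $P$ and $P(\epsilon)$ share the recession cone $\sigma$, both normal fans $\Sigma(P)$ and $\Sigma(P(\epsilon))$ have support $\sigma^\vee$. Every nonempty face of $Q \in \{P, P(\epsilon)\}$ is of the form $Q_u := \{x \in Q : \langle u, x \rangle = \min_{y \in Q}\langle u, y \rangle\}$ for some $u \in \sigma^\vee$, and the normal cone of $Q_u$ is exactly the set of functionals picking out this face. I would prove the lemma by showing that, for $\epsilon$ sufficiently small, every cone of $\Sigma(P(\epsilon))$ is contained in a cone of $\Sigma(P)$.

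From this fan-level refinement, the refining property of Definition~\ref{d:refine} follows directly: for a proper nonempty face $J(\epsilon) = P(\epsilon)_u$ with $u$ in the relative interior of its normal cone, the face $J := P_u$ of $P$ satisfies $\rec(J(\epsilon)) = \{x \in \sigma : \langle u, x \rangle = 0\} = \rec(J)$, and the hypothesis that $\langle u, V_t(\epsilon)\rangle \leq \langle u, V_s(\epsilon)\rangle$ for all $s \in T$ and all $t \in T(J(\epsilon))$ transfers to the analogous inequality for the undeformed $V_t, V_s$, yielding $T(J(\epsilon)) \subset T(J)$.

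To establish the fan-level refinement uniformly in $u$, I would argue by sequential compactness. Suppose the property fails for $\epsilon$ arbitrarily close to $0$. Extract a sequence $\epsilon_m \to 0$, proper faces $J(\epsilon_m)$ of $P(\epsilon_m)$ admitting no suitable $J$, and unit vectors $u_m \in \tau_{J(\epsilon_m)}^\circ \subset \sigma^\vee$. By the finiteness of $T$ and of the face lattice of $\sigma$, I pass to a subsequence along which $T(J(\epsilon_m)) = T_0$ and $\rec(J(\epsilon_m)) = \rho_0$ are constant and $u_m \to u_0$ for some unit $u_0 \in \sigma^\vee$. Taking limits in the defining inequalities $\langle u_m, V_t(\epsilon_m) - V_s(\epsilon_m)\rangle \leq 0$ and $\langle u_m, x\rangle = 0$ (for $x \in \rho_0$) shows that $J := P_{u_0}$ satisfies $T_0 \subset T(J)$ and $\rho_0 \subset \rec(J)$.

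The main obstacle is verifying that the limit face $J = P_{u_0}$ is proper rather than all of $P$. The locus $\{u \in \sigma^\vee : P_u = P\}$ is a closed subset of $\sigma^\vee$ of positive codimension, determined by the affine hull of $\Conv\{V_t : t \in T\}$ together with $\spn(\sigma)$, while each $\tau_{J(\epsilon_m)}^\circ$ is open in $\sigma^\vee$ (as $J(\epsilon_m)$ is proper). One can therefore perturb the $u_m$ within $\tau_{J(\epsilon_m)}^\circ$ so that their limit $u_0$ lies outside the degenerate locus, making $P_{u_0}$ a proper face of $P$; the modified sequence still realizes the contradictory faces $J(\epsilon_m)$. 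Combining this with the limit argument above produces the required proper $J$ and contradicts the assumption, yielding the lemma.
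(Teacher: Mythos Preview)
Your core argument—contradiction, extract a subsequence with constant $T(J(\epsilon_m)) = T_0$ and $\rec(J(\epsilon_m)) = \rho_0$, choose unit vectors $u_m \in \tau_{J(\epsilon_m)}^\circ$, pass to a convergent subsequence $u_m \to u_0$, and take limits in the defining inequalities—is exactly the paper's proof.

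Where you diverge is in explicitly addressing whether the limit face $J = P_{u_0}$ is proper. The paper simply writes $u \in \tau_J^\circ$ for some nonempty face $J$ and declares a contradiction, without verifying properness. Your instinct to flag this is sound, but your proposed fix has a gap: the claim that ``each $\tau_{J(\epsilon_m)}^\circ$ is open in $\sigma^\vee$'' is false unless $J(\epsilon_m)$ is a vertex of a full-dimensional $P(\epsilon_m)$, since $\dim \tau_{J(\epsilon_m)} = n - \dim J(\epsilon_m)$ in general. Without that openness there is no guarantee one can perturb the $u_m$ inside $\tau_{J(\epsilon_m)}^\circ$ to steer the limit away from $\tau_P$; indeed, if $P$ were a single point then $\tau_P = \R^n$ and the lemma as stated would fail outright. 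What makes both your argument and the paper's go through in the intended applications is that $P$ is (or can be arranged to be) full-dimensional, so $\tau_P = \{0\}$ and any nonzero $u_0$ automatically picks out a proper face—this is the unstated hypothesis that closes the gap.
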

\begin{proof}
Assume the conclusion fails. Then there exists 
a sequence $\{ \epsilon_m \}_{m \in \Z_{\ge 0}}$ such that $\lim_{m \to \infty} \epsilon_m = 0$, and a sequence of proper nonempty faces $J(\epsilon_m)$ of $P(\epsilon_m)$, such that, for any $m$ and any proper nonempty face $J$ of $P$, either   $T(J(\epsilon_m)) \not \subset T(J)$ or $\rec(J(\epsilon_m)) \not \subset \rec(J)$. 
Since $T$ 
is finite and $\sigma$ has finitely many faces, after possibly replacing 
$\{ \epsilon_m \}_{m \in \Z_{\ge 0}}$ by a subsequence, we may assume that $T(J(\epsilon_m))$ and $\rec(J(\epsilon_m))$
are independent of $m$. Denote these by $\tilde{T} = T(J(\epsilon_m))$ and $\tilde{R} = \rec(J(\epsilon_m))$ respectively.  
Consider a sequence $u_m$ of elements in 
$\tau_{J(\epsilon_m)}^\circ \subset  \tau_{\tilde{R}}^\circ$ 
such that $||u_m|| = 1$.  
After possibly replacing $\{ \epsilon_m \}_{m \in \Z_{\ge 0}}$ by a subsequence, we may  assume that $\lim_{m \to \infty} u_m = u \in \tau_J^\circ \subset  \tau_{\rec(J)}^\circ$ exists for some nonempty face $J$ of $P$.  
Since $\tau_{\tilde{R}}$ is closed,
$u \in \tau_{\tilde{R}}$, and hence $\tau_{\rec(J)} \subset \tau_{\tilde{R}}$ and $\tilde{R}
\subset \rec(J)$. 
For any $t \in \tilde{T}$ and $t' \in T$, 
\begin{equation}\label{e:limit}
\langle u , V_{t} \rangle = \lim_{m \to \infty} \langle u_m , V_t \rangle = \lim_{m \to \infty} \langle u_m , V_t(\epsilon_m) \rangle \le \lim_{m \to \infty} \langle u_m , V_{t'}(\epsilon_m) \rangle = \lim_{m \to \infty} \langle u_m , V_{t'} \rangle = \langle u , V_{t'} \rangle,
\end{equation}
and hence $V_t \in J$. 
We deduce that $\tilde{T} \subset T(J)$, a contradiction.
\end{proof}

\begin{definition}\label{d:combconstant}
We say that $\{ P(\epsilon) \}_\epsilon$ is 
\emph{locally combinatorially constant} if for any $\epsilon$ sufficiently small, 
and for any 
nonempty face $J(\epsilon)$  of $P(\epsilon)$, 
there exists a (unique) 
nonempty face $J$ of $P$ such that $T(J(\epsilon))  = T(J)$ and $\rec(J(\epsilon))  = \rec(J)$,
and, moreover, every 
nonempty face $J$ of $P$ appears in this way. 
\end{definition}

\begin{lemma}\label{l:combconstant}
After possibly replacing $P$ with $P(\epsilon)$ for some $\epsilon \in \Q^{\ell}$, $\{ P(\epsilon) \}_\epsilon$ is locally combinatorially constant.
\end{lemma}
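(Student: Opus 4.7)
The plan is to stratify $\R^{\ell}$ according to the combinatorial type of $P(\epsilon)$, show that the stratification is finite and semi-algebraic, and then choose $\epsilon_0 \in \Q^{\ell}$ in the interior of a full-dimensional stratum arbitrarily close to the origin. First I would encode the combinatorial type of $P(\epsilon)$ by the set of pairs $(T',\rho) \in 2^{T} \times \{\text{faces of } \sigma\}$ that are \emph{realized} as $(T(J(\epsilon)),\rec(J(\epsilon)))$ for some nonempty face $J(\epsilon)$ of $P(\epsilon)$. Since any face of $P(\epsilon)$ equals $\Conv{V_t(\epsilon) : t \in T(J(\epsilon))} + \rec(J(\epsilon))$, this set of realized pairs determines $P(\epsilon)$ up to combinatorial equivalence; and since there are only finitely many candidate pairs, only finitely many combinatorial types can arise.

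For a fixed pair $(T',\rho)$, realization at $\epsilon$ is equivalent to the existence of $u$ in the relative interior of the cone $\tau_{\rho} \subset \sigma^{\vee}$ satisfying the inequalities $\langle u, V_t(\epsilon) - V_s(\epsilon) \rangle \le 0$ for all $t \in T'$, $s \in T$, with equality exactly when $s \in T'$. In the intended applications (see Lemma~\ref{l:deformZ}), the maps $\epsilon \mapsto V_t(\epsilon)$ are affine linear, so this is a semi-algebraic condition on $(\epsilon,u)$; by Tarski--Seidenberg, its projection to $\epsilon$ is a semi-algebraic subset of $\R^{\ell}$. Consequently, the ``combinatorial type'' function partitions $\R^{\ell}$ into finitely many semi-algebraic strata.

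Next I would select $\epsilon_0 \in \Q^{\ell}$ of arbitrarily small norm lying in the interior of a top-dimensional stratum intersecting the neighborhood of $0$ on which Lemma~\ref{l:refine} applies; such a rational point exists because a semi-algebraic set with nonempty interior contains a rational point by density of $\Q$ in $\R$. Replacing $P = P(0)$ by $P(\epsilon_0)$ (equivalently, reparametrizing so that $\epsilon_0$ becomes the origin), the resulting family is locally combinatorially constant: the set of realized pairs $(T',\rho)$ is identical for all $\epsilon$ sufficiently close to the new origin, and since each realized pair corresponds to a unique face, this yields the required bijection between nonempty faces of the new base $P$ and faces of $P(\epsilon)$ preserving the labels $(T(J),\rec(J))$.

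The main technical point is the interplay between finiteness of combinatorial types and semi-algebraicity of strata, which is what the affine-linearity of $V_t(\epsilon)$ buys us; for purely continuous perturbations a Baire-category argument would not be sufficient, but affine-linearity (present in all applications of this lemma) makes the stratification semi-algebraic and hence its top-dimensional strata open, which is precisely what enables the reduction to a ``combinatorially generic'' base point.
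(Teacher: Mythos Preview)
Your approach differs from the paper's and, as you yourself note, requires the additional hypothesis that $\epsilon \mapsto V_t(\epsilon)$ be affine (or at least semi-algebraic), which the lemma does not assume. Under that hypothesis your semi-algebraic stratification argument is correct.

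The paper's proof is shorter and works under the stated continuity hypothesis alone. It uses only Lemma~\ref{l:refine}: since $T$ is finite and $\sigma$ has finitely many faces, there are only finitely many possible combinatorial types, so one may choose $\epsilon$ with $Q=P(\epsilon)$ \emph{minimal} under the refinement preorder. For $\epsilon'$ small, $Q(\epsilon')$ refines $Q$ by Lemma~\ref{l:refine}; minimality then forces $Q(\epsilon')$ to have the same combinatorial type as $Q$, so the recentered family is locally combinatorially constant. A further small rational perturbation yields $\epsilon\in\Q^\ell$. Your closing remark that ``for purely continuous perturbations a Baire-category argument would not be sufficient'' is therefore too pessimistic: no Baire or Tarski--Seidenberg machinery is needed, because Lemma~\ref{l:refine} already supplies the order-theoretic structure that replaces openness of strata.

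In short: your argument is a valid alternative in the paper's applications (where $Z_s(\epsilon_s)=Z_s+\epsilon_s V_M$ is indeed affine), but it does not establish the lemma in the generality stated; the paper's minimality argument does, with less overhead.
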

\begin{proof}
Lemma~\ref{l:refine} implies that 
we may order $\{ P(\epsilon) \}_\epsilon$ by refinement. 
Since $T$ 
is finite and $\sigma$ has finitely many faces, there exists an $\epsilon \in \R^{\ell}$ such that $Q = P(\epsilon)$ is minimal under this ordering.  
Then $\{ Q(\epsilon) \}_\epsilon$ is locally combinatorially constant. Consider $\epsilon' \in \R^{\ell}$ such that $\epsilon + \epsilon' \in \Q^{\ell}$, and let $Q' = Q(\epsilon')$. Then for $\epsilon'$ sufficiently small,
$\{ Q'(\epsilon) \}_\epsilon$ is locally combinatorially constant.
\end{proof}

Given a family of cones $\{C_k\}_{k=1}^{\infty}$, define $\limsup C_k$ to be the cone of points $u \in \mathbb{R}^n$ such that $u$ is a limit point of
a sequence of points $u_k \in C_k$, i.e., 
there exists a subsequence of $u_k$ converging to $u$.

\begin{lemma}\label{l:convergence} 
Assume that $\{ P(\epsilon) \}_\epsilon$ is locally combinatorially constant. 
Fix a 
nonempty face $J$ of $P$.
For $\epsilon$ sufficiently small, let 
$J(\epsilon)$ be the 
nonempty face of $P(\epsilon)$ such that $T(J(\epsilon))  = T(J)$ and $\rec(J(\epsilon))  = \rec(J)$. 
Consider any $\{ \epsilon_k \}_{k \in \Z_{\ge 0}}$ such that $\lim_{m \to \infty} \epsilon_k  = 0$.
Then $\limsup_k \tau_{J(\epsilon_k)} \subset \tau_J$, 
and, if we assume that $\dim P = n$, then 
$\limsup_k \tau_{J(\epsilon_k)} = \tau_J$.
\end{lemma}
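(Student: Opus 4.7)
\textbf{Proof plan for Lemma~\ref{l:convergence}.} I would prove the two containments separately, with the first being a straightforward closedness argument and the second requiring a small perturbation of $u$ at the level of linear algebra.

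For the containment $\limsup_k \tau_{J(\epsilon_k)} \subset \tau_J$, I would take a limit point $u = \lim_{j \to \infty} u_{k_j}$ with $u_{k_j} \in \tau_{J(\epsilon_{k_j})}$ and pass to the limit in the inequalities defining the normal cone. Since $T(J(\epsilon_{k_j})) = T(J)$ and $\rec(J(\epsilon_{k_j})) = \rec(J)$ by hypothesis, membership in $\tau_{J(\epsilon_{k_j})}$ amounts to $u_{k_j} \in \sigma^\vee$, $u_{k_j}|_{\rec(J)} = 0$, and $\langle u_{k_j}, V_t(\epsilon_{k_j})\rangle \le \langle u_{k_j}, V_{t'}(\epsilon_{k_j})\rangle$ for all $t \in T(J)$ and $t' \in T$. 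Since $V_t(\epsilon)$ is continuous in $\epsilon$, passing to the limit produces the same inequalities with $V_t$ replacing $V_t(\epsilon_{k_j})$, which by definition say that $J$ is contained in the face of $P$ on which $\langle u,\cdot\rangle$ is minimized. Equivalently, $u \in \tau_J$.

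For the reverse containment under the hypothesis $\dim P = n$: since $\limsup_k \tau_{J(\epsilon_k)}$ is closed and $\tau_J = \overline{\tau_J^\circ}$, it suffices to approximate each $u \in \tau_J^\circ$ by points $u_k \in \tau_{J(\epsilon_k)}$. I would introduce the linear subspace $L(\epsilon) \subset \R^n$ cut out by $u|_{\rec(J)} = 0$ and $\langle u, V_t(\epsilon) - V_{t'}(\epsilon)\rangle = 0$ for all $t,t' \in T(J)$; this is the linear span of $\tau_{J(\epsilon)}$. The hypothesis $\dim P = n$ ensures that the normal fan covers $\R^n$, so $\dim \tau_{J(\epsilon)} = n - \dim J(\epsilon)$, and local combinatorial constancy forces $\dim L(\epsilon)$ to be independent of $\epsilon$ for small $\epsilon$. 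Since the defining equations of $L(\epsilon)$ depend continuously on $\epsilon$ with constant rank, standard linear algebra (for instance, orthogonal projection of $u$ onto $L(\epsilon_k)$) furnishes $u_k \in L(\epsilon_k)$ with $u_k \to u$. Because $u \in \tau_J^\circ$, the strict open conditions $\langle u, V_t - V_{t'}\rangle < 0$ for $t \in T(J), t' \in T \smallsetminus T(J)$, together with strict positivity of $u$ on rays of $\sigma$ outside $\rec(J)$, hold at $u$; continuity of $V_t(\epsilon)$ and the convergence $u_k \to u$ then guarantee these strict conditions persist for $u_k$ against $V_t(\epsilon_k)$ for all $k$ large. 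Hence $u_k \in \tau_{J(\epsilon_k)}^\circ$ for $k$ large, giving $u \in \limsup_k \tau_{J(\epsilon_k)}$.

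The main obstacle is the construction of the approximating sequence $u_k$ in the second step; the whole argument hinges on ruling out rank jumps in the defining linear system, which is precisely what local combinatorial constancy (together with $\dim P = n$) provides. Once the constancy of $\dim L(\epsilon)$ is established, the rest is a routine continuity and open-condition argument.
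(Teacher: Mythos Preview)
Your argument is correct. For the containment $\limsup_k \tau_{J(\epsilon_k)} \subset \tau_J$, your approach coincides with the paper's: both pass closed inequalities to the limit using continuity of $V_t(\epsilon)$, the paper simply phrasing this via a reference to the computation in the proof of Lemma~\ref{l:refine}.

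For the reverse containment your route genuinely differs. The paper argues via the ray structure: since $\dim P = n$, the cone $\tau_J$ is generated by rays $\gamma_1,\dots,\gamma_p$ (each dual to a facet of $P$ containing $J$), and local combinatorial constancy supplies corresponding rays $\gamma_m(\epsilon)$ in the dual fan of $P(\epsilon)$. Taking unit vectors $u_{m,k}\in\gamma_m(\epsilon_k)$ and passing to a convergent subsequence, the already-proved first inclusion forces the limits $u_m$ to lie on $\gamma_m$; any $u\in\tau_J$ is then approximated by the same nonnegative combination of the $u_{m,k}$. Your approach instead works with the linear span $L(\epsilon)=\Span\tau_{J(\epsilon)}$, observes that local combinatorial constancy keeps $\dim L(\epsilon)$ fixed, and uses continuity of orthogonal projection onto a continuously varying subspace of constant dimension to produce the approximants. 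The paper's argument is a bit more combinatorial and reuses the first inclusion as a black box; yours is more linear-algebraic and makes the mechanism (no rank jump in the defining system) explicit. One small point you should state: $\dim P(\epsilon)=n$ for small $\epsilon$ (immediate by continuity from $\dim P=n$), which you need both for the identity $\Span\tau_{J(\epsilon)}=L(\epsilon)$ and for the dimension count $\dim\tau_{J(\epsilon)}=n-\dim J(\epsilon)$.
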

\begin{proof}
We first show that $\limsup_k \tau_{J(\epsilon_k)} \subset \tau_J$.
Suppose that $u_k \in \tau_{J(\epsilon_k)}$, and that, after possibly replacing $\{ \epsilon_k \}_{k \in \Z_{\ge 0}}$ with a subsequence,  $\lim_{k \to \infty} u_k = u \in \R^n$ exists. Then $u \in \tau_{J'}^\circ$  for some 
nonempty face $J'$ of $P$. Then the calculation in \eqref{e:limit} implies that $T(J) = T(J(\epsilon_k)) \subset T(J')$, and hence $J \subset J'$ and $u \in \tau_{J'} \subset \tau_J$. 

We need to prove the converse statement.  
Assume that 
$\dim P = n$. 
Then $\tau_J$ is generated by its rays 
$\{ \gamma_m \}_{1 \le m \le p}$. 
For any $\epsilon$ sufficiently small, let 
$\{ \gamma_m(\epsilon) \}_{1 \le m \le p}$ denote the corresponding rays in the dual fan to $P(\epsilon)$. 
Consider elements $\{ u_{m,k} \in \gamma_m(\epsilon_k) : 1 \le m \le p, k \ge 0 \}$ such that $||u_{m,k} || = 1$. 
Then, after possibly replacing $\{ \epsilon_k \}_{k \in \Z_{\ge 0}}$ with a subsequence, we may assume that
$\lim_{k \to \infty} u_{m,k} = u_m$ exists for $1 \le m \le p$. 
Then $u_m \in \limsup_k \gamma_m(\epsilon_k)  \subset \gamma_m$ and $||u_m || = 1$.
Given an element $u \in \tau_J$, there exists $a_m \in \R^n_{\ge 0}$ such that $u = \sum_m a_m u_m$. Then 
$\lim_{k \to \infty} \sum_m a_m u_{m,k} = u \in
\limsup_k \tau_{J(\epsilon_k)}$,
as desired. 
\end{proof}

With these lemmas in hand, we now return to our problem. 
Fix a restricted, 
weakly $\alpha$-compatible pair $(\mathcal{Z},\mathcal{F})$, where 
$\mathcal{Z} = (Z_s)_{s \in \mathcal{S}}$ and $\mathcal{F} = (F_s)_{s \in \mathcal{S}}$.
We may apply Lemma~\ref{l:combconstant} to both $P(\epsilon) = \Newt(f)_{\mathcal{Z}(\epsilon)}$ and $P(\epsilon) = Q_{\mathcal{Z}(\epsilon)}$. 
Hence, by Lemma~\ref{l:deformZ}, we may replace 
$\mathcal{Z}$ by $\mathcal{Z}(\epsilon)$
so that $\{ Q_{\mathcal{Z}(\epsilon)}\}_\epsilon$ and $\{ \Newt(f)_{\mathcal{Z}(\epsilon)}\}_\epsilon$  are locally combinatorially constant. 
Consider a 
nonempty face $J$ of $Q_{\mathcal{Z}}$, dual to a cone $\tau_J$ in $\Sigma_{\mathcal{Z}}$. 
For $\epsilon$ sufficiently small, let
$J(\epsilon)$ denote the corresponding nonempty face  of 
$Q_{\mathcal{Z}(\epsilon)}$, dual to the cone 
$\tau_{J(\epsilon)}$ of $\Sigma_{\mathcal{Z(\epsilon})}$. 
Similarly, given a cone $C$ in $\Sigma \cap \Sigma_{\mathcal{Z}}$, we  let $C(\epsilon)$ denote the corresponding cone in $\Sigma \cap \Sigma_{\mathcal{Z(\epsilon})}$. 
If $C$ is dual to a 
face of $\Newt(f)_{\mathcal{Z}}$ of the form $K' + J'$ for some $K' \in \Gamma$ and some nonempty face $J' \subset Q_{\mathcal{Z}(\epsilon)}$, 
then $C(\epsilon) = \sigma_{K'} \cap \tau_{J'(\epsilon)}$ is 
dual to the face $K' + J'(\epsilon)$ in $\Newt(f)_{\mathcal{Z}(\epsilon)}$.

\begin{remark}\label{r:independence}
Consider a nonempty face $J$ of $Q_{\mathcal{Z}}$ such that $\sigma_M \cap \tau_J \neq \{ 0 \}$.
Since $\{ Q_{\mathcal{Z}(\epsilon)} \}_{\epsilon}$ is locally combinatorially constant,
for any $s \in \mathcal{S}$, $Z_s \in J$ if and only if 
$Z_s(\epsilon_s) \in J(\epsilon)$.
In particular, if 
$\sigma_M \cap \tau_{J(\epsilon)} \neq \{ 0 \}$,
then $F_J  = \max \{ F_s : Z_s \in J \} = \max \{ F_s : Z_s(\epsilon_s) \in J(\epsilon) \} = F_{J(\epsilon)}$.
\end{remark}

The lemma below says that a pair $(C,J)$ not being $\alpha$-critical is an open condition. 

\begin{lemma}\label{l:closed}
Let $J$ be a nonempty face of $Q_{\mathcal{Z}}$, 
and consider a cone $C \in \Sigma \cap \Sigma_{\mathcal{Z}}$.
Suppose there exists a 
sequence $\{ \epsilon_m  \}_{m \in \Z_{\ge 0}}$  
such that $\epsilon_m  \in \Q^{\mathcal{S}}$, $\lim_{m \to \infty} \epsilon_m = 0$ and
$(C(\epsilon_m),J(\epsilon_m))$ is $\alpha$-critical for all $m$. Then  
$(C,J)$ is  $\alpha$-critical.
\end{lemma}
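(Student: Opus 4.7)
The plan is to verify each of the three defining conditions of an $\alpha$-critical pair by passing to the limit, using the combinatorial constancy that has already been arranged and Lemma~\ref{l:convergence}.

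First I would observe that the apex $A_{F_{J(\epsilon_m)}}$ appearing in the hypotheses is actually independent of $m$. Indeed, condition~\eqref{i:CC1} for $(C(\epsilon_m),J(\epsilon_m))$ supplies a nonzero vector in $C(\epsilon_m) \cap \sigma_M \subset \tau_{J(\epsilon_m)} \cap \sigma_M$, so $\sigma_M \cap \tau_{J(\epsilon_m)} \neq \{0\}$, and Remark~\ref{r:independence} then gives $F_{J(\epsilon_m)} = F_J$ for all $m$. In particular $A_{F_{J(\epsilon_m)}} = A_{F_J}$.

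Next I would invoke Lemma~\ref{l:convergence}. The polyhedron $\Newt(f)_{\mathcal{Z}} = \Newt(f) + Q_{\mathcal{Z}}$ has full dimension $n$ because $\Newt(f)$ does, so the lemma applied to the face of $\Newt(f)_{\mathcal{Z}}$ dual to $C$ yields $\limsup_m C(\epsilon_m) = C$. This is the single analytic input: every point of $C$ is the limit (along a subsequence) of points chosen from the cones $C(\epsilon_m)$.

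Conditions~\eqref{i:CC2} and \eqref{i:CC3} now pass to the limit by closedness. For \eqref{i:CC3}, criticality with respect to $(\alpha, A_{F_J})$ is exactly the condition $C(\epsilon_m) \subset \mathrm{H}_{\alpha A_{F_J} + \mathbf{1}}$, which is a closed linear subspace; combined with $\limsup_m C(\epsilon_m) = C$ it yields $C \subset \mathrm{H}_{\alpha A_{F_J} + \mathbf{1}}$. For \eqref{i:CC2}, the inclusion $C(\epsilon_m) \subset \sigma_{A_{F_J}}$ passes to the limit identically since $\sigma_{A_{F_J}}$ is closed. The inclusion $C \subset \tau_J$ is combinatorial rather than analytic: if $C$ is dual to a face $K' + J'$ of $\Newt(f)_{\mathcal{Z}}$, then $C(\epsilon_m) = \sigma_{K'} \cap \tau_{J'(\epsilon_m)} \subset \tau_{J(\epsilon_m)}$ forces $\tau_{J'(\epsilon_m)} \subset \tau_{J(\epsilon_m)}$, i.e. $J(\epsilon_m)$ is a face of $J'(\epsilon_m)$; the locally combinatorially constant structure of $\{Q_{\mathcal{Z}(\epsilon)}\}_\epsilon$ then gives $J \subset J'$ and hence $C \subset \tau_{J'} \subset \tau_J$.

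Finally, for condition~\eqref{i:CC1}, I would pick nonzero $u_m \in C(\epsilon_m) \cap \sigma_M$, rescale so that $\|u_m\|=1$, extract a convergent subsequence by compactness of the unit sphere, and use closedness of $\sigma_M$ together with $\limsup_m C(\epsilon_m) = C$ to obtain a unit vector $u \in C \cap \sigma_M$. Since $\|u\|=1$, condition \eqref{i:CC1} holds. There is no substantial obstacle here once the groundwork in the preceding paragraphs (locally constant combinatorics and Lemma~\ref{l:convergence}) is in place; the proof amounts to a clean closedness/compactness argument checked one condition at a time.
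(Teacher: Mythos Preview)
Your proof is correct and follows essentially the same approach as the paper's: verify the three conditions of Definition~\ref{d:s0criticaltau2} by passing to the limit, using local combinatorial constancy and Lemma~\ref{l:convergence}. The only cosmetic difference is ordering: the paper first establishes $C\cap\sigma_M\neq\{0\}$ (hence $\sigma_M\cap\tau_J\neq\{0\}$) and only then invokes Remark~\ref{r:independence}, whereas you invoke the remark first---strictly speaking its hypothesis is $\sigma_M\cap\tau_J\neq\{0\}$, which you only prove at the end, but combinatorial constancy already gives $\{s:Z_s\in J\}=\{s:Z_s(\epsilon_s)\in J(\epsilon)\}$, so the maxima agree whenever either exists, and this is all you use. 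For $C\subset\sigma_{A_{F_J}}$ the paper argues combinatorially ($\sigma_K$ is the smallest cone of $\Sigma$ containing $C(\epsilon_m)$, forcing $\sigma_K\subset\sigma_{A_{F_J}}$ and hence $C\subset\sigma_K\subset\sigma_{A_{F_J}}$) rather than by closedness, but your closedness argument is equally valid once $C=\limsup_m C(\epsilon_m)$ is in hand.
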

\begin{proof}
By Lemma~\ref{l:Cinvariant} and since $\{ \Newt(f)_{\mathcal{Z}(\epsilon)}\}_\epsilon$ is locally combinatorially constant, $C(\epsilon_m) \subset \tau_{J(\epsilon_m)}$ is dual to a face of the form $K + J'(\epsilon_m)$ of $\Newt(f)_{\mathcal{Z}(\epsilon_m)}$, where
$K \in \Gamma$ and
$J'$ is a face  of $Q_{\mathcal{Z}}$ such that $J(\epsilon_m) \subset J'(\epsilon_m)$, or, equivalently, 
$J \subset J'$. Then $C$ is dual to $K + J'$. 
In particular, $C \subset \tau_{J'} \subset \tau_J$. 
By hypothesis, 
$C(\epsilon_m) \cap \sigma_M \neq \{ 0 \}$. 
It follows from Bolzano--Weierstrass and  Lemma~\ref{l:convergence} that 
$C \cap \sigma_M \neq \{ 0 \}$. Then $\sigma_M \cap \tau_J \neq \{ 0 \}$, and, by Remark~\ref{r:independence}, $F_J  = F_{J(\epsilon_m)}$ for all $m$. The condition $C(\epsilon_m) \subset \sigma_{A_{F_J}}$ implies that $\sigma_{K} \subset 
\sigma_{A_{F_J}}$, and hence $C \subset \sigma_{A_{F_J}}$. 
By Lemma~\ref{l:convergence} and since $\mathrm{H}_{\alpha A_{F_J} + \One}$ is closed,  $C = \limsup_m C(\epsilon_m)  \subset \mathrm{H}_{\alpha A_{F_J} + \One}$,  and hence 
$C$ is critical with respect to $(\alpha,A_{F_J})$. 
We conclude that $(C,J)$ is  $\alpha$-critical.
\end{proof}

We say that $\epsilon$ 
can be chosen to be arbitrarily
small 
if for any $\delta > 0$, there exists a choice of $\epsilon$ such that 
$|\epsilon_{s}| < \delta$  for all 
$s \in \mathcal{S}$.

\begin{lemma}\label{l:deformnotcritical}
Suppose that $(C,J)$ is  $\alpha$-critical and  $C \not \subset \sigma_M$. 
Then there exists an arbitrarily small choice of $\epsilon \in \Q^{\mathcal{S}} $ such that 
$(C(\epsilon),J(\epsilon))$ is not 
$\alpha$-critical.
\end{lemma}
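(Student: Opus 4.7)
The plan is to argue by contrapositive: assume that $(C(\epsilon), J(\epsilon))$ is $\alpha$-critical for every sufficiently small $\epsilon \in \mathbb{Q}^{\mathcal{S}}$, and deduce $C \subset \sigma_M$. By Lemma~\ref{l:closed} and density of $\mathbb{Q}^{\mathcal{S}}$ in $\mathbb{R}^{\mathcal{S}}$, this assumption extends to all sufficiently small real $\epsilon$. By Lemma~\ref{l:Cinvariant}, I may write $C = \sigma_K \cap \tau_{J'}$ for some $K \in \Gamma$ with $A_{F_J} \in K \subset F_J \subset F_{J'}$ and $J \subset J'$, so $C(\epsilon) = \sigma_K \cap \tau_{J'(\epsilon)}$. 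Since $\sigma_K$ is independent of $\epsilon$ and $A_{F_{J(\epsilon)}} = A_{F_J}$ by Remark~\ref{r:independence}, condition (2) of Definition~\ref{d:s0criticaltau2} persists automatically and the hyperplane $\mathrm{H}_w$ with $w := \alpha A_{F_J} + \mathbf{1}$ does not depend on $\epsilon$. Either condition (1) fails for some arbitrarily small $\epsilon$, in which case the lemma is already proved, or condition (1) also persists, in which case the standing assumption reduces to $C(\epsilon) \subset \mathrm{H}_w$ for all small $\epsilon$.

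Next I would carry out a first-order analysis. By local combinatorial constancy, each ray $\gamma = \mathbb{R}_{\geq 0} u_\gamma$ of $C$ varies continuously to a ray of $C(\epsilon)$ with primitive direction $u_\gamma(\epsilon)$, cut out by fixed facets of $\sigma_K$ together with facets of $\tau_{J'(\epsilon)}$ of the form $\mathrm{H}_{(Z_s - Z_{s'}) + (\epsilon_s - \epsilon_{s'}) V_M}$. Thus each facet of $\tau_{J'(\epsilon)}$ tilts only in the direction $V_M$, with coefficient $(\epsilon_s - \epsilon_{s'})$. Differentiating the identity $\langle u_\gamma(\epsilon), w\rangle \equiv 0$ at $\epsilon = 0$ along each coordinate $\epsilon_s$ yields a system of linear relations for the tangent vectors $v_{\gamma, s} := \partial_{\epsilon_s} u_\gamma(0)$, relating them to $w$, to the fixed normals of $\sigma_K$, to the vectors $Z_{s_1} - Z_{s_2}$, and to the scalars $\langle u_\gamma, V_M\rangle$.

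The crux is then to use restrictedness and weak $\alpha$-compatibility to extract rigidity from this system and conclude that $u_\gamma \in \sigma_M$ for every ray $\gamma$ of $C$, which gives $C \subset \sigma_M$. The relevant inputs are: each $Z_s$ lies in $\mathrm{Span}(\{V_M\} \cup \gens(C_{F_s} \smallsetminus C_M) \cup \mathcal{A}_M)$ by Definition~\ref{d:restricted2}; $V_M$ is annihilated by every apex covector $e_F^*$ with $F \supset M$ by \eqref{e:VM}; and by the $\alpha$-simplicial hypothesis the set $\gens(C_{F_{J'}} \smallsetminus C_M)$ is linearly independent modulo $\mathrm{Span}(C_M)$. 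Together with the observation that both $\alpha V_M + \mathbf{1}$ and $w = \alpha A_{F_J} + \mathbf{1}$ annihilate $\sigma_M$ (since $V_M, A_{F_J} \in M$ and $M \in \Contrib(\alpha)$), these ingredients should allow one to solve the tangent system and show that the only rays admitting a $w$-preserving tangent displacement are those already in $\sigma_M$.

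The main obstacle is the linear-algebraic bookkeeping in this last step: tracking how the displacements $v_{\gamma, s}$ pair with $w$ for rays $\gamma$ sitting outside $\sigma_M$, and showing the pairings cannot all conspire to vanish. The $\UB$ structure of $F_J$, and in particular its unique base direction $e_{F_J}^*$, should play the decisive role: because $\langle e_{F_J}^*, V_M\rangle = 0$ while $\langle e_{F_J}^*, A_{F_J}\rangle = 1$, the perturbation direction $V_M$ separates the apex contribution from the base directions in $\gens(C_{F_J} \smallsetminus C_M)$, which is precisely what is needed to rule out the existence of a tangent displacement keeping $u_\gamma(\epsilon)$ inside $\mathrm{H}_w$ when $u_\gamma \notin \sigma_M$.
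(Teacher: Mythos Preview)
Your proposal is not a proof but a plan, and the part you flag as the ``main obstacle'' is exactly the part where the argument would have to happen.  Concretely: you want to show that for every ray $\gamma$ of $C$ outside $\sigma_M$ there is some $s$ with $\langle \partial_{\epsilon_s} u_\gamma(0),\, w\rangle \neq 0$.  But a ray of $C=\sigma_K\cap\tau_{J'}$ may be cut out entirely by facets of $\sigma_K$, and such a ray is fixed under the deformation, so its tangent vanishes identically.  Thus you cannot hope to prove the statement ray by ray; you would need a global argument that \emph{some} point of $C(\epsilon)$ escapes $\mathrm{H}_w$, and your first-order setup does not supply one.  Your final heuristic about $e_{F_J}^*$ separating $A_{F_J}$ from $V_M$ does not translate into a statement about the pairing $\langle v_{\gamma,s},w\rangle$: the covector $e_{F_J}^*$ lives in the same dual space as $u_\gamma$, so the identities $\langle e_{F_J}^*,V_M\rangle=0$ and $\langle e_{F_J}^*,A_{F_J}\rangle=1$ are pairings of a dual vector with primal vectors and say nothing about how the tangent displacement $v_{\gamma,s}$ pairs with the primal vector $w$.

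The paper's argument avoids this entirely by reversing the roles: instead of fixing $\epsilon$ and asking whether $C(\epsilon)$ stays in $\mathrm{H}_w$, it first produces a point $\hat u(\lambda)=u+\lambda u'$ in $\sigma_K^\circ$ that is \emph{not} in $\mathrm{H}_w$, and only then engineers $\epsilon$ so that $\hat u(\lambda)\in\tau_{J'(\epsilon)}^\circ$.  The crucial step you are missing is the construction of the direction $u'$.  This requires showing that $\alpha A_{F_J}+\mathbf{1}\notin\Span\big(\gens(C_{K\cap M})\cup\mathcal{A}_M\big)$, which is where the hypothesis $C\not\subset\sigma_M$ actually enters: if $\mathbf{1}$ lay in that span, a short computation with the equation $\langle u,\alpha A_{F_J}+\mathbf{1}\rangle=0$ for $u\in C^\circ$ forces $M\subset K$ and hence $C\subset\sigma_M$.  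Once this is established, $\alpha$-simpliciality gives a $u'$ orthogonal to $\gens(C_{K\cap M})\cup\mathcal{A}_M\cup\gens(C_{F_{J'}}\smallsetminus C_M)$ with $\langle u',\alpha A_{F_J}+\mathbf{1}\rangle=1$; the first two orthogonality conditions keep $\hat u(\lambda)$ inside $\sigma_K^\circ$, while the third, together with restrictedness, lets one solve explicitly for $\epsilon_s$ so that $\langle\hat u(\lambda),Z_s(\epsilon_s)\rangle=\langle u,Z_s\rangle$ for each $s$ with $Z_s\in J'$.  None of these ingredients appear in your outline.
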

\begin{proof}
By Lemma~\ref{l:Cinvariant},  $C$ is dual to a face  $K + J'$ of $\Newt(f)_{\mathcal{Z}}$,
where
$K \in \Gamma$ and $J'$ is a face of $Q_{\mathcal{Z}}$ 
such that $K  
\subset F_{J'}$ and  $J \subset J'$. 
Then 
$C(\epsilon) \subset \tau_{J(\epsilon)}$ is dual to the face $K + J'(\epsilon)$ of $\Newt(f)_{\mathcal{Z}(\epsilon)}$. Note that $C \subset \sigma_{A_{F_J}}$ implies that $\sigma_K \subset \sigma_{A_{F_J}}$, and hence $A_{F_J} \in K$. Since $K,M$ are faces of $\Gamma$,
$K \cap M$ is a (possibly empty) face of $M$, and $C_{K \cap M} = C_{K} \cap C_{M}$.
Let $B_K = \gens(C_{K \cap M}) \cup \mathcal{A}_M$. 

Assume that $\One \in \Span(B_{K})$. 
We can write
$
\One = \sum_{V \in B_{K}} \lambda_V V, 
$
for some  coefficients $\lambda_V$, 
with $\lambda_V = 1$ for all $V \in \mathcal{A}_M$.
Applying $\psi_{M}$ to both sides gives 
$
-\alpha = \sum_{V \in B_{K} \smallsetminus \Unb(C_{K \cap M})} \lambda_V.
$
Hence we may equivalently write 
$$
\alpha A_{F_J} + \One = \sum_{V \in B_{K} \smallsetminus \Unb(C_{K \cap M})} \lambda_V (V - A_{F_J}) + \sum_{V \in  \Unb(C_{K \cap M})} \lambda_V V.
$$
Consider $u \in C^\circ \subset \sigma_{K}^\circ  \cap \mathrm{H}_{\alpha A_{F_J} + \One}$. 
Consider $V \in \gens(C_K)$. If $V \in \Unb(C_{K})$, then $u \in \sigma_{K}^\circ$ implies that $\langle u , V \rangle = 0$. 
Since $A_{F_J} \in K$, if $V \in \ver(K)$, then 
$u \in \sigma_{K}^\circ$ implies that
$\langle u , V - A_{F_J} \rangle = 0$. 
We compute:
\begin{align*}
0 = \langle u, \alpha A_{F_J} + \One \rangle &= 
\sum_{V \in B_{K} \smallsetminus \Unb(C_{K \cap M})} \lambda_V \langle u, V - A_{F_J} \rangle + \sum_{V \in  \Unb(C_{K \cap M})} \lambda_V \langle u, V \rangle 
\\
&= \sum_{V \in B_{K} \smallsetminus C_K} \langle u, V - A_{F_J} \rangle.
\end{align*}
Since each term in the right-hand sum is positive, we deduce that the sum must be empty. It follows that $B_{K} \subset 
C_{K \cap M}$ and 
$\One \in 
\Span(C_{K \cap M})$.
Since $M$ is minimal in $\Contrib(\alpha)$, we deduce that 
$K \cap M = M$. 
Then $C \subset \sigma_{K} \subset \sigma_M$, a contradiction.

We conclude that $\One \notin \Span(B_{K})$. 
Since $A_{F_J} \in B_{K}$,  it follows that $\alpha A_{F_J} + \One \notin \Span(B_{K})$. 
Since 
$\alpha A_{F_J} + \One \in \Span(M) \cap \Q^n$ and 
$\Newt(f)$ is $\alpha$-simplicial, it follows that there exists $u' \in \Q^n$ such that
\begin{enumerate}
\item\label{i:uWWW1} $\langle u' , \alpha A_{F_J} + \One \rangle = 1$,
\item\label{i:uWWW2} $\langle u' , V \rangle = 0$ for all elements $V \in B_{K}$, and
\item\label{i:uWWW3} $\langle u' , V \rangle = 0$ for all $V \in \gens(C_{F_{J'}} \smallsetminus C_M)$.
\end{enumerate}  
Consider an element  $u \in C^\circ \cap \Q^n = \sigma_{K}^\circ \cap (\tau_{J'})^\circ \cap \Q^n  \subset \mathrm{H}_{\alpha A_{F_J} + \One}$, and 
let $\hat{u}(\lambda) = u + \lambda u'$ for some choice of $\lambda \neq 0 \in \R$.
Then property \eqref{i:uWWW1} implies that 
$\langle \hat{u}(\lambda), \alpha A_{F_J} + \One \rangle = \lambda \neq 0$, and hence 
$\hat{u}(\lambda) \notin \mathrm{H}_{\alpha A_{F_J} + \One}$. 
Properties \eqref{i:uWWW2} and \eqref{i:uWWW3} imply that  for any $V$ in $C_K$,
$
\langle \hat{u}(\lambda) , V \rangle = \langle u , V \rangle + \lambda \langle u' , V \rangle = \langle u , V \rangle. 
$
It follows that $\hat{u}(\lambda) \in \sigma_{K}^\circ$ provided $|\lambda|$ is sufficiently small.

Recall that $\tau_{J'} = \cap_{Z_s \in J'} \tau_{Z_s}$.
Consider $s \in \mathcal{S}$ such that $Z_{s} \in J'$.
We claim that for a generic choice of $\lambda \in \Q$, 
we may choose $\epsilon_{s} \in \Q$  such that
$ \langle \hat{u}(\lambda), Z_{s}(\epsilon_{s}) \rangle = \langle u , Z_{s} \rangle$. Assume this claim holds.
It follows that with this choice of $\epsilon = \{ \epsilon_s \}_{s \in \mathcal{S}}$, 
$\hat{u}(\lambda) \in (\tau_{J'}(\epsilon))^\circ$ provided $|\lambda|$ is chosen sufficiently small. 
Then $\hat{u}(\lambda) \in \sigma_{K}^\circ \cap (\tau_{J'}(\epsilon))^\circ = C(\epsilon)^\circ$ and $\hat{u}(\lambda) \notin \mathrm{H}_{\alpha A_{F_J} + \One}$.
Either $\sigma_M \cap \tau_{J(\epsilon)} = \{ 0 \}$, 
or $\sigma_M \cap \tau_{J(\epsilon)} \neq \{ 0 \}$ and, by Remark~\ref{r:independence}, 
$F_{J} = F_{J(\epsilon)}$ and $C(\epsilon) \not \subset \mathrm{H}_{\alpha A_{F_{J(\epsilon)}} + \One}$. 
In either case, $(C(\epsilon),J(\epsilon))$ is not $\alpha$-critical. 

It remains to verify the claim.  We compute: 
\begin{align*}
\langle \hat{u}(\lambda), Z_{s}(\epsilon_{s}) \rangle &=  \langle \hat{u}(\lambda), Z_{s} + \epsilon_{s} V_M \rangle \\
&= \langle u , Z_{s} \rangle + \lambda \langle u' , Z_{s} \rangle + \epsilon_{s}(\langle u, V_M \rangle + \lambda \langle u', V_M \rangle). 
\end{align*}
Suppose that  $\langle u', V_M \rangle \neq 0$. Then for $\lambda \neq -\frac{\langle u, V_M \rangle}{\langle u', V_M \rangle}$, 
we may set 
$
\epsilon_{s} = -\frac{\lambda \langle u' , Z_{s} \rangle}{\langle u, V_M \rangle + \lambda \langle u', V_M \rangle}, 
$
and the above calculation shows that $ \langle \hat{u}(\lambda), Z_{s}(\epsilon_{s}) \rangle = \langle u , Z_{s} \rangle$. 
If $\langle u', V_M \rangle = 0$, then we may set $
\epsilon_{s} = 0$.
Since $(\mathcal{Z},\mathcal{F})$  is restricted, 
$Z_s \in \Span(\{ V_M \} \cup \gens(C_{F_s} \smallsetminus C_M) \cup \mathcal{A}_M)$ for every $s \in \mathcal{S}$.
Since $Z_s \in J'$,  Definition~\ref{d:max} implies that $F_s \subset F_{J'}$. 
Then properties 
\eqref{i:uWWW2} and \eqref{i:uWWW3} imply that $\langle u', Z_{s} \rangle = 0$, and the above calculation shows that $ \langle \hat{u}(\lambda), Z_{s}(\epsilon_{s}) \rangle = \langle u , Z_{s} \rangle$.
\end{proof}

\begin{lemma}\label{l:weakimpliesstrong}
Suppose there exists a restricted, weakly 
$\alpha$-compatible pair $(\mathcal{Z},\mathcal{F})$.
Then there exists an
$\alpha$-compatible pair $(\mathcal{Z},\mathcal{F})$.
\end{lemma}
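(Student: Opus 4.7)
The plan is to take a restricted, weakly $\alpha$-compatible pair $(\mathcal{Z},\mathcal{F})$ and iteratively deform it by the operation $\mathcal{Z} \mapsto \mathcal{Z}(\epsilon)$ until no ``bad pair'' remains, where a bad pair is an $\alpha$-critical $(C,J)$ with $C \not\subset \sigma_M$. First I would apply Lemma~\ref{l:combconstant} to each of the families $\{Q_{\mathcal{Z}(\epsilon)}\}_\epsilon$ and $\{\Newt(f)_{\mathcal{Z}(\epsilon)}\}_\epsilon$, and use Lemma~\ref{l:deformZ} to replace the original pair by one for which both families are locally combinatorially constant while remaining restricted and weakly $\alpha$-compatible. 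From this point on, for every sufficiently small $\epsilon$ there is a canonical bijection $(C,J) \leftrightarrow (C(\epsilon), J(\epsilon))$, and by Remark~\ref{r:independence} the assignments $J \mapsto F_J$ are also unchanged. So the total set of pairs is finite and fixed, and the goal is to make the (finite) subset of bad pairs empty.

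The key point is that non-badness of a pair is preserved by small deformations, so that eliminating a bad pair will not create new ones. There are two ways a pair can be non-bad. If $(C,J)$ is not $\alpha$-critical, then by the contrapositive of Lemma~\ref{l:closed}, $(C(\epsilon),J(\epsilon))$ is also not $\alpha$-critical for all $\epsilon$ sufficiently small; applied to the finite list of such pairs this gives a single uniform $\epsilon$. If instead $C \subset \sigma_M$ and $C \neq \{0\}$, I would argue as follows: the smallest face $K'$ of $\Newt(f)$ with $C \subset \sigma_{K'}$ satisfies $\emptyset \neq C^\circ \subset \sigma_{K'}^\circ \cap \sigma_M$, so $\sigma_{K'} \cap \sigma_M$ is dual to a face $K''$ containing both $K'$ and $M$; minimality of $K'$ forces $K'' = K'$ and hence $M \subset K'$, i.e.\ $\sigma_{K'} \subset \sigma_M$. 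Since the face $K'$ is combinatorial data attached to $C$, local combinatorial constancy gives $C(\epsilon) \subset \sigma_{K'} \subset \sigma_M$ for all small $\epsilon$, so this form of non-badness is automatically preserved.

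The iteration is then direct: while a bad pair $(C,J)$ exists, invoke Lemma~\ref{l:deformnotcritical} to produce an arbitrarily small $\epsilon \in \Q^\mathcal{S}$ for which $(C(\epsilon), J(\epsilon))$ is not $\alpha$-critical, and shrink $\epsilon$ further to simultaneously preserve (i) restrictedness and weak $\alpha$-compatibility (Lemma~\ref{l:deformZ}), (ii) local combinatorial constancy of both families (an open condition), and (iii) non-bad status of the finitely many previously non-bad pairs (Lemma~\ref{l:closed}). Replacing $\mathcal{Z}$ by $\mathcal{Z}(\epsilon)$, the number of bad pairs drops by at least one. Since the total count is finite, finitely many iterations terminate with an $\alpha$-compatible pair.

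The main obstacle I expect is the second half of the ``non-bad is preserved'' claim—namely that $C \subset \sigma_M$ cannot be destroyed by a small deformation. A priori one could imagine a pair whose cone $C$ sits inside $\sigma_M$ only by virtue of the constraint coming from $\tau_{J'}$, and which slips partly outside $\sigma_M$ after a generic perturbation. The argument above that such $C$ must be contained in a single $\Sigma$-cone $\sigma_{K'}$ already contained in $\sigma_M$ is what rules this out and makes the whole inductive strategy work; the remaining manipulations are routine applications of the lemmas in the preceding subsection.
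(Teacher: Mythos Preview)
Your proposal is correct and follows the same overall strategy as the paper: set up local combinatorial constancy via Lemmas~\ref{l:deformZ} and~\ref{l:combconstant}, then iterate using Lemmas~\ref{l:deformnotcritical} and~\ref{l:closed}. The one genuine difference is in the invariant you track. You count \emph{bad pairs} (those that are $\alpha$-critical with $C\not\subset\sigma_M$) and argue this number strictly decreases; this forces you to prove that the condition $C\subset\sigma_M$ is stable under deformation, which you correctly do via the minimal $\Sigma$-cone argument. The paper instead counts \emph{non-$\alpha$-critical} pairs and shows this number strictly increases, which follows immediately from Lemma~\ref{l:closed} alone; the point is that the iteration halts as soon as no bad pair remains, regardless of whether some $\alpha$-critical pairs with $C\subset\sigma_M$ persist. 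So your ``main obstacle'' is real for your chosen invariant but simply does not arise with the paper's bookkeeping. Both arguments are valid; the paper's is shorter, while yours establishes the slightly stronger (though unneeded) fact that the containment $C(\epsilon)\subset\sigma_M$ is combinatorially forced.
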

\begin{proof}
Consider the restricted, weakly 
$\alpha$-compatible pair $(\mathcal{Z},\mathcal{F})$ above. 
Suppose
$(\mathcal{Z},\mathcal{F})$ is not $\alpha$-compatible. 
That is, suppose there exists a pair  $(C,J)$ that is $\alpha$-critical and  $C \not \subset \sigma_M$.
Then Lemma~\ref{l:closed} and Lemma~\ref{l:deformnotcritical} imply that we can deform 
$(\mathcal{Z},\mathcal{F})$ and strictly increase the number of pairs 
$(C,J)$ that do not have $\alpha$-critical intersection. 
Since there are finitely many such pairs, by repeating this procedure we obtain an  $\alpha$-compatible pair.
\end{proof}

\subsection{Existence of restricted, weakly $\alpha$-compatible sets}\label{ss:exist2}

In this section, we use the existence of a locally unique labeling 
to explicitly construct a restricted, weakly 
$\alpha$-compatible pair. Recall that $\gens(C_F) = \ver(F) \cup \Unb(C_{F})$ is the set of distinguished vertices on the rays of $C_F$. Recall that because $\Newt(f)$ is $\alpha$-simplicial, 
there is a bijection between
$\{ K \in \Gamma : M \subset K \subset F \}$ and 
subsets of $\gens(C_F \smallsetminus C_M) = \gens(C_F) \smallsetminus \gens(C_M)$.

Consider an element $F \in \Contrib(\alpha)_M$. 
Given an element $V$ in 
$\gens(C_F)$, let $\zeta(V) \in F \subset \Gamma$ be defined by 
\[
\zeta(V) := \begin{cases}
V &\textrm{ if } V \in \ver(F), \\
V + V_M &\textrm{ if } V \in \Unb(C_{F}).
\end{cases}
\]

\begin{lemma}\label{l:zetabasic}
Suppose that $F,F' \in \Contrib(\alpha)_M$ and $V \in \gens(C_F)$. Then $\zeta(V) \in F'$ if and only if 
$V \in \gens(C_{F'})$. 
\end{lemma}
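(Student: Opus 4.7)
The plan is to split into two cases according to the definition of $\zeta$, exploiting throughout that $F' \in \Contrib(\alpha)_M$ gives $M \subset F'$ and in particular $V_M \in F'$.

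First I would handle the case $V \in \ver(F)$, where $\zeta(V) = V$. Here $V$ is a vertex of $\Newt(f)$, and in particular $V$ does not lie on any ray disjoint from $\Newt(f)$. So $V \in \gens(C_{F'})$ can only mean $V \in \ver(F')$, and this is equivalent to $V \in F'$ because any vertex of $\Newt(f)$ lying inside a face $F'$ of $\Newt(f)$ remains extremal in $F'$. Both implications in this case are then immediate.

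The substantive case is $V \in I_F$, in which $V = e_\ell$ for some $\ell$ such that $\R_{\ge 0} e_\ell$ is disjoint from $\Newt(f)$, and $\zeta(V) = V_M + e_\ell$. For the direction $V \in \gens(C_{F'}) \Rightarrow \zeta(V) \in F'$: since $e_\ell$ is not a vertex of $\Newt(f)$, we must have $e_\ell \in I_{F'}$, so that $F' + \R_{\ge 0} e_\ell \subset F'$; combined with $V_M \in M \subset F'$, this yields $V_M + e_\ell \in F'$. For the converse, I would pick a supporting linear functional $u \in (\R^n)^*$ and a real number $c$ with $F' = \{x \in \Newt(f) : \langle u, x\rangle = c\}$. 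Then $V_M \in F'$ and $V_M + e_\ell \in F'$ give $\langle u, V_M\rangle = \langle u, V_M\rangle + \langle u, e_\ell\rangle = c$, so $\langle u, e_\ell\rangle = 0$; this characterizes exactly $e_\ell \in I_{F'}$, hence $V \in \gens(C_{F'})$.

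There is no real obstacle here; the only subtlety is bookkeeping the partition $\gens(C_F) = \ver(F) \sqcup I_F$ and confirming from the paper's convention (``rays disjoint from $\Newt(f)$ that contain a coordinate vector $e_\ell$'') that an element of $I_F$ is never a vertex of $\Newt(f)$, which is what ensures the two cases do not interact and makes the case analysis legitimate.
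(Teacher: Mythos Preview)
Your proof is correct and follows essentially the same approach as the paper: both split into the cases $V \in \ver(F)$ and $V \in I_F$, and in the unbounded case both use a supporting functional $u$ for $F'$ (equivalently $u \in \sigma_{F'}^\circ$) together with $V_M \in F'$ to reduce $\zeta(V) \in F'$ to $\langle u, e_\ell\rangle = 0$, which is the criterion for $e_\ell \in I_{F'}$. The paper just packages the two directions of the second case into a single chain of equivalences $\zeta(V)\in F' \Leftrightarrow \langle u,\zeta(V)-V_M\rangle=0 \Leftrightarrow \langle u,V\rangle=0 \Leftrightarrow V\in I_{F'}$, but the content is identical to what you wrote.
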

\begin{proof}
First, suppose that $V \in \ver(F)$. Then $\zeta(V) = V \in F'$ if and only if $V \in \ver(F')$. 
Second, suppose that $V \in \Unb(C_{F})$. Consider $u \in \sigma_{F'}^\circ$. 
Then $\langle u , V \rangle = \langle u , \zeta(V) - V_M \rangle$, so $\zeta(V) \in F'$  if and only if
$\langle u , \zeta(V) - V_M \rangle = 0$, if and only if 
$\langle u , V \rangle = 0$, if and only if 
$V \in \Unb(C_{F'})$. The result follows.
\end{proof}

Let $\mathcal{S}$ be the set of saturated chains 
of faces  in $\Gamma$ starting at $M$, i.e., a chain of faces starting at $M$ where the dimension increases by one at each step. Let $s = F_\bullet$ be an element of $\mathcal{S}$. Let 
$\ell_s$
denote the length of $F_\bullet$, i.e., the number of elements in $F_\bullet$ minus one. 
We
let $F_{\bullet,i}$ denote the $i$th element of $F_\bullet$ for $0 \le i \le \ell_s$. For example, $F_{\bullet,0} = M$. 
We write $F \in F_\bullet$ if $F = F_{\bullet,i}$ for some $0 \le i \le \ell_s$.

Define  $V_{s,0} = V_M$. 
Since $F_\bullet$ is saturated and  $\Newt(f)$ is $\alpha$-simplicial, we may define 
$V_{s,i}$ to be
the unique 
element of $\gens(C_{F_{\bullet,i}} \smallsetminus C_{F_{\bullet,i-1}})$ for $1 \le i \le \ell_s$.

\begin{definition}\label{d:explicitconstruct}
Let $\mathcal{S}$ be the set of saturated chains 
of faces  in $\Gamma$ starting at $M$. 
We define a pair $(\mathcal{Z},\mathcal{F})$, where 
$\mathcal{Z} = (Z_s)_{s \in \mathcal{S}}$ and $\mathcal{F} = (F_s)_{s \in \mathcal{S}}$ are collections of elements of $\Q^n$ and $\Contrib(\alpha)_M$ respectively, as follows:
for any element $s = F_\bullet$  of $\mathcal{S}$, let
\[
Z_s := \sum_{i = 0}^{\ell_s} b_{i,\ell_s} \zeta(V_{s,i}),
\]
where $\{ b_{i,j} \}_{0 \le i,j \le r}$, $r = n - 1 - \dim M$, and
\[
b_{i,j} = b_{i,j}(\mu) =
\begin{cases}
2^{-i} - 2i \mu,& \text{if } i = j \\   
2^{-(i+1)}  - (i + j)\mu,& \text{if } i < j \\
0,              & \text{otherwise}
\end{cases}
\]
for some $\mu \in \Q$ such that $0 < \mu \ll 1$.
Let  $F_s := F_{\bullet,\ell_s}$ be the maximal element of $F_\bullet$.
\end{definition}

For example, $b_{0,0} = 1$ and if $s = F_\bullet$, where 
$F_\bullet = \{ M \}$, then $\ell_s = 0$, $Z_s = V_M$, and $F_s = M$. 
Note that  we abuse notation above by not indicating  the dependence of $(\mathcal{Z},\mathcal{F})$ on the choice of $\mu$. Below we fix a value of $\mu$ sufficiently small. 
Our goal is to show that we can construct 
a restricted, weakly 
$\alpha$-compatible pair from $(\mathcal{Z},\mathcal{F})$. Recall that we have fixed a locally unique labeling $(A_F, e_{F}^*)$ of $\Contrib(\alpha)_M$. 

\begin{definition}\label{d:orthogoperator}
Let $s = F_\bullet \in \mathcal{S}$. 
Let $\mathcal{A}_s = 
\{ A_F : F \in F_\bullet \}$. Given an element $A$ in $\mathcal{A}_s$, we define a base direction $e^*_{s,A}$ as follows: if $A = A_F$ for some $F \in F_\bullet$, then 
$e^*_{s,A} := e_F^*$. We  define a linear function 
$\Phi_{s} \colon \R^n \to \R^n$ by 
\[
\Phi_{s}(X) = X - \sum_{A \in \mathcal{A}_s} \langle e^*_{s,A} , X \rangle (A - V_M).
\]
\end{definition}

The fact that $e^*_{s,A}$ is well-defined in Definition~\ref{d:orthogoperator}
is an immediate consequence of the  locally unique labeling condition.  Explicitly, if $A = A_{F} = A_{F'}$ for some  $F,F' \in F_\bullet$, then either $F \subset F'$ or $F' \subset F$, and $(*)$
implies that $e_F^* = e^*_{F'}$.

\begin{remark}\label{r:phiFbullet}
For $s \in \mathcal{S}$, $u \in \sigma_M$ and $X \in \R^n$, 
$\langle u , \Phi_{s}(X) \rangle = \langle u , X \rangle$.
\end{remark}

\begin{lemma}\label{l:orthogformula}
Let $s = F_\bullet \in \mathcal{S}$.
Then $\langle e_F^* , \Phi_{s}(X) \rangle = 0$ for any $F \in F_\bullet$ and any $X \in \R^n$. If $F_s \subset K$ and $\langle e_K^*, X \rangle = 0$ for some $X \in \R^n$, then $\langle e_K^*, \Phi_{s}(X) \rangle = 0$.
\end{lemma}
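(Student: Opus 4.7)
The plan is to first establish a structural claim that forces every apex $A_F$ for $F \in F_\bullet$ to lie in $\ver(M)$; once this is in hand, both statements of the lemma follow by direct expansion of the defining formula for $\Phi_s$.

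The key claim I would prove is: for every $F \in \Contrib(\alpha)_M$ with operative labeling $(A_F, e_F^*)$, we have $A_F \in \ver(M)$. Since $M \in \Contrib(\alpha)$ gives $\1 \in \Span(M)$, I may write $\1 = \sum_{V \in \gens(C_M)} c_V V$; pairing with $e_F^* = e_{\ell_F}^*$ yields $1 = \sum_V c_V V_{\ell_F}$. Since $\gens(C_M) \subset \gens(C_F)$ and the apex condition for $F$ forces $e_F^*$ to vanish on $\gens(C_F) \setminus \{A_F\}$, any $V \in \gens(C_M)$ with $V_{\ell_F} \neq 0$ must coincide with $A_F$. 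Hence $A_F \in \gens(C_M)$; and since $A_F$ is a point vertex of $\Newt(f)$ while elements of $I_M$ are coordinate vectors on rays disjoint from $\Newt(f)$, we conclude $A_F \in \ver(M)$.

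Granting this claim, the first statement is a short computation. Using \eqref{e:VM} to rewrite $\langle e_F^*, A - V_M \rangle = \langle e_F^*, A \rangle$, I would expand
\[
\langle e_F^*, \Phi_s(X) \rangle = \langle e_F^*, X \rangle - \sum_{A \in \mathcal{A}_s} \langle e^*_{s, A}, X \rangle \langle e_F^*, A \rangle.
\]
By the key claim every $A \in \mathcal{A}_s$ lies in $\ver(M) \subset \ver(F) \subset \gens(C_F)$. For $A \neq A_F$ the apex property forces $\langle e_F^*, A \rangle = 0$; the lone surviving term $A = A_F$ contributes $\langle e_F^*, X \rangle \cdot 1 = \langle e_F^*, X \rangle$, using $e^*_{s, A_F} = e_F^*$ from the definition, and this exactly cancels the leading term.

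The second statement is analogous. The key claim applied to $K$ gives $A_K \in \ver(M) \subset \ver(K)$, and every $A \in \mathcal{A}_s$ lies in $\ver(K)$. The hypothesis eliminates the leading $\langle e_K^*, X \rangle$; in the sum, for $A \neq A_K$ the apex property of $K$ gives $\langle e_K^*, A \rangle = 0$, while for $A = A_K$ (if present in $\mathcal{A}_s$) I would invoke the operative-labeling compatibility rule: $A_K = A_{F'}$ for some $F' \in F_\bullet \subset F_s \subset K$ forces $e^*_{s, A_K} = e_{F'}^* = e_K^*$, so the factor $\langle e^*_{s, A_K}, X \rangle = \langle e_K^*, X \rangle$ vanishes by hypothesis. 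The main conceptual obstacle is recognizing the structural claim $A_F \in \ver(M)$; once it is spotted, both parts are routine and neither induction on the length of $F_\bullet$ nor any delicate analysis of saturated-chain combinatorics is required.
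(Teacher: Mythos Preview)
Your proof is correct and follows essentially the same expansion-and-cancellation approach as the paper. The main difference is that you isolate and prove the structural fact $A_F \in \ver(M)$ for every $F \in \Contrib(\alpha)_M$, and use it uniformly for both statements. The paper's proof does not state this fact; its one-line computation for the first statement implicitly relies on $\langle e_F^*, A\rangle = 0$ for all $A \in \mathcal{A}_s \setminus \{A_F\}$, which is exactly what your structural claim justifies (the case $F \subsetneq F'$ with $A_{F'} \neq A_F$ is not otherwise obvious). For the second statement the paper instead argues by a case split on whether some $\langle e_K^*, A\rangle$ is nonzero, using only that each $A_{F'}$ is a vertex of $F' \subset F_s \subset K$; your version bypasses the case split via the same structural claim. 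So the two proofs are the same in spirit, with yours filling in a step that the paper leaves implicit.
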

\begin{proof}
Recall from \eqref{e:VM} that
$\langle e_F^*, V_M \rangle = 0$ for all $F \supset M$.
Suppose that $F \in F_\bullet$. 
Then $e^*_{s,A_F} = e_F^*$, and we compute
$\langle e_F^* , \Phi_{s}(X) \rangle = 
\langle e_F^* , X - \langle e^*_{s,A_F} , X \rangle (A_F - V_M) \rangle = 0.$
Suppose that $F_s \subset K$ and $\langle e_K^*, X \rangle = 0$. If 
$\langle e_K^*, A \rangle = 0$ for  all $A \in \mathcal{A}_s$, then 
$\langle e_K^*, \Phi_s(X) \rangle = \langle e_K^*, X \rangle = 0$.
Suppose that 
$\langle e_K^*, A_F \rangle \neq 0$ for some $F \in F_\bullet$. Then $A_K = A_F$. Since $F \subset K$,  the locally unique labeling condition $(*)$ implies that $e^*_{s,A_F} = e_F^* = e_K^*$. As above, 
$\langle e_K^*, \Phi_{s}(X) \rangle = 
\langle e_K^*, X - \langle e^*_{s,A_F} , X \rangle (A_F - V_M) \rangle = 0.$
\end{proof}

\begin{lemma}\label{l:reducetooriginal}
With the notation of Definition~\ref{d:explicitconstruct}, suppose that  $(\mathcal{Z},\mathcal{F})$ satisfies the following property:

Suppose that 
$\sigma_K^\circ \cap \tau_{Z_{s}} \cap \tau_{Z_{s'}}  \ne \emptyset$, for some  $K \in \Contrib(\alpha)_M$  and $s  = F_\bullet ,s'  = F_\bullet' \in \mathcal{S}$. 
Then 
\begin{enumerate}
\item\label{i:pweak1} $K \subset F_s$, and
\item\label{i:pweak2}
either $F_s \in F_\bullet'$ or $F_{s'} \in F_\bullet$. 
\end{enumerate} 
Let $\Phi(\mathcal{Z}) := ( \Phi_s(Z_s) )_{s \in \mathcal{S}}$. Then $(\Phi(\mathcal{Z}),\mathcal{F})$ is restricted and weakly $\alpha$-compatible. 
\end{lemma}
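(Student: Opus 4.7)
The plan is to verify separately that $(\Phi(\mathcal{Z}), \mathcal{F})$ is restricted and that it is weakly $\alpha$-compatible. For the restricted condition, one expands $\Phi_s(Z_s) = Z_s - \sum_{A \in \mathcal{A}_s}\langle e^*_{s,A}, Z_s\rangle(A - V_M)$ term by term. The chain structure of $F_\bullet$ and the $\alpha$-simplicial hypothesis place each $V_{s,i}$ for $i \ge 1$ in $\gens(C_{F_s}\smallsetminus C_M)$, and $\zeta$ only contributes $V_M$, so $Z_s \in \Span(\{V_M\}\cup\gens(C_{F_s}\smallsetminus C_M))$. For each $A = A_F \in \mathcal{A}_s$ with $F \in F_\bullet$, the vertex $A$ lies in $\ver(F_s)$; when $A \in \ver(M)$, the $B_1$-condition on $F$ together with $\gens(C_M) \subset \gens(C_F)$ shows $A$ is also an apex of $M$ and so $A \in \mathcal{A}_M$, while otherwise $A \in \gens(C_{F_s}\smallsetminus C_M)$. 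Combining these observations gives $\Phi_s(Z_s) \in \Span(\{V_M\}\cup\gens(C_{F_s}\smallsetminus C_M)\cup\mathcal{A}_M)$.

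For weak $\alpha$-compatibility, suppose $\sigma_K^\circ \cap \tau_{\Phi_s(Z_s)} \cap \tau_{\Phi_{s'}(Z_{s'})} \ne \emptyset$ for some $K \in \Contrib(\alpha)_M$ and $s, s' \in \mathcal{S}$. Since $M \subset K$ gives $\sigma_K \subset \sigma_M$, Remark~\ref{r:phiFbullet} says $\langle u, \Phi_t(Z_t)\rangle = \langle u, Z_t\rangle$ for $u \in \sigma_M$ and $t \in \{s, s'\}$, so the defining inequalities of $\tau_{\Phi_t(Z_t)}$ and $\tau_{Z_t}$ coincide when restricted to $\sigma_K$. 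Hence $\sigma_K^\circ \cap \tau_{Z_s} \cap \tau_{Z_{s'}} \ne \emptyset$, and the hypothesis of the lemma immediately yields $K \subset F_s$ together with $F_s \in F_\bullet'$ or $F_{s'} \in F_\bullet$; these are exactly conditions (1) and (2) of Definition~\ref{d:weaklys0compatible}, since chain membership is inclusion.

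For condition (3), the vanishing $\langle e^*_{F_s}, \Phi_s(Z_s)\rangle = 0$ is the first part of Lemma~\ref{l:orthogformula} applied with $F = F_s \in F_\bullet$. The same statement applied to the chain $F_\bullet'$ at $F = F_s$ handles $\langle e^*_{F_s}, \Phi_{s'}(Z_{s'})\rangle = 0$ in the case $F_s \in F_\bullet'$. In the remaining case $F_{s'} \subsetneq F_s$ with $F_{s'} \in F_\bullet$, one applies the second part of Lemma~\ref{l:orthogformula} (with $s$, $K$, $X$ replaced by $s'$, $F_s$, $Z_{s'}$) to reduce the claim to showing $\langle e^*_{F_s}, Z_{s'}\rangle = 0$. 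The inclusion $F_{s'} \subset F_s$ forces $\gens(C_{F_{s'}}) \subset \gens(C_{F_s})$ (using $I_{F_{s'}} \subset I_{F_s}$, which follows from the dual-cone inclusion $\sigma_{F_s} \subset \sigma_{F_{s'}}$, together with $\ver(F_{s'}) \subset \ver(F_s)$), so the $B_1$-property of $F_s$ kills every term in $\langle e^*_{F_s}, Z_{s'}\rangle$ except possibly one arising from a unique index $j$ with $V_{s',j} = A_{F_s}$. Eliminating or cancelling this last term — by invoking the operative-labeling compatibility (which forces $e^*_{F''} = e^*_{F_s}$ for any $F'' \subset F_s$ with $A_{F''} = A_{F_s}$) in concert with the hypothesis of the lemma that constrains how the chains $F_\bullet$ and $F_\bullet'$ can meet on $\sigma_K$ — is the main technical obstacle of the proof, and is where the globally fixed operative labeling has to be shown to interact cleanly with the chain-dependent construction of $Z_{s'}$ and $\Phi_{s'}$.
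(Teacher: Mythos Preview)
Your approach matches the paper's, and the argument is correct right up to the final step in the case $F_{s'} \in F_\bullet$. There you correctly reduce to $\langle e^*_{F_s}, Z_{s'}\rangle = 0$ and correctly isolate the only possible obstruction: a term with $V_{s',j} = A_{F_s}$ for some $j \ge 1$. But you then treat this as ``the main technical obstacle'' and gesture vaguely at the operative labeling and the chain hypothesis to cancel it. That line of reasoning does not work, and more importantly it is not needed: no such term can occur.

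The missing observation is that $A_{F_s} \in \ver(M)$. Indeed, $M \subset F_s$ gives $\gens(C_M) \subset \gens(C_{F_s})$, and the $B_1$-condition on $F_s$ says $e^*_{F_s}$ vanishes on every element of $\gens(C_{F_s})$ other than $A_{F_s}$. If $A_{F_s} \notin \gens(C_M)$, then $e^*_{F_s}$ vanishes on all of $\gens(C_M)$ and hence on $\Span(M)$; but $\mathbf{1} \in \Span(M)$ (since $M \in \Contrib(\alpha)$), forcing $\langle e^*_{F_s}, \mathbf{1}\rangle = 0$, which contradicts $\langle e^*_{F_s}, \mathbf{1}\rangle = 1$. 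Thus $A_{F_s} \in \gens(C_M)$, and since apices are vertices, $A_{F_s} \in \ver(M)$. Consequently $A_{F_s} \notin \gens(C_{F_{s'}} \smallsetminus C_M)$, so $V_{s',j} \ne A_{F_s}$ for every $j \ge 1$, and the ``obstruction'' is vacuous. The $B_1$-property of $F_s$ (for vertices) together with Remark~\ref{r:apexisbounded} (for unbounded directions) and \eqref{e:VM} then give $\langle e^*_{F_s}, Z_{s'}\rangle = 0$ directly, exactly as the paper asserts. Incidentally, the same observation shows that in your verification of restrictedness the ``otherwise $A \in \gens(C_{F_s} \smallsetminus C_M)$'' case never occurs: every $A \in \mathcal{A}_s$ already lies in $\mathcal{A}_M$.
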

\begin{proof}
It follows from Definition~\ref{d:explicitconstruct} and Definition~\ref{d:orthogoperator} that $(\Phi(\mathcal{Z}),\mathcal{F})$ is restricted. Suppose that 
$\sigma_K^\circ \cap \tau_{\Phi_s(Z_{s})} \cap \tau_{\Phi_{s'}(Z_{s'})}  \ne \emptyset$, for some  $K \in \Contrib(\alpha)_M$  and $s  = F_\bullet ,s'  = F_\bullet' \in \mathcal{S}$.
By Remark~\ref{r:phiFbullet}, the restriction of $\Sigma_{\mathcal{Z}}$ to $\sigma_M$ equals the 
restriction of $\Sigma_{\Phi(\mathcal{Z})}$ to $\sigma_M$. Hence $\sigma_K^\circ \cap \tau_{Z_{s}} \cap \tau_{Z_{s'}}  \ne \emptyset$. We deduce that 
$K \subset F_s$, and,
either $F_s \in F_\bullet'$ or $F_{s'} \in F_\bullet$.
The latter condition implies that either $F_s \subset F_{s'}$ or $F_{s'} \subset F_s$. 

Applying Lemma~\ref{l:orthogformula} with $s = F_\bullet$ and $F = F_s$, gives $
\langle e^*_{F_s}, \Phi_s(Z_s) \rangle = 0$. 
It remains to show that $
\langle e^*_{F_s}, \Phi_{s'}(Z_{s'}) \rangle = 0$. 
Suppose that $F_s \in F_\bullet'$. Applying
Lemma~\ref{l:orthogformula} with $s' = F_\bullet'$ and $F = F_s$, gives
$
\langle e^*_{F_s}, \Phi_{s'}(Z_{s'}) \rangle = 0$, as desired. 
Suppose that $F_{s'} \in F_\bullet$. 
Then $F_{s'} \subset F_s$. 
Applying
Lemma~\ref{l:orthogformula} with $s' = F_\bullet'$  and $K = F_s$, gives 
$
\langle e^*_{F_s}, \Phi_{s'}(Z_{s'}) \rangle = 0$, provided 
$\langle e^*_{F_s}, Z_{s'} \rangle = 0$. 
By Definition~\ref{d:explicitconstruct}, 
$Z_{s'} \in \Span(\{ V_M \} \cup \gens(C_{F_{s'}} \smallsetminus C_M))$. 
Since $F_{s'} \subset F_s$, $F_s$ is a $B_1$-face with base direction $e^*_{F_s}$, and 
$\langle e^*_{F_s}, V_M \rangle = 0$ by \eqref{e:VM}, it follows from Remark~\ref{r:apexisbounded} that $\langle e^*_{F_s}, Z_{s'} \rangle = 0$, as desired.
\end{proof}

It remains to show that  $(\mathcal{Z},\mathcal{F})$ satisfies conditions \eqref{i:pweak1} and \eqref{i:pweak2} in Lemma~\ref{l:reducetooriginal}.
We will prove this through a series of lemmas.

\begin{lemma}\label{l:lowerbound2}
There exists a constant $\lambda_M > 0$ such that for 
any $K,F,F' \in \Contrib(\alpha)_M$ that are not subfaces of  a common face in  $\Contrib(\alpha)_M$, and for any nonzero 
$u \in \sigma_K$, there exists an element 
$V \in \gens(C_F \smallsetminus C_M) \cup \gens(C_{F'} \smallsetminus C_M)$ such that 
$
\langle u , \zeta(V) \rangle / N(u) \ge 1 + \lambda_M.
$
\end{lemma}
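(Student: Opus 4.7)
The plan is to combine a pointwise strict-inequality argument, which uses $\alpha$-simpliciality and the fact that $F, F'$ share no common superface in $\Contrib(\alpha)_M$, with a compactness argument on a polytopal slice of $\sigma_K$. For any nonzero $u \in \sigma_K$, let $K'' \in \Gamma$ be the unique face with $u \in \sigma_{K''}^\circ$. Then $M \subset K \subset K''$, and $K'' \in \Contrib(\alpha)_M$ because $\mathbf{1} \in \Span(M) \subset \Span(K'')$ forces $\alpha_{K''} = \alpha$. The hypothesis then forces that $K''$ cannot contain both $F$ and $F'$: otherwise it would be a common superface of $F$ and $F'$ in $\Contrib(\alpha)_M$.

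The next step will convert the condition ``$F, F' \not\subset K''$'' into one about generators. Because $M \subset K''$, the face $C_M$ is a face of $C_{K''}$, so $\gens(C_M) \subset \gens(C_{K''})$. If both $\gens(C_F \smallsetminus C_M)$ and $\gens(C_{F'} \smallsetminus C_M)$ were contained in $\gens(C_{K''})$, then $\gens(C_F) \subset \gens(C_{K''})$ and $\gens(C_{F'}) \subset \gens(C_{K''})$. By $\alpha$-simpliciality, the image of $C_{K''}$ in $\R^n/\Span(C_M)$ is simplicial, so any subset of its generators spans a face; pulling back, $C_F$ and $C_{F'}$ are faces of $C_{K''}$, which translates (separating vertices from unbounded directions) to $F \subset K''$ and $F' \subset K''$, a contradiction. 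Therefore at least one $V \in \gens(C_F \smallsetminus C_M) \cup \gens(C_{F'} \smallsetminus C_M)$ satisfies $V \notin \gens(C_{K''})$, so $\zeta(V) \notin K''$ by Lemma~\ref{l:zetabasic}. Since $u \in \sigma_{K''}^\circ$ and $\zeta(V) \in \Newt(f) \smallsetminus K''$, we obtain the strict inequality $\langle u, \zeta(V) \rangle > N(u)$.

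For the uniform bound, the plan is to use compactness. Fix a triple $(K, F, F')$ satisfying the hypothesis and consider
\[
g_{K, F, F'}(u) \, := \, \max\bigl\{\langle u, \zeta(V)\rangle / N(u) : V \in \gens(C_F \smallsetminus C_M) \cup \gens(C_{F'} \smallsetminus C_M)\bigr\}.
\]
Since $K$ is interior, $N$ restricted to $\sigma_K$ is linear and strictly positive on $\sigma_K \smallsetminus \{0\}$ (evaluating at any interior point $W$ of $K$), and $\sigma_K \subset \R^n_{\ge 0}$ is pointed; hence the slice $S_K := \sigma_K \cap \{N(u) = 1\}$ is a compact polytope. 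On $S_K$, the function $g_{K, F, F'}$ is continuous and, by the previous step, strictly greater than $1$ at every point, so its minimum is $1 + \lambda_{K, F, F'}$ for some $\lambda_{K, F, F'} > 0$. Setting $\lambda_M$ to be the minimum of $\lambda_{K, F, F'}$ over the finite set of admissible triples, and using the degree-zero homogeneity $g_{K,F,F'}(u) = g_{K,F,F'}(u/N(u))$, gives the lemma. The principal obstacle is the combinatorial step in the middle paragraph, where $\gens(C_F) \cup \gens(C_{F'}) \subset \gens(C_{K''})$ is converted into $F, F' \subset K''$; this is precisely where the $\alpha$-simplicial hypothesis is needed.
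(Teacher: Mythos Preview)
Your proof is correct and follows essentially the same strategy as the paper's: establish the pointwise strict inequality by showing that equality would place $K$, $F$, and $F'$ inside a common face $K'' = F_u \in \Contrib(\alpha)_M$, contradicting the hypothesis, and then pass to a uniform bound by compactness of a slice of $\sigma_K$ and finiteness of the set of triples.

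One remark on your closing comment: the appeal to $\alpha$-simpliciality in the middle step is not actually needed, and the paper's proof does not use it. From $\gens(C_F) \subset \gens(C_{K''})$ you get $\ver(F) \subset \ver(K'')$ and $I_F \subset I_{K''}$ directly: a vertex $V$ of $F$ is a vertex of $\Newt(f)$, so the ray $\R_{\ge 0}V$ meets $\Newt(f)$ and therefore cannot be an unbounded-direction ray of $C_{K''}$ (those are, by definition, disjoint from $\Newt(f)$); symmetrically, an $e_\ell \in I_F$ cannot land in $\ver(K'')$. Hence $F = \mathrm{Conv}(\ver(F)) + \mathrm{cone}(I_F) \subset K''$, and since $F$ is already a face of $\Newt(f)$ it is a face of $K''$. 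So what you flagged as the ``principal obstacle'' is in fact elementary and does not rely on the $\alpha$-simplicial hypothesis.
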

\begin{proof}
Fix $K,F,F' \in \Contrib(\alpha)_M$ that are not subfaces of  a common face in  $\Contrib(\alpha)_M$. 
Let $\mathcal{V} = \gens(C_F \smallsetminus C_M) \cup \gens(C_{F'} \smallsetminus C_M)$, and 
consider the continuous
function 
$\phi \colon \sigma_K \smallsetminus \{ 0 \} \to \R$  
defined by 
\[
\phi(u) =  \Big( \max_{V \in \mathcal{V}} \langle u , \zeta(V) \rangle/N(u)\Big) - 1. 
\]
We claim that the image satisfies $\Ima(\phi) \subset \R_{> 0}$. Indeed, suppose $\phi(u) = 0$. 
Let $F_u$ be the face of $\partial \Newt(f)$ minimized by $u$. Then $F_u$ contains $K$ and $\{ \zeta(V) : V  \in \mathcal{V} \}$.
By Lemma~\ref{l:zetabasic}, $C_{F_u}$ contains $C_K$, 
$C_{F}$ and $C_{F'}$. Then $K,F,F'$ are common subfaces of $F_u$, a contradiction.

Note that $\phi(\eta u) = \phi(u)$ for all $\eta \in \R_{> 0}$ and 
$u \in   \sigma_K \smallsetminus \{ 0 \}$. 
Since $\sigma_K \cap S$ is compact, there exists $\lambda = \lambda(K,F,F') > 0$ such that $\phi(\sigma_K \cap S) \subset [\lambda,\infty)$.
We let $\lambda_M$ be the minimum value of $\lambda(K,F,F')$ over the finitely many choices of 
$K,F,F'$.
\end{proof}

Below we fix $\lambda_M > 0$ satisfying Lemma~\ref{l:lowerbound2}.

\begin{lemma}\label{l:bij}
Let $r = n - 1 - \dim M$. 
Assume that $\mu > 0$ is chosen sufficiently small. 
Then the coefficients 
$\{ b_{i,j} = b_{i,j}(\mu) \}_{0 \le i,j \le r}$ satisfy the following properties:
\begin{enumerate}
\item \label{i:B1} 
$b_{i,j} > 0$ for $i \le j$, 
\item\label{i:B2}  
$b_{i,j} > b_{i,j + 1}$ for $i \le j < r$, 
\item\label{i:B3}  
$b_{i,j} > b_{i + 1,j}$ for $i < j$,
\item\label{i:B4}  
$\sum_{i \ge k} b_{i,j} > \sum_{i \ge k} b_{i,j + 1}$ for any $1 \le k \le j < r$, and
\item\label{i:B5}   
$\sum_{i \ge 0} b_{i,r} +  b_{r,r}\lambda_M > 1$.
\end{enumerate}
\end{lemma}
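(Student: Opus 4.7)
The plan is to verify each of the five properties for $\mu > 0$ sufficiently small by elementary computations from the explicit formula for $b_{i,j}(\mu)$, exploiting that $\mu$ enters as a subtractive linear perturbation whose coefficients are bounded over the finite index set $0 \le i,j \le r$. Any strict inequality at $\mu = 0$ therefore persists for all $\mu > 0$ sufficiently small, so the only real work is to identify which cases degenerate to equality at $\mu = 0$ and check that the $\mu$-perturbation breaks them in the correct direction.

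Property (1) will follow immediately from $b_{i,j}(0) \ge 2^{-(r+1)} > 0$ for $i \le j \le r$. Properties (2) and (3) I would reduce to case-by-case computation of the relevant differences; for example,
\[
b_{i,j} - b_{i,j+1} = \begin{cases} 2^{-(j+1)} + \mu, & i = j, \\ \mu, & i < j, \end{cases} \qquad b_{i,j} - b_{i+1,j} = \begin{cases} 2^{-(i+2)} + \mu, & i+1 < j, \\ \mu, & i+1 = j, \end{cases}
\]
all of which are strictly positive. The role of the perturbation $(i+j)\mu$ in the definition is precisely to supply the strict excess $+\mu$ in the cases $i < j$ (for (2)) and $i+1 = j$ (for (3)), where the unperturbed values already coincide. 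For (4), I would telescope $\sum_{i \ge k}(b_{i,j} - b_{i,j+1})$ over $k \le i \le j+1$, applying (2) on the bulk $k \le i < j$ and using the boundary identities $b_{j+1,j} = 0$ and $b_{j+1,j+1} = 2^{-(j+1)} - 2(j+1)\mu$ at the top index; the powers of two cancel and the total collapses to the pure positive multiple $(3j - k + 3)\mu$, strictly positive for $k \ge 1$.

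The only property with a quantifier subtlety is (5). Evaluating in closed form, the unperturbed geometric sum is $\sum_{i=0}^{r-1} 2^{-(i+1)} + 2^{-r} = 1$, while the $\mu$-corrections assemble into $-\tfrac{3}{2}r(r+1)\mu$, giving
\[
\sum_{i \ge 0} b_{i,r} + b_{r,r}\lambda_M = 1 + 2^{-r}\lambda_M - \mu\bigl(\tfrac{3}{2}r(r+1) + 2r\lambda_M\bigr).
\]
Since $\lambda_M > 0$ was fixed in Lemma~\ref{l:lowerbound2} independently of $\mu$, the fixed positive slack $2^{-r}\lambda_M$ strictly dominates the $O(\mu)$ correction once $\mu$ is chosen small enough. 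The main (and only) obstacle is bookkeeping: the conclusion follows by picking $\mu$ smaller than the minimum of the finitely many positive thresholds arising in (1)--(5).
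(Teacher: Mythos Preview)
Your proposal is correct and follows essentially the same approach as the paper: direct elementary computation of each inequality from the explicit formula for $b_{i,j}(\mu)$. The only cosmetic difference is in property~(4), where the paper first computes the closed form $c_{k,j} = \sum_{i \ge k} b_{i,j} = 2^{-k} - (j-k+1)(3j+k)\mu/2$ and then compares $c_{k,j}$ with $c_{k,j+1}$, whereas you sum the term-by-term differences; both routes give the same $(3j-k+3)\mu > 0$.
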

\begin{proof}
We check the conditions hold by direct computation, for $\mu$ sufficiently small. 
Condition \eqref{i:B1} is clear.
For condition \eqref{i:B2}, we compute, for $i = j < r$,
\[ b_{i,i} = 2^{-i} - 2i\mu > 
b_{i,i+1} = 2^{-(i + 1)} - (2i + 1)\mu, \]
and, for $i < j < r$,
\[
b_{i,j} = 2^{-(i + 1)}  - (i + j)\mu >
b_{i,j + 1} = 2^{-(i + 1)}  - (i + j + 1)\mu.
\]
For condition \eqref{i:B3}, we compute, for $i + 1 < j$,
\[
b_{i,j} = 2^{-(i + 1)}  - (i + j)\mu >
b_{i+1,j} = 2^{-(i+2)}  - (i + j + 1)\mu,
\]
and, for $i + 1 = j$,
\[
b_{i,i+1} = 2^{-(i + 1)}  - (2i + 1)\mu >
b_{i+1,i+1} = 2^{-(i + 1)} - (2i + 2)\mu.
\]
For condition \eqref{i:B4}, define  $c_{k,j} = \sum_{i \ge k} b_{i,j}$ for $k \le j$. Then 
\begin{align*}
c_{k,j} &= 2^{-k} - \sum_{i = k}^j (i + j)\mu \\
&= 2^{-k} - (j - k +1)(3j + k)\mu/2.
\end{align*}
For $j < r$, $c_{k,j} > c_{k,j+1}$, as desired.
For condition \eqref{i:B5}, we compute
\begin{align*}
\sum_{i \ge 0} b_{i,r} + b_{r,r} \lambda_M  - 1 
&= c_{0,r} + b_{r,r} \lambda_M  - 1
\\
&= - 3r(r + 1)\mu/2 + b_{r,r} \lambda_M  
\\
&= - 3r(r + 1)\mu/2 +  (2^{-r} - 2r\mu)\lambda_M
\\
&=  2^{-r}\lambda_M - \mu r(3(r + 1)/2 + 2\lambda_M).
\end{align*}
The latter expression is positive for $\mu$ sufficiently small.
\end{proof}

\begin{lemma}\label{l:Vstructure2}
Let $s = F_\bullet \in \mathcal{S}$ and suppose
$u \in \sigma_M \cap \tau_{Z_s}$ is nonzero.
Then
$\langle u , \zeta(V_{s,i}) \rangle \le \langle u , \zeta(V_{s,j}) \rangle$ for any $0 \le i \le j \le \ell_s$. 
Moreover, if $F_s \subset F \in \Gamma$, then there 
exists a constant $0 \le m < \lambda_M$ 
such that $\gens(C_{F_s} \smallsetminus C_M) = \{ V \in \gens(C_F \smallsetminus C_M) : \langle u , \zeta(V) \rangle/N(u) \le 1 + m \}$. 

\end{lemma}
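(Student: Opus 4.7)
The plan is to deduce both claims from the hypothesis $u \in \tau_{Z_s}$ by comparing $\langle u, Z_s \rangle$ with $\langle u, Z_{s'}\rangle$ for strategically chosen alternative chains $s' \in \mathcal{S}$ obtained from $s$ by swapping, replacing, or extending vertices. The combinatorial inequalities on the coefficients $b_{i,j}$ established in Lemma~\ref{l:bij} are precisely what allow one to convert such chain comparisons into inequalities on individual dot products $\langle u, \zeta(V_{s,i})\rangle$.

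For claim 1, I handle the base case $i = 0$ directly: the identity $\langle u, V_M\rangle = N(u)$ (from $V_M \in M$ and $u \in \sigma_M$) combined with the observation that $\langle u, \zeta(V_{s,j})\rangle \ge N(u)$ — proved by splitting into the cases $V_{s,j} \in \ver(F_{\bullet,j})$ (where $F_{\bullet,j} \supset M$ forces $\langle u, V_{s,j}\rangle \ge N(u)$) and $V_{s,j} \in I_{F_{\bullet,j}}$ (where $V_{s,j}$ is a coordinate vector, $u \ge 0$, and $\zeta(V_{s,j}) = V_{s,j} + V_M$) — gives the desired inequality. For $1 \le i < \ell_s$, I take $s'$ to be the chain obtained from $s$ by swapping $V_{s,i}$ with $V_{s,i+1}$; this remains in $\mathcal{S}$ by $\alpha$-simpliciality, since subsets of $\gens(C_{F_s} \smallsetminus C_M)$ are in bijection with the faces $K$ satisfying $M \subset K \subset F_s$, and the relevant containments in $\Contrib(\alpha)_M$ are preserved under the swap. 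The difference simplifies to $Z_s - Z_{s'} = (b_{i,\ell_s} - b_{i+1,\ell_s})(\zeta(V_{s,i}) - \zeta(V_{s,i+1}))$, whose leading coefficient is positive by Lemma~\ref{l:bij}(3); combined with $\langle u, Z_s - Z_{s'}\rangle \le 0$, this gives $\langle u, \zeta(V_{s,i})\rangle \le \langle u, \zeta(V_{s,i+1})\rangle$, and iteration handles general $i < j$.

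For claim 2, I set $m := \langle u, \zeta(V_{s,\ell_s})\rangle/N(u) - 1$, which by claim 1 coincides with $\max\{\langle u, \zeta(V)\rangle/N(u) - 1 : V \in \gens(C_{F_s} \smallsetminus C_M)\}$ and is nonnegative. This choice makes the inclusion $\gens(C_{F_s} \smallsetminus C_M) \subseteq \{V : \langle u, \zeta(V)\rangle/N(u) \le 1 + m\}$ automatic. For the reverse inclusion, given $V \in \gens(C_F \smallsetminus C_{F_s})$, the chain $s'$ obtained by replacing $V_{s,\ell_s}$ with $V$ (which lies in $\mathcal{S}$ by $\alpha$-simpliciality) yields the weak inequality $\langle u, \zeta(V)\rangle \ge \langle u, \zeta(V_{s,\ell_s})\rangle$; to promote this to strict inequality, I will invoke an extended chain $s''$ of length $\ell_s + 1$ that appends $V$ and exploit Lemma~\ref{l:bij}(2) and (4) to force separation between the values attained on vertices of $F_s$ and on vertices of $F \smallsetminus F_s$. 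The bound $m < \lambda_M$ follows by contradiction: if $m \ge \lambda_M$, then Lemma~\ref{l:lowerbound2} (applied with $K = M$ and a suitable $F'$ that is not subfaced with $F_s$ in a common face in $\Contrib(\alpha)_M$) produces an alternative chain $s'$ for which the threshold identity $\sum_{i \ge 0} b_{i,r} + b_{r,r}\lambda_M > 1$ of Lemma~\ref{l:bij}(5) yields $\langle u, Z_{s'}\rangle < \langle u, Z_s\rangle$, violating $u \in \tau_{Z_s}$.

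The main obstacle will be establishing the strict inequality in the forward direction of claim 2 together with the bound $m < \lambda_M$; both rely on the finely tuned arithmetic of the $b_{i,j}$ coefficients assembled in Lemma~\ref{l:bij}, particularly the partial-sum estimate (4) and the threshold condition (5). A secondary technical nuisance is the bookkeeping needed to verify that each alternative chain $s'$ arising in the proof actually lies in $\mathcal{S}$, for which $\alpha$-simpliciality is the essential hypothesis.
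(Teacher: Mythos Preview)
Your plan for Claim 1 and for the set equality in Claim 2 is essentially the paper's argument, with only cosmetic differences: you use adjacent transpositions and then iterate, whereas the paper swaps positions $i$ and $j$ directly; and your two-step ``weak via replacement, strict via extension'' argument for the reverse inclusion is redundant, since the extended chain $s''$ alone already yields strict inequality. Indeed, in the Abel-summation identity
\[
\langle u, Z_s - Z_{s''}\rangle \;=\; \sum_{k=0}^{\ell_s}\Big(\textstyle\sum_{i\ge k} b_{i,\ell_s} - \sum_{i\ge k} b_{i,\ell_s+1}\Big)\big\langle u,\, \zeta(V_{s,k}) - \zeta(V_{s,k-1})\big\rangle \;+\; b_{\ell_s+1,\ell_s+1}\big\langle u,\, \zeta(V_{s,\ell_s}) - \zeta(V)\big\rangle,
\]
every coefficient is nonnegative by Lemma~\ref{l:bij}(1),(4), and the $k=0$ term is strictly positive because $\langle u, \zeta(V_{s,0})\rangle = N(u) > 0$; so no separate replacement step is needed.

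Your plan for the bound $m < \lambda_M$, however, has a genuine gap. Lemma~\ref{l:lowerbound2} is not relevant here and does not produce the comparison chain you want: that lemma supplies a vertex $V$ with $\langle u, \zeta(V)\rangle/N(u) \ge 1 + \lambda_M$, i.e.\ a \emph{lower} bound on a maximum over two faces that lack a common ambient face, and inserting such a $V$ into a chain would only make $\langle u, Z_{s'}\rangle$ larger. Moreover, a ``suitable $F'$'' as you describe need not exist. The correct argument is much simpler. Assuming $m \ge \lambda_M$, use Claim 1 to bound
\[
\frac{\langle u, Z_s\rangle}{N(u)} \;=\; \sum_{i=0}^{\ell_s} b_{i,\ell_s}\,\frac{\langle u, \zeta(V_{s,i})\rangle}{N(u)} \;\ge\; \sum_{i=0}^{\ell_s} b_{i,\ell_s} + b_{\ell_s,\ell_s}\lambda_M \;\ge\; \sum_{i=0}^{r} b_{i,r} + b_{r,r}\lambda_M \;>\; 1,
\]
the last inequality being exactly Lemma~\ref{l:bij}(5). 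Now compare with the trivial chain $\tilde{s} = \{M\} \in \mathcal{S}$, for which $Z_{\tilde{s}} = b_{0,0}V_M = V_M$ and hence $\langle u, Z_{\tilde{s}}\rangle = N(u)$. This gives $\langle u, Z_s\rangle > \langle u, Z_{\tilde{s}}\rangle$, contradicting $u \in \tau_{Z_s}$. Lemma~\ref{l:lowerbound2} enters only in the \emph{next} lemma, where it is used to show that $F_s$ and $F_{s'}$ lie in a common face; it plays no role in bounding $m$.
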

\begin{proof}
Since $u \in \sigma_M$, $\zeta(V_{s,0}) = V_M \in M$, and $\zeta(V_{s,j}) \in \Newt(f)$, it follows that 
$\langle u , \zeta(V_{s,0}) \rangle \le \langle u , \zeta(V_{s,j}) \rangle$ for any $0 \le j \le \ell_s$. Suppose that 
$\langle u , \zeta(V_{s,i}) \rangle > \langle u , \zeta(V_{s,j}) \rangle$ for some  $0 < i < j \le \ell_s$.
Let $\pi = (i,j) \in \Sym_{\ell_s}$ be the permutation of $[\ell_s]$ switching $i$ and $j$. 
Let
$\pi(s)$ be the unique element in $\mathcal{S}$ such that $\ell_{\pi(s)} = \ell_s$ and $V_{\pi(s),i} = V_{s,\pi(i)}$ for $1 \le i \le \ell_s$. 
Using \eqref{i:B3} in Lemma~\ref{l:bij}, we compute:
\begin{align*}
\langle u , Z_s - Z_{\pi(s)} \rangle 
&= b_{i,\ell_s} \langle u ,  \zeta(V_{s,i}) - \zeta(V_{\pi(s),i}) \rangle + 
b_{j,\ell_s} \langle u ,  \zeta(V_{s,j}) - \zeta(V_{\pi(s),j}) \rangle 
\\
&= 
(b_{i,\ell_s} - b_{j,\ell_s})\langle u , \zeta(V_{s,i}) - \zeta(V_{s,j}) \rangle > 0.
\end{align*}
The latter contradicts the assumption that $u \in \tau_{Z_s}$. This completes the proof of the first statement.

Since $M$ is interior and $u \in \sigma_M$, $N(u) > 0$. 
Let $m 
= (\langle u , \zeta(V_{s,\ell_s}) \rangle/N(u)) - 1 \ge 0$. 
Assume that $m \ge \lambda_M$. 
Using all the 
statements 
of Lemma~\ref{l:bij},
we compute
\begin{align*}
\langle u , Z_{s} \rangle / N(u) &=  \sum_{i = 0}^{\ell_s} b_{i,\ell_s} \langle u , \zeta(V_{s,i}) \rangle / N(u)
\\
&\ge  \sum_{i = 0}^{\ell_s} b_{i,\ell_s} + b_{\ell_s,\ell_s} \lambda_M
\\
&\ge  \sum_{i = 0}^{r} b_{i,r} + b_{r,r} \lambda_M > 1.
\end{align*}
On the other hand, if $\tilde{s}$ is the unique element 
in $\mathcal{S}$ with $\ell_{\tilde{s}} = 0$, then, since $u \in \sigma_M$, 
\[
\langle u , Z_{\tilde{s}} \rangle/N(u) = \langle u , V_M \rangle / N(u) = 1 < \langle u , Z_{s} \rangle / N(u).
\]
This contradicts the assumption that $u \in
\tau_{Z_s}$. We conclude that $m < \lambda_M$.

Since $\langle u , \zeta(V_{s,\ell_s}) \rangle  = \max_{0 \le i \le \ell_s} \langle u, \zeta(V_{s,i}) \rangle$, 
we have 
\[
\gens(C_{F_s} \smallsetminus C_M) \subset \{ V \in \gens(C_F \smallsetminus C_M) : \langle u , \zeta(V) \rangle/N(u) \le 1 + m \}.
\]
It remains to prove the reverse inclusion. 
Suppose that 
$V \in \gens(C_F \smallsetminus C_M)$ and $\langle u , \zeta(V) \rangle/N(u) \le 1 + m$. Equivalently, we assume that 
$\langle u , \zeta(V_{s,\ell_s}) - \zeta(V) \rangle \ge 0$.
We argue by contradiction. Assume that
$V \notin C_{F_s}$. Let $s'$ be the unique element in $\mathcal{S}$ such that $\ell_{s'} = \ell_s + 1$, $V_{s',i} = V_{s,i}$ for $0 \le i \le \ell_s$, and $V_{s',\ell_{s'}} = V$. If we let $V_{s,-1} = V_{s',-1} = 0$, then we can write\[
\langle u , Z_s \rangle = \sum_{k = 0}^{\ell_s} b_{k,\ell_s} \langle u ,  \zeta(V_{s,k}) \rangle = \sum_{k = 0}^{\ell_s} (\sum_{i = k}^{\ell_s} b_{i,\ell_s})\langle u , \zeta(V_{s,k}) - \zeta(V_{s,k-1}) \rangle, 
\]
\[
\langle u , Z_{s'} \rangle = \sum_{k = 0}^{\ell_s + 1} b_{k,\ell_s + 1} \langle u ,  \zeta(V_{s',k}) \rangle = \sum_{k = 0}^{\ell_s + 1} (\sum_{i = k}^{\ell_s + 1} b_{i,\ell_s + 1})\langle u , \zeta(V_{s',k}) - \zeta(V_{s',k-1}) \rangle, \text{ and} 
\]
\begin{align*}
\langle u , Z_s - Z_{s'} \rangle &= 
\sum_{k = 0}^{\ell_s} (\sum_{i = k}^{\ell_s} b_{i,\ell_s} - \sum_{i = k}^{\ell_s + 1} b_{i,\ell_s + 1})\langle u , \zeta(V_{s,k}) - \zeta(V_{s,k-1}) \rangle + b_{\ell_s + 1,\ell_s + 1}  \langle u, \zeta(V_{s,\ell}) - \zeta(V)  \rangle. 
\end{align*}
Since $u \in \sigma_M$ and $V_{s,0} = V_M \in M$, we have $\langle u , \zeta(V_{s,0}) \rangle = N(u) > 0$. 
By conditions \eqref{i:B1} and \eqref{i:B4} in Lemma~\ref{l:bij}, 
it follows that all terms above are nonnegative, and 
at least one term is positive.
This contradicts the assumption that $u \in
\tau_{Z_s}$.
\end{proof}

The following lemma completes our proof. 

\begin{lemma}\label{l:original}
With the notation of Definition~\ref{d:explicitconstruct}, $(\mathcal{Z},\mathcal{F})$ satisfies the 
the following property:

Suppose that 
$\sigma_K^\circ \cap \tau_{Z_{s}} \cap \tau_{Z_{s'}}  \ne \emptyset$, for some  $K \in \Contrib(\alpha)_M$  and $s  = F_\bullet ,s'  = F_\bullet' \in \mathcal{S}$. 
Then 
\begin{enumerate}
\item\label{i:ppweak1} $K \subset F_s$, and
\item\label{i:ppweak2}
either $F_s \in F_\bullet'$ or $F_{s'} \in F_\bullet$. 
\end{enumerate}  
\end{lemma}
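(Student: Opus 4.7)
The plan is to analyze any $u \in \sigma_K^\circ \cap \tau_{Z_s} \cap \tau_{Z_{s'}}$ using the explicit form of $Z_s, Z_{s'}$ together with the two key tools already in place: Lemma~\ref{l:lowerbound2}, which forces faces to share a common containing face, and Lemma~\ref{l:Vstructure2}, which supplies monotonicity of $\langle u, \zeta(V_{s,i}) \rangle$ in $i$ and characterizes $\gens(C_{F_s} \smallsetminus C_M)$ as the sublevel set $\{V \in \gens(C_F \smallsetminus C_M) : \langle u, \zeta(V)\rangle/N(u) \le 1 + m_s\}$ for some constant $0 \le m_s < \lambda_M$ depending on $u$ and $s$. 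Such a $u$ is nonzero and lies in $\sigma_M$ since $M \subset K$, and for any $V \in \gens(C_K \smallsetminus C_M)$, splitting into cases $V \in \ver(K)$ and $V \in I_K$ and using $V_M \in M \subset K$ gives $\langle u, \zeta(V) \rangle = N(u)$ directly from the definition of $\zeta$.

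For part (1), I would argue by contradiction: if $K$, $F_s$, and $F_{s'}$ are not jointly contained in any face of $\Contrib(\alpha)_M$, Lemma~\ref{l:lowerbound2} produces a $V$ in $\gens(C_{F_s} \smallsetminus C_M) \cup \gens(C_{F_{s'}} \smallsetminus C_M)$ with $\langle u, \zeta(V) \rangle / N(u) \ge 1 + \lambda_M$, contradicting the bound $m_{(\cdot)} < \lambda_M$ supplied by Lemma~\ref{l:Vstructure2} (applied with $F = F_s$ or $F = F_{s'}$, respectively). Hence there is $F \in \Contrib(\alpha)_M$ containing all three of $K$, $F_s$, $F_{s'}$. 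Applying Lemma~\ref{l:Vstructure2} with this $F$ for the chain $s$, and combining with $\langle u, \zeta(V)\rangle/N(u) = 1 \le 1 + m_s$ for every $V \in \gens(C_K \smallsetminus C_M) \subset \gens(C_F \smallsetminus C_M)$, yields $\gens(C_K \smallsetminus C_M) \subset \gens(C_{F_s} \smallsetminus C_M)$. The $\alpha$-simpliciality of $\Newt(f)$ then promotes this to $K \subset F_s$, giving (1).

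For part (2), by symmetry of the hypothesis in $s, s'$, I may assume $m_s \le m_{s'}$, where $m_s, m_{s'}$ come from Lemma~\ref{l:Vstructure2} applied with the common face $F$ above. Comparing the two sublevel characterizations gives $\gens(C_{F_s} \smallsetminus C_M) \subset \gens(C_{F_{s'}} \smallsetminus C_M)$, hence $F_s \subset F_{s'}$ and $\ell_s \le \ell_{s'}$. The first assertion of Lemma~\ref{l:Vstructure2} applied to $s'$ gives that $\langle u, \zeta(V_{s',i})\rangle$ is non-decreasing in $i$, so the sublevel set $\{i \in [\ell_{s'}] : \langle u, \zeta(V_{s',i})\rangle/N(u) \le 1 + m_s\}$ is an initial segment $\{1, \ldots, j^*\}$. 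Taking $F = F_{s'}$ in the sublevel characterization of Lemma~\ref{l:Vstructure2} identifies this set with $\gens(C_{F_s} \smallsetminus C_M)$, which has size $\ell_s$, forcing $j^* = \ell_s$. Thus $\{V_{s',1}, \ldots, V_{s', \ell_s}\} = \gens(C_{F_s} \smallsetminus C_M)$, and $\alpha$-simpliciality identifies $F_{\bullet, \ell_s}' = F_s$, giving $F_s \in F_\bullet'$.

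The substantive content of the argument has been packaged into Lemmas~\ref{l:lowerbound2} and \ref{l:Vstructure2}, so this final step is largely bookkeeping. The one piece requiring genuine care is the prefix argument in (2): one must track carefully that the sublevel condition detected by the monotonic values $\langle u, \zeta(V_{s',i})\rangle$ picks out exactly $\gens(C_{F_s} \smallsetminus C_M)$ rather than a proper sub- or super-set, which is why the precise equality (rather than mere inclusion) in Lemma~\ref{l:Vstructure2} is indispensable here.
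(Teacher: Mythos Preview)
Your proposal is correct and follows essentially the same approach as the paper's proof: use Lemma~\ref{l:Vstructure2} to bound all values $\langle u,\zeta(V)\rangle/N(u)$ below $1+\lambda_M$, invoke Lemma~\ref{l:lowerbound2} to obtain a common face $F$ containing $K$, $F_s$, $F_{s'}$, and then combine the sublevel characterization with monotonicity to read off $K \subset F_s$ and $F_s \in F_\bullet'$. The only cosmetic difference is that the paper cites Lemma~\ref{l:zetabasic} to identify $\gens(C_K\smallsetminus C_M)$ as the level set $\{\langle u,\zeta(V)\rangle/N(u)=1\}$, whereas you verify $\langle u,\zeta(V)\rangle=N(u)$ for $V\in\gens(C_K\smallsetminus C_M)$ directly from the definition of $\zeta$; both work.
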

\begin{proof}
Fix $u \in   \sigma_K^\circ \cap \tau_{Z_s} \cap \tau_{Z_{s'}}$. 
By Lemma~\ref{l:Vstructure2}, $\langle u , \zeta(V) \rangle/N(u) < 1 + \lambda_M$ for all $V \in \gens(C_{F_s} \smallsetminus C_M) \cup \gens(C_{F_{s'}} \smallsetminus C_M)$.
By Lemma~\ref{l:lowerbound2}, there exists a face $F \in \Contrib(\alpha)$ such that $K,F_s,F_{s'} \subset F$. 
By Lemma~\ref{l:Vstructure2}, there exists $m,m' \ge 0$ such that 
$$\gens(C_{F_s} \smallsetminus C_M) = \{ V \in \gens(C_F \smallsetminus C_M) : \langle u , \zeta(V) \rangle/N(u) \le 1 + m \}, \text{ and}$$ 
$$\gens(C_{F_{s'}} \smallsetminus C_M) = \{ V \in \gens(C_F \smallsetminus C_M) : \langle u , \zeta(V) \rangle/N(u) \le 1 + m' \}.$$

Since $u \in \sigma_K^\circ$, it follows from Lemma~\ref{l:zetabasic} that $\gens(C_K \smallsetminus C_M) = \{ V \in \gens(C_F \smallsetminus C_M) : \langle u , \zeta(V) \rangle / N(u) = 1 \}$, and hence 
$K \subset F_s$. 
It remains to establish 
\eqref{i:ppweak2}. 
Without loss of generality, we may assume  that $m \le m'$.  Then $\gens(C_{F_s} \smallsetminus C_M) = \{ V \in \gens(C_{F_{s'}} \smallsetminus C_M) : \langle u , \zeta(V) \rangle/N(u) \le 1 + m \}$.
By Lemma~\ref{l:Vstructure2}, $\langle u , \zeta(V_{s',i}) \rangle \le \langle u , 
\zeta(V_{s',j}) \rangle$ for $0 \le i \le j \le \ell_{s'}$. We deduce that $F_s \in F_{\bullet}'$. 
\end{proof}

A corollary of the proof of Lemma~\ref{l:original} is that $K \in F_\bullet$.

\begin{proof}[Proof of Theorem~\ref{thm:existence}]
Let $\alpha \not \in \Z$, and assume that all faces of $\Contrib(\alpha)$ are $\UB$ and $\Newt(f)$ is $\alpha$-simplicial. 
Let $M$ be a minimal face of $\Contrib(\alpha)$. Then Lemma~\ref{l:reducetooriginal} and Lemma~\ref{l:original} imply that there 
is a
restricted, weakly $\alpha$-compatible pair. Lemma~\ref{l:weakimpliesstrong} then 
implies that there is 
an $\alpha$-compatible pair. 
\end{proof}

\section{Beyond the simplicial case}\label{sec:beyondsimplicial}

Our techniques are capable of proving the local motivic monodromy conjecture for certain nondegenerate singularities whose Newton polyhedra are not simplicial.
In this section, we prove our strongest result on the local motivic monodromy conjecture, explain the remaining cases needed to prove the local motivic monodromy conjecture for nondegenerate singularities, and prove the local motivic monodromy conjecture for $3$-dimensional nondegenerate singularities.

\subsection{Local motivic monodromy conjecture}

We first state our strongest theorem on the local motivic monodromy conjecture for nondegenerate singularities. Recall that given $\beta \in \Q$,  $D(\beta) \in \Z_{> 0}$ is the denominator of $\beta$, 
written as a reduced fraction.

\begin{theorem}\label{thm:optimalclass}
Suppose $f$ is nondegenerate, and suppose that, for every $\alpha \in \mathbb{Q} \smallsetminus \Z$, either:
\begin{enumerate}
 \item\label{item:goodproj} $\Newt(f)$ has $D(\alpha)$-good projection and there is a face in $\Contrib(\alpha)$ that is not  pseudo-$\UB$,

 \item\label{item:nopole} $\Newt(f)$ is $\alpha$-simplicial and every face of $\Contrib(\alpha)$ is $\UB$, or

 \item\label{item:largeunbounded} there is $\beta \in \Q$ with $D(\alpha)$ dividing $D(\beta)$ and  a face $F$ of $\Contrib(\beta)$ with $|\Unb(C_F)| = n-1$. 

\end{enumerate}  
Then there is a set of candidate poles $\mathcal{P} \subset \mathbb{Q}$ for $Z_{\mot}(T)$ such that for all $\alpha \in \mathcal{P}$, $\exp(2 \pi i \alpha)$ is a nearby eigenvalue of monodromy. 
\end{theorem}

To complete the proof of Theorem~\ref{thm:optimalclass} we need the following lemma.

\begin{lemma}\label{lem:largeunbounded}
Let $\alpha \in \Q$.
Suppose there is $\beta \in \Q$ with $D(\alpha)$ dividing $D(\beta)$ and a face $F \in \Contrib(\beta)$ with 
$|\Unb(C_F)| = n - 1$. 
Then $\exp(2 \pi i \alpha)$ is a nearby eigenvalue of monodromy. 
\end{lemma}

\begin{proof}
Let $F$ be a face in $\Contrib(\beta)$ with $|\Unb(C_F)| = n-1$. 
Recall that we may write $\langle \Unb(C_F) \rangle = \R^{I_F}_{\ge 0}$ for some 
$I_F \subset [n]$, and that $\overline{F}$ denotes the image of $F$ under the projection $\R^n \to \R^n/\langle \Unb(C_F) \rangle$. 
Then $\overline{F} = \{\rho_F\} \subset \mathbb{R}$, where $\rho_F$ is the lattice distance of $F$ to the origin.  Observe that $\beta = -1/\rho_F$ and $D(\beta) = \rho_F$.
Let $x = x_{I_F}$ be a general point in $\A^{I_F} \subset X_f$. 
By either Varchenko's theorem (see \eqref{e:varchenko}) or  
 Theorem~\ref{t:nonnegativeVarchenko}, 
we compute
\[
\widetilde{E}(\cF_{x}) + 1  = \sum_{i = 0}^{\rho_F - 1} [i/D(\beta)]. \qedhere
\]
\end{proof}

\begin{proof}[Proof of Theorem~\ref{thm:optimalclass}]
The result is an immediate consequence of Theorem~\ref{thm:eigenvalue}, Theorem~\ref{thm:nopole} and Lemma~\ref{lem:largeunbounded}.
\end{proof}

On the other hand, there are three major 
classes of Newton polyhedra that Theorem~\ref{thm:optimalclass} does not cover. 

\begin{enumerate}

\item\label{i:missing1} All faces of $\Contrib(\alpha)$ are $\UB$, but $\Newt(f)$ is not $\alpha$-simplicial.

\item\label{i:missing2} Every face of $\Contrib(\alpha)$ is pseudo-$\UB$,
    and at least one face  of $\Contrib(\alpha)$ is not $\UB$.
    
\item\label{i:missing3} There is a face of $\Contrib(\alpha)$ that is not $\UB$, and $\Newt(f)$ does not have $D(\alpha)$-good projection. 

\end{enumerate}

For \eqref{i:missing1}, see \cite[Theorem 4.3]{ELT} and \cite[Theorem A]{Quek22} for results that produce “fake poles” of the topological and naive motivic zeta function under certain conditions but without an $\alpha$-simplicial assumption. 
For \eqref{i:missing2}, a $B_2$-facet in the sense of \cite[Definition 3.9]{ELT} is pseudo-$\UB$ but not $\UB$. It is known that $B_2$-facets sometimes do not give rise to poles of the local topological zeta function, see, e.g., \cite[Proposition 3.11]{ELT}. 
 For \eqref{i:missing3},  see \cite{Esterov21} for one approach to proving that $\exp(2 \pi i \alpha)$ is an eigenvalue of monodromy in this situation. 
See Example~\ref{ex:ELTprop} and Example~\ref{e:ELT} for explicit examples. Note that \eqref{i:missing3} does not occur when $\Newt(f)$ is convenient.

\subsection{Dimension 3 case}

We now use Theorem~\ref{thm:optimalclass} to deduce the local motivic monodromy conjecture for nondegenerate singularities when $n=3$ and prove the theorem below. See Section~\ref{ssec:history} for a history of prior results on monodromy conjectures for nondegenerate singularities when $n=3$.

\begin{theorem}\label{thm:n=3}
Suppose that $f$ is a nondegenerate polynomial in three variables. Then there is a set of candidate poles $\mathcal{P} \subset \mathbb{Q}$ for $Z_{\mot}(T)$ such that for all $\alpha \in \mathcal{P}$, $\exp(2 \pi i \alpha)$ is a nearby eigenvalue of monodromy. 
\end{theorem}

\begin{lemma}\label{lem:B1simplicial}
Suppose $n = 3$ and  $F$ is a $B_1$-face. Then $C_F$ is simplicial.
\end{lemma}
\begin{proof}
Let $A$ be an apex with base direction $e_\ell^*$. Then $C_F \cap \{ e_\ell^* = 0 \}$ is a polyhedral cone of dimension at most $2$, and hence is simplicial. Since $C_F$ is spanned by $C_F \cap \{ e_\ell^* = 0 \}$
and the ray through $A$, it follows that $C_F$ is simplicial.
\end{proof}

\begin{proof}[Proof of Theorem~\ref{thm:n=3}]
Let $\alpha \in \Q$ be a candidate pole. If $\alpha \in \Z$, then $1$ is an eigenvalue of monodromy for $H^0(\cF_0, \mathbb{C})$. Hence, we may assume that $\alpha \notin \Z$.  
Similarly, we may assume that $X_f$ is not smooth at the origin, else $\{ -1 \}$ is a set of candidate poles for $Z_{\mot}(T)$. 
We show that if $\Newt(f)$ does not satisfy condition (\ref{item:goodproj}) or (\ref{item:largeunbounded}) of Theorem~\ref{thm:optimalclass}, then it satisfies condition (\ref{item:nopole}). 
By Lemma~\ref{lem:largeunbounded}, we may therefore assume that, for all $\beta$ with $D(\alpha)$ dividing $D(\beta)$ and all $F \in \Contrib(\beta)$, $|\Unb(C_F)| \le 1$. Then $\Newt(f)$ has $D(\alpha)$-good projection, so we may assume that for every face $F$ of $\Contrib(\alpha)$, 
$F$ is pseudo-$\UB$.

We now argue that every face $F$ of $\Contrib(\alpha)$ is $\UB$. Then by Lemma~\ref{lem:B1simplicial}, $\Newt(f)$ is $\alpha$-simplicial and we have verified that condition (\ref{item:nopole}) of Theorem~\ref{thm:optimalclass} is satisfied. 

If $\dim F \le 1$, then $F$ is simplicial and hence is $\UB$. First suppose that $\dim F = 2$ and $F$ is compact. Choose two vertices $w_1$ and $w_2$ that lie on a $2$-dimensional face of $\R^3_{\ge 0}$
(if they do not exist, then no triangulation contains any $\UB$-facet), say $\{e_1^* = 0\}$. 
Observe that for $j \in \{ 2,3 \}$, either 
$\langle e_j^*, w_1 \rangle > 0$ or $\langle e_j^*, w_2 \rangle > 0$, else 
 one of $w_1$ or $w_2$ would be in the upper convex hull of the other.

If $F$ is not $\UB$, there are at least two other vertices, $w_3$ and $w_4$. We now consider two cases. 
\begin{enumerate}
\item First consider the case when $w_3$ and $w_4$ both have $\langle e_1^*, w_i \rangle = 1$. 
Consider the $2$-dimensional $\UB$ lattice simplex with vertices 
$w_1, w_3, w_4$. 
We may assume that there is an apex with base direction $e_2^*$. 
If the apex is $w_1$, then $w_3$ and $w_4$ must be of the form $(1, 0, c)$ for some $c \in \mathbb{N}$. But then one of $w_3$ or $w_4$ is in the upper convex hull of the other. 
Hence we may assume that the apex is $w_3$. 
Then, $w_1 = (0, 0, a)$ and $w_3 = (1, 1, b)$ for some $a, b \in \mathbb{N}$. 
Note that $\langle e_1^*, w_2 \rangle = 0$
implies that $\langle e_2^*, w_2 \rangle > 0$. 
Now consider the $2$-dimensional $\UB$ lattice simplex with vertices 
$w_2, w_3, w_4$. This has an apex at height $1$ with base direction $e_3^*$. As above, the apex can not be $w_2$, else  one of $w_3$ or $w_4$ is in the upper convex hull of the other. It also can not be $w_3$, else $b = 1$ and $\alpha = -1 \in \Z$. Hence $b = 0$ and the apex is $w_4$. 
Since $w_1,w_2,w_3,w_4$ all lie on $F$, we deduce that 
\[
w_1 = (0,0,a), w_2 = (0,a,0), w_3 = (1, 1, 0), w_4 = (1,0,1),
\]
for some $a \in \Z_{>0}$. Note that $a > 1$, else $X_f$ is smooth at the origin. Then $w = ((a - 1)/a)w_1 + (1/a)w_2 = (0,1,a - 1)$ is a lattice point in $F$, and the $2$-dimensional lattice simplex with vertices $w$,$w_3$,$w_4$ is not $\UB$, a contradiction.

\item Now suppose that there is some vertex $w_3$ 
such that $\langle e_1^*, w_3 \rangle > 1$. 
As $w_1, w_2, w_3$ span a $2$-dimensional $\UB$ lattice simplex and either $\langle e_2^*, w_1 \rangle > 0$ or   $\langle e_2^*, w_2 \rangle > 0$, we may assume that
$w_1$ is an apex with base direction $e_2^*$ and $\langle e_2^*, w_1 \rangle = 1$.
 The fourth vertex $w_4$ must have 
$\langle e_2^*, w_4 \rangle > 1$, 
 as otherwise we would be in the previous case.
Consider the $2$-dimensional $\UB$ lattice simplex
with vertices $w_2, w_3, w_4$. It cannot 
have an apex at height $1$ in either of the directions $e_1^*$ or $e_2^*$. 
Then we must have $w_2 = e_3$, which implies that $X_f$ is smooth at the origin, a contradiction.
\end{enumerate}
Now suppose that $\dim F = 2$ and $|\Unb(C_F)| = 1$. Then $C_F$ has good projection and $\overline{F}$ is $\UB$. 
By Remark~\ref{r:goodprojection}, $F$ is $\UB$. 
\end{proof}

\bibliographystyle{amsalpha}
\bibliography{Monodromy-Dec2025.bib}

\end{document}